\newcommand\xqed[1]{%
	\leavevmode\unskip\penalty9999 \hbox{}\nobreak\hfill
	\quad\hbox{#1}}
\newcommand\lozengeend{\xqed{$\lozenge$}}
\title{Iwasawa Theory for GU(2,1) at inert primes}
\author{Muhammad Manji}
\newtheorem{thm}{Theorem}[section]
\newtheorem{lem}[thm]{Lemma}
\newtheorem{prop}[thm]{Proposition}
\newtheorem{cor}[thm]{Corollary}
\newtheorem{conj}[thm]{Conjecture}
\theoremstyle{definition}
\newtheorem{defn}[thm]{Definition}
\theoremstyle{remark}
\newtheorem{rmk}[thm]{Remark}
\newtheorem{ass}[thm]{Assumption}
\begin{document}
    \maketitle
	
	\section*{Abstract}
		
        \par Many problems of arithmetic nature rely on the computation or analysis of values of $L$-functions attached to objects from geometry. Whilst basic analytic properties of the $L$-functions can be difficult to understand, recent research programs have shown that automorphic $L$-values are susceptible to study via algebraic methods linking them to Selmer groups. Iwasawa theory, pioneered first by Iwasawa in the 1960s and later Mazur and Wiles provides an algebraic recipe to obtain a $p$-adic analogue of the $L$-function. In this work we aim to adapt Iwasawa theory to a new context of representations of the unitary group GU(2,1) at primes inert in the respective imaginary quadratic field. This requires a novel approach using the Schneider--Venjakob regulator map, working over locally analytic distribution algebras. Subsequently, we show vanishing of some Bloch--Kato Selmer groups when a certain $p$-adic distribution is non-vanishing. These results verify cases of the Bloch--Kato conjecture for GU(2,1) at inert primes in rank 0.
		
		\tableofcontents

		\section{Introduction}
			\par In the past decade there has been a series of developments in the theory of Euler systems, in particular the construction of new Euler systems for Shimura varieties attached to reductive linear algebraic groups such as $\mathrm{GL}_2 \times_{\mathrm{GL}_1} \mathrm{GL}_2$, $\mathrm{GSp}_4$, as well as the construction for GU(2,1) in \citep{LSZ21} discussed in this thesis.  With an increasing array of examples, new techniques are being developed to extend these ideas to new cases. We will focus on the case where a prime $p$ is inert in the imaginary quadratic reflex field $E$ of the GU(2,1) Shimura variety we work over. The split prime case in \citep{Man22} follows existing ideas, largely of Kato in \citep{Kat04}, while the inert case needs fresh ideas. 
			\par This work provides techniques which can be applied to other settings with non-split primes over quadratic reflex fields. Partial results have been obtained, proving the main conjectures up to an explicit reciprocity law; we will have to use placeholders for $p$-adic $L$-functions whose constructions do not exist yet, however this still takes us a step closer to the Bloch--Kato conjecture for GU(2,1). 
			\par In \S1 we discuss automorphic representations $\Pi$ for GU(2,1) defined with respect to $E$ and properties of associated Galois representations $V=V_\mathfrak{P}(\Pi)$ coming from the Langlands program.  We recall the statement of the Loeffler--Skinner--Zerbes GU(2,1) Euler system $c^\Pi \in H^1_{\text{Iw}}(E[p^\infty]/E,V)$, where $E[p^\infty]$ is the maximal extension of $E$ unramified away from $p$, so that $\Gamma_E=\mathrm{Gal}(E[p^\infty]/E) \cong \mathbb{Z}_p^2 \times \Delta$ for a finite group $\Delta$. Let $\Lambda$ be the two variable Iwasawa algebra $\mathcal{O}_{E_p}[[\Gamma_E]]$. We adapt versions of Mazur, Rubin and Kato's Euler system machinery from \citep{MR04} and \citep{Kat04} to prove one divisibility of Kato's Iwasawa main conjecture for such $V$, as a statement of $\Lambda$-modules (i.e. without restricting to the cyclotomic 1-variable Iwasawa algebra).
			\par In \S2, inspired by work of Pottharst, we recall the construction via rigid geometry of a candidate distribution algebra $\Lambda_\infty^L$ from Schneider--Teitelbaum's $p$-adic Fourier theory where the $p$-adic $L$-function can live. We will use these results when $L=E_p=\mathbb{Q}_{p^2}$. Since this algebra is defined by an `$L$-analyticity condition' on distributions rather than a condition on the Galois group (in contrast to cyclotomic or anticyclotomic Iwasawa theory), we do not have a way to `pull back' to a quotient of the Iwasawa algebra. We need to understand the module theory and prime ideal theory of $\Lambda_\infty^L$ to base change Kato's main conjecture to this algebra.
			\par \begin{thm}[See Corollary \ref{analytic_kmc}] \label{intro1}
				Suppose $V$ satisfies the conditions listed in Corollary \ref{analytic_kmc}. Then, for a certain Selmer complex $\widetilde{C}_1$ of $\Lambda$-modules
				$$\mathrm{char}_{\Lambda^{E_p}_{\infty}} \left(H^2(\widetilde{C}_1) \otimes_{\Lambda} \Lambda^{E_p}_\infty \right) \bigm\vert \mathrm{char}_{\Lambda^{E_p}_{\infty}} \left( \dfrac{H^1(\widetilde{C}_1)\otimes_{\Lambda} \Lambda^{E_p}_\infty}{\mathrm{im}(c^\Pi)} \right) $$
			\end{thm}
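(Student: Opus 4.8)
The plan is to deduce the stated divisibility from the analogous ``integral'' divisibility in Kato's Iwasawa main conjecture over the Noetherian algebra $\Lambda = \mathcal{O}_{E_p}[[\Gamma_E]]$, and then to transport it along the natural map $\Lambda \to \Lambda_\infty^{E_p}$. The main subtlety is that this base change is \emph{not} flat and that $\Lambda_\infty^{E_p}$ is not Noetherian, so one must invoke the module theory and prime-ideal theory of $\Lambda_\infty^{E_p}$ set up in \S2 both to know that $\mathrm{char}_{\Lambda_\infty^{E_p}}$ is defined on the relevant modules and to control its behaviour under base change.

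\emph{Step 1: one divisibility over $\Lambda$.} Using the Loeffler--Skinner--Zerbes class $c^\Pi \in H^1_{\mathrm{Iw}}(E[p^\infty]/E, V)$ of \citep{LSZ21} as the bottom class of an Euler system for $V = V_\mathfrak{P}(\Pi)$, I would run the Mazur--Rubin--Kato Euler system machinery of \citep{MR04,Kat04}, adapted to the two-variable group $\Gamma_E \cong \mathbb{Z}_p^2 \times \Delta$ and worked componentwise over the idempotent decomposition of $\Lambda$ induced by $\Delta$. Granting the hypotheses collected in Corollary \ref{analytic_kmc} --- big image of the $\Gamma_E$-representation, the local conditions at $p$ defining $\widetilde{C}_1$ making it a perfect complex of $\Lambda$-modules into which $c^\Pi$ lifts, and $c^\Pi$ non-torsion --- this gives that $H^1(\widetilde{C}_1)$ has $\Lambda$-rank one, $H^2(\widetilde{C}_1)$ is $\Lambda$-torsion, and
\[ \mathrm{char}_{\Lambda}\bigl(H^2(\widetilde{C}_1)\bigr)\ \bigm\vert\ \mathrm{char}_{\Lambda}\bigl(H^1(\widetilde{C}_1)/\mathrm{im}(c^\Pi)\bigr). \]

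\emph{Step 2: the flat part of the base change.} The map $\Lambda \to \Lambda_\infty^{E_p}$ factors as $\Lambda \to \Lambda[1/p] \to \mathcal{D}^{\mathrm{la}}(\Gamma_E) \twoheadrightarrow \Lambda_\infty^{E_p}$, where $\mathcal{D}^{\mathrm{la}}(\Gamma_E)$ is the locally $\mathbb{Q}_p$-analytic distribution algebra and the last arrow is the surjection cutting out the $E_p$-analytic character subvariety. Inverting $p$ is flat, and $\Lambda[1/p] \to \mathcal{D}^{\mathrm{la}}(\Gamma_E)$ is flat (Pottharst-type flatness of the distribution algebra over the Iwasawa algebra, as part of the \S2 formalism). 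Hence $-\otimes_{\Lambda}\mathcal{D}^{\mathrm{la}}(\Gamma_E)$ preserves torsion, commutes with forming the cokernel of $c^\Pi$, and extends characteristic ideals, so the divisibility of Step 1 holds verbatim over $\mathcal{D}^{\mathrm{la}}(\Gamma_E)$.

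\emph{Step 3: the non-flat reduction, and the main obstacle.} The surjection $\mathcal{D}^{\mathrm{la}}(\Gamma_E) \to \Lambda_\infty^{E_p}$ has kernel generated by a single non-zero-divisor $\theta$ (the $E_p$-analyticity operator, equivalently the equation of the codimension-one analytic subvariety; here one uses the $p$-adic Fourier theory of \S2 and vanishing of the relevant Picard groups). For a finitely generated torsion $\mathcal{D}^{\mathrm{la}}(\Gamma_E)$-module $Q$, the reduction $Q \otimes_{\mathcal{D}^{\mathrm{la}}(\Gamma_E)} \Lambda_\infty^{E_p}$ is again torsion over $\Lambda_\infty^{E_p}$ precisely when the divisor of $\mathrm{char}_{\mathcal{D}^{\mathrm{la}}(\Gamma_E)}(Q)$ has no component equal to $V(\theta)$, in which case its characteristic ideal is the pullback of $\mathrm{char}_{\mathcal{D}^{\mathrm{la}}(\Gamma_E)}(Q)$, and divisibility of divisors pulls back to divisibility. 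So the whole argument comes down to showing that $\theta$ does not divide $\mathrm{char}_{\mathcal{D}^{\mathrm{la}}(\Gamma_E)}\bigl(H^1(\widetilde{C}_1)/\mathrm{im}(c^\Pi)\bigr)$ --- which by the Step 2 divisibility forces the same for $H^2(\widetilde{C}_1)$, and makes $H^1(\widetilde{C}_1)\otimes\Lambda_\infty^{E_p}$ of rank one so that the quotient by $\mathrm{im}(c^\Pi)$ is torsion. Equivalently, one must show that $c^\Pi$ stays non-torsion after $\otimes_{\Lambda}\Lambda_\infty^{E_p}$; this is exactly the non-vanishing hypothesis of Corollary \ref{analytic_kmc}, which via the Schneider--Venjakob regulator map identifies the image of $c^\Pi$ in $\Lambda_\infty^{E_p}$-cohomology with the relevant $p$-adic distribution. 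I expect essentially all the difficulty to be concentrated here: checking that $\Lambda_\infty^{E_p}$ carries a workable theory of characteristic ideals compatible with reduction modulo $\theta$ on the class of modules coming from $\widetilde{C}_1$, that the Selmer module remains torsion across the non-flat reduction, and that the Euler-system class does not degenerate. Once these are in place using \S2 and the running hypotheses, tracking the two base changes yields
\[ \mathrm{char}_{\Lambda_\infty^{E_p}}\bigl(H^2(\widetilde{C}_1)\otimes_{\Lambda}\Lambda_\infty^{E_p}\bigr)\ \bigm\vert\ \mathrm{char}_{\Lambda_\infty^{E_p}}\Bigl(\bigl(H^1(\widetilde{C}_1)\otimes_{\Lambda}\Lambda_\infty^{E_p}\bigr)/\mathrm{im}(c^\Pi)\Bigr), \]
which is the assertion.
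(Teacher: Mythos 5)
Your three-step strategy --- (1) the Mazur--Rubin--Kato Euler system machinery to prove the main conjecture over $\Lambda$ following B\"uy\"ukboduk--Ochiai, (2) a flat base change to the full locally analytic distribution algebra $\Lambda_\infty$, and (3) a non-flat reduction modulo the $E_p$-analyticity operator to reach $\Lambda_\infty^{E_p}$ --- is indeed the paper's route (Theorem \ref{kato_mc}, Corollary \ref{cyc_kmc}, Corollary \ref{analytic_kmc}). You also correctly identify that the kernel of $\Lambda_\infty \twoheadrightarrow \Lambda_\infty^{E_p}$ is principal (generated by $\overline{\nabla}$, Lemma \ref{lie_element}) and that the Pr\"ufer property of $\Lambda_\infty^{E_p}$ is what makes characteristic ideals usable, and that a support hypothesis is needed to keep modules torsion.

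There is, however, a genuine gap in Step 3. You assert that, under the support hypothesis, the characteristic ideal of $Q \otimes_{\Lambda_\infty} \Lambda_\infty^{E_p} = Q/\overline{\nabla}Q$ is simply the pullback $\pi_{E_p}\bigl(\mathrm{char}_{\Lambda_\infty}(Q)\bigr)$, and that divisibility therefore pulls back directly. This is false in general: the correct relation, which the paper establishes in Proposition \ref{inert_descent} as a Fr\'echet--Stein version of Kato's Lemma 14.15, is
\[ \mathrm{char}_{\Lambda_\infty^{E_p}}\bigl(Q/\overline{\nabla}Q\bigr) \;=\; \pi_{E_p}\bigl(\mathrm{char}_{\Lambda_\infty}(Q)\bigr)\cdot \mathrm{char}_{\Lambda_\infty^{E_p}}\bigl(Q[\overline{\nabla}]\bigr), \]
and the torsion factor $\mathrm{char}_{\Lambda_\infty^{E_p}}(Q[\overline{\nabla}])$ can be nontrivial even when $(\overline{\nabla})\notin \mathrm{Supp}_{\Lambda_\infty}(Q)$. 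The reason is that $Q[\overline{\nabla}]$ can be supported in codimension $2$ in $\mathrm{Spec}(\Lambda_\infty)$ along the $E_p$-analytic locus, which contributes nothing to $\mathrm{char}_{\Lambda_\infty}(Q)$ (that ideal only records height-one data) but does contribute a height-one $\Lambda_\infty^{E_p}$-ideal. So one cannot simply pull back a $\Lambda_\infty$-divisibility: the $\overline{\nabla}$-torsion correction terms on the two sides must be compared. The paper handles this by combining the short exact sequences from the base-change spectral sequence (Proposition \ref{inert_descent2}) with Proposition \ref{inert_descent}, computing two fractional ideals $\mathcal{I}_1$ and $\mathcal{I}_2$ of $\Lambda_\infty^{E_p}$ and showing they are equal, so that $\mathcal{I}_1$ being a proper ideal is inherited from $\mathcal{I}_2$. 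This bookkeeping, using the Pr\"ufer structure to invert the various finitely generated characteristic ideals, is exactly the work your ``pullback'' claim skips over.

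A smaller point worth flagging: the statement you are asked to prove is phrased via the naive tensor $H^i(\widetilde{C}_1)\otimes_\Lambda \Lambda_\infty^{E_p}$, whereas Corollary \ref{analytic_kmc} is phrased via $\widetilde{H}^i(E,\mathbb{V}_\infty^{E_p};\Delta^1)$, the cohomology of the derived base change; these differ by a $\widetilde{H}^{i+1}(E,\mathbb{T};\Delta^1)[\overline{\nabla}]$ term as the exact sequences in Proposition \ref{inert_descent2} show, and one must keep track of this when translating between the two formulations.
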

		\par In \S3 we review theories of Lubin--Tate $(\phi,\Gamma)$-modules for $L/\mathbb{Q}_p$ over different base rings and properties of some cohomology theories of such modules. In particular, we highlight a discrepancy between Iwasawa cohomology of the Tate dual of an $L$-analytic representation (one whose Hodge--Tate weights away from the identity embedding are all 0) and the $\psi=1$ invariants of a $(\phi, \Gamma$)-module over the Robba ring which does not appear in the cyclotomic theory. This discrepancy is observed from calculations of Steingart \citep{Ste22b} stemming from the difference between locally $L$-analytic and overconvergent representations as soon as $L \neq \mathbb{Q}_p$. This will be crucial to constructing Selmer groups down in the next chapter. The following result is a conjecture, which will be the focus of a follow up project.
		
		\begin{conj} [See Conjecture \ref{steingart_conj}] 
			Given a trianguline $\mathrm{Gal}(\overline{L}/L)$-representation $S$ whose local Tate dual is $L$-analytic. Then
			$$ H^1_{\text{Iw}}(L_\infty/L,S) \otimes_\Lambda \Lambda_\infty^L \rightarrow D^{\dagger}_{\text{rig}}(S(\tau^{-1}))^{\psi=1} $$ is surjective with kernel of $\Lambda_\infty^L$-rank $([L:\mathbb{Q}_p]-1)\text{dim}(S)$.
		\end{conj}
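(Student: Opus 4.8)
The plan is to compute the $\Lambda_\infty^L$-rank of the source and target and then deduce the kernel from a surjectivity statement. The source has $\Lambda_\infty^L$-rank $[L:\mathbb{Q}_p]\dim S$: this equals $\mathrm{rank}_\Lambda H^1_{\mathrm{Iw}}(L_\infty/L,S)$, which the Tate local Euler characteristic formula gives as $[L:\mathbb{Q}_p]\dim S$ since $H^0_{\mathrm{Iw}}$ and $H^2_{\mathrm{Iw}}$ are $\Lambda$-torsion, and no triangulation is needed here. The target is the degree-one cohomology of the analytic Iwasawa complex $[D^\dagger_{\mathrm{rig}}(S(\tau^{-1}))\xrightarrow{\psi-1}D^\dagger_{\mathrm{rig}}(S(\tau^{-1}))]$ over $\mathcal{R}_L$; since $S$ is trianguline, $D^\dagger_{\mathrm{rig}}(S(\tau^{-1}))$ is filtered by rank-one $(\phi,\Gamma_L)$-modules $\mathcal{R}_L(\delta_i)$, and because $\Lambda_\infty^L$-rank is additive in the resulting long exact sequences, its rank --- and the surjectivity onto it --- may be checked on the rank-one pieces, where $D_{\mathrm{LT}}$ and $D^\dagger_{\mathrm{rig}}$ are explicit in terms of the character. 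Steingart's computations then give $\mathrm{rank}_{\Lambda_\infty^L}D^\dagger_{\mathrm{rig}}(S(\tau^{-1}))^{\psi=1}=\dim S$ --- the concrete form of the discrepancy emphasised in \S3, whereby the multiplicity of $S$ drops from $[L:\mathbb{Q}_p]$ in the bounded world to $1$ in the overconvergent-analytic world, consistently with the cyclotomic case $L=\mathbb{Q}_p$ where the kernel is torsion. Granting surjectivity, the kernel has $\Lambda_\infty^L$-rank $[L:\mathbb{Q}_p]\dim S-\dim S=([L:\mathbb{Q}_p]-1)\dim S$; one also checks that the analytic $H^2_{\mathrm{Iw}}$ is $\Lambda_\infty^L$-torsion so that surjectivity indeed propagates through the filtration without inflating ranks.

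To pin down the map I would use Schneider--Venjakob's Lubin--Tate analogue of Fontaine's theorem, $H^1_{\mathrm{Iw}}(L_\infty/L,S)\cong D_{\mathrm{LT}}(S)^{\psi=1}$ over the bounded period ring: for trianguline $S$ the $\psi=1$ part is overconvergent, whence an inclusion into $D^\dagger_{\mathrm{rig}}(S)^{\psi=1}$, and the twist by $\tau^{-1}$ is exactly the Tate-twist bookkeeping reconciling the Lubin--Tate normalisation of $\psi$ --- built from $\tfrac{1}{\pi_L}\phi^{-1}\circ\mathrm{Tr}$ and the period $\Omega$ --- with the normalisation built into the Herr complex over $\mathcal{R}_L$. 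Tensoring with $\Lambda_\infty^L$ then yields the map of the conjecture. Equivalently, and more robustly, everything can be run through analytic Iwasawa cohomology via the Kedlaya--Pottharst--Xiao approach in Steingart's $L$-analytic refinement, with the map being the canonical comparison of the bounded and analytic Iwasawa complexes after $-\otimes_\Lambda\Lambda_\infty^L$.

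The main obstacle is the surjectivity itself. The natural route is a Perrin-Riou--type divisibility argument built on the Schneider--Teitelbaum $p$-adic Fourier isomorphism recalled in \S2, which trivialises $\mathcal{R}_L^{+,\psi=0}$ as a $\Lambda_\infty^L$-module and so controls $D^\dagger_{\mathrm{rig}}(S(\tau^{-1}))^{\psi=0}$ and $D^\dagger_{\mathrm{rig}}(S(\tau^{-1}))^{\psi=1}$: one first shows that every class in $D^\dagger_{\mathrm{rig}}(S(\tau^{-1}))^{\psi=1}$ is the image of a genuine Iwasawa class after multiplication by a suitable nonzero element of $\Lambda_\infty^L$ (surjectivity up to a torsion cokernel, which already yields the rank assertion), and then upgrades this to honest surjectivity using the near-B\'ezout behaviour of $\Lambda_\infty^L$ on the slices of its Fr\'echet--Stein presentation. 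The genuine difficulties, and the reason this is deferred to a follow-up, are: $\Lambda_\infty^L$ is not Noetherian, so all rank and torsion statements must be phrased in Pottharst's coadmissible-module formalism and checked slice by slice; Steingart's cohomology computations are available in the $L$-analytic (not merely overconvergent) setting, which forces the d\'evissage to remain inside trianguline $L$-analytic objects; and making the comparison map integral and compatible with the triangulation, with the correct $\tau^{-1}$-twist, requires a careful matching of the two normalisations of $\psi$ and of the Herr complex.
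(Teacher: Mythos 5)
Note first that this statement is a \emph{conjecture} in the paper, not a theorem: it reproduces Steingart's Conjecture 5.3 (stated in the body as Conjecture \ref{steingart_conj}), and the paper explicitly declines to prove the surjectivity, deferring it to forthcoming joint work with Steingart and elsewhere simply \emph{assuming} it. There is therefore no proof of the surjectivity in the paper against which your attempt can be measured, and you are right to single this out as the crux.

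Your rank computation, by contrast, is essentially the content of the paper's Lemma \ref{steingart_kernel}, and your argument runs parallel to it. The paper obtains $\mathrm{rank}_{\Lambda}H^1_{\mathrm{Iw}}(L_\infty/L,S)=[L:\mathbb{Q}_p]\dim S$ from the quasi-isomorphism $R\Gamma_{\mathrm{Iw}}(L_\infty/L,S)\simeq R\Gamma(L,S\otimes_{\mathcal{O}_L}\Lambda)$ of \citep[Lemma 5.8]{SV15} together with vanishing of local $H^0,H^2$ and the local Euler characteristic, exactly as you do; it obtains $\mathrm{rank}_{\Lambda_\infty^L}D^{\dagger}_{\mathrm{rig}}(S(\tau^{-1}))^{\Psi=1}=\dim S$ by quoting \citep[Prop.\ 4.7]{Ste22b} (character by character of $\Gamma_{\mathrm{tors}}$) rather than by re-running the d\'evissage through the triangulation as you do, though your route is roughly how that proposition is established; and it then deduces the kernel rank $([L:\mathbb{Q}_p]-1)\dim S$ \emph{conditional on} the surjectivity. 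So the rank portion of your proposal is sound and matches the paper's.

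Your sketch of the surjectivity (Perrin--Riou-style divisibility through the Schneider--Teitelbaum Fourier trivialisation of $\mathcal{R}_K^{+,\psi=0}$, then upgrading from surjectivity-up-to-torsion via the Pr\"ufer structure on the Fr\'echet--Stein slices) is a reasonable programme and you correctly name the obstacles: the non-Noetherian, merely Pr\"ufer $\Lambda_\infty^L$, the need to stay in the coadmissible category slice by slice, the $L$-analytic versus overconvergent mismatch, and the $\psi$-versus-$\Psi$ normalisation (the introduction writes $\psi=1$, the body and \citep{Ste22b} use $\Psi=\tfrac{\varpi_L}{q}\psi$, and these differ by a nontrivial twist). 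But none of this is a proof: in particular d\'evissage through the triangulation does not by itself propagate surjectivity, since one must control the connecting maps of the resulting long exact sequences of $\Psi$-invariants, which is precisely the open issue. Treat this part of your write-up as a plan, not a proof; that is also the status the paper assigns to it.
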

        
		\par In \S4 we state the Schneider--Venjakob locally analytic regulator map $\mathcal{L}_{\text{SV}}$ which relies on the $p$-adic Fourier theory of Schneider--Teitelbaum and replaces the Perrin--Riou big logarithm map which no longer works in this setting. Using this regulator and the cohomology results of \S3 we state an $L$-analytic main conjecture for a class of Galois representations of GU(2,1) satisfying the expected technical condition plus one additional fixed Hodge--Tate weight coming from the $L$-analytic input. We prove one divisibility up to an `explicit reciprocity law' linking this distribution to complex $L$-values.
		\begin{thm}[See Theorem \ref{analytic_imc}] \label{intro3}
			Suppose $V$ satisfies the assumptions of Theorem \ref{intro1} and an extra `$E_p$-analyticity condition' (Assumption \ref{assumption_L-an}). Then there is a Selmer complex $\widetilde{C}_0^{E_p-\text{an}}$ of $\Lambda^{E_p}_\infty$-modules equipped with a complex map into $R\Gamma_{\text{Iw}}(E,V) \otimes_\Lambda^{\mathbb{L}} \Lambda_\infty^{E_p}$ such that $$\mathrm{char}_{\Lambda_{\infty,K}^{E_p}}H^2(\widetilde{C}_0^{E_p-\text{an}}) \bigm\vert (L^*_{p,E_p\text{-an}}) $$
		\end{thm}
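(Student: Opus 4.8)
The plan is to obtain the asserted divisibility from the $\Lambda_\infty^{E_p}$-divisibility of Theorem~\ref{intro1} by feeding its local-at-$p$ part through the Schneider--Venjakob regulator $\mathcal{L}_{\text{SV}}$, which should act as a dictionary converting the term $H^1(\widetilde{C}_1)/\mathrm{im}(c^\Pi)$ into the ideal $(L^*_{p,E_p-\text{an}})$ and, correspondingly, the local condition of $\widetilde{C}_1$ at $p$ into the one defining $\widetilde{C}_0^{E_p-\text{an}}$. Everything is carried out over the distribution algebra $\Lambda_\infty^{E_p}$ and its localisation $\Lambda_{\infty,K}^{E_p}$, whose coadmissible-module and prime-ideal theory (developed in \S2) make characteristic ideals well behaved in short exact sequences and computable on the Fréchet--Stein pieces.

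First I would construct $\widetilde{C}_0^{E_p-\text{an}}$. Starting from $R\Gamma_{\text{Iw}}(E,V)\otimes_\Lambda^{\mathbb{L}}\Lambda_\infty^{E_p}$, one retains the unramified local conditions away from $p$ inherited from $\widetilde{C}_1$ and imposes at the inert prime $p$ the condition built from the Lubin--Tate $(\phi,\Gamma)$-module theory of \S3 --- concretely a base change of $D^{\dagger}_{\text{rig}}(V(\tau^{-1}))^{\psi=1}$, twisted by the trianguline parameter supplied by Assumption~\ref{assumption_L-an}. The purpose of the $E_p$-analyticity hypothesis is that it forces the Hodge--Tate weights of $V$ at $p$ away from the identity embedding to vanish, which is exactly the regime covered by Steingart's computations recalled in \S3; these pin down the discrepancy between $L$-analytic Iwasawa cohomology and $(\phi,\Gamma)$-module cohomology and hence fix the size of the local term at $p$. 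By construction $\widetilde{C}_0^{E_p-\text{an}}$ comes equipped with the required map of complexes to $R\Gamma_{\text{Iw}}(E,V)\otimes_\Lambda^{\mathbb{L}}\Lambda_\infty^{E_p}$.

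Next I would compare $\widetilde{C}_0^{E_p-\text{an}}$ with $\widetilde{C}_1\otimes_\Lambda\Lambda_\infty^{E_p}$ and then bring in the Euler system. The two complexes differ only in the local term at $p$, so they sit in a distinguished triangle whose third vertex is (a shift of) the mapping cone of the two local conditions; using $p$-adic Fourier theory and the cohomology computations of \S3, this cone has cohomology concentrated in a single degree with characteristic ideal a unit over $\Lambda_{\infty,K}^{E_p}$, the surplus term coming from the discrepancy being $E_p$-analytically trivial and dropping out after localisation. Hence $\mathrm{char}_{\Lambda_{\infty,K}^{E_p}}H^2(\widetilde{C}_0^{E_p-\text{an}})\bigm\vert \mathrm{char}_{\Lambda_{\infty,K}^{E_p}}\!\bigl(H^2(\widetilde{C}_1)\otimes_\Lambda\Lambda_\infty^{E_p}\bigr)$. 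On the other side, $\mathcal{L}_{\text{SV}}$ maps the local-at-$p$ Iwasawa cohomology of $V$ (base changed to $\Lambda_\infty^{E_p}$) into $E_p$-analytic distributions and sends $\mathrm{loc}_p(c^\Pi)$ to $L^*_{p,E_p-\text{an}}$ (the identification of the latter with complex $L$-values is the separate explicit reciprocity law, not needed here); a Poitou--Tate duality argument --- the one underlying the construction of $\widetilde{C}_1$ --- identifies $\mathrm{char}_{\Lambda_{\infty,K}^{E_p}}\!\bigl((H^1(\widetilde{C}_1)\otimes_\Lambda\Lambda_\infty^{E_p})/\mathrm{im}(c^\Pi)\bigr)$, up to the torsion kernel and cokernel of $\mathcal{L}_{\text{SV}}$ already accounted for above, with an ideal contained in $(L^*_{p,E_p-\text{an}})$. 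Chaining these with Theorem~\ref{intro1} yields $\mathrm{char}_{\Lambda_{\infty,K}^{E_p}}H^2(\widetilde{C}_0^{E_p-\text{an}})\bigm\vert (L^*_{p,E_p-\text{an}})$.

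The main obstacle is the comparison step: controlling the mapping cone of the two local conditions at $p$. This is precisely where the inert case departs from the cyclotomic and anticyclotomic theories --- since locally $E_p$-analytic and overconvergent representations already differ once $E_p\neq\mathbb{Q}_p$, the naive local term at $p$ is not acyclic, and one must combine Assumption~\ref{assumption_L-an} with Steingart's computations to show that the surplus cohomology is $\Lambda_\infty^{E_p}$-torsion supported away from the primes of interest (equivalently, an invertible Euler-type factor over $\Lambda_{\infty,K}^{E_p}$). I expect this, rather than the formal construction of $\widetilde{C}_0^{E_p-\text{an}}$ or the closing diagram chase, to carry the real weight; without the $E_p$-analyticity assumption the argument genuinely breaks, which is why that hypothesis appears in the statement.
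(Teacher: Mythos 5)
Your proposal correctly identifies the broad shape of the argument---construct the rank-0 Selmer complex as a cone over the $\Delta^1$ complex and a $(\phi,\Gamma)$-module term, then invoke the Schneider--Venjakob regulator to produce $(L^*_{p,E_p\text{-an}})$---but two of the central steps misrepresent what actually happens, and as a result the chain of divisibilities you write down does not close.

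First, the local condition at $p$ in $\widetilde{C}_0^{E_p\text{-an}}$ is \emph{not} built from $D^\dagger_{\text{rig}}(V(\tau^{-1}))$. It is built from the rank 1 middle graded piece $S$ of the ordinary filtration of $V^1$, via the cone of the comparison map into $D^\dagger_{\text{rig}}(S(\tau^{-1}))[\Psi-1][1]$ (Definition~\ref{analytic_rank0_selmer}). The $E_p$-analyticity of Assumption~\ref{assumption_L-an} is a condition on $S$, not on $V$: it fixes a single Hodge--Tate weight so that the 1-dimensional $S$ is co-$E_p$-analytic and the regulator of \S4 applies to it. Asserting that the assumption "forces the Hodge--Tate weights of $V$ at $p$ away from the identity embedding to vanish" is false and would be far too restrictive: it would fix $([E_p:\mathbb{Q}_p]-1)\dim V = 3$ weights, not one, and would exclude the representations $V$ the theorem is about.

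Second, and more seriously, you claim the cone (the third vertex of the distinguished triangle) "has cohomology concentrated in a single degree with characteristic ideal a unit over $\Lambda_{\infty,K}^{E_p}$," so that $\mathrm{char}\,H^2(\widetilde{C}_0^{E_p\text{-an}}) \mid \mathrm{char}\,(H^2(\widetilde{C}_1)\otimes\Lambda_\infty^{E_p})$ and the work reduces to showing the $\Delta^1$-side is controlled by $(L^*)$. This is not so. The cone's $H^1$ is $D^\dagger_{\text{rig}}(S(\tau^{-1}))^{\Psi=1}$, which has $\Lambda_\infty^{E_p}$-rank 1---it is not torsion at all, let alone of trivial characteristic ideal. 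The paper's proof does \emph{not} relate $H^2(\Delta^0)$ to $H^2(\Delta^1)$ by killing the cone; instead it uses the vanishing of $\widetilde{H}^1(\Delta^0)$ (Lemma~\ref{analytic_rank0_vanishing}) to extract from the triangle a four-term exact sequence
\[ 0 \to \frac{\widetilde{H}^1(\Delta^1)}{\mathrm{im}(c^\Pi)} \to \frac{D^\dagger_{\text{rig}}(S(\tau^{-1}))^{\Psi=1}}{\mathrm{im}(c^\Pi)} \to \widetilde{H}^2(\Delta^0) \to \widetilde{H}^2(\Delta^1) \to 0, \]
in which \emph{both} the first and second terms are torsion precisely because the Euler system image has been divided out. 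The regulator $\widetilde{\mathcal{L}}_S$ then maps the \emph{second} term (not $H^1(\widetilde{C}_1)/\mathrm{im}$) injectively into $\Lambda_{\infty,K}^{E_p}/(L^*_{p,E_p\text{-an}})$ with torsion cokernel (Lemmas~\ref{ker_reg}, \ref{coker_reg}). Multiplicativity of characteristic ideals over the Pr\"ufer ring $\Lambda_{\infty,K}^{E_p}$ then gives an \emph{equality} of fractional ideals which, combined with the properness from Corollary~\ref{analytic_kmc}, yields $\mathrm{char}\,\widetilde{H}^2(\Delta^0) \mid (L^*_{p,E_p\text{-an}})$. Your proposed chain---$\mathrm{char}\,H^2(\widetilde{C}_0) \mid \mathrm{char}\,H^2(\widetilde{C}_1) \mid \mathrm{char}(H^1(\widetilde{C}_1)/\mathrm{im})$ "contained in" $(L^*)$---conflates the global Selmer quotient with the local $(\phi,\Gamma)$-module quotient $D^{\Psi=1}/\mathrm{im}$: the regulator acts on the latter, not the former, and without the exact sequence above there is no way to convert one into the other. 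You would need to rebuild the argument around that four-term sequence for the divisibility to go through.
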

		\par The recipe for going from $\widetilde{C}_1$ to $\widetilde{C}_0^{E_p-\text{an}}$ will use the kernel of the comparison map in the statement of the Conjecture (note that the conjecture tells us what the $\Lambda_\infty^L$-module rank of this kernel is).  Finally in \S5 we define analytic and overconvergent `descended' versions of our new Selmer group and describe their relations to the Bloch--Kato Selmer group. We need both versions since only one satisfies the right duality property and only the other satisfies the right Euler characteristic property. We obtain an upper bound on certain Bloch--Kato Selmer groups for $V$ in terms of the analytic distributions $L^*_{p,E_p\text{-an}}$ coming from the image of the Euler system under $\mathcal{L}_{\text{SV}}$.
		\begin{thm}[See Theorem \ref{analytic_BK_bound}]
			Suppose $V$ is as in Theorem \ref{intro3} and $\eta$ is an $E_p$-analytic character satisfying the technical conditions (listed in the full statement of the Theorem). If  $\eta(L^*_{p,E_p\text{-an}}) \neq 0 $ then $$ H^1_f(E,V(\eta^{-1})^*(1)) = 0 $$ 
		\end{thm}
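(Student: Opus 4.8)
The plan is to run the standard mechanism by which an Euler system yields rank-zero cases of the Bloch--Kato conjecture, the novelty being that every step now takes place over the distribution algebra $\Lambda_\infty^{E_p}$ rather than a classical Iwasawa algebra. The input is the divisibility of Theorem \ref{intro3}, $\mathrm{char}_{\Lambda_{\infty,K}^{E_p}} H^2(\widetilde{C}_0^{E_p-\text{an}}) \mid (L^*_{p,E_p\text{-an}})$, together with the fact that $L^*_{p,E_p\text{-an}}$ is, up to a unit, the image of the Loeffler--Skinner--Zerbes class $c^\Pi$ under the Schneider--Venjakob regulator $\mathcal{L}_{\text{SV}}$. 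An $E_p$-analytic character $\eta$ satisfying the technical hypotheses determines a closed point $\mathfrak{m}_\eta$ of $\mathrm{Spec}\,\Lambda_{\infty,K}^{E_p}$, and the assumption $\eta(L^*_{p,E_p\text{-an}}) \neq 0$ says exactly that a generator of $(L^*_{p,E_p\text{-an}})$ is a unit in the localization at $\mathfrak{m}_\eta$; by the displayed divisibility the same holds for a generator of $\mathrm{char}\,H^2(\widetilde{C}_0^{E_p-\text{an}})$. Since the conditions on $\eta$ are designed to place it in the locus where $H^2(\widetilde{C}_0^{E_p-\text{an}})$ carries no pseudo-null contribution at $\mathfrak{m}_\eta$ and where the base change along $\eta$ is exact, I would conclude $H^2(\widetilde{C}_0^{E_p-\text{an}}) \otimes_{\Lambda_\infty^{E_p},\,\eta} K = 0$, hence $H^2(\widetilde{C}_0^{E_p-\text{an}} \otimes^{\mathbb{L}}_{\Lambda_\infty^{E_p},\,\eta} K) = 0$.

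The decisive step is then the descent carried out in \S5. Applying the comparison between the analytic and overconvergent descended Selmer groups, one identifies $H^2$ of the specialized complex $\widetilde{C}_0^{E_p-\text{an}} \otimes^{\mathbb{L}}_{\eta} K$, via Poitou--Tate duality, with the $K$-linear dual of $H^1_f(E, V(\eta^{-1})^*(1))$, or at least with a Selmer group that surjects onto it. Here both incarnations of the descended Selmer group are needed: one has the local and global Euler characteristic behaviour, which together with $H^0(E, V(\eta^{-1})^*(1)) = 0$ among the hypotheses lets one propagate the vanishing through the complex, while the other satisfies Poitou--Tate duality; and the $([E_p:\mathbb{Q}_p]-1)\dim V$-dimensional discrepancy between Iwasawa cohomology and $(\phi,\Gamma)$-module invariants from \S3 --- already built into the passage from $\widetilde{C}_1$ to $\widetilde{C}_0^{E_p-\text{an}}$ via the kernel of the map in Conjecture \ref{steingart_conj} --- must be tracked through the comparison so that the local condition at the inert prime $p$ specializes to exactly the Bloch--Kato condition $H^1_f$. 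Granting this, $\dim_K H^1_f(E, V(\eta^{-1})^*(1)) \leq \dim_K H^2(\widetilde{C}_0^{E_p-\text{an}} \otimes^{\mathbb{L}}_{\eta} K) = 0$, which is the claim.

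The main obstacle is precisely this descent: making the specialization of the Iwasawa-theoretic Selmer complex along a genuinely $E_p$-analytic (hence infinite-order) character $\eta$ error-free, and matching the Lubin--Tate triangulation condition at the inert prime $p$ that underlies $\mathcal{L}_{\text{SV}}$ with the Bloch--Kato local condition there after specialization. Verifying that the technical conditions on $\eta$ --- its Hodge--Tate--Sen weight, the absence of exceptional zeros, the vanishing of $H^0$, and whatever is needed to suppress pseudo-null phenomena --- remove all the ambiguity, and that the rank-$([E_p:\mathbb{Q}_p]-1)\dim V$ discrepancy of \S3 behaves as predicted after descent, is where essentially all the difficulty lies; by contrast, the reduction of the $\Lambda$-level main conjecture to the statement over $\Lambda_\infty^{E_p}$ recorded in Theorem \ref{intro3} and the final dimension count are formal once the comparison of \S5 is available.
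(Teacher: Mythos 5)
Your proposal is correct and follows essentially the same route as the paper: specialize the $E_p$-analytic main conjecture (Theorem \ref{analytic_imc}) at $\eta$ using the Grothendieck-group computation of Kato/\ref{inert_descent} to kill the specialized $H^2$, then apply the descent isomorphism of Corollary \ref{delta_0_descent} and Poitou--Tate duality for the simple Selmer structure $\Delta^0_{\text{an}}$ (Proposition \ref{delta_0_an_crys_dual}, whose proof is where both the analytic and overconvergent descended complexes are genuinely needed) to identify the vanishing group with $H^1_f(E,V(\eta^{-1})^*(1))$. The one imprecision is the appeal to "no pseudo-null contribution": since $\Lambda_\infty^{E_p}$ is a limit of dimension-one rings, the characteristic ideal already detects all support, and the paper's argument proceeds directly from the divisibility and the support condition rather than requiring a separate pseudo-nullity hypothesis.
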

	
		\section*{Acknowledgements} 
			\par This paper grew out of my PhD thesis project, largely thanks to the advice of my advisor David Loeffler, and also the valuable contributions of Otmar Venjakob, Ju-Feng Wu, Rustam Steingart, Antonio Lei, Xenia Dimitrakopoulou and the support of many others.
			
		\section{Kato's main conjecture for GU(2,1)}
		
		\subsection{Ordinary Galois representations for GU(2,1)}
		
		\par Suppose $E$ is an imaginary quadratic field of discriminant $-D$, with non-trivial automorphism $c: x \mapsto \bar{x}$. Identify $E \otimes_{\mathbb{Q}} \mathbb{R} \cong \mathbb{C}$ such that $\delta= \sqrt{-D}$ has positive imaginary part. Let $J \in \mathrm{GL}_3$ be the hermitian matrix
		\begin{equation*} J = \begin{pmatrix}
				0 & 0 & \delta^{-1} \\
				0 & 1 & 0\\
				-\delta^{-1} & 0 & 0 
			\end{pmatrix}.
		\end{equation*}
		Let $G$ be the group scheme over $\mathbb{Z}$ such that for a $\mathbb{Z}$-algebra $R$,
		$$ G(R) = \{ (g, \nu) \in \mathrm{GL}_3(\mathcal{O}_E \otimes_{\mathbb{Z}} R) \times R^\times : \prescript{t}{}{\bar{g}}Jg=\nu J \}.$$
		Then $G(\mathbb{R})$ is the unitary similitude group GU(2,1), which is reductive over $\mathbb{Z}_l$ for all $l \nmid D$.

        \par Following the exposition of \citep[\S1]{Man22}, given a regular algebraic essentially conjugate self-dual cuspidal representation $\Pi$ of GU(2,1) with coefficients in a number field $F$, we can associate a Galois representation $V=V_{\mathfrak{P}}(\Pi)$ defined over a $F_{\mathfrak{P}}$ dependent on a choice of prime $\mathfrak{P} \mid p$. Unlike \textit{op. cit.} we will work solely in the case where $p$ is inert in $E$. 

        \par In defining the Galois representation $V$, we have fixed an `identity' embedding $\text{id}: E \hookrightarrow F_{\Pi,\mathfrak{P}}$. We correspondingly get a conjugate embedding $\sigma$ induced by the non-zero element $\sigma \in \text{Gal}(E/\mathbb{Q})$. We can consider $V$ as a $G_{E_p}$-representation through either embedding, and we get two different local Galois representations, which we denote by $V=V_{\text{id}}$ and $V^\sigma$.
        
		\begin{prop}\textup{\citep[Theorem 1.2 (4)]{BLGHT11}} \label{galois_HT} The representation $V$ is de Rham at $p$. $V_{\text{id}}$ has Hodge--Tate weights $\{ 0, 1+b, 2+a+b \}$ and $V^\sigma$ has Hodge--Tate weights $\{ 0, 1+a, 2+a+b \}$. We call these the Hodge--Tate weights of $V$ at the identity and conjugate embeddings. Moreover the coefficients of $P_p(\Pi,p^2X)$ lie in $\mathcal{O}_{F_\Pi}$. \end{prop}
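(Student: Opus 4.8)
The plan is to deduce the whole statement from the construction and the now-standard properties of the Galois representations attached to regular algebraic conjugate self-dual automorphic representations of $\mathrm{GL}_3$ over the CM field $E$. Recall, following \citep[\S1]{Man22}, that $V = V_{\mathfrak{P}}(\Pi)$ is obtained by first transferring $\Pi$ along stable base change to a regular algebraic conjugate self-dual automorphic representation $\Pi_E$ of $\mathrm{GL}_3(\mathbb{A}_E)$, and then taking (a twist of) the $\mathfrak{P}$-adic representation $r_{\mathfrak{P}}(\Pi_E)$ produced by the work of Chenevier--Harris, Shin and Caraiani, as packaged in \citep{BLGHT11}. The inertness of $p$ in $E$ enters only through the identification $E \otimes_{\mathbb{Q}} \mathbb{Q}_p = E_p = \mathbb{Q}_{p^2}$ as a single $p$-adic field, so that there is one place of $E$ above $p$ and $G_{E_p}$ is the relevant decomposition group.

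First I would record the de Rham property. Since $\Pi_E$ is regular algebraic, $(\Pi_E)_\infty$ is cohomological, and local-global compatibility at $p$ — part of \citep[Theorem 1.2]{BLGHT11}, resting ultimately on Caraiani's results — shows that $r_{\mathfrak{P}}(\Pi_E)|_{G_{E_p}}$ is de Rham, indeed crystalline when $\Pi_{E,p}$ is unramified. Twisting and restriction preserve the de Rham condition, so $V = V_{\mathrm{id}}$ is de Rham as a $G_{E_p}$-representation, and likewise $V^\sigma$, which is $V$ with its $G_{E_p}$-action taken through the conjugate embedding (equivalently a twist of the contragredient, by essential conjugate self-duality).

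Next, the Hodge--Tate weights. The module $D_{\mathrm{dR}}(V|_{G_{E_p}})$ is free over $E_p \otimes_{\mathbb{Q}_p} F_{\Pi,\mathfrak{P}} \cong F_{\Pi,\mathfrak{P}} \times F_{\Pi,\mathfrak{P}}$, the two factors corresponding to the embeddings $\mathrm{id}$ and $\sigma$; the multiset of jumps of the filtration on each factor is prescribed by the corresponding component of the infinitesimal character of $(\Pi_E)_\infty$, i.e.\ by the highest weight of the algebraic coefficient system attached to $\Pi$, which in the normalisation of \citep{Man22} is governed by the pair $(a,b)$. Carrying out this translation yields $\{0,1+b,2+a+b\}$ at $\mathrm{id}$ and $\{0,1+a,2+a+b\}$ at $\sigma$; as a consistency check, essential conjugate self-duality forces the two multisets to be exchanged by $x \mapsto (2+a+b)-x$, which is visibly the case here.

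Finally, for the integrality of $P_p(\Pi, p^2X)$: the eigenvalues of geometric Frobenius on $V$ are, after the normalising twist by $p^2 = \#\,\mathbb{F}_{p^2}$ (the residue field of the inert prime $p$), the Satake parameters of $\Pi_{E,p}$; these lie in $F_\Pi$ and, by purity (Caraiani), are Weil numbers of the expected weight, so the twisted characteristic polynomial has algebraic-integer coefficients. More robustly, $r_{\mathfrak{P}}(\Pi_E)$ stabilises an $\mathcal{O}_{F_{\Pi},\mathfrak{P}}$-lattice, so $P_p(\Pi,p^2X) \in \mathcal{O}_{F_{\Pi},\mathfrak{P}}[X]$; running this over every $\mathfrak{P} \mid p$ and using the $\mathfrak{P}$-independence of these coefficients places them in $\bigcap_{\mathfrak{P}} \mathcal{O}_{F_{\Pi},\mathfrak{P}} = \mathcal{O}_{F_\Pi}$. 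I expect the only genuine obstacle to be bookkeeping: reconciling the weight conventions of \citep{Man22} for $\Pi$ with those of \citep{BLGHT11} for $\mathrm{GL}_3/E$ so that the Hodge--Tate weights emerge exactly in the stated form and are correctly distributed between the identity and conjugate embeddings.
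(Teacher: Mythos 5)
The paper provides no independent proof here: the proposition is stated as a direct citation of \citep[Theorem 1.2(4)]{BLGHT11}, and nothing further is said. Your sketch is a correct reconstruction of what that citation delivers, following the same route (stable base change to $\mathrm{GL}_3/E$, the Chenevier--Harris/Shin/Caraiani construction, local--global compatibility at $p$), so in that sense you agree with the paper's approach. The de Rham/crystalline claim and the Hodge--Tate weight bookkeeping are handled correctly, and the conjugate-self-duality symmetry check $x \mapsto (2+a+b)-x$ is a genuine consistency test that your two multisets pass.

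One point in the integrality step deserves a correction. You argue for $\mathfrak{P}$-integrality of the coefficients of $P_p(\Pi,p^2X)$ via a Galois-stable $\mathcal{O}_{F_\Pi,\mathfrak{P}}$-lattice, then claim that running over all $\mathfrak{P}\mid p$ yields $\bigcap_{\mathfrak{P}\mid p}\mathcal{O}_{F_\Pi,\mathfrak{P}} = \mathcal{O}_{F_\Pi}$. That intersection is not $\mathcal{O}_{F_\Pi}$: it is only the subring of elements of $F_\Pi$ that are integral at the primes above $p$, and says nothing at primes of $F_\Pi$ away from $p$. To get integrality everywhere one needs the additional input that the Satake parameters of $\Pi_{E,p}$ are Weil numbers (purity, again going back to Caraiani), so that they are units at all finite places of $\overline{\mathbb{Q}}$ not above $p$; combining this with the $\mathfrak{P}$-adic bound from the lattice (or from Newton--Hodge) does land the normalised coefficients in $\mathcal{O}_{F_\Pi}$. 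Your first, Weil-number argument already contains this; the ``more robust'' lattice argument is weaker than you present it and does not by itself close the case.
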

		
		\par From here on we will assume $\Pi_p$ is unramified, so the Hecke polynomial at $p$ defined. We want to make further assumptions about the local behaviour of $\Pi$.
		\begin{defn} \label{def_ord} We say $\Pi$ is ordinary at $p$ (with respect to $\mathfrak{P} \mid p$ of $F_\Pi$) if $P_p(\Pi,p^2X)$ has a factor $(1-\alpha_p X)$ with $\mathrm{val}_{\mathfrak{P}}(\alpha_p)=0$. \end{defn}
		
		\par Following arguments in \citep[\S2]{BLGHT11}, one can show that $\Pi$ is ordinary at $p$ if and only if the dual representation $V_{\mathfrak{P}}(\Pi)^*$ has a codimension 1 Galois-invariant subspace $\mathcal{F}^1_p V_{\mathfrak{P}}(\Pi)^*$ at $v$ such that $V_{\mathfrak{P}}(\Pi)^*/\mathcal{F}^1_v$ is unramified and $\mathrm{Frob}_v$ acts on the quotient by $\alpha_v$. Taking dual, such a subspace exists if and only if $V_{\mathfrak{P}}(\Pi)$ has a codimension 2 invariant subspace at $v$ with compatible action of $\mathrm{Frob}_{\bar{v}}$. By conjugate self duality, ordinary at all primes above $p$ implies a full flag of invariant subspaces at each place, which we will denote by $\mathcal{F}^i_v$:
		$$ 0 \subset \mathcal{F}^2_v V_{\mathfrak{P}}(\Pi) \subset  \mathcal{F}^1_v V_{\mathfrak{P}}(\Pi) \subset V_{\mathfrak{P}}(\Pi). $$
		\par Throughout this work we will assume ordinarity at all primes above $p$ as we will need the full flag in what follows.
		
		\begin{rmk} \label{crys_ord} If $V$ is crystalline and ordinary, there exist crystalline characters $\chi_{i,v}$ such that 
			\begin{equation*} V|_{\mathrm{Gal}(\overline{E_v}/E_v)} \sim \begin{pmatrix}
					\chi_{1,v} & * & * \\
					0 & \chi_{2,v} & * \\
					0 & 0 & \chi_{3,v} \\
				\end{pmatrix}.
			\end{equation*}
			The crystalline characters are 1-dimensional subquotients of $V|_{\mathrm{Gal}(\overline{E_v}/E_v)}$ with Hodge--Tate weights in decreasing order as $i$ increases, and this will help us understand how the crystalline Frobenius map $\phi$ acts on the Dieudonné module $D_{\mathrm{cris}}(V)$. When $p$ is split in $E$ these $\chi_{i,v}$ are powers of the cyclotomic character times a finite order character by \citep[Proposition B.04]{Con11}. When $p$ is inert they are powers of the Lubin--Tate character times powers of its conjugate times a finite order character by the same result.  \lozengeend \end{rmk}
		
		\subsection{Local Conditions}
		We refer to \citep{LZ20} for the definition of the rank $r$ Panchishkin subrepresentation for $r\in \mathbb{N}$. Below we recall a result which highlights why these are important.
		
		\begin{lem} \label{fukaya_kato}
			Suppose $V$ is a de Rham $p$-adic representation of $\mathrm{Gal}(\bar{E}/E)$ which admits a Panchishkin subrepresentation $V^+_v$ at an unramified prime $v \mid p$, satisfying $H^0(E_v, V/V^+_v)=H^0(E_v,(V^+_v)^*(1))=0$. Then locally at $v$,
			$$H^1_f(E_v, V) = H^1(E_v, V^+)$$
		\end{lem}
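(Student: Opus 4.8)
The plan is to prove this by comparing the two subspaces $H^1_f(E_v,V)$ and $H^1(E_v,V^+_v)$ inside $H^1(E_v,V)$ via dimension count and a direct inclusion, using the defining property of the Bloch--Kato local condition and of a Panchishkin subrepresentation. First I would recall that, by definition, a Panchishkin subrepresentation $V^+_v \subseteq V$ at $v$ is a $\mathrm{Gal}(\overline{E_v}/E_v)$-stable subspace such that $\mathrm{Fil}^0 D_{\mathrm{dR}}(V^+_v) = 0$ (it contains no nonnegative Hodge--Tate weights in a suitable normalization) and such that $\mathrm{Fil}^0 D_{\mathrm{dR}}(V/V^+_v) = D_{\mathrm{dR}}(V/V^+_v)$, i.e. the quotient carries all the `geometric' part of the Hodge filtration; equivalently $D_{\mathrm{dR}}(V) = D_{\mathrm{dR}}(V^+_v) \oplus \mathrm{Fil}^0 D_{\mathrm{dR}}(V)$ as the underlying property. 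The key input from the literature (Perrin-Riou, or \citep{LZ20}) is that under the stated vanishing hypotheses $H^0(E_v,V/V^+_v) = 0$ the natural map $H^1(E_v,V^+_v) \to H^1(E_v,V)$ is injective, so it makes sense to regard $H^1(E_v,V^+_v)$ as a subspace.

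The main steps: (1) Show $H^1(E_v,V^+_v) \subseteq H^1_f(E_v,V)$. For this I would use that $H^1_f(E_v,V^+_v) = H^1(E_v,V^+_v)$ because $V^+_v$ has no nonnegative Hodge--Tate weights, hence $D_{\mathrm{cris}}(V^+_v)^{\phi=1}$ and the relevant quotient contributing to $H^1/H^1_f$ vanish — more precisely $H^1_e = H^1_f = H^1_g$ for a representation with all Hodge--Tate weights negative, or one argues directly that $H^1(E_v,V^+_v)$ injects into $H^1(E_v, B_{\mathrm{cris}} \otimes V)$ trivially. Then functoriality of $H^1_f$ under $V^+_v \hookrightarrow V$ gives the inclusion $H^1_f(E_v,V^+_v) \to H^1_f(E_v,V)$, and combined with the injectivity of $H^1(E_v,V^+_v)\hookrightarrow H^1(E_v,V)$ this yields $H^1(E_v,V^+_v) \subseteq H^1_f(E_v,V)$. (2) Compute both dimensions via the local Euler characteristic formula and Bloch--Kato's formula. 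One has $\dim H^1_f(E_v,V) = \dim H^0(E_v,V) + \dim \mathrm{Fil}^0 D_{\mathrm{dR}}(V/\!\!\sim)$... concretely $\dim H^1_f(E_v,V) = \dim H^0(E_v,V) + \dim_{E_v} (D_{\mathrm{dR}}(V)/\mathrm{Fil}^0)$, and by the Tate--Poitou / Euler characteristic computation $\dim H^1(E_v,V^+_v) = \dim H^0(E_v,V^+_v) + \dim H^2(E_v,V^+_v) + [E_v:\mathbb{Q}_p]\dim V^+_v$. Using $H^0(E_v,V/V^+_v)=0$ (so $H^0(E_v,V^+_v) = H^0(E_v,V)$) and $H^2(E_v,V^+_v) \cong H^0(E_v,(V^+_v)^*(1))^\vee = 0$ by the second hypothesis, together with the Panchishkin condition $\dim_{E_v} D_{\mathrm{dR}}(V^+_v) = [E_v:\mathbb{Q}_p]^{-1}\cdot$(contribution) matching $\dim_{E_v}(D_{\mathrm{dR}}(V)/\mathrm{Fil}^0)$, one gets $\dim H^1(E_v,V^+_v) = \dim H^1_f(E_v,V)$.

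Combining (1) and (2): we have an inclusion of $E_v$-vector spaces of equal finite dimension, hence equality $H^1(E_v,V^+_v) = H^1_f(E_v,V)$. I expect the main obstacle to be the careful bookkeeping in step (2): matching the de Rham / Hodge--Tate weight count of the Panchishkin subrepresentation with the jump in the Hodge filtration of $V$ that computes $\dim H^1_f$, while correctly tracking the local factor $[E_v:\mathbb{Q}_p]$ (here $E_v = E_p$ with $[E_p:\mathbb{Q}_p]=2$ since $p$ is inert) and making sure the Panchishkin normalization is the one for which $\mathrm{Fil}^0 D_{\mathrm{dR}}(V^+_v)=0$ exactly accounts for the non-$f$ part. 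A cleaner alternative that avoids some of this is to use the exact sequence $0 \to V^+_v \to V \to V/V^+_v \to 0$ and the snake-lemma / long exact sequence argument: since $V/V^+_v$ has all Hodge--Tate weights $\geq 0$ (the Panchishkin condition on the quotient) and $H^0(E_v,V/V^+_v)=0$, one shows $H^1_f(E_v,V/V^+_v)=0$, so the image of $H^1_f(E_v,V)$ in $H^1(E_v,V/V^+_v)$ is zero, giving $H^1_f(E_v,V)\subseteq \mathrm{im}\,H^1(E_v,V^+_v)$; the reverse inclusion is step (1). This two-sided containment gives the result without a dimension count, so I would likely present that route and relegate the Euler-characteristic version to a remark.
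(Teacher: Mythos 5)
Your argument is correct, but it takes a genuinely different route from the paper: the paper's proof is a two-line citation, invoking \citep[4.1.7]{FK06} directly for the split case and then handling the non-split (inert) case by inducing from $E_v$ to $\mathbb{Q}_p$ via Shapiro's Lemma, whereas you reconstruct the content of the Fukaya--Kato result from first principles. Your two-sided containment argument --- showing $H^1(E_v,V^+_v) = H^1_f(E_v,V^+_v) \subseteq H^1_f(E_v,V)$ via functoriality of $H^1_f$ together with vanishing of $H^2(E_v,V^+_v)$ (dual to the hypothesis $H^0(E_v,(V^+_v)^*(1))=0$) on the one hand, and $H^1_f(E_v,V) \subseteq \ker\bigl(H^1(E_v,V)\to H^1(E_v,V/V^+_v)\bigr) = \mathrm{im}\,H^1(E_v,V^+_v)$ via $H^1_f(E_v,V/V^+_v)=0$ on the other --- is sound, and the sketched dimension count via the Bloch--Kato formula and local Euler characteristic would also close it. What your approach buys is that it works uniformly over any local field $E_v$ without the Shapiro's-Lemma detour through $\mathbb{Q}_p$, which makes the inert case no different from the split case; what the paper's citation buys is brevity and a direct appeal to a result the reader likely already accepts. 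The only thing worth tightening is your Panchishkin normalisation: the argument requires $\mathrm{Fil}^0 D_{\mathrm{dR}}(V^+_v)=0$ and $\mathrm{Fil}^0 D_{\mathrm{dR}}(V/V^+_v) = D_{\mathrm{dR}}(V/V^+_v)$ simultaneously, and you should state once and for all which sign convention for Hodge--Tate weights you use so that the two Bloch--Kato dimension computations ($H^1_f(E_v,V^+_v)=H^1(E_v,V^+_v)$ and $H^1_f(E_v,V/V^+_v)=0$) come out with the claimed vanishings.
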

		\begin{proof}
			When $p$ is split in $E$ this is simply \citep[4.1.7]{FK06}. For $p$ non-split this is an application of Shapiro's Lemma to the above result, cf \citep[Lemma 1.11]{Man22}.
		\end{proof}

		\par We twist the GU(2,1) Galois representation $V$ by a Hecke character $\eta$ of $\mathbb{A}_E$ and see the different Panchishkin subrepresentations that can arise depending on our choice of $\eta$. We have a set of 3 Hodge--Tate weights attached to each of $\text{id}$ and $\sigma$. Suppose $\eta$ has infinity type $(s,r)$, then the Hodge--Tate weights of $V(\eta^{-1})$ are:
		
		\begin{align*} \{-s, 1+b-s, 2+a+b-s \} & \enspace \mathrm{at} \enspace id \\
			\{-r, 1+a-r, 2+a+b-r \} & \enspace \mathrm{at} \enspace \sigma. 
		\end{align*}
		
	We summarise the different possible Panchishkin subrepresentations below --- plotting the regions where different rank $r$ Panchishkin conditions are met in Figure \ref{fig1}, and tabulating the ranks of subrepresentations attached to each region. Any unlabelled regions are not $r$-critical for any $r$. \vspace{5pt}\\
		
		\begin{figure}[H]
			\includegraphics[width=14cm]{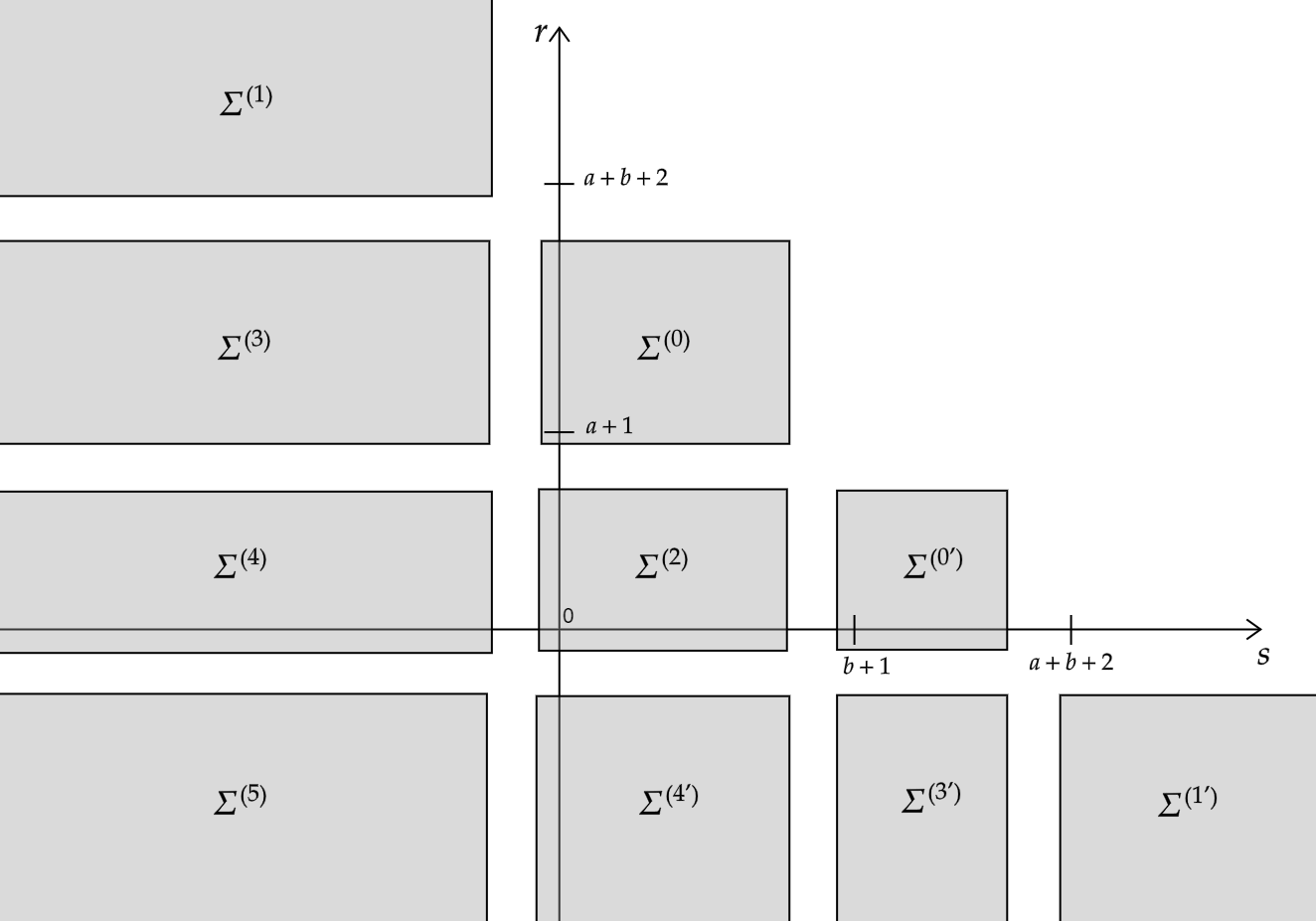}
			\caption{All possible (twisted) $r$-critical regions $V(\eta^{-1})$ as we vary $\infty$-type of $\eta$.}
			\label{fig1}
		\end{figure}
		%THIS DIAGRAM IS FLIPPED - NEED TO REDO IT 
		
		\begin{figure}[H] 	\label{panch_reps}
			{\renewcommand{\arraystretch}{1.2}
				\begin{tabular}{c|c|c|c}
					\bf Region & \bf Rank $r$ & \bf Panchishkin subrep. at $id$ & \bf Panchishkin subrep. at $\sigma$  \\ \hline
					$\Sigma^{(0)}$ & 0 & $\mathcal{F}^1$ & $\mathcal{F}^2$ \\
					$\Sigma^{(0')}$ & 0 & $\mathcal{F}^2$ & $\mathcal{F}^1$ \\
					$\Sigma^{(1)}$ & 0 & $V$ & 0 \\
					$\Sigma^{(1')}$ & 0 & 0 & $V$ \\
					$\Sigma^{(2)}$ & 1 & $\mathcal{F}^1$ & $\mathcal{F}^1$ \\
					$\Sigma^{(3)}$ & 1 & $V$ & $\mathcal{F}^2$ \\
					$\Sigma^{(3')}$ & 1 & $\mathcal{F}^2$ & $V$ \\
					$\Sigma^{(4)}$ & 2 & $V$ & $\mathcal{F}^1$ \\
					$\Sigma^{(4')}$ & 2 & $\mathcal{F}^1$ & $V$ \\
					$\Sigma^{(5)}$ & 3 & $V$ & $V$ \\
			\end{tabular}}
			
			\caption{The Panchishkin subrepresentations for $V(\eta^{-1})$ in each region.}
		\end{figure}

		\par When $p$ is inert, for $V^+_p$ to exist we must satisfy the extra condition that $\dim(V^+_{id})=\dim(V^+_{\sigma})$ since $p$ is preserved by conjugation. Hence $V^+_p$ only exists for twists attached to the regions intersecting the `diagonal' - i.e. preserved after applying conjugation map then dualising. The rank 1 Panchishkin subrepresentation $V^1$ for region $\Sigma^{(2)}$ satisfies this and therefore can exist. This matches the following Euler system construction of \citep[Theorem 12.3.1]{LSZ21}. We denote this Euler system as a class $c^\Pi \in H^1_{\mathrm{Iw}}(E[p^\infty]/E, V)$ 
		
		\par Part $(iii)$ of \textit{op. cit.} tells us that the localisation of the Euler system at $p$ vanishes when projected to the cohomology of the quotient $V/V^1$, so it lives entirely in the cohomology of $V^1$.
		
	\subsection{Kato's main conjecture}
		
		\par We need to make some assumptions on $V$ and the Hecke character twist we take. These are developed and explained in \citep[]{Man22}, and we will list them below for convenience. Here we take $\delta$ to be a character of $\text{Gal}(E[p^\infty]/E)_{\text{tors}}$, which uniquely determines a choice $\mathbb{Z}_p^2 \hookrightarrow \mathrm{Gal}(E[p^\infty]/E)$.
		
		\begin{ass} \label{assumption_V_delta} We assume $V$ and $\delta$ satisfy the following;
			\begin{itemize}
                \item $p \geq 5$,
				\item $p \nmid \# \mathrm{Cl}(E)$, 
                \item $p$ is unramified in $E$,
				\item $V$ is an irreducible $\mathrm{Gal}(\bar{E}/E)$ representation,
                \item $V$ is ordinary at all primes above $p$,
				\item $\Pi_w$ is unramified at all places $w \mid p$, in particular $V$ is crystalline at such $w$
				\item $V$ satisfies big image criteria,
				\item $e_{\delta}c_1^\Pi \neq 0$.
			\end{itemize}
		\end{ass}
		
		\begin{ass} \label{assumption_V_eta} Now suppose we have a character $\eta: \Lambda \rightarrow \mathcal{O}$ for a suitably large integral extension $\mathcal{O}/ \mathbb{Z}_p$ and denote by $\bar{\eta}$ its restriction to $\Gamma_{\text{tors}}$. We assume $V$ and $\eta$ satisfy the following;
			\begin{itemize}
				\item $V$ and $\bar{\eta}$ satisfy Assumption \ref{assumption_V_delta},
				\item For any subquotient $X$ of $\overline{T(\eta^{-1})}$, any $v \mid p$, $H^0(E_v, X)=H^2(E_v,X)=0$.
			\end{itemize}
		\end{ass}
		
		\par We will also use Nekov\'{a}\v{r}'s language of Selmer complexes, where it will be important to note the following definitions. Whilst this is all work from \citep{Nek06}, we find it convenient to quote results from \citep{KLZ17} where it has been restated for the purposes of Iwasawa theory.
		
		\begin{defn} A Selmer structure $\Delta = (\Delta_v)_{v \in S}$ for $M$ is a collection of homomorphisms $$\{\Delta_v: U_v^+ \rightarrow R\Gamma(E_v,M)\}_{v \in S}$$ from a complex $U_v^+$ into the complex of continuous $M$-valued cochains. \label{selmer_complex} \end{defn} 
		
		\par Using the notation of \citep[\S11.2]{KLZ17} we can write the associated Selmer complex by $\widetilde{\mathrm{R\Gamma}}(E,M;\Delta)$. This is defined as the mapping fibre of 
		$$ R\Gamma(E,M) \oplus \bigoplus_{v \in S} U_v^+ \xrightarrow{\text{loc}_v - \Delta_v} \bigoplus_{v \in S} R\Gamma(E_v,M). $$  We can compute the cohomology of this Selmer complex, which we will denote by $\widetilde{H}^i(E,M;\Delta).$
		
			\begin{defn} \label{selmer_simple} A Selmer structure is called simple if for all $v$ the map $$\Delta_v: H^i(U_v^+) \rightarrow H^i(E_v,M) $$ is an isomorphism for $i=0$, an injection for $i=1$, and the zero map for $i=2$. The local condition at $v$ for a simple Selmer structure is determined by the subspace $H^1(E_v,M;\Delta_v):=\Delta_v(H^1(U_v^+)) \subseteq H^1(E_v,M)$. \end{defn}
		
		\par Simple Selmer structures satisfy a nice statement of Poitou--Tate duality, and as such we think of these as classical Selmer groups dressed up as Selmer complexes. Non-simple Selmer complexes are often discussed as `extended Selmer groups' in Nekov\'a\v{r}'s work; in degree 1 they give groups which are not just cut out of $H^1$ of the original complex. We can also understand the $H^2$ of a simple Selmer complex easily using Poitou--Tate duality. Let us denote $M^\vee$ as the Pontryagin dual of a module $M$ and $\Delta^\vee$ as the dual Selmer structure, defined as the collection $\Delta_v^\vee$ of complexes which are the orthogonal complements of $\Delta_v$ under local Tate duality, as in \citep[Definition 11.2.6]{KLZ17}.
		
		\begin{prop} \textup{ \citep[Proposition 11.2.9]{KLZ17}} \label{selmer_pt_dual}
			Suppose $\Delta$ is a simple Selmer structure defined by local conditions $U_v \rightarrow R\Gamma(E_v, M)$ for $M$ a finite rank $\mathcal{O}$-module $T$ or $T \otimes \Lambda(-\bf{j})$. Then 
			$$ \widetilde{H}^i(E,M;\Delta) = \begin{cases}H^0\left(E, M\right) & \text { if } i=0, \\ \ker\left(H^1\left(E, M\right) \rightarrow \bigoplus_{v \in S} \frac{H^1\left(E_v, M\right)}{H^1(E_v,M;\Delta_v)}\right) & \text { if } i=1, \\ \ker\left(H^1\left(E, M^{\vee}(1)\right) \rightarrow \bigoplus_{v \in S} \frac{H^1\left(E_v, M^{\vee}(1)\right)}{H^1(E_v,M^\vee;\Delta_v^\vee)}\right)^{\vee} & \text { if } i=2 .\end{cases} $$
			
		\end{prop}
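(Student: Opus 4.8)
The plan is to read off all three cohomology groups from the long exact sequence attached to the mapping fibre defining $\widetilde{R\Gamma}(E,M;\Delta)$, using the simplicity conditions of Definition \ref{selmer_simple} directly in degrees $0$ and $1$, and reducing the degree-$2$ statement to the degree-$1$ statement for the Tate-dual data via Nekov\'{a}\v{r}'s global duality. Write $A^{\bullet}=R\Gamma(E,M)\oplus\bigoplus_{v\in S}U_v^+$ and $B^{\bullet}=\bigoplus_{v\in S}R\Gamma(E_v,M)$, so that $\widetilde{R\Gamma}(E,M;\Delta)$ is the mapping fibre of $f=(\mathrm{loc}_v-\Delta_v)\colon A^{\bullet}\to B^{\bullet}$; the defining distinguished triangle then yields a long exact sequence $\cdots\to\widetilde{H}^i(E,M;\Delta)\to H^i(A^{\bullet})\xrightarrow{f}H^i(B^{\bullet})\to\widetilde{H}^{i+1}(E,M;\Delta)\to\cdots$. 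Since $E$ is imaginary quadratic (hence totally imaginary) and $p$ is odd, $R\Gamma(E,M)$ and each $R\Gamma(E_v,M)$ are concentrated in degrees $0,1,2$, so only finitely many terms are nonzero.

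In degrees $0$ and $1$ the statement is then formal. Simplicity makes $\bigoplus_v H^0(U_v^+)\to\bigoplus_v H^0(E_v,M)$ an isomorphism, hence $f$ is surjective on $H^0$; the kernel of $f$ on $H^0(A^{\bullet})$ maps isomorphically to $H^0(E,M)$ under the first projection (the auxiliary classes are forced to be $u_v=\Delta_v^{-1}(\mathrm{loc}_v x)$), giving $\widetilde{H}^0(E,M;\Delta)=H^0(E,M)$. Surjectivity of $f$ on $H^0$ also kills the connecting map into $\widetilde{H}^1$, so $\widetilde{H}^1(E,M;\Delta)=\ker\big(f\colon H^1(E,M)\oplus\bigoplus_v H^1(U_v^+)\to\bigoplus_v H^1(E_v,M)\big)$; since $\Delta_v$ is injective on $H^1(U_v^+)$ with image $H^1(E_v,M;\Delta_v)$, the classes $u_v$ are again redundant and this kernel is exactly $\ker\big(H^1(E,M)\to\bigoplus_v H^1(E_v,M)/H^1(E_v,M;\Delta_v)\big)$.

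For $\widetilde{H}^2$ I would not push the long exact sequence further but instead invoke Nekov\'{a}\v{r}'s Poitou--Tate duality for Selmer complexes \citep{Nek06}, in the form recalled in \citep{KLZ17}: for $M=T$ or $M=T\otimes\Lambda(-\mathbf{j})$ there is a natural quasi-isomorphism identifying $\widetilde{R\Gamma}(E,M;\Delta)$ with the Pontryagin dual of $\widetilde{R\Gamma}(E,M^\vee(1);\Delta^\vee)$ placed in complementary degrees (a shift by $[-3]$), where $\Delta^\vee$ is the dual Selmer structure. Taking cohomology gives $\widetilde{H}^2(E,M;\Delta)\cong\widetilde{H}^1(E,M^\vee(1);\Delta^\vee)^\vee$, and feeding in the degree-$1$ computation above, now applied to $(M^\vee(1),\Delta^\vee)$, produces precisely the formula in the statement. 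This step uses that $\Delta^\vee$ is again simple: by local Tate duality $H^i(E_v,M^\vee(1))$ is dual to $H^{2-i}(E_v,M)$ and $H^1(E_v,M^\vee;\Delta_v^\vee)$ is by definition the exact annihilator of $H^1(E_v,M;\Delta_v)$, so the three conditions for $\Delta^\vee$ at each $v$ are equivalent to those for $\Delta$.

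The main obstacle is arranging this duality input in the correct module-theoretic form. Over $\mathcal{O}$ one identifies the Selmer complex of the finite free module $T$ with the Pontryagin dual of that of the cofree module $T^\vee(1)$, which is routine. For $M=T\otimes\Lambda(-\mathbf{j})$, however, one must first apply Shapiro's lemma to rewrite $\widetilde{R\Gamma}(E,M;\Delta)$ as an Iwasawa-cohomology Selmer complex up the relevant $\mathbb{Z}_p^2$-tower, then use Nekov\'{a}\v{r}'s duality over the Iwasawa algebra $\Lambda$, tracking how the twist $\mathbf{j}$ interacts with the dualizing module of $\Lambda$; only after this translation does $\widetilde{H}^2\cong(\widetilde{H}^1)^\vee$ of the dual take the printed shape. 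One could instead chase the long exact sequence directly in degree $2$ and substitute the classical Poitou--Tate nine-term sequence for $M$ and $M^\vee(1)$, avoiding Nekov\'{a}\v{r}'s duality as a black box; that route then requires the extra input $H^2(U_v^+)=0$, which holds for the Greenberg-type conditions of interest since there $U_v^+=R\Gamma(E_v,V_v^+)$ with $H^2(E_v,V_v^+)\cong H^0(E_v,(V_v^+)^*(1))^\vee=0$ by the running Panchishkin hypothesis, so the spurious $\bigoplus_v H^2(U_v^+)$ term drops out. Beyond these bookkeeping points the argument is a diagram chase.
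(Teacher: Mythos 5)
The paper offers no proof of this proposition: it is quoted verbatim from Kings--Loeffler--Zerbes (Proposition 11.2.9) as part of the recalled background on Selmer complexes, so there is no in-paper argument to compare against. Your reconstruction is correct and is exactly the standard proof that citation points to: degrees $0$ and $1$ follow from the mapping-fibre long exact sequence together with the degree-$0$ isomorphism and degree-$1$ injectivity built into simplicity (and surjectivity on $H^0$ killing the connecting map), while degree $2$ is obtained by applying the degree-$1$ formula to $(M^\vee(1),\Delta^\vee)$ via Nekov\'a\v{r}'s duality of Selmer complexes after checking $\Delta^\vee$ is again simple; your side remarks about Shapiro's lemma for $T\otimes\Lambda(-\mathbf{j})$ and the alternative chase through the nine-term Poitou--Tate sequence requiring $H^2(U_v^+)=0$ are also accurate.
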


		\par We can define a Selmer structure $\Delta^1$ by the unramified local condition for $v \nmid p$, and above $p$ by: $$\Delta^1_v: R\Gamma(E_v, T^1_v) \hookrightarrow R\Gamma(E_v,T).$$ This is the local condition attached to the rank 1 Panchishkin subrepresentations $T_v^1$ attached to twists by characters in the `rank 1' $\Sigma^{(2)}$ region of Figure \ref{fig1}. This will be used in the statement of Kato's main conjecture, i.e. the Euler system version of the main conjecture. \par When $p$ splits we can similarly define $\Delta^0$ for the inclusion $$\Delta^0_v: R\Gamma(E_v, T_v^0) \hookrightarrow R\Gamma(E_v,T)$$ attached to the `rank 0' region. This is used to state a main conjecture in this setting, developed in \citep{Man22}. This local condition does not exist for $p$ inert, and a large amount of this paper will be dedicated to constructing its replacement. Note that  $\Delta^1$ is a simple local condition inside the Galois cohomology of $V(\eta^{-1})$ by the cohomology vanishing of Assumption \ref{assumption_V_eta}. We won't come across a non-simple Selmer structure until much later. We also rely on certain base change and perfectness properties of Selmer complexes listed in \citep{Man22}.
		
		\par Now we can state Kato's main conjecture, i.e. a statement without a $p$-adic $L$-function. This is a corollary of the work of B\"uy\"ukboduk and Ochiai which generalises arguments of Mazur--Rubin. When we define characteristic ideal, it is the usual definition for a Noetherian Krull domain $R$; given a torsion $R$-module $M$ and height 1 prime $\mathfrak{p}$ we first take $l_\mathfrak{p}$ to be the length of $M_\mathfrak{p}$ as an $R_\mathfrak{p}$-modules. For $M$ finitely generated, this will be 0 outside a finite set of primes. Then we can define
		$$ \mathrm{char}_R(M) = \prod_{\mathfrak{p} \subset \Lambda \enspace \text{height} \enspace 1} \mathfrak{p}^{l_\mathfrak{p}}. $$ This tells us that a divisibility of characteristic ideals is really a comparison of lengths of localised modules at height 1 primes. Thus we can interpret such a divisibility as a finite list of inequalities of such lengths. Now Kato's main conjecture for GU(2,1) is as follows.
		
		\begin{thm} \label{kato_mc} Assume $\delta$ is a character of $\Gamma_{\text{tors}}$ such that $V$ and $\delta$ satisfy Assumption \ref{assumption_V_delta}. Then the following statements hold:
			\begin{enumerate}[i)]
				\item $e_{\delta} \widetilde{H}^2(E,\mathbb{T};\Delta^1)$ is $\Lambda_{\delta}$-torsion,
				\item $e_{\delta} \widetilde{H}^1(E,\mathbb{T}; \Delta^1)$ is torsion-free of $\Lambda_{\delta}$-rank 1,
				\item $\mathrm{char}_{\Lambda_{\delta}} \left( e_{\delta} \widetilde{H}^2(E,\mathbb{T};\Delta^1) \right) \bigm\vert \mathrm{char}_{\Lambda_{\delta}} \left( \dfrac{e_{\delta} \widetilde{H}^1(E,\mathbb{T};\Delta^1)}{ e_{\delta}c^\Pi_1} \right). $
			\end{enumerate} 
		\end{thm}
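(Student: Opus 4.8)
The plan is to deduce all three statements from the Euler system machinery in the form valid over a general number field and over the full multivariable Iwasawa algebra, as set up by B\"uy\"ukboduk and Ochiai generalizing Mazur--Rubin; the only geometric input will be the Loeffler--Skinner--Zerbes class $c^\Pi$. First I would pass to the $e_\delta$-isotypic component: the character $\delta$ of $\Gamma_{\text{tors}}$ determines a splitting $\mathbb{Z}_p^2 \hookrightarrow \Gamma_E$ and an idempotent $e_\delta$, and $\Lambda_\delta := e_\delta\Lambda$ is a two-variable regular local ring, finite over $\mathcal{O}_{E_p}[[\mathbb{Z}_p^2]]$. The norm-compatibility of $c^\Pi$ together with part $(iii)$ of \citep[Theorem 12.3.1]{LSZ21} --- which says $\mathrm{loc}_v(c^\Pi)$ dies in $H^1(E_v,\mathbb{T}/\mathbb{T}^1_v)$ for every $v\mid p$ --- place $c^\Pi$ inside the Selmer complex, so that $e_\delta c^\Pi_1 \in e_\delta\widetilde{H}^1(E,\mathbb{T};\Delta^1)$ is a legitimate class to feed into the machine, and the nonvanishing clause $e_\delta c^\Pi_1\neq 0$ of Assumption \ref{assumption_V_delta} makes the associated Kolyvagin system nontrivial.

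Next I would check that Assumption \ref{assumption_V_delta} provides exactly the hypotheses the machine needs. Irreducibility of $V$ gives $\widetilde{H}^0(E,\mathbb{T};\Delta^1)=0$ and annihilates the trivial-subquotient error terms; the big-image criterion supplies the Chebotarev input needed to produce Kolyvagin primes $\ell$ at which a prescribed element of the (finite-layer, torsion) dual Selmer group is detected and at which the localizations of the derived classes behave correctly; $p\nmid\#\mathrm{Cl}(E)$ and $p\geq 5$ remove the class-group and small-prime obstructions to the cohomological vanishing statements over $E(\mu_{p^\infty})$; and ordinarity at all $v\mid p$ is what makes the rank-$1$ Panchishkin subrepresentation $\mathbb{T}^1_v$, and hence the Selmer structure $\Delta^1$, exist in the first place. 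I would also record that $\Delta^1$ is simple in this Iwasawa-theoretic setting, since the relevant $H^0$ and $H^2$ of the local subquotients vanish over $\Lambda_\delta$ (the cyclotomic variables act nontrivially), so that Proposition \ref{selmer_pt_dual} applies; in a general situation one instead invokes Nekov\'{a}\v{r}'s duality for Selmer complexes.

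Granting this, I would run the Euler system argument proper: build the Kolyvagin derivative classes $c_{\mathfrak{n}}$ indexed by squarefree products $\mathfrak{n}$ of Kolyvagin primes, satisfying the transverse local conditions at the $\ell\mid\mathfrak{n}$ and $\Delta^1$ elsewhere, and use them with Chebotarev to bound the Pontryagin dual of the strict Selmer group of $\mathbb{T}^\vee(1)$. By Proposition \ref{selmer_pt_dual} that Pontryagin dual is $e_\delta\widetilde{H}^2(E,\mathbb{T};\Delta^1)$, and the resulting bound is
$$\mathrm{char}_{\Lambda_\delta}\!\left(e_\delta\widetilde{H}^2(E,\mathbb{T};\Delta^1)\right)\ \Bigm|\ \mathrm{char}_{\Lambda_\delta}\!\left(\frac{e_\delta\widetilde{H}^1(E,\mathbb{T};\Delta^1)}{\Lambda_\delta\,e_\delta c^\Pi_1}\right),$$
which is $(iii)$ and in particular forces $e_\delta\widetilde{H}^2$ to be $\Lambda_\delta$-torsion, giving $(i)$. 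For $(ii)$ the machine outputs directly that $e_\delta\widetilde{H}^1(E,\mathbb{T};\Delta^1)$ is torsion-free of rank $1$; alternatively, once $(i)$ holds and $\widetilde{H}^0=0$, Nekov\'{a}\v{r}'s global Euler characteristic formula for $\widetilde{\mathrm{R}\Gamma}(E,\mathbb{T};\Delta^1)$ pins the rank at $1$, and torsion-freeness follows since a nonzero torsion submodule of $\widetilde{H}^1$ would, by the duality above, produce a finite submodule that irreducibility of $V$ forbids.

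The hard part will be twofold. First, one must confirm that the version of the Euler system machine invoked genuinely operates over the two-variable algebra $\Lambda_\delta$ rather than only over a one-variable cyclotomic quotient --- this is precisely the extra generality beyond Kato's original cyclotomic formulation that the B\"uy\"ukboduk--Ochiai framework is built to accommodate, and matching its axioms to the present Selmer-complex setup has to be done carefully. Second, and more delicate, is verifying the big-image hypothesis in the precise shape the Chebotarev step requires for the specific Galois representation $V=V_{\mathfrak{P}}(\Pi)$ attached to a regular algebraic essentially conjugate self-dual cuspidal $\Pi$ on GU(2,1): this is the one step where the automorphic input really enters, and it is what the ``big image criteria'' clause of Assumption \ref{assumption_V_delta} abstracts away.
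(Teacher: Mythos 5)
Your proposal is correct and follows essentially the same route as the paper, whose entire proof is a single citation of B\"uy\"ukboduk--Ochiai \citep[Theorem 3.6]{BO17} applied with $R=\Lambda_\delta\cong\mathbb{Z}_p[[X_1,X_2]]$. What you have written is an accurate unpacking of what that theorem delivers and which clauses of Assumption \ref{assumption_V_delta} feed which hypothesis of the machine.
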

		\begin{proof}
			Apply \citep[Theorem 3.6]{BO17} with $R=\Lambda_{\delta} \cong \mathbb{Z}_p[[X_1,X_2]]$.
		\end{proof}
	
	\par In Kato's work, given an ordinary modular form $f$ (satisfying suitable assumptions) with respective Galois representation $V_f$, he used a Perrin--Riou big logarithm map $$ \mathcal{L}: H_{\text{Iw}}^1(\mathbb{Q}_p^{\text{cyc}}/\mathbb{Q}_p,V_f) \rightarrow \Lambda(\mathbb{Z}_p^\times) \otimes_{\mathbb{Q}_p} D_{\text{cris}}(S) $$  for a suitable character $S$. Under this map, Kato's Euler system goes to the $p$-adic $L$-function. With some exact sequence computations, one obtains one divisibility of a main conjecture with the $p$-adic $L$-function. This strategy has been successfully imitated for GU(2,1) with $p$ split in \citep[Theorem 4.3]{Man22}, up to an explicit reciprocity law, and this is used to bound a Bloch--Kato Selmer group when $p$-adic distribution taken as the image of the Euler system is non-vanishing. One hopes in the future that this distribution will be related to a $p$-adic $L$-function, then we will get a statement verifying cases of the Bloch--Kato conjecture.
	
	\begin{rmk} \label{tor_spectral_sequence}
		\par The descent of Kato's main conjecture for GU(2,1) requires a two step base change from $\Lambda(\mathbb{Z}_p^2)$-modules to $\Lambda(\mathbb{Z}_p)$-modules for a choice of $\mathbb{Z}_p$-subextension down to $\mathcal{O}$-algebras for a suitable integral extension $\mathcal{O}/\mathbb{Z}_p$. We can write out the base change in the form of a more general spectral sequence which we use later. Begin with the homological first quadrant Tor spectral sequence from \citep[Example 15.62.1]{SP} for a commutative ring $S$, an $S$-module $R$ and a bounded chain complex $K_\bullet$ of $S$-modules;
		$$ (E_2)_{i,j} = \mathrm{Tor}^S_i(H_j(K_\bullet), R) \Rightarrow H_{i+j}(K \otimes^{\mathbb{L}}_S R). $$
		\par We aim to turn this into a cohomological sequence by substituting $K_i=C^{d-i}$ for $d$ the homological degree of $K$, but it would no longer be a first quadrant spectral sequence
		$$  \text{Tor}^{-j}_{\Lambda_\infty^L}(H^{j}(C^\bullet),K) \Rightarrow H^{i+j}(C^\bullet \otimes^{\mathbb{L}}_{\Lambda_\infty} K) .$$
		This spectral sequence is dual under adjunction to the Groethendieck spectral sequence coming from composition of the right derived functors of Galois cohomology and $\text{Ext}^i_S(-,R)$ - since $-\otimes_S R$ is right exact, this is the natural way to get a spectral sequence composing derived tensor product with cohomology. Computing the 7-term long exact sequence of the homological spectral sequence and then dualising, we obtain;
		
		\[ \begin{tikzcd}[column sep=small]
			0 \rightarrow H_1(K_\bullet) \otimes_S R & H_1(K_\bullet \otimes^{\mathbb{L}}_S R) & \mathrm{Tor}_1^S(H_0(K_\bullet),R) \\
			H_2(K_\bullet) \otimes_S R & & \ker \left(H_2(K_\bullet \otimes^{\mathbb{L}}_S R) \rightarrow \mathrm{Tor}_2^S(H_0(K_\bullet),R)\right) \\
			\mathrm{Tor}_1^S(H_1(K_\bullet),R) & & \mathrm{Tor}_3^S(H_0(K_\bullet),R)
			\arrow[from=1-1, to=1-2]
			\arrow[from=1-2, to=1-3]
			\arrow[from=1-3, to=2-1]
			\arrow[from=2-1, to=2-3]
			\arrow[from=2-3, to=3-1]
			\arrow[from=3-1, to=3-3]
		\end{tikzcd}\]
	
		 We will lie in the simplifying case where $\tau: S \twoheadrightarrow R$ is a map with a principally generated kernel, say $\ker(\tau)=a$ (if the kernel has is a regular sequence of length $n$ then we can simply iterate this process and think of it as an $n$-step descent argument). We have computations of the Tor groups for an arbitary $S$-module $M$:
		$$ \operatorname{Tor}_i^S(R , M) \cong \begin{cases}M /aM & i=0 \\ M[a] & i=1 \\ 0 & \text { otherwise }\end{cases}. $$ 
		
		The above exact sequence of homology groups turns into short exact sequences of cohomology groups for $0<i\leq d$;
		\[ 	0 \rightarrow H^{i-1}(C^\bullet)/a \rightarrow H^{i-1}(C^\bullet \otimes^{\mathbb{L}}_S R ) \rightarrow H^{i-1}(C^\bullet)[a] \rightarrow 0 \] 
		When the complexes $C^\bullet$ are in fact Selmer complexes $\widetilde{R\Gamma}(\Delta)$ for a Selmer structure $\Delta$, this is a standard implementation of base change properties of Selmer complexes.
	\end{rmk}
	
	\par We would like to do this for inert $p$, but have some technical obstacles; notably that there is no Perrin--Riou type big logarithm map we can use since $E_p$ is a non-trivial extension of $\mathbb{Q}_p$. The replacement we use is by work of Schneider and Venjakob, and with some extra technical assumption fixing one Hodge--Tate weight of $V$ we can obtain a result. The following three sections will develop the necessary theory to state and implement the Schneider--Venjakob regulator map to state a main conjecture. This spectral sequence will still prove useful, where $S$ is taken to be a certain flat $\Lambda$-algebra rather than $\Lambda$ itself.

		\section{Kato's main conjecture over distribution algebras}

	\subsection{The two-variable distribution algebra}
		
		\par We want to develop our theory to work over a base ring $\Lambda \hookrightarrow A$ such that $A$ will contain unbounded and $p$-adically non-integral distributions, but will also have a module theory and prime ideal theory rich enough to not lose information when we base change. A first candidate algebra exists due to work of Pottharst in \citep{Pot12}. When developing Iwasawa theory for modular forms at supersingular primes, which have unbounded $p$-adic $L$-functions, in \textit{op. cit.} the cyclotomic Iwasawa algebra $\Lambda$ is replaced with a distribution algebra $\Lambda_\infty$ coming from rigid geometry - after applying the Mellin transform (and forgetting about torsion in the Galois group) these both live in $\mathbb{Q}_p[[T]]$ but the distribution algebra sees a large class of unbounded distributions. Pottharst's construction was for a $\mathbb{Z}_p$-extension, whereas we must work with a $\mathbb{Z}_p^2$-extension variant which is of larger dimension. The goal of this section is to base change Theorem \ref{kato_mc} via the inclusion $ \Lambda \hookrightarrow \Lambda_\infty$, after which we adapt Pottharst's construction further to work over a carefully chosen quotient of $\Lambda_\infty$. 
		
		\par We will adapt Pottharst's definition for our $\mathbb{Z}_p^2$-extension, presented as follows. Let $\mathfrak{X}=\mathrm{Spf}(\Lambda)^{\text{rig}}$, the cyclotomic $p$-adic weight space, a rigid analytic space of dimension 2. By construction this space parameterises characters of $\Lambda$, or by local class field theory Hecke characters of $E$ of $p$-power conductor. We can then define $\Lambda_\infty = \Gamma(\mathfrak{X}, \mathcal{O}_{\mathfrak{X}})$ like Pottharst does; although note we are working with a $\mathbb{Z}_p^2$-extension  whereas Pottharst works with a cyclotomic $\mathbb{Z}_p$-extenstion. Our $\Lambda_\infty$ is a Fr\'echet--Stein algebra, a notion developed in \citep{ST02}, and an inverse limit of dimension 2 rings $\Lambda_\infty = \varprojlim_n \Lambda_n$ where $\Lambda_n =\widehat{\Lambda[\tfrac{\mathfrak{m}^n}{p}]}[\tfrac{1}{p}]$, for $\mathfrak{m}$ the maximal ideal of $\Lambda$ (which is a local ring since we assume the Galois group is torsion free). These $\Lambda_n$ are in turn the sections of an increasing admissible affinoid cover $\{ \mathfrak{X}_n \}_{n \in \mathbb{N}}$ of $\mathfrak{X}$. 
        \par Recall that  inside $\Gamma=\text{Gal}(E[p^\infty]/E)$ we can determine a torsion free $\mathbb{Z}_p^2$-extension $e_\delta\Gamma$ by a choice of character $\delta$ of $\Gamma_{\text{tors}}$. Similarly, this idempotent lifts to one on $\Lambda_\infty$, and we can define $ \Lambda_{\infty,\delta}=\mathcal{O}(\mathfrak{X}^0_\delta)$ where $\mathfrak{X}^0_\delta$ is the connected component of $\mathfrak{X}$ corresponding to $\text{Spf}(\Lambda_{\delta})^{rig}$. In Schneider--Teitelbaum's $p$-adic Fourier theory \citep[Theorem 3.6]{ST01} they show that each $\mathfrak{X}^0_{\delta}$ is $\mathbb{Q}_p$-isomorphic to an open rigid unit ball, therefore $\mathfrak{X}$ is a finite disjoint union of open balls. The $\mathfrak{X}_n$ are the images of closed polydisks of radius $r_n<1$ under this isomorphism. Thus each $\Lambda_n$ is therefore isomorphic to a disjoint union of Tate algebras $L \langle T_1,T_2 \rangle$ and is therefore noetherian.

        \begin{lem} \label{cyc_flat}
			The injection $\Lambda \hookrightarrow \Lambda_\infty$ is flat, and  $\Lambda[\tfrac{1}{p}] \hookrightarrow \Lambda_\infty$ is faithfully flat. Thus, for a Selmer structure $\Delta$, $$ \widetilde{H}^i(E,\mathbb{T};\Delta) \otimes_\Lambda \Lambda_\infty = \widetilde{H}^i(E,\mathbb{T} \otimes_\Lambda \Lambda_\infty;\Delta)  $$
		\end{lem}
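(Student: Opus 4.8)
The plan is to establish flatness of $\Lambda\hookrightarrow\Lambda_\infty$ first at each finite level of the Fr\'echet--Stein presentation and then propagate it to the inverse limit through the coadmissible-module formalism of \citep{ST02}; the cohomology base-change identity will then follow formally from the perfectness of the Selmer complex together with flat base change for continuous cochains.

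First I would record flatness of the finite-level maps $\Lambda\to\Lambda_n$. By the description already recalled, on each connected component $\mathfrak{X}^0_\delta$ of $\mathfrak{X}=\mathrm{Spf}(\Lambda)^{\mathrm{rig}}$ the affinoid $\mathfrak{X}_n$ is a closed polydisk of radius $r_n<1$ inside the open unit polydisk and $\Lambda_n$ is (a finite product of) Tate algebras. The inclusion of the ring of an open polydisk into the ring of a closed sub-polydisk is flat; this is the two-variable analogue of the statement Pottharst uses on the cyclotomic line \citep{Pot12}, resting ultimately on Berthelot's theory of the rigid generic fibre of an admissible formal scheme, and I would simply cite it. I would also note that $p$ is invertible in every $\Lambda_n$, so $\Lambda_\infty=\varprojlim_n\Lambda_n$ is a $\Lambda[\tfrac1p]$-algebra.

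Next comes the passage to the limit. Since $\Lambda$ is noetherian it suffices to show that $-\otimes_\Lambda\Lambda_\infty$ sends a short exact sequence $0\to M'\to M\to M''\to 0$ of finitely generated $\Lambda$-modules to a short exact sequence, i.e. that $\mathrm{Tor}_1^\Lambda(\Lambda_\infty,M'')=0$. Tensoring with the flat $\Lambda$-algebra $\Lambda_n$ keeps the sequence exact for every $n$; using finite presentations, each of $M\otimes_\Lambda\Lambda_\infty$, $M'\otimes_\Lambda\Lambda_\infty$, $M''\otimes_\Lambda\Lambda_\infty$ is a coadmissible $\Lambda_\infty$-module with associated system $\{M\otimes_\Lambda\Lambda_n\}_n$, $\{M'\otimes_\Lambda\Lambda_n\}_n$, $\{M''\otimes_\Lambda\Lambda_n\}_n$, and $M\otimes_\Lambda\Lambda_\infty=\varprojlim_n(M\otimes_\Lambda\Lambda_n)$. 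Applying $\varprojlim_n$ to the resulting compatible system of short exact sequences and using the vanishing of $\varprojlim^1$ for such systems \citep{ST02}, the sequence $0\to M'\otimes_\Lambda\Lambda_\infty\to M\otimes_\Lambda\Lambda_\infty\to M''\otimes_\Lambda\Lambda_\infty\to 0$ stays exact; hence $\Lambda_\infty$ is $\Lambda$-flat, and a fortiori $\Lambda[\tfrac1p]$-flat. For faithful flatness over $\Lambda[\tfrac1p]$ I would verify surjectivity of $\mathrm{Spec}\,\Lambda_\infty\to\mathrm{Spec}\,\Lambda[\tfrac1p]$: every maximal ideal of $\Lambda[\tfrac1p]$ has residue field finite over $E_p$, hence corresponds to a point of $\mathfrak{X}$, which lies in some $\mathfrak{X}_n$; it therefore extends to a proper ideal of $\Lambda_n$ and a fortiori of $\Lambda_\infty$, so no nonzero $\Lambda[\tfrac1p]$-module is annihilated by the base change.

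Finally, for the displayed identity I would invoke that $\widetilde{R\Gamma}(E,\mathbb{T};\Delta)$ is a perfect complex of $\Lambda$-modules, realised as the mapping fibre of continuous-cochain complexes (the base-change and perfectness properties quoted from \citep{Man22}); the flat functor $-\otimes_\Lambda\Lambda_\infty$ commutes with forming these cochain complexes and with the mapping fibre, so $\widetilde{R\Gamma}(E,\mathbb{T};\Delta)\otimes^{\mathbb{L}}_\Lambda\Lambda_\infty\simeq\widetilde{R\Gamma}(E,\mathbb{T}\otimes_\Lambda\Lambda_\infty;\Delta)$, and flatness turns the derived tensor into the ordinary tensor on cohomology. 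I expect the real work to be in the limit argument of the third paragraph: one must check that the relevant inverse systems are coadmissible and that $\varprojlim^1$ vanishes for them, which is exactly where the Fr\'echet--Stein structure of $\Lambda_\infty$ and the flatness of its transition maps (inherited from the nested-polydisk geometry) are indispensable; by comparison the finite-level flatness and the cochain-level base change are routine.
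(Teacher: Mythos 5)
Your proposal is correct in outline but takes a genuinely different route from the paper. The paper does not work level-by-level at all: it cites de Jong's Lemma 7.1.9 to produce, for each maximal ideal $\mathfrak{m}$ of $\Lambda[\tfrac1p]$, a corresponding point $x_{\mathfrak{m}}\in\bigcup_n\mathfrak{X}_n$ together with a local homomorphism $\Lambda[\tfrac1p]_{\mathfrak{m}}\to\Lambda_{\infty,x_{\mathfrak{m}}}$ that becomes an isomorphism on completions, then invokes the general criterion (Stacks 41.9.1(9)) that a local map of noetherian local rings inducing an isomorphism on completions is flat; faithfulness follows because the same de Jong result gives surjectivity on closed points. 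Your version instead proves flatness of each $\Lambda\to\Lambda_n$ and then passes to the inverse limit using the coadmissible-module formalism and the vanishing of $\varprojlim^1$ from Schneider--Teitelbaum. The trade-off is clear: the paper's argument is shorter and outsources the hard work to de Jong's comparison of the algebraic local ring with the rigid-analytic local ring, whereas yours makes the Fr\'echet--Stein structure do the work explicitly, at the cost of having to verify that $M\otimes_\Lambda\Lambda_\infty$ is coadmissible with associated system $\{M\otimes_\Lambda\Lambda_n\}$ and that limits preserve exactness of these systems --- precisely the step you flag as the heart of the argument. For the final cohomology base-change identity, both you and the paper ultimately invoke perfectness of the Selmer complex together with flat base change (the paper via Pottharst, Theorem 1.12(4)), so that part agrees in substance.

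One small caveat in your faithfulness step: the slick criterion ``flat $+$ $\mathfrak{m}\Lambda_\infty\neq\Lambda_\infty$ for all maximal $\mathfrak{m}$'' is what you actually prove, and it is a legitimate characterization of faithful flatness, so this is fine; but be careful not to phrase it as ``surjectivity of $\mathrm{Spec}$'', since a priori you are only seeing the maximal spectrum unless you additionally observe that $\Lambda[\tfrac1p]$ is Jacobson.
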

		\begin{proof} 
			\par First note that $\Lambda \hookrightarrow \Lambda[\tfrac{1}{p}]$ is a localisation map therefore flat, and the composition of flat maps is also flat. Thus we only need to show faithful flatness of $\Lambda[\tfrac{1}{p}] \hookrightarrow \Lambda_\infty$. We note first that flatness is a local property, so we will work over localisations at maximal ideals $\mathfrak{m} \subset \Lambda[\tfrac{1}{p}]$. By \citep[Lemma 7.1.9]{dJ95} we have a bijection $$ \mathfrak{m} \in \text{maxSpec}(\Lambda[\tfrac{1}{p}]) \leftrightarrow \bigcup_n \mathfrak{X}_n \ni x_{\mathfrak{m}}$$ and a morphism of local rings $$ f_{\mathfrak{m}}:\Lambda[\tfrac{1}{p}]_{\mathfrak{m}} \rightarrow \Lambda_{\infty,x_{\mathfrak{m}}}$$ inducing an isomorphism $\widehat{f_{\mathfrak{m}}}$ on completions. Therefore by \citep[Definition 41.9.1 (9)]{SP} $f_{\mathfrak{m}}$ itself is flat and by locality $\Lambda[\tfrac{1}{p}] \hookrightarrow \Lambda_\infty$ is flat. Since \citep[Lemma 7.1.9]{dJ95} shows a surjection of closed points, the injection is moreover faithfully flat. The isomorphism of cohomology groups then follows from \citep[Theorem 1.12 case (4)]{Pot13}.
			
		\end{proof}

       \par  Since the $\Lambda_n$ are noetherian we can use \citep[Ch. VII \S4.5]{Bou98} to define $\mathrm{char}_{\Lambda_n}$ for torsion $\Lambda_n$-modules. Now we need a notion of $\Lambda_\infty$-modules which are `suitably nice' so this can be stitched together to define a characteristic ideal over $\Lambda_\infty$. This notion will come from the underlying geometry. Following \citep[\S3]{ST02} we call a $\Lambda_\infty$-module $M$ coadmissible if it arises as the global sections of a coherent analytic sheaf $\mathscr{M}$ over $\mathfrak{X}$. We can equivalently interpret this algebraically; that $M=\varprojlim_n M_n$ with each $M_n$ finitely generated as a $\Lambda_n$-module such that $\Lambda_n \otimes_{\Lambda_{n+1}} M_{n+1} \xrightarrow{\sim} M_n$ for all $n$. To define a suitable characteristic ideal we will need to work with coadmissible torsion modules.  We note that Pottharst provides an explicit description for coadmissible torsion $\Lambda_\infty$-modules when working over a $\mathbb{Z}_p$-extension which we can no longer use in the $\mathbb{Z}_p^2$ setting; however it will be useful in the next section.

		\begin{lem} \label{cyc_char}
			Let $M=\varprojlim_n M_n$ be a coadmissible torsion $\Lambda_\infty$-module. Then $$\mathrm{char}_{\Lambda_\infty}(M) = \varprojlim_n \mathrm{char}_{\Lambda_n}(M_n)$$ is a well defined ideal of $\Lambda_\infty$. If $M= N \otimes_\Lambda \Lambda_\infty$ for a finitely generated torsion $\Lambda$-module $N$, $M$ is a coadmissible torsion $\Lambda_\infty$-module and $$ \mathrm{char}_{\Lambda_\infty}(M) = \mathrm{char}_{\Lambda}(N)\cdot\Lambda_\infty.$$
		\end{lem}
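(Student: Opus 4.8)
The plan is to prove the three assertions — that $\varprojlim_n\mathrm{char}_{\Lambda_n}(M_n)$ is a coadmissible ideal, that $N\otimes_\Lambda\Lambda_\infty$ is coadmissible and torsion, and that its characteristic ideal is $\mathrm{char}_\Lambda(N)\cdot\Lambda_\infty$ — in that order, all of them resting on one base-change statement for characteristic ideals which I would isolate first. The overall shape of the argument is the two-variable analogue of Pottharst's treatment of the cyclotomic ($\mathbb{Z}_p$) case in \citep{Pot12}; the only substantive change is that $\Lambda$ and each $\Lambda_n$ acquire an extra variable, so I would point to \textit{loc.\ cit.}\ and sketch the steps for completeness.

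First I would record the base-change lemma: if $R\to R'$ is a flat homomorphism of finite products of Noetherian regular domains which is moreover height-preserving (equivalently, has zero-dimensional fibres), and $P$ is a finitely generated torsion $R$-module, then $\mathrm{char}_{R'}(P\otimes_R R')=\mathrm{char}_R(P)\cdot R'$, and by flatness this ideal is canonically $R'\otimes_R\mathrm{char}_R(P)$. Working over each factor we may assume $R,R'$ are domains. That $P\otimes_R R'$ is again torsion is immediate, since $\mathrm{Ann}_R(P)$ contains a nonzerodivisor (as $R$ is a domain and $P$ is finitely generated torsion) whose image in $R'$ stays a nonzerodivisor by flatness. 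Both $\mathrm{char}_{R'}(P\otimes_R R')$ and $\mathrm{char}_R(P)\cdot R'$ are divisorial ideals of the normal ring $R'$, hence determined by their localisations at height-$1$ primes; at a height-$1$ prime $\mathfrak q\subset R'$ with contraction $\mathfrak p=\mathfrak q\cap R$ (of height $1$ by the height-preservation hypothesis; primes contracting to $(0)$ do not occur since the generic fibre is zero-dimensional), the flat length formula gives $\mathrm{length}_{R'_\mathfrak q}\bigl((P\otimes_R R')_\mathfrak q\bigr)=\mathrm{length}_{R_\mathfrak p}(P_\mathfrak p)\cdot\mathrm{length}_{R'_\mathfrak q}(R'_\mathfrak q/\mathfrak p R'_\mathfrak q)$, and the factor $\mathrm{length}_{R'_\mathfrak q}(R'_\mathfrak q/\mathfrak p R'_\mathfrak q)$ is exactly the ramification index which also governs the image of $\mathrm{char}_R(P)_\mathfrak p=\mathfrak p^{\mathrm{length}_{R_\mathfrak p}(P_\mathfrak p)}R_\mathfrak p$ in $R'_\mathfrak q$. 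Comparing at all such $\mathfrak q$ gives the claim. In our situation the maps $\Lambda_{n+1}\to\Lambda_n$ are affinoid subdomain inclusions (a closed polydisk inside a slightly larger one) and $\Lambda[\tfrac{1}{p}]\to\Lambda_n$ is flat by the proof of Lemma \ref{cyc_flat}; in both cases height-preservation follows from the rigid geometry — for the second via the isomorphism of completed local rings in \citep[Lemma 7.1.9]{dJ95}.

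Granting this, well-definedness is quick: applying the lemma to $\Lambda_{n+1}\to\Lambda_n$ and $M_n=\Lambda_n\otimes_{\Lambda_{n+1}}M_{n+1}$ yields $\Lambda_n\otimes_{\Lambda_{n+1}}\mathrm{char}_{\Lambda_{n+1}}(M_{n+1})\xrightarrow{\ \sim\ }\mathrm{char}_{\Lambda_n}(M_n)$, so $\{\mathrm{char}_{\Lambda_n}(M_n)\}_n$ is a coadmissible system of finitely generated modules whose limit is a coadmissible $\Lambda_\infty$-module; and applying the left-exact $\varprojlim_n$ to $0\to\mathrm{char}_{\Lambda_n}(M_n)\to\Lambda_n\to\Lambda_n/\mathrm{char}_{\Lambda_n}(M_n)\to0$ realises $\varprojlim_n\mathrm{char}_{\Lambda_n}(M_n)$ as a kernel inside $\Lambda_\infty$, hence an ideal. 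For $M=N\otimes_\Lambda\Lambda_\infty$ I would set $M_n=N\otimes_\Lambda\Lambda_n$, note these are finitely generated over $\Lambda_n$ with $\Lambda_n\otimes_{\Lambda_{n+1}}M_{n+1}=M_n$, and check $\varprojlim_n M_n=M$ using right-exactness of $-\otimes_\Lambda\Lambda_\bullet$ on a finite presentation of $N$ together with the vanishing of the relevant $\varprojlim^1$ for the Fréchet--Stein algebra \citep{ST02}; this shows $M$ is coadmissible, and it is torsion because $N$ is killed by a nonzero element of $\Lambda$ which remains a nonzerodivisor in each $\Lambda_n$ by Lemma \ref{cyc_flat}. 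Finally, factoring $\Lambda\to\Lambda[\tfrac{1}{p}]\to\Lambda_n$ and using that characteristic ideals commute with the localisation $\Lambda\to\Lambda[\tfrac{1}{p}]$ (the height-$1$ prime $(p)$ and any other height-$1$ prime containing $p$ simply drop out) followed by the base-change lemma for $\Lambda[\tfrac{1}{p}]\to\Lambda_n$ gives $\mathrm{char}_{\Lambda_n}(M_n)=\mathrm{char}_\Lambda(N)\cdot\Lambda_n$; taking $\varprojlim_n$ and using $\varprojlim_n(\mathfrak a\Lambda_n)=\mathfrak a\Lambda_\infty$ for the finitely generated ideal $\mathfrak a=\mathrm{char}_\Lambda(N)$ (same coadmissibility argument, or directly Lemma \ref{cyc_flat}) yields $\mathrm{char}_{\Lambda_\infty}(M)=\mathrm{char}_\Lambda(N)\cdot\Lambda_\infty$.

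The main obstacle is the base-change lemma, and specifically the input that the transition maps are flat with zero-dimensional fibres so that heights — and hence the divisorial structure that characteristic ideals are built from — are preserved; this is genuinely a statement about the rigid-analytic geometry of $\mathfrak{X}$ rather than pure commutative algebra. A secondary subtlety one must not overlook is that $\Lambda=\mathcal{O}_{E_p}[[\Gamma]]$ is three-dimensional whereas the $\Lambda_n$ are two-dimensional Tate algebras, which is why the localisation at $p$ has to be inserted before any comparison of characteristic ideals is even type-correct; everything after that is routine bookkeeping with coadmissible modules, for which \citep{ST02} and \citep{Pot12} suffice.
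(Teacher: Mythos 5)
Your proof is correct and takes a genuinely different route from the paper's. For the second assertion the paper argues via the structure theorem for finitely generated torsion $\Lambda$-modules: it first shows that pseudo-null $\Lambda$-modules base-change to pseudo-null $\Lambda_n$-modules (using flatness and going-down to bound the heights of contractions), so that a pseudo-isomorphism $N \sim \prod_\alpha \Lambda/\mathfrak{p}_\alpha^{r_\alpha}$ becomes an actual isomorphism after $-\otimes_\Lambda \Lambda_n$, and then reads off $\mathrm{char}_{\Lambda_n}(M_n)$ from the resulting explicit presentation; well-definedness of the inverse system is treated by a short (and, compared with your version, rather terse) remark that the primes not hitting $\mathrm{Spec}(\Lambda_n)$ extend to the unit ideal. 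You instead isolate an abstract base-change lemma — characteristic ideals commute with flat height-preserving homomorphisms of finite products of Noetherian regular domains — proved at the level of localisations at height-one primes via the flat length-multiplicativity formula, and apply it uniformly to $\Lambda_{n+1}\to\Lambda_n$ and $\Lambda[\tfrac{1}{p}]\to\Lambda_n$. Both arguments ultimately rest on the same geometric input from Lemma \ref{cyc_flat} (flatness with zero-dimensional fibres), but your formulation makes height preservation the explicit protagonist, avoids the structure theorem entirely, and supplies the length comparison $\mathrm{length}_{\Lambda_{n,\mathfrak q}}((M_n)_{\mathfrak q})=\mathrm{length}_{\Lambda_{n+1,\mathfrak p}}((M_{n+1})_{\mathfrak p})\cdot e$ that the paper leaves implicit when asserting the inverse-system property. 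Your observation that $\Lambda$ is three-dimensional while the $\Lambda_n$ are two-dimensional, so that one must localise at $p$ before the comparison is even type-correct, is a real point that the paper handles only tacitly (the $(p)$-part of $\mathrm{char}_\Lambda(N)$ extends to the unit ideal). The one place your write-up is slightly terse is the claim that $\mathrm{char}_R(P)\cdot R'$ is divisorial; this is true but deserves a line (e.g.\ $\mathrm{char}_R(P)$ is a finite intersection of primary ideals of height-one primes in a regular domain, and this description persists under a flat height-preserving extension, so the divisorial comparison at height-one primes of $R'$ is legitimate).
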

		\begin{proof}
			\par First we show this definition is well-defined.	Note that each $M_n$ must be torsion and each $\Lambda_n$ noetherian, so we can write for each $n$, $$\mathrm{char}_{\Lambda_n} (M_n) = \prod_{\mathfrak{p} \subset \Lambda_n,\enspace \text{ht}(\mathfrak{p})=1}\mathfrak{p}^{l_{\mathfrak{p},n}}$$ following the ideas of \citep[\S4.7]{Bou98}, where this is a finite product. We want to show this forms an inverse system when $M$ is coadmissible. Note that such a finite product need not exist over $\Lambda_\infty$, which is why we really need coadmissibility.
			
			\par We let $p_{n+1}: \Lambda_{n+1} \rightarrow \Lambda_n$, then $$ \mathrm{char}_{\Lambda_{n+1}}(M_{n+1}) \otimes_{\Lambda_{n+1}} \Lambda_n = \prod_\alpha \mathfrak{p}_{\alpha}^{l_\alpha} \otimes_{\Lambda_{n+1}} \Lambda_n. $$
			Note that we have inclusions for each $n$ $$ \mathrm{Spec}(\Lambda_n) \hookrightarrow \mathrm{Spec}(\Lambda_{n+1}). $$
			By coadmissibility, if one of the $\mathfrak{p}_\alpha$ lies in the image of this inclusion, $$ \mathfrak{p}_{\alpha}^{l_\alpha} \otimes_{\Lambda_{n+1}} \Lambda_n = \mathfrak{p}_{\alpha}^{l_\alpha} \cdot \Lambda_n = p_{n+1}(\mathfrak{p}_{\alpha}^{l_\alpha}). $$
			If not, then $p_{n+1}(\mathfrak{p}_\alpha) = \mathfrak{p}_{\alpha} \otimes_{\Lambda_{n+1}} \Lambda_n = \Lambda_n$ so this term will not contribute. Thus they form an inverse system and $\mathrm{char}_{\Lambda_\infty}$ is well defined as an ideal.
			
			\par Now we show the second part. Coadmissibility of $M$ is clear since $N$ is finitely generated and so each $N \otimes_\Lambda \Lambda_n$ is too. For torsionness; given $m \in N$ there is $\lambda \in \Lambda \hookrightarrow \Lambda_\infty$ such that $\lambda \cdot(m \otimes \lambda^\prime)=0$ for any $\lambda^\prime$. By flatness of Lemma \ref{cyc_flat}, for any prime ideal $\mathfrak{p} \subset \Lambda$, integers $n$ and $r$, $\left( \Lambda/\mathfrak{p}^r \right) \otimes_\Lambda \Lambda_n \cong \Lambda_n / (\mathfrak{p}^r \cdot \Lambda_n)$. By the proof of Lemma \ref{cyc_flat}, there is a surjection $$ \bigcup_n \mathrm{Spec}(\Lambda_n) \twoheadrightarrow \mathrm{Spec}(\Lambda)\backslash \{p\}.$$
			\par Suppose $P$ is a finitely generated pseudo-null $\Lambda$-module, then we claim that $P \otimes_\Lambda \Lambda_n$ is a pseudo-null $\Lambda_n$-module for each $n$. By hypothesis we have $P_{\mathfrak{p}}=0$ for every height 1 prime $\mathfrak{p}$ of $\Lambda$. Thus for $n$ large enough such that $\mathfrak{p}$ has preimage $\mathfrak{q}$ in Spec($\Lambda_n$), $$0 = P_{\mathfrak{p}} \otimes_\Lambda \Lambda_n \cong (P \otimes_\Lambda \Lambda_{\mathfrak{p}}) \otimes_\Lambda \Lambda_n \cong P \otimes_\Lambda (\Lambda_{\mathfrak{p}} \otimes_\Lambda \Lambda_n) \cong (P \otimes_\Lambda \Lambda_n)_{\mathfrak{q}} . $$
			Choose $\mathfrak{q}$ a generic height 1 prime of $\Lambda_n$ for $n$ large enough and let $\mathfrak{p}$ be its preimage in $\mathrm{Spec}(\Lambda)$, which is of height $\leq 1$ since chains of prime ideals decrease in length (perhaps non-strictly) when we pull back by the inclusion map. When it is height 1 we apply the above chain of equalities to this $\mathfrak{p}$; hence $(P \otimes_\Lambda \Lambda_n)_{\mathfrak{q}}=0$. When $\mathfrak{p}$ is height 0, since $P$ is a torsion module we find that $P_\mathfrak{p} = P_{(0)} = 0$ also, so the claim is proved.
			
			\par By the structure theorem of torsion $\Lambda$-modules, our torsion $\Lambda$-module $N$ is pseudo-isomorphic to one of the form $\prod_\alpha \Lambda/\mathfrak{p}^{r_\alpha}_\alpha$ for a finite set of $\alpha$ indexing a multiset of powers of height 1 primes. Taking flat base change by $\otimes_\Lambda \Lambda_n$ kills the pseudo-null kernel and cokernel by the above claim, thus giving an actual isomorphism $$ M_n \cong \prod_\alpha \left( \Lambda/\mathfrak{p}^{r_\alpha}_\alpha \right) \otimes_\Lambda \Lambda_n \cong \prod_\alpha \Lambda_n /(\mathfrak{p}^{r_\alpha}_\alpha \cdot \Lambda_n). $$ as can we swap the tensor product and finite product. Taking characteristic ideals therefore tells us that 
			$$ \text{char}_{\Lambda_n}(N \otimes_\Lambda \Lambda_n) = \text{char}_{\Lambda}(N) \cdot \Lambda_n.$$
			Now we can take inverse limits (using the first part of the Lemma) to obtain the full result.
			
		\end{proof}
		
		\par For the following remark we will define two properties of non-noetherian rings which are important to distinguish.
		
		\begin{defn} \label{bezout_prufer}
			Let $A$ be a commutative integral domain. We say $A$ is:
			\begin{itemize}
				\item A Pr\"ufer domain if every non-zero finitely generated ideal is invertible.
				\item A Bezout domain if the sum of any two principal ideals is also principal.
			\end{itemize}
			If $A$ is moreover noetherian then being Pr\"ufer is equivalent to being a Dedekind domain and being Bezout is equivalent to being a principal ideal domain.
		\end{defn}
		\par There are many equivalent definitions to being a Pr\"ufer domain; a ring $A$ is Pr\"ufer if and only if all localisations at $\mathfrak{p} \in \text{Spec}(A)$ are valuation domains (that is for any $x \in \text{Frac}(A)$ either $x \in A$ or $x^{-1} \in A$), if and only if every torsion free $A$-module is flat. These all resemble different properties of Dedekind domains when the noetherian criterion is removed. These definitions and several other equivalent ones are listed in \citep[VII.2 Ex.12]{Bou98}.
		
		\begin{rmk}
			In Pottharst's work \citep[Proposition 1.1]{Pot12} he uses $\Lambda_\infty$ for a $\mathbb{Z}_p$-extension which is a projective limit of dimension 1 rings. Here he shows that the ideal $\mathrm{char}_{\Lambda_\infty}(M)$ is a principal ideal. We can no longer do this as it relies on the property of his $\Lambda_\infty(\mathbb{Z}_p)$being a Bezout domain, whereas in this work it is a limit of dimension 2 rings and cannot be Bezout. However the main conjecture over $\Lambda_\infty(\mathbb{Z}_p^2)$ has the corollary that the characteristic ideals of certain Selmer groups are principally generated by the given $p$-adic distribution. A similar (but more subtle) issue about showing principality of characteristic ideals will appear later on. \lozengeend
		\end{rmk}
		
		\par We let $\mathbb{V}_\infty = \mathbb{V} \otimes_{\Lambda} \Lambda_\infty = \mathbb{T} \otimes_{\Lambda} \Lambda_\infty$, similar to universal twisting by $\Lambda$. Note that we have inverted $p$ so we lose the notion of integrality and the choice of integral lattice $T$ in $V$ no longer matters. We can then specialise at a character $\eta:\Lambda \rightarrow \mathcal{O}$, which lifts to a closed point of the character variety $x_\eta \in \mathfrak{X}$. Evaluation at $x_\eta$ then gives a unique character on its global sections $\tilde{\eta}: \Lambda_\infty \rightarrow K$, where $K=\text{Frac}(\mathcal{O})$. We will often drop the tilde from our notation. We are interested in the base change map $- \otimes_{\Lambda_\infty, \eta} K $ and how the corresponding derived tensor product interacts with taking cohomology of the Selmer complexes we have seen so far.
		\

		\begin{cor} \label{cyc_kmc}
			Suppose $V$ and $\delta$ satisfy Assumption \ref{assumption_V_delta} and $p$ is inert in $E$. Then,
			\begin{enumerate}[i)]
				\item $e_\delta \widetilde{H}^2(E,\mathbb{V}_\infty; \Delta^1)$ is coadmissible $\Lambda_{\infty, \delta}$-torsion
				\item $e_\delta \widetilde{H}^1(E,\mathbb{V}_\infty; \Delta^1)$ is coadmissible and torsion-free of $\Lambda_{\infty, \delta}$-rank 1
				\item $\mathrm{char}_{\Lambda_{\infty,\delta}} \left( e_\delta \widetilde{H}^2(E,\mathbb{V}_\infty; \Delta^1) \right) \bigm\vert \mathrm{char}_{\Lambda_{\infty, \delta}} \left( \dfrac{e_\delta \widetilde{H}^1(E,\mathbb{V}_\infty; \Delta^1)}{\mathrm{im}(c^\Pi)} \right) $
			\end{enumerate}
		\end{cor}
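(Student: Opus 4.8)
The plan is to deduce all three assertions from Theorem \ref{kato_mc} by flat base change along $\Lambda_\delta \hookrightarrow \Lambda_{\infty,\delta}$, using Lemmas \ref{cyc_flat} and \ref{cyc_char}; the inert hypothesis affects only the already-recorded structure of the base ring $\Lambda_{\infty,\delta}$, namely that each connected component $\mathfrak{X}^0_\delta$ is an open unit ball, so $\Lambda_{\infty,\delta}$ is an integral domain.

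First I would use Lemma \ref{cyc_flat} to identify $\widetilde{H}^i(E,\mathbb{V}_\infty;\Delta^1) \cong \widetilde{H}^i(E,\mathbb{T};\Delta^1)\otimes_\Lambda \Lambda_\infty$, and after applying $e_\delta$, $e_\delta\widetilde{H}^i(E,\mathbb{V}_\infty;\Delta^1) \cong e_\delta\widetilde{H}^i(E,\mathbb{T};\Delta^1)\otimes_{\Lambda_\delta}\Lambda_{\infty,\delta}$. Since the Selmer complex for $\mathbb{T}$ is perfect over $\Lambda$, each $e_\delta\widetilde{H}^i(E,\mathbb{T};\Delta^1)$ is finitely generated over the Noetherian ring $\Lambda_\delta$, so each base change is coadmissible over $\Lambda_{\infty,\delta}$ (it is the limit of the finitely generated $\Lambda_{n,\delta}$-modules $e_\delta\widetilde{H}^i\otimes_{\Lambda_\delta}\Lambda_{n,\delta}$, compatible under the transition maps by flatness). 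For (i), Theorem \ref{kato_mc}(i) says $e_\delta\widetilde{H}^2(E,\mathbb{T};\Delta^1)$ is finitely generated $\Lambda_\delta$-torsion, so by (the $\delta$-component of) Lemma \ref{cyc_char} its base change is coadmissible $\Lambda_{\infty,\delta}$-torsion. For (ii), since $\Lambda_\delta\cong\mathbb{Z}_p[[X_1,X_2]]$ is a Noetherian regular domain, the torsion-free module $e_\delta\widetilde{H}^1(E,\mathbb{T};\Delta^1)$ of Theorem \ref{kato_mc}(ii) embeds into a finite free $\Lambda_\delta$-module; flatness preserves this embedding, and as $\Lambda_{\infty,\delta}$ is a domain the base change is again torsion-free. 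Its generic rank is unchanged because $\Lambda_\delta\hookrightarrow\Lambda_{\infty,\delta}$ extends to an embedding of fraction fields and rank is computed after $\otimes\,\mathrm{Frac}$, so it equals $1$.

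For (iii), Assumption \ref{assumption_V_delta} provides $e_\delta c_1^\Pi\neq 0$, so together with (ii) the quotient $Q:=e_\delta\widetilde{H}^1(E,\mathbb{T};\Delta^1)/\Lambda_\delta\, e_\delta c_1^\Pi$ is a finitely generated torsion $\Lambda_\delta$-module and hence has a characteristic ideal. By right exactness of $-\otimes_{\Lambda_\delta}\Lambda_{\infty,\delta}$ there is a canonical isomorphism $Q\otimes_{\Lambda_\delta}\Lambda_{\infty,\delta}\cong e_\delta\widetilde{H}^1(E,\mathbb{V}_\infty;\Delta^1)/\mathrm{im}(c^\Pi)$, where $\mathrm{im}(c^\Pi)$ now denotes the $\Lambda_{\infty,\delta}$-span of the base-changed Euler system class. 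As $\Lambda_\delta$ is a UFD its characteristic ideals are principal; writing $\mathrm{char}_{\Lambda_\delta}\big(e_\delta\widetilde{H}^2(E,\mathbb{T};\Delta^1)\big)=(f)$ and $\mathrm{char}_{\Lambda_\delta}(Q)=(g)$, Theorem \ref{kato_mc}(iii) gives $f\mid g$ in $\Lambda_\delta$. Applying Lemma \ref{cyc_char} to both base changes yields $\mathrm{char}_{\Lambda_{\infty,\delta}}$ equal to $(f)\Lambda_{\infty,\delta}$ and $(g)\Lambda_{\infty,\delta}$ respectively, and $f\mid g$ forces $(f)\Lambda_{\infty,\delta}\mid (g)\Lambda_{\infty,\delta}$, which is exactly (iii).

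I expect the only point genuinely needing care to be the bookkeeping around the idempotent $e_\delta$ and the connected-component decomposition of $\mathfrak{X}$ — so that $\Lambda_{\infty,\delta}$ is really a domain and torsion, rank, and characteristic ideals behave as over $\Lambda_\delta$ — together with checking that Lemma \ref{cyc_char} may be applied to the cokernel $Q\otimes_{\Lambda_\delta}\Lambda_{\infty,\delta}$ and not merely to $\widetilde{H}^2$. Everything else is a formal consequence of the flatness in Lemma \ref{cyc_flat} and the base-change behaviour of characteristic ideals established in Lemma \ref{cyc_char}.
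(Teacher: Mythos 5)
Your proposal is correct and follows the same route as the paper, which simply states that all three parts follow by applying Lemma \ref{cyc_flat} and Lemma \ref{cyc_char} to Theorem \ref{kato_mc}. You have merely spelled out in full the base-change and torsion-freeness bookkeeping that the paper leaves implicit.
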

		\begin{proof}
			\par All parts follow from Lemma \ref{cyc_flat} and Lemma \ref{cyc_char} applied to Theorem \ref{kato_mc}.
		\end{proof}

		\subsection{The locally analytic distribution algebra}
		
		\par One key reason why Pottharst's approach with $\Lambda_\infty$ of a $\mathbb{Z}_p$-extension is successful is that he is working with a limit of dimension 1 rings, whereas our $\Lambda_\infty$ is a limit of dimension 2 rings. To exploit the ring-theoretic properties he uses, we must find some way to cut the $\mathbb{Z}_p^2$ version down to the right size. By using $\Lambda_\infty$ we also fail to incorporate the choice of identity embedding and action of complex conjugation which fixes our inert prime $p$ in $E_p$. These two points suggest that we ought to restrict attention to characters which depend on $E_p$ in a more specific way.
		\par We can think of the weight space $\mathfrak{X}$ as an analytic character variety defined in \citep[\S1.2]{BSX20}, and recall the following terminology; for a finite extension $L/\mathbb{Q}_p$ we say a $p$-adic Galois representation $W$ is $L$-analytic if it is Hodge--Tate with all weights at all non-identity embeddings equal to 0. We will call a Galois representation $S$ co-$L$-analytic if $S^*(1)$ is $L$-analytic. 
		\par For a generic Galois representation and general $[L:\mathbb{Q}_p]=d$ this is quite restrictive, requiring us to fix $([L:\mathbb{Q}_p]-1) \times \dim(W)$ Hodge--Tate weights, but in our application we will take $W$ to be an $L$-analytic character where $L=E_p$ has degree 2 over $\mathbb{Q}_p$; this imposes a condition on a single Hodge--Tate weight which is more acceptable. This numerology is important and will crop up repeatedly as we study the consequences of the $L$-analytic theory. The structure theorem of \cite[Proposition B.04]{Con11}, which tells us that the crystalline $L$-analytic $\mathrm{Gal}(L_\infty/L)$-characters are exactly powers of the Lubin--Tate character $\chi_{\text{LT}}$ associated to $L$ times an unramified character - so in a sense they are characters `generated' by the Lubin--Tate character. We contrast this with the study of $G_{\mathbb{Q}_p}$-representations where these are `generated' by $\chi_{\text{cyc}}$.
        \par The character $\chi_{\text{LT}}$ is defined in Lubin and Tate's formulation of local class field theory and generalises the function of the cyclotomic character in cutting out cyclotomic integers. In particular, after fixing a uniformiser $\varpi_L$ of $L$ (on which it is dependent) it can be written as
		$$ \chi_{\text{LT}}: \Gamma_L \cong L^\times = \mathcal{O}_L^\times \times \varpi_L^{\mathbb{Z}} \rightarrow \mathcal{O}_L^\times,$$ and when such $\varpi_L$ satisfies $N_{L/\mathbb{Q}_p}(\varpi_L) \in p^{\mathbb{Z}}$ we have the equality of characters $N_{L/\mathbb{Q}_p} \circ \chi_{\text{LT}} = \chi_{\text{cyc}}$.
		\par We now take $\mathfrak{X}_L$ to be the 1-dimensional locus of $L$-analytic characters in $\mathfrak{X}$, which is the character variety defined in \citep[\S1.2]{BSX20}. Now we define the $L$-analytic distribution algebra as the corresponding ring of global sections, equipped with the surjection $$\pi_L: \Lambda_\infty \rightarrow \Lambda_\infty^L := \Gamma(\mathfrak{X}_L, \mathcal{O}_{\mathfrak{X}_L}).$$
		\par This $\Lambda_\infty^L$ is also a Fr\'echet--Stein algebra, i.e. we can express $\Lambda^L_\infty = \varprojlim_n \Lambda^L_n$; equivalently $\mathfrak{X}_L$ is a quasi-Stein space with admissible cover $\mathfrak{X}_{L,n} = \mathfrak{X}_n \cap \mathfrak{X}_L$. However, as soon as $L \neq \mathbb{Q}_p$, $\Lambda_\infty^L$ does not have a model as a formal scheme of an Iwasawa algebra of a $\mathbb{Z}_p$-subextension of $E[p^\infty]$. Needless to say, the non-existence of this subextension is exactly why we need to resort to using these distribution algebras rather than a working over a quotient of $\Lambda$ (compare this again with anticyclotomic Iwasawa theory where such a subextension does exist). We will sometimes discuss $\Lambda_\infty^L$ for general $L$, and sometimes we will focus only on the case where $L=E_p=\mathbb{Q}_{p^2}$; in context it will be clear when something applies to general $L$.
		\par The lack of such a formal scheme model is an important reason as to why the theory goes wrong. For example, the flatness argument presented in Lemma \ref{cyc_flat} will no longer work if we replace $\mathfrak{X}$ by $\mathfrak{X}_L$. Despite the surprising fact that there is an embedding $\Lambda \hookrightarrow \Lambda_\infty^L$ for arbitrary $L$, given in \citep{BSX20} where the ring of bounded functions on $\mathfrak{X}$ is just $\Lambda[\tfrac{1}{p}]$, this map is not flat. By \citep[Corollary 5.9]{Laz67}, since $\text{Spec}(\Lambda_\infty)$ has a finite number of connected components (precisely $\# \Gamma_{\text{tors}}$), $\Lambda_\infty^L$ is flat over $\Lambda_\infty$ only if the map $\mathfrak{X}_L \subset \mathfrak{X}$ is both open and closed. It is by definition closed but is certainly not open - and hence the map is not flat.
		
		\begin{rmk} This is not the only algebraic issue we face. \label{padic_fourier} The work of \citep{ST01} shows that $\mathfrak{X}_L$ is isomorphic to the rigid open unit disk $\mathbb{B}_{[0,1)}$ only after base extending by a complete field containing the Lubin--Tate period $\Omega_{\text{LT}}$. When $L=\mathbb{Q}_p$ then $\Omega_{\text{cyc}} \in \mathbb{Q}_p$, but when $L \neq \mathbb{Q}_p$ $\Omega_{\text{LT}}\in \widehat{L_\infty}$ which is transcendental over $\mathbb{Q}_p$, hence $\mathfrak{X}_L$ can be thought of as a transcendentally twisted disk. Since the open disk is a limit of closed disks of radius converging to 1, this tells us that when for $K/\widehat{L_\infty}$ is complete, $\Lambda_\infty^L \hat{\otimes}_L K \cong \lim_{|r| \rightarrow 1_{-}} K\langle \tfrac{T}{r} \rangle$ i.e. the $L$-analytic distribution algebra can be expressed as the limit of 1-variable Tate algebras after extending scalars. Similarly each $\Lambda_n^L \hat{\otimes}_L K$ is $K$-isomorphic to a 1-variable Tate algebra $K\langle T \rangle$.\lozengeend
		\end{rmk}
		
		\par Like the $\mathbb{Q}_p$-analytic case of Lemma \ref{cyc_char} above we can define characteristic ideals of torsion coadmissible modules. By Remark \ref{padic_fourier} since $\Lambda_\infty^L$ is a limit of dimension 1 rings, we can use Pottharst's original ideas to give a structure theorem for such modules, which lets us define characteristic ideal more explicitly.
		
		\begin{lem} \label{analytic_char} 
			\begin{itemize}
				\item Let $A=\varprojlim_n A_n$ be a Fr\'echet--Stein algebra such that each $A_n$ is noetherian and $A$ is a Pr\"ufer domain. An $A$-module $M$ is torsion and coadmissible if and only if $$M \cong \prod_{\alpha \in I} A/\mathfrak{p}_\alpha^{l_\alpha}$$ for some collection of closed points $\{\mathfrak{p}_\alpha\} \subset \bigcup_{n \geq 1} \text{Spec}$ $A_n$ such that each $\text{Spec}$ $A_n$ contains $\mathfrak{p}_\alpha $ for only finitely many $\alpha$.
				\item $\mathrm{char}_{A}(M)$ is well defined. 
			\end{itemize}
		\end{lem}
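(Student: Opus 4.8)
The plan is to imitate Pottharst's structure theorem \citep[Proposition 1.1]{Pot12}: although $A$ itself is non-noetherian, it is an inverse limit of \emph{noetherian} Dedekind domains $A_n$ over which finitely generated torsion modules are classically understood, and coadmissibility is exactly the glue needed to reassemble the layer-wise pictures. First I would record that each $A_n$ is a Dedekind domain: in the application $A=\Lambda_\infty^L$ this is immediate from Remark \ref{padic_fourier}, since each $A_n$ becomes a one-variable Tate algebra (even a PID) after the relevant scalar extension; abstractly, $A_n$ is noetherian, one-dimensional and locally regular because $A$ being a Pr\"ufer domain makes its localisations at maximal ideals valuation rings, forcing the local rings of $A_n$ to be DVRs. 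Now write a torsion coadmissible $M$ as $M=\varprojlim_n M_n$ with each $M_n$ finitely generated over $A_n$ and $A_n\otimes_{A_{n+1}}M_{n+1}\xrightarrow{\sim}M_n$; torsionness of $M$ is equivalent to $A_n$-torsionness of each $M_n$. The structure theorem over the Dedekind domain $A_n$ then yields $M_n\cong\bigoplus_{\beta\in J_n}A_n/\mathfrak{q}_{n,\beta}^{l_{n,\beta}}$ for a finite set of closed points $\mathfrak{q}_{n,\beta}$ of $\mathrm{Spec}\,A_n$.

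The key step is to propagate these decompositions through the transition maps. The restriction $A_{n+1}\to A_n$ corresponds to the inclusion of the admissible opens underlying the Fr\'echet--Stein structure ($U_n\subset U_{n+1}$, an open disk inside a larger one in the situation of Remark \ref{padic_fourier}), so a closed point $\mathfrak{q}$ of $\mathrm{Spec}\,A_{n+1}$ satisfies either $\mathfrak{q}A_n=A_n$ (when the point lies outside $U_n$) or $\mathfrak{q}A_n$ is a closed point of $\mathrm{Spec}\,A_n$ with $\mathfrak{q}^lA_n=(\mathfrak{q}A_n)^l$ for all $l$. Applying $A_n\otimes_{A_{n+1}}(-)$ to the decomposition of $M_{n+1}$ and comparing with $M_n$, the multiset $\{(\mathfrak{q}_{n,\beta},l_{n,\beta})\}_\beta$ is exactly the image of $\{(\mathfrak{q}_{n+1,\beta},l_{n+1,\beta})\}_\beta$ under "restrict each prime to $A_n$, discard those that become the unit ideal". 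Hence there is a single index set $I$ and a family of closed points $\mathfrak{p}_\alpha\in\bigcup_n\mathrm{Spec}\,A_n$ with exponents $l_\alpha$, each $\mathrm{Spec}\,A_n$ containing only finitely many $\mathfrak{p}_\alpha$, such that $M_n\cong\prod_{\alpha:\,\mathfrak{p}_\alpha\in\mathrm{Spec}\,A_n}A_n/\mathfrak{p}_\alpha^{l_\alpha}A_n$ compatibly in $n$. Passing to the inverse limit, which commutes with these products by local finiteness, gives $M\cong\prod_{\alpha\in I}A/\mathfrak{p}_\alpha^{l_\alpha}$.

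For the converse, given such a product I would set $M_n:=\prod_{\alpha:\,\mathfrak{p}_\alpha\in\mathrm{Spec}\,A_n}A_n/\mathfrak{p}_\alpha^{l_\alpha}A_n$, a finite product, hence finitely generated and $A_n$-torsion; the dichotomy above gives $A_n\otimes_{A_{n+1}}M_{n+1}\cong M_n$, and $\varprojlim_n M_n=M$ by local finiteness, so $M$ is torsion and coadmissible. Finally, for well-definedness of $\mathrm{char}_A(M)$, each $\mathrm{char}_{A_n}(M_n)=\prod_\alpha(\mathfrak{p}_\alpha A_n)^{l_\alpha}$ is a well-defined ideal since $A_n$ is a noetherian Krull (indeed Dedekind) domain \citep[Ch.~VII \S4]{Bou98}, and the same transition-map computation shows $\mathrm{char}_{A_{n+1}}(M_{n+1})A_n=\mathrm{char}_{A_n}(M_n)$; thus $\mathrm{char}_A(M):=\varprojlim_n\mathrm{char}_{A_n}(M_n)$ is a well-defined coadmissible ideal of $A$, independent of the presentation, exactly as in the first part of Lemma \ref{cyc_char} and \citep[Proposition 1.1]{Pot12}.

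The hard part will be the middle paragraph: making precise that the layer-wise Dedekind decompositions are mutually compatible --- that the (prime, multiplicity) data at level $n$ is the restriction-then-truncation of that at level $n+1$ --- and, underlying this, establishing cleanly that each $A_n$ is a Dedekind domain so that the direct-sum-of-cyclics form of the structure theorem is genuinely available. Once these are in place the inverse-limit manipulations are routine, precisely because the local-finiteness condition is built into the statement.
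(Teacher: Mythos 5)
Your proposal is correct and follows essentially the same route as the paper: deduce that each $A_n$ is Dedekind (as in Pottharst but dropping Bezout), apply the structure theorem for finitely generated torsion modules over Dedekind domains at each layer, and use coadmissibility to splice the layer-wise decompositions into a compatible inverse system, finally invoking Lemma \ref{cyc_char}'s argument for well-definedness of the characteristic ideal. Where the paper compresses the splicing step into ``which form an inverse system by coadmissibility $\dots$ taking limits gives the result,'' you usefully spell out the restriction-then-truncation dichotomy for primes under $A_{n+1}\to A_n$ and also supply the converse direction of the characterization, which the paper leaves implicit; your reasoning for why the $A_n$ are Dedekind is no more (and no less) rigorous than the paper's one-line assertion, both ultimately resting on the geometric fact that $\mathfrak{X}_L$ is one-dimensional and smooth.
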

		\begin{proof}
			
			\par 
			For the second part we can run the argument of the well-definedness part of Lemma \ref{cyc_char} almost word for word since $M$ is coadmissible. Proof of the first part can be thought of a version of \citep[Prop. 1.1]{Pot12} with weakened assumptions. Importantly, we do not assume $A$ is a Bezout domain. Since $A$ is a Pr\"ufer domain, and the $A_n$ are noetherian, they must be Dedekind domains. We thus have a unique factorisation of each $\mathrm{char}_{A_n}(M_n)$ into height 1 prime ideals and a unique pseudo-isomorphism $$ M_n \cong \prod_{\alpha \in I_n} A_n/\mathfrak{p}_{n,\alpha}^{r_{n,\alpha}} $$ for a finite indexing set $I_n$, which form an inverse system by coadmissibility. Let $I = \bigcup_n I_n$ which may be infinite. Taking limits gives the result. 
			
		\end{proof}
		
		\par We will use this when $A=\Lambda_\infty^L$ in order to define $\text{char}_{\Lambda_\infty^L}$. Similar to Pottharst's version for his $\Lambda_\infty$, this is multiplicative in exact sequences. Note that we can't apply this Lemma to $A=\Lambda_\infty$ in our setting when $L \neq \mathbb{Q}_p$ because it is the limit of dimension 2 rings and therefore the $\Lambda_n$ are not Dedekind domains. We now aim to compare these definition with $\mathrm{char}_{\Lambda_\infty}$.
		
		\begin{rmk} \label{char_prufer}
			\par In Pottharst's $\mathbb{Q}_p$-analytic version he invokes a `theorem of Lazard' which shows the existence of a single element $f$ such that $\mathrm{div}(f)$ is a formal sum of exactly the (possibly infinite number of) height 1 primes dividing $\mathrm{char}_{\Lambda_n^L}(M_n)$ for all the $n$, counting multiplicity, thus $\mathrm{char}_{\Lambda_\infty^L}(M)=(f)$ is principal. This argument relies on the fact that $\mathfrak{X}^{\mathbb{Q}_p}$ is isomorphic to the unit disk over a finite extension of $\mathbb{Q}_p$ (more generality allows a spherically complete extension), which makes his $\Lambda_\infty$ a Bezout domain. As seen in \ref{padic_fourier}, $\mathfrak{X}_L$ does not satisfy this property as we need to take the base change by a transcendental extension $\widehat{L_\infty} \subset \mathbb{C}_p$ which is not discretely valued or spherically complete. This is shown in \citep[Lemma 3.9]{ST01} and we conclude that $\Lambda_\infty$ is not a Bezout domain. It is shown that it is still a Pr\"ufer domain at the end of \S3 of op. cit. This is important, because it means we will be able to invert our (finitely generated) characteristic ideals over $\Lambda_\infty^L$ as fractional ideals, something we can't do over $\Lambda_\infty$. \lozengeend
		\end{rmk}

		%different definitions of L-analyticity
		\par In the specific situation we analyse, where $L=E_p$ is a degree 2 extension of $\mathbb{Q}_p$, we have some more ways of understanding $E_p$-analyticity. Drawing parallels to complex analysis, we can think about $E_p$-analyticity as `holomorphicity' compared to $\mathbb{Q}_p$-analyticity as `real analyticity'. This motivates the following categorisation of $E_p$-analytic distributions:

		\begin{lem} \label{lie_element}
			There is an element $\nabla \in \mathrm{Lie}(\mathcal{O}_{E_p}$) such that $\{ \nabla, \overline{\nabla}\}$ generate $\mathrm{Lie}(\mathcal{O}_{E_p})$ over $\mathbb{Q}_p$, where $\bar{\cdot}$ is conjugation by $\mathrm{Gal}(E_p/\mathbb{Q}_p)$, and $\ker{\pi_{E_p}}$ is exactly the distributions $f \in \Lambda_\infty$ for which $f(\chi)=0$ for all characters $\chi$ such that $\overline{\nabla} \cdot \chi = 0$. Thus we have an exact sequence $$ 0 \rightarrow \Lambda_\infty \xrightarrow{\overline{\nabla}} \Lambda_\infty \xrightarrow{\pi_{E_p}} \Lambda_\infty^{E_p} \rightarrow 0. $$
		\end{lem}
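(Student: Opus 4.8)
The plan is to exhibit $\overline{\nabla}$ as a non-zero-divisor in $\Lambda_\infty$ coming from the Lie algebra action, to identify the $E_p$-analytic locus $\mathfrak{X}_{E_p}$ with its scheme-theoretic zero set, and then to read off the exact sequence.

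First I would set up $\mathfrak{g} := \mathrm{Lie}(\mathcal{O}_{E_p}^\times)$, a two-dimensional $\mathbb{Q}_p$-vector space on which $\mathrm{Gal}(E_p/\mathbb{Q}_p)$ acts through field conjugation, and recall that the (commutative, since $\mathcal{O}_{E_p}^\times$ is abelian) enveloping algebra of $\mathfrak{g}$ embeds into $\Lambda_\infty$ via point distributions supported at the identity. Choosing $\nabla \in \mathfrak{g}$ outside the two bad lines $\mathbb{Q}_p$ and $\ker(\mathrm{tr}_{E_p/\mathbb{Q}_p})$, the normal basis theorem for $E_p/\mathbb{Q}_p$ --- equivalently non-degeneracy of the trace form --- makes $\{\nabla,\overline{\nabla}\}$ with $\overline{\nabla} := \sigma(\nabla)$ a $\mathbb{Q}_p$-basis of $\mathfrak{g}$, which is the first assertion; I then regard $\overline{\nabla}$ as the distinguished element of $\Lambda_\infty$ given by the corresponding anti-holomorphic derivation.

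Next I would prove $\mathfrak{X}_{E_p} = V(\overline{\nabla})$. Evaluating $\overline{\nabla}$ at a character $\chi$ of $\mathcal{O}_{E_p}^\times$ computes, up to normalisation by the Lubin--Tate period $\Omega_{\text{LT}}$, the Hodge--Tate--Sen weight of $\chi$ at the conjugate embedding; this transcendental twist is exactly the feature of Remark \ref{padic_fourier} absent in the cyclotomic case, and it is why $\overline{\nabla}$ --- rather than any element of $\Lambda$ --- is what cuts out analyticity. Since $\mathfrak{X}_{E_p}$ is by definition the locus where all non-identity Hodge--Tate weights vanish, this gives $\overline{\nabla}(\chi) = 0 \iff \chi \in \mathfrak{X}_{E_p}$ set-theoretically. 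To make this scheme-theoretic I would descend to the Fréchet--Stein cover: after extending scalars to a complete $K \supseteq \widehat{L_\infty}$ with $L = E_p$, Remark \ref{padic_fourier} presents each $\Lambda_n \hat{\otimes} K$ as a two-variable Tate algebra in which $\overline{\nabla}$ becomes, up to a unit, one of the coordinates, cutting out $\Lambda_n^{E_p}\hat{\otimes}K$ as a reduced principal divisor.

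Finally, for the exact sequence: multiplication by $\overline{\nabla}$ is injective because $\Lambda_\infty$ is a finite product of integral domains --- one per connected component of $\mathfrak{X}$, each an open ball by Schneider--Teitelbaum --- in each of which $\overline{\nabla}$ is nonzero; surjectivity of $\pi_{E_p}$ is part of its construction as the restriction map to the Zariski-closed subspace $\mathfrak{X}_{E_p}$ of the quasi-Stein space $\mathfrak{X}$ (no $H^1$-obstruction); and $\ker\pi_{E_p} = (\overline{\nabla})$ follows from $(\overline{\nabla}) \subseteq \ker\pi_{E_p}$ together with the level-by-level identity $\ker(\mathcal{O}(\mathfrak{X}_n) \to \mathcal{O}(\mathfrak{X}_{E_p,n})) = (\overline{\nabla})$ and passage to the limit. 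I expect this last reverse inclusion --- that $\overline{\nabla}$ generates the \emph{entire} ideal of the analytic locus with no residual multiplicity --- to be the main obstacle; the explicit description of $\Lambda_n\hat{\otimes}K$ as a two-variable Tate algebra with $\overline{\nabla}$ a coordinate (up to a unit), or a citation to the corresponding computations in \citep{ST01,BSX20}, is what does the real work there, using regularity of the $\Lambda_n$ to rule out embedded primes.
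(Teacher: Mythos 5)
Your choice of $\nabla$ is not the one that makes the lemma work, and the error is fatal rather than cosmetic. You take $\nabla$ to be a generic (normal-basis) element of $\mathfrak{g}=\mathrm{Lie}(\mathcal{O}_{E_p}^\times)\cong E_p$, regarded as a two-dimensional $\mathbb{Q}_p$-vector space, so that $\overline{\nabla}=\sigma(\nabla)\in E_p$. But then the locus $\overline{\nabla}\cdot\chi=0$ does \emph{not} cut out the $E_p$-analytic characters. Concretely, write $E_p=\mathbb{Q}_p(\sqrt d)$, $\nabla=1+\sqrt d$, $\overline{\nabla}=1-\sqrt d$; for a locally $\mathbb{Q}_p$-analytic character $\chi$ with derivative $d\chi\in\mathrm{Hom}_{\mathbb{Q}_p}(E_p,K)$ put $a=d\chi(1)$, $b=d\chi(\sqrt d)$. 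Then $\overline{\nabla}\cdot\chi=0$ means $d\chi(\overline{\nabla})=a-b=0$, whereas $\chi$ is $E_p$-analytic iff $d\chi$ is $E_p$-linear, i.e.\ $b=\sqrt d\cdot a$. These are different lines in $\mathrm{Hom}_{\mathbb{Q}_p}(E_p,K)$. More generally, for any nonzero $\overline{\nabla}\in E_p$ the annihilator of $\overline{\nabla}$ in $\mathrm{Hom}_{\mathbb{Q}_p}(E_p,K)$ is a $K$-line that cannot contain the identity embedding (else $\overline{\nabla}=0$), so it can never coincide with the line $K\cdot\mathrm{id}$ of $E_p$-linear functionals. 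Thus the step where you ``regard $\overline{\nabla}$ as the anti-holomorphic derivation'' and claim $\overline{\nabla}(\chi)$ computes the conjugate Hodge--Tate--Sen weight is simply false for a $\mathbb{Q}_p$-rational normal basis.

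The element the lemma actually wants is Berger's operator from \citep[\S 2]{Ber16}: using the $E_p$-coefficients of $\Lambda_\infty$ one decomposes $\mathrm{Lie}(\mathcal{O}_{E_p}^\times)\otimes_{\mathbb{Q}_p}E_p\cong E_p^{\mathrm{id}}\times E_p^{\sigma}$, and $\nabla,\overline{\nabla}$ are the images of the two idempotent directions. They are Galois conjugate (Galois acting through the Lie factor) and span the $E_p$-extended Lie algebra, but they are \emph{not} a $\mathbb{Q}_p$-rational basis of $\mathrm{Lie}(\mathcal{O}_{E_p}^\times)$ itself; the extra $E_p$-coefficient, transcendental over the $\mathbb{Q}_p$-span, is precisely what lets $\overline{\nabla}$ single out the identity embedding, in exact analogy with $\tfrac{\partial}{\partial\bar z}=\tfrac12(\partial_x+i\partial_y)$ needing $i$. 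The paper's proof is then a short citation argument: \citep[\S 2]{Ber16} for the operators, Remark 2.7 of op.\ cit.\ for the identification of $\Lambda_\infty^{E_p}$ with the dual of the $\overline{\nabla}=0$ locally analytic functions, and \citep[\S 1]{ST01} for the Galois-equivariant embedding of the Lie algebra into $\Lambda_\infty$ that turns the sequence into one of $\Lambda_\infty$-modules. Your level-by-level Tate-algebra descent might plausibly supply the exactness-at-the-limit step, but your auxiliary claim that $\overline{\nabla}$ becomes a unit times a coordinate in each $\Lambda_n\hat{\otimes}K$ is also doubtful (its zero locus is one-dimensional, and as a distribution it behaves like $\log(1+T)$ rather than $T$, picking up extra zeros on larger affinoids); in any case that part of the argument must be rebuilt on the correct $\nabla$ before it can be assessed.
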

		\begin{proof}
			The operators $\nabla$ and $\overline{\nabla}$ come from \citep[\S2]{Ber16} and Remark 2.7 of op. cit. identifies $\Lambda_\infty^{E_p}$ with the dual of the analytic $\mathbb{Q}_p$-valued functions which $\overline{\nabla}$ sends to 0. Note that $\mathrm{Lie}(\mathcal{O}_{E_p})$ embeds Galois equivariantly into $\Lambda_\infty$ by \citep[\S1]{ST01} so $\nabla, \enspace \overline{\nabla} \in \Lambda_\infty$ act via multiplication and the exact sequence in the Lemma is an exact sequence of $\Lambda_\infty$-modules.
		\end{proof}
		
		\begin{rmk} \label{cauchy_riemann}
			To see the analogy to Cauchy--Riemann equations; we can consider $\Lambda_\infty^{E_p}$ as the continuous $K$-valued distributions of the Lubin--Tate Galois group $E_p^\times$, for large enough $p$-adic field $K$. We can think of this as a power series in one `complex' variable $Z$. Similarly, $\Lambda_\infty$ is the $K$-valued $\mathbb{Q}_p$-linear distributions of $(\mathbb{Q}_p^\times)^2$ Given an arbitrary basis of $L/\mathbb{Q}_p$ we can write elements out as power series of two `real' variables $x$ and $y$. We can think of $\nabla$ as `$\tfrac{d}{dz}$' and $\overline{\nabla}$ as `$\tfrac{d}{d\bar{z}}$' for the `complex' variable $z$. 
			\par For a two-dimensional real power series to be $\mathbb{C}$-differentiable, it is not enough to be real differentiable in each real variable; we need an added compatibility condition  given by $\tfrac{d}{d\bar{z}}=0$. The condition that $\overline{\nabla}=0$ should be thought of as an analogous operator measuring the difference between $\mathbb{Q}_p^2$-analyticity and $\mathbb{Q}_{p^2}$-analyticity. This analogy was originally discussed in \citep{ST01} and has been developed since then, notably the work of Berger cited in Lemma \ref{lie_element} and in \citep{Ste23}. Since $\Lambda_\infty$ is a dimension 2 ring and the embeddings of these Lie algebra operators give a regular sequence of length 2, we find that $\left(\dfrac{\Lambda_\infty}{\langle \nabla, \overline{\nabla} \rangle}\right)^{\Gamma_L}$ is a field extension of $\mathbb{Q}_p$. This is analogous to saying that a function on $\mathbb{C}$ which is holomorphic and anti-holomorphic must be constant. \lozengeend
		\end{rmk}
		
		\begin{prop} \label{inert_descent}
			Let $M$ be a torsion coadmissible $\Lambda_\infty$-module and suppose $\overline{\nabla} \notin \mathrm{Supp}(M)$. Then $$ \mathrm{char}_{\Lambda^{E_p}_\infty}(M[\overline{\nabla}]) \pi_{E_p}(\mathrm{char}_{\Lambda_\infty}(M))=\mathrm{char}_{\Lambda^{E_p}_\infty}(M/\overline{\nabla}M). $$
		\end{prop}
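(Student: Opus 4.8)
The plan is to prove the displayed identity term by term in the Fréchet--Stein presentations $\Lambda_\infty=\varprojlim_n\Lambda_n$, $\Lambda_\infty^{E_p}=\varprojlim_n\Lambda_n^{E_p}$ and then pass to the inverse limit, reducing it to a statement about modules over a two-dimensional regular ring cut down by the nonzerodivisor $\overline{\nabla}$. First I would set up the reduction. Tensoring the exact sequence of Lemma~\ref{lie_element} with the flat $\Lambda_\infty$-algebra $\Lambda_n$ gives $0\to\Lambda_n\xrightarrow{\overline{\nabla}}\Lambda_n\to\Lambda_n^{E_p}\to0$; thus $\overline{\nabla}$ is a nonzerodivisor on $\Lambda_n$ and $\Lambda_n^{E_p}=\Lambda_n/\overline{\nabla}\Lambda_n$, which is a Dedekind domain (proof of Lemma~\ref{analytic_char}), while $\Lambda_n$ is a regular Noetherian domain of dimension two (a Tate algebra $L\langle T_1,T_2\rangle$, up to passing to connected components). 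Since coadmissible $\Lambda_\infty$-modules form an abelian category on which $M\mapsto M_n=M\otimes_{\Lambda_\infty}\Lambda_n$ is exact, $M[\overline{\nabla}]$ and $M/\overline{\nabla}M$ are coadmissible with $(M[\overline{\nabla}])_n=M_n[\overline{\nabla}]$ and $(M/\overline{\nabla}M)_n=M_n/\overline{\nabla}M_n$, each $M_n$ is a finitely generated torsion $\Lambda_n$-module, and $\overline{\nabla}\notin\mathrm{Supp}(M_n)$ (otherwise $\overline{\nabla}$ divides $\mathrm{char}_{\Lambda_n}(M_n)$ and hence $(\overline{\nabla})\in\mathrm{Supp}(M)$ — this is also what makes $\pi_{E_p}(\mathrm{char}_{\Lambda_\infty}(M))$ nonzero, so the statement makes sense). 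By Lemmas~\ref{cyc_char} and~\ref{analytic_char} all three ideals are inverse limits of their level-$n$ analogues and $\pi_{E_p}$ is the inverse limit of the surjections $\Lambda_n\twoheadrightarrow\Lambda_n^{E_p}$, so it suffices to prove, for a regular two-dimensional Noetherian domain $R$, a prime element $\overline{\nabla}$ with $\bar R:=R/\overline{\nabla}R$ Dedekind, and a finitely generated torsion $R$-module $N$ with $\overline{\nabla}\notin\mathrm{Supp}(N)$, the identity $\mathrm{char}_{\bar R}(N[\overline{\nabla}])\cdot\big(\mathrm{char}_R(N)\bar R\big)=\mathrm{char}_{\bar R}(N/\overline{\nabla}N)$.

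For the finite-level identity I would isolate the multiplicative invariant $\epsilon(N):=\mathrm{char}_{\bar R}(N/\overline{\nabla}N)\cdot\mathrm{char}_{\bar R}(N[\overline{\nabla}])^{-1}$, a fractional ideal of the Dedekind domain $\bar R$; this is legitimate because a support computation shows $N[\overline{\nabla}]$ and $N/\overline{\nabla}N$ are supported on the finite set $V(\overline{\nabla})\cap\mathrm{Supp}_R(N)$ of maximal ideals, hence are finite-length $\bar R$-modules, and finitely generated nonzero ideals of a Dedekind domain are invertible. For a short exact sequence $0\to N'\to N\to N''\to0$ of such modules (the support hypothesis passing to sub and quotient), the snake lemma for multiplication by $\overline{\nabla}$ gives a six-term exact sequence of finite-length $\bar R$-modules, and multiplicativity of $\mathrm{char}_{\bar R}$ in exact sequences yields $\epsilon(N)=\epsilon(N')\epsilon(N'')$; similarly $N\mapsto\mathrm{char}_R(N)\bar R$ is multiplicative, using multiplicativity of $\mathrm{char}_R$ and $(IJ)\bar R=(I\bar R)(J\bar R)$. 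Taking a filtration of $N$ with cyclic quotients $R/\mathfrak q_j$, $\mathfrak q_j\in\mathrm{Supp}(N)$ — so each $\mathfrak q_j$ is a height-one prime $\neq(\overline{\nabla})$ or a maximal ideal — it remains to verify $\epsilon(R/\mathfrak q)=(\mathrm{char}_R(R/\mathfrak q))\bar R$ on these. If $\mathfrak q$ is a height-one prime $\neq(\overline{\nabla})$, then $R/\mathfrak q$ is a domain on which $\overline{\nabla}$ acts injectively, so $(R/\mathfrak q)[\overline{\nabla}]=0$ and $(R/\mathfrak q)/\overline{\nabla}=\bar R/\bar{\mathfrak q}$ with $\bar{\mathfrak q}:=\mathfrak q\bar R\neq0$; hence $\epsilon(R/\mathfrak q)=\mathrm{char}_{\bar R}(\bar R/\bar{\mathfrak q})=\bar{\mathfrak q}$, while regularity of $R$ gives $\mathrm{char}_R(R/\mathfrak q)=\mathfrak q$, whose image in $\bar R$ is $\bar{\mathfrak q}$. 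If $\mathfrak q$ is maximal, then $\mathrm{char}_R(R/\mathfrak q)=R$ (pseudo-null over $R$) and either $\overline{\nabla}\notin\mathfrak q$, so $\overline{\nabla}$ is invertible on the field $R/\mathfrak q$ and both $(R/\mathfrak q)[\overline{\nabla}]$, $(R/\mathfrak q)/\overline{\nabla}$ vanish, or $\overline{\nabla}\in\mathfrak q$, so $\overline{\nabla}$ kills $R/\mathfrak q$ and $(R/\mathfrak q)[\overline{\nabla}]=(R/\mathfrak q)/\overline{\nabla}=R/\mathfrak q$; either way $\epsilon(R/\mathfrak q)=\bar R$. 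This settles the finite-level identity, and taking inverse limits over $n$ proves the Proposition.

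The obstacle I expect is conceptual rather than computational: one cannot shortcut by replacing $M$ with the elementary module $\bigoplus_\alpha\Lambda_\infty/\mathfrak p_\alpha^{e_\alpha}$ from the structure theorem over $\Lambda_\infty$ and working termwise, because the pseudo-null kernel and cokernel of such a pseudo-isomorphism — invisible to $\mathrm{char}_{\Lambda_\infty}$ — contribute to $\mathrm{char}_{\Lambda_\infty^{E_p}}(M[\overline{\nabla}])$ and $\mathrm{char}_{\Lambda_\infty^{E_p}}(M/\overline{\nabla}M)$, since a codimension-two point of $\mathrm{Spec}\,\Lambda_\infty$ on the divisor $V(\overline{\nabla})$ is a codimension-one point of $\mathrm{Spec}\,\Lambda_\infty^{E_p}$. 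This is precisely why the argument must run through honest (non-pseudo) filtrations and the additivity of the derived-Euler-characteristic-type invariant $\epsilon$; the residual work — exactness of $(-)_n$ on coadmissible modules, and commuting $\pi_{E_p}$ and ideal products past the inverse limits — is the same coadmissibility bookkeeping already used in Lemmas~\ref{cyc_char} and~\ref{analytic_char}.
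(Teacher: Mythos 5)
Your proposal is correct, and the skeleton is the same as the paper's: both reduce to the finite levels $\Lambda_n$ using coadmissibility, prove the identity there, and pass to the inverse limit. The one substantive difference is what happens at finite level. The paper simply cites Kato \cite[Lemma 14.15]{Kat04} for the identity over each $\Lambda_n$ (forming an auxiliary module $N$ with $\mathrm{char}_{\Lambda_n^{E_p}}(N)=\pi_{E_p}(\mathrm{char}_{\Lambda_n}(M_n))$ and comparing classes in a Grothendieck group), whereas you reprove that lemma from scratch: you introduce the multiplicative invariant $\epsilon(N)=\mathrm{char}_{\bar R}(N/\overline{\nabla}N)\cdot\mathrm{char}_{\bar R}(N[\overline{\nabla}])^{-1}$, show it is well-defined via the support argument (finitely many closed points on $V(\overline{\nabla})$), establish multiplicativity by the snake lemma for multiplication by $\overline{\nabla}$, filter $N$ by cyclic quotients $R/\mathfrak q$, and check the identity directly on each cyclic piece (height-one $\mathfrak q\neq(\overline{\nabla})$ versus maximal $\mathfrak q$). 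This is exactly the content of Kato's lemma, so you are doing strictly more work than the paper, but the payoff is a self-contained and arguably cleaner account — the paper's invocation of Kato's lemma is somewhat compressed and awkwardly phrased. Your closing observation, that one cannot shortcut by replacing $M$ with the elementary module from the structure theorem because pseudo-null errors over $\Lambda_\infty$ become height-one over $\Lambda_\infty^{E_p}$, is a genuine subtlety and precisely the reason Kato's lemma is not a triviality; it is good that you flagged it.
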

		\begin{proof}
			\par If $M$ is a torsion = $\Lambda_\infty$-module, given $m \in M$ we can find $\lambda \in \Lambda_\infty$ such that $\lambda \cdot m =0$; but to have torsionness for $m \in M[\overline{\nabla}]$ or $M/\overline{\nabla}$ as $\Lambda_\infty^L$-modules we need $\overline{\nabla} \nmid \lambda$. This is equivalent to saying that $\text{Supp}(M) \cap \mathfrak{X}_L = \emptyset$, or that $\overline{\nabla} \notin \text{Supp}(M)$. Assuming this condition, if $M$ is a torsion module then $M[\overline{\nabla}]$ and $M/\overline{\nabla}$ are torsion  $\Lambda_\infty^{E_p}$-modules and we can define characteristic ideal. 
			\par Now we apply \citep[Lemma 14.15]{Kat04} for each $A=\Lambda_n$  and $a=$ image of $\overline{\nabla}$ in $\Lambda_n$. We define $N$ as the $\Lambda_\infty^{E_p}$-module $\prod_\mathfrak{q} \left( A/(\mathfrak{q} + \overline{\nabla}A) \right)^{l_\mathfrak{q}}$, where $l_\mathfrak{q} = \text{length}_{\Lambda_{\infty, \mathfrak{q}}^{E_p}} N_{\mathfrak{q}}$. then we can verify by Lemma \ref{cyc_char} that $\mathrm{char}_{\Lambda_n^{E_p}}(N) = \pi_{E_p}(\mathrm{char}_{\Lambda_n}(M)).$ Since $\mathrm{char}_{\Lambda_n}$ is multiplicative, it gives a well defined function on $G(C)$. Equality in $G(C)$ provides our result at the level of $\Lambda_n$ for each $n$. By coadmissibility we get the result over $\Lambda_\infty$.
		\end{proof}
		
		% Need compatibility of char ideals from lambda to lambda_\infty - Have this mostly written up now! FOLLOWING KATO ASTERISQUE 14.15

		\begin{prop} \label{inert_descent2}
			With notations above, suppose $\overline{\nabla} \notin \mathrm{Supp}(\dfrac{\widetilde{H}^1(E,\mathbb{V}^{E_p}_\infty;\Delta^1) }{\mathrm{im}(c^\Pi)})$. Then we have the following exact sequences of torsion $\Lambda_\infty^{E_p}$-modules:
			\begin{itemize}
				\item $ \widetilde{H}^1(E,\mathbb{T};\Delta^1)[\overline{\nabla}] = 0  $
				\item $ 0 \rightarrow \dfrac{\widetilde{H}^1(E,\mathbb{T};\Delta^1)}{\mathrm{im}(c^\Pi) \cdot\overline{\nabla}} \rightarrow  \dfrac{\widetilde{H}^1(E,\mathbb{V}^{E_p}_\infty;\Delta^1) }{\mathrm{im}(c^\Pi)} \rightarrow \widetilde{H}^2(E,\mathbb{T};\Delta^1)[\overline{\nabla}] \rightarrow 0 $
				\item $ 0 \rightarrow \widetilde{H}^2(E,\mathbb{T};\Delta^1) \otimes_{\Lambda_\infty} \Lambda^{E_p}_\infty \rightarrow \dfrac{\widetilde{H}^2(E,\mathbb{T};\Delta^1)}{\overline{\nabla}} \rightarrow 0$
			\end{itemize}
		\end{prop}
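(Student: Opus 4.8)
The plan is to deduce all three statements from a single application of the base-change short exact sequence of Remark~\ref{tor_spectral_sequence}, in the case of a principal kernel. Abbreviate $\mathbb{V}_\infty = \mathbb{T}\otimes_\Lambda\Lambda_\infty$ and $\mathbb{V}^{E_p}_\infty = \mathbb{V}_\infty\otimes_{\Lambda_\infty}\Lambda_\infty^{E_p}$, so that (via Lemma~\ref{cyc_flat}) the statement's $\widetilde H^i(E,\mathbb T;\Delta^1)$ is $\widetilde H^i(E,\mathbb V_\infty;\Delta^1)$. Take $S=\Lambda_\infty$, $R=\Lambda_\infty^{E_p}$, $\tau=\pi_{E_p}$, and for the generator of $\ker\tau$ the element $\overline{\nabla}$ of Lemma~\ref{lie_element}; by the exact sequence there, $\Lambda_\infty^{E_p}=\Lambda_\infty/(\overline{\nabla})$ with $\overline{\nabla}$ a nonzerodivisor, so the $\mathrm{Tor}$-computation quoted in Remark~\ref{tor_spectral_sequence} applies. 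As input complex take the Selmer complex $C^\bullet=\widetilde{R\Gamma}(E,\mathbb V_\infty;\Delta^1)$, using the base-change and perfectness properties of Selmer complexes from \citep{Man22} (and \citep{Pot13}) to identify $C^\bullet\otimes^{\mathbb{L}}_{\Lambda_\infty}\Lambda_\infty^{E_p}$ with $\widetilde{R\Gamma}(E,\mathbb V^{E_p}_\infty;\Delta^1)$. The remark then yields, for every degree $n$, the short exact sequence
$$ 0 \rightarrow \widetilde H^n(E,\mathbb V_\infty;\Delta^1)/\overline{\nabla} \rightarrow \widetilde H^n(E,\mathbb V^{E_p}_\infty;\Delta^1) \rightarrow \widetilde H^{n+1}(E,\mathbb V_\infty;\Delta^1)[\overline{\nabla}] \rightarrow 0. $$

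The first bullet, that $\widetilde H^1(E,\mathbb V_\infty;\Delta^1)[\overline{\nabla}]=0$, I would read off directly from Corollary~\ref{cyc_kmc}(ii): on each connected component $e_\delta\widetilde H^1(E,\mathbb V_\infty;\Delta^1)$ is torsion-free over the domain $\Lambda_{\infty,\delta}$, on which $\overline{\nabla}$ restricts to a nonzero (hence non-zero-divisor) function, so there is no $\overline{\nabla}$-torsion; summing over the finitely many $\delta$ gives the claim. (Equivalently it is the $n=0$ instance of the displayed sequence, using $\widetilde H^0(E,\mathbb V^{E_p}_\infty;\Delta^1)=H^0(E,\mathbb V^{E_p}_\infty)=0$.) For the third bullet I would note, from Proposition~\ref{selmer_pt_dual}, the vanishing in Assumption~\ref{assumption_V_eta} and the perfectness package, that the simple Selmer complex for $\Delta^1$ has cohomology concentrated in degrees $0,1,2$ with $\widetilde H^0=\widetilde H^3=0$; the $n=2$ instance of the displayed sequence then collapses to the isomorphism $\widetilde H^2(E,\mathbb V^{E_p}_\infty;\Delta^1)\cong \widetilde H^2(E,\mathbb V_\infty;\Delta^1)/\overline{\nabla}=\widetilde H^2(E,\mathbb V_\infty;\Delta^1)\otimes_{\Lambda_\infty}\Lambda_\infty^{E_p}$, which is the stated sequence.

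For the middle bullet I would take the $n=1$ instance and quotient it by the submodule generated by the Euler system. One uses that, by part (iii) of \citep[Theorem 12.3.1]{LSZ21}, the localisation of $c^\Pi$ at $p$ lies in the image of $H^1(E_p,\mathbb V^1_p)$, so $c^\Pi$ lifts (uniquely, by simplicity of $\Delta^1$) to a class in $\widetilde H^1(E,\mathbb V_\infty;\Delta^1)$, and hence its image in $\widetilde H^1(E,\mathbb V^{E_p}_\infty;\Delta^1)$ already lies in the subobject $\widetilde H^1(E,\mathbb V_\infty;\Delta^1)/\overline{\nabla}$. Quotienting the $n=1$ sequence by $\mathrm{im}(c^\Pi)$ then keeps the left-hand map injective (the potential obstruction is $\mathrm{im}(c^\Pi)$ pulled back into the sub, which is just $\mathrm{im}(c^\Pi)$) and, by right-exactness, leaves the right-hand term unchanged, producing
$$ 0 \rightarrow \frac{\widetilde H^1(E,\mathbb V_\infty;\Delta^1)}{\overline{\nabla}\,\widetilde H^1(E,\mathbb V_\infty;\Delta^1)+\mathrm{im}(c^\Pi)} \rightarrow \frac{\widetilde H^1(E,\mathbb V^{E_p}_\infty;\Delta^1)}{\mathrm{im}(c^\Pi)} \rightarrow \widetilde H^2(E,\mathbb V_\infty;\Delta^1)[\overline{\nabla}] \rightarrow 0, $$
whose left term is precisely the iterated quotient of $\widetilde H^1(E,\mathbb V_\infty;\Delta^1)$ first by $\mathrm{im}(c^\Pi)$ and then by $\overline{\nabla}$, as in the statement. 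That these are sequences of \emph{torsion} $\Lambda_\infty^{E_p}$-modules I would then check exactly as in the proof of Proposition~\ref{inert_descent}: $\widetilde H^1(E,\mathbb V_\infty;\Delta^1)/\mathrm{im}(c^\Pi)$ and $\widetilde H^2(E,\mathbb V_\infty;\Delta^1)$ are $\Lambda_{\infty,\delta}$-torsion by Corollary~\ref{cyc_kmc} (using $e_\delta c_1^\Pi\neq 0$ from Assumption~\ref{assumption_V_delta}), and the support hypothesis — that their supports meet $\mathfrak X_{E_p}$ trivially — forces $M/\overline{\nabla}M$ and $M[\overline{\nabla}]$ to be $\Lambda_\infty^{E_p}$-torsion for the modules $M$ in sight.

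I expect the main obstacle to be the very first step: justifying that $\widetilde{R\Gamma}(E,\mathbb V_\infty;\Delta^1)\otimes^{\mathbb{L}}_{\Lambda_\infty}\Lambda_\infty^{E_p}$ genuinely computes the Selmer complex of $\mathbb V^{E_p}_\infty$, despite $\Lambda_\infty^{E_p}$ being \emph{non-flat} over $\Lambda_\infty$ (compare Remark~\ref{cauchy_riemann} and the discussion preceding it). This needs perfectness of the Selmer complex over the Fréchet–Stein algebra $\Lambda_\infty$, compatibility of continuous (Iwasawa) cohomology with derived base change along $\pi_{E_p}$, and the fact that the local conditions of $\Delta^1$ — the inclusions $R\Gamma(E_v,T^1_v)\hookrightarrow R\Gamma(E_v,T)$ attached to a subrepresentation — commute with this base change; coadmissibility throughout is what makes it go. Once that identification, and the vanishing $\widetilde H^0=\widetilde H^3=0$, are in place, the remainder is the bookkeeping above.
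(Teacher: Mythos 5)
Your proposal is correct and follows essentially the same route as the paper: tensor the exact sequence $0\to\Lambda_\infty\xrightarrow{\overline{\nabla}}\Lambda_\infty\to\Lambda_\infty^{E_p}\to 0$ of Lemma~\ref{lie_element} against the perfect (hence flat) Selmer complex $C^\bullet=\widetilde{R\Gamma}(E,\mathbb{V}_\infty;\Delta^1)$, take the long exact sequence in cohomology (the principal-kernel instance of Remark~\ref{tor_spectral_sequence}), and then quotient the degree-$1$ sequence by $\mathrm{im}(c^\Pi)$ and invoke the support hypothesis together with Corollary~\ref{cyc_kmc} to get torsionness. Your justifications for the first bullet (torsion-freeness over each $\Lambda_{\infty,\delta}$ directly) and the third (using $\widetilde H^3=0$) are cleaner renderings of what the paper's degenerate sequences implicitly record, and your worry about non-flatness of $\Lambda_\infty\to\Lambda_\infty^{E_p}$ is exactly the point the paper resolves by the perfectness of $C^\bullet$, so there is no gap.
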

		\begin{proof}
			\par These sequences will arise from computing the exact sequences attached to the spectral sequence in Remark \ref{tor_spectral_sequence}, when $S=\Lambda_\infty$ and $R=\Lambda_\infty^L$ specifically for $L=E_p$; although this can be done for more general $L$ by iterating the spectral sequence with different generators of $\text{Lie}(\mathcal{O}_L)$ embedded into $\Lambda_\infty$. However, since $R$ is a quotient of $S$ by a single element we can compute this more explicitly as follows. It will be clear to the reader that this matches the spectral sequence calculations of the Remark.
			\par First note that $C^\bullet = \widetilde{R\Gamma}^\bullet(E,\mathbb{T}_\infty;\Delta^1)$ is a perfect complex of $\Lambda_\infty$-modules - this follows from the Selmer complex being perfect over $\Lambda$ and flat base change to $\Lambda_\infty$. Thus it is also a flat complex of $\Lambda_\infty$-modules and we can apply $\otimes^{\mathbb{L}} C^\bullet$ to the exact sequence of Lemma \ref{lie_element} to get $$ 0 \rightarrow C^\bullet \xrightarrow{\overline{\nabla} \otimes \mathrm{id}} C^\bullet \rightarrow C^\bullet \otimes^{\mathbb{L}}_{\Lambda_\infty} \Lambda_\infty^{E_p} \rightarrow 0. $$
			\par We can take cohomology of this short exact sequence of complexes to get a long exact exact sequences as in Remark \ref{tor_spectral_sequence}, which degenerate to the following:
			
			\begin{eqnarray*}
				& \qquad 0 \rightarrow \widetilde{H}^1(E,\mathbb{T};\Delta^1)[\overline{\nabla}] \rightarrow 0 \qquad &\\
				0 & \rightarrow  \widetilde{H}^1(E,\mathbb{T};\Delta^1)[\overline{\nabla}] \rightarrow H^1(C^\bullet \otimes^{\mathbb{L}}_{\Lambda_\infty} \Lambda_\infty^{E_p} )  \rightarrow \widetilde{H}^2(E,\mathbb{T};\Delta^1)[\overline{\nabla}]  \rightarrow  & 0 \\ 
				& 0  \longrightarrow  \widetilde{H}^2(E,\mathbb{T};\Delta^1) [\overline{\nabla}] \rightarrow H^2(C^\bullet \otimes^{\mathbb{L}}_{\Lambda_\infty} \Lambda_\infty^{E_p} ) \longrightarrow  0 &
			\end{eqnarray*}

			\par We have two short exact sequences of $\Lambda_\infty^{E_p}$-modules almost resembling the statement of the proposition but it remains to check which groups are $\Lambda_\infty^{E_p}$-torsion. From Corollary \ref{cyc_kmc} we can see the corresponding torsion results over $\Lambda_\infty$ before base change by the surjection $\pi_{E_p}$. To ensure the modules we work with are really $\Lambda_\infty^{E_p}$-torsion, we apply the support condition from Proposition \ref{inert_descent}. By the divisibility of Corollary \ref{cyc_kmc} we only need this for $M=\dfrac{\widetilde{H}^1(E,\mathbb{V}^{E_p}_\infty;\Delta^1) }{\mathrm{im}(c^\Pi)}.$ By the divisibility of Corollary \ref{cyc_kmc} we only need this support condition on $\left( \dfrac{e_\delta \widetilde{H}^1(E,\mathbb{T}_\infty; \Delta^1)}{\mathrm{im}(c^\Pi)} \right)$. 
            \par  Since we need a sequence of torsion modules we have to divide out by $\text{im}(c^\Pi)$ in the first two terms of the sequence of $H^1$ terms to get torsion exact sequences of $\Lambda_\infty^{E_p}$-modules.
		\end{proof}
		
		\par We can use the exact sequences in Proposition \ref{inert_descent2} and the comparison of characteristic ideals in Proposition \ref{inert_descent} for $M=\widetilde{H}^i(E,\mathbb{T};\Delta^1)$, $i=1,2$, to run a descent argument like in the split prime case of \citep[Corollary 3.8]{Man22}. First we extend the definition of the $\Delta^1$ Selmer complex, defining the base changed representation $\mathbb{V}_\infty^{E_p} = T \otimes_{\Lambda} \Lambda_\infty^{E_p}=V \otimes_{\Lambda} \Lambda_\infty^{E_p}$ (again since we are inverting $p$ in the definition of $\Lambda_\infty$ we lose integral information of $T$), we define locally at $p$: $$ \widetilde{R\Gamma}(E,\mathbb{V}_\infty^{E_p};\Delta^1) = \widetilde{R\Gamma}(E,\mathbb{V}_\infty;\Delta^1) \otimes^{\mathbb{L}}_{\Lambda_\infty} \Lambda_\infty^{E_p} .$$ This is really a Selmer complex of $\Lambda_\infty^{E_p}$-modules since Selmer complexes commute with derived base change, see \citep[Theorem 1.12 case (3)]{Pot13} where $f_0=\pi_{E_p}$, but we will omit the pushforward $\pi_{E_p,*}$ from our notation. We can now state the $E_p$-analytic variant of Kato's main conjecture in terms of this Selmer complex. 
		
		\begin{cor} \label{analytic_kmc}
			Suppose $V$ and $\delta$ satisfy Assumption \ref{assumption_V_delta}, $p$ is inert in $E$ and the support condition from Proposition \ref{inert_descent2}. Then,
			\begin{enumerate}[i)]
				\item $e_\delta \widetilde{H}^2(E,\mathbb{V}^{E_p}_\infty; \Delta^1)$ is coadmissible $\Lambda_{\infty,\delta}^{E_p}$-torsion
				\item $e_\delta \widetilde{H}^1(E,\mathbb{V}^{E_p}_\infty; \Delta^1)$ is coadmissible and torsion-free of $\Lambda_{\infty,\delta}^{E_p}$-rank 1
				\item $\mathrm{char}_{\Lambda^{E_p}_{\infty, \delta}} \left(e_\delta \widetilde{H}^1(E,\mathbb{V}_\infty^{E_p}; \Delta^1) \right) \bigm\vert \mathrm{char}_{\Lambda^{E_p}_{\infty \delta} } \left( \dfrac{e_\delta \widetilde{H}^2(E,\mathbb{V}_\infty^{E_p}; \Delta^1)}{\mathrm{im}(c^\Pi)} \right) $
			\end{enumerate}
		\end{cor}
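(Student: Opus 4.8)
The plan is to descend the $\Lambda_{\infty,\delta}$-statement of Corollary \ref{cyc_kmc} along the surjection $\pi_{E_p}\colon \Lambda_{\infty,\delta}\twoheadrightarrow \Lambda^{E_p}_{\infty,\delta}$. Since $\pi_{E_p}$ is not flat (the locus $\mathfrak X_{E_p}\subset\mathfrak X$ is closed but not open), one cannot simply tensor the statements of Corollary \ref{cyc_kmc} through; instead one works with the \emph{derived} base change, which by Lemma \ref{lie_element} is computed by the mapping cone of multiplication by $\overline\nabla$ on $C^\bullet:=e_\delta\widetilde{R\Gamma}(E,\mathbb V_\infty;\Delta^1)$. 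This $C^\bullet$ is a perfect complex over $\Lambda_{\infty,\delta}$ (perfectness of the Selmer complex over $\Lambda$ recalled from \citep{Man22}, together with the flat base change of Lemma \ref{cyc_flat}), so $e_\delta\widetilde{R\Gamma}(E,\mathbb V^{E_p}_\infty;\Delta^1)=C^\bullet\otimes^{\mathbb L}_{\Lambda_\infty}\Lambda^{E_p}_\infty$ is a perfect complex of $\Lambda^{E_p}_{\infty,\delta}$-modules; its cohomology is finitely presented over each $\Lambda^{E_p}_{n,\delta}$ and glues to a coherent sheaf on $\mathfrak X_{E_p,\delta}$, hence is coadmissible. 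This settles the coadmissibility assertions in (i) and (ii).

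Write $M_i:=e_\delta\widetilde H^i(E,\mathbb V_\infty;\Delta^1)$ and $N:=M_1/\mathrm{im}(c^\Pi)$; the latter is $\Lambda_{\infty,\delta}$-torsion, since $M_1$ is torsion-free of rank $1$ (Corollary \ref{cyc_kmc}(ii)) and, using $e_\delta c^\Pi_1\ne 0$, $\mathrm{im}(c^\Pi)$ is a nonzero cyclic, hence rank one, submodule. Now feed the exact sequences of Proposition \ref{inert_descent2} into the analysis. Using $\widetilde H^0=0$ (irreducibility of $V$) and $\widetilde H^3=0$ (the concentration/duality properties of these Selmer complexes, \citep{Man22}), the $H^2$-sequence gives $e_\delta\widetilde H^2(E,\mathbb V^{E_p}_\infty;\Delta^1)\cong M_2/\overline\nabla M_2$, which is $\Lambda^{E_p}_{\infty,\delta}$-torsion by Corollary \ref{cyc_kmc}(i) and the support hypothesis (cf.\ the proof of Proposition \ref{inert_descent}); this is (i). For (ii), the $H^1$-sequence
\[0\longrightarrow M_1/\overline\nabla M_1\longrightarrow e_\delta\widetilde H^1(E,\mathbb V^{E_p}_\infty;\Delta^1)\longrightarrow M_2[\overline\nabla]\longrightarrow 0\]
has $\Lambda^{E_p}_{\infty,\delta}$-torsion right-hand term (support hypothesis), while $M_1/\overline\nabla M_1$ has rank one over $\Lambda^{E_p}_{\infty,\delta}$: a finitely generated torsion-free module over a regular two-dimensional noetherian ring is locally free of the same rank outside finitely many closed points, so working over each $\Lambda_{n,\delta}$ and reducing modulo the regular element $\overline\nabla$ one reads off generic rank one. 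Hence $e_\delta\widetilde H^1(E,\mathbb V^{E_p}_\infty;\Delta^1)$ has rank one; torsion-freeness is obtained as in the split-prime case \citep[Corollary 3.8]{Man22} (any torsion submodule would inject into a Selmer group that vanishes under Assumption \ref{assumption_V_delta}).

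For the divisibility in (iii) I would combine Proposition \ref{inert_descent} --- applied to $M_2$ and to $N$, both torsion and both satisfying the relevant support condition (for $N$ this is exactly the hypothesis of the Corollary) --- with multiplicativity of $\mathrm{char}_{\Lambda^{E_p}_{\infty,\delta}}$ in the exact sequences above. Writing $\mathrm{char}$ for $\mathrm{char}_{\Lambda^{E_p}_{\infty,\delta}}$ and keeping $\mathrm{char}_{\Lambda_{\infty,\delta}}$ explicit, Proposition \ref{inert_descent} for $M_2$ gives
\[\mathrm{char}\bigl(e_\delta\widetilde H^2(E,\mathbb V^{E_p}_\infty;\Delta^1)\bigr)=\mathrm{char}(M_2/\overline\nabla M_2)=\mathrm{char}(M_2[\overline\nabla])\cdot\pi_{E_p}\bigl(\mathrm{char}_{\Lambda_{\infty,\delta}}(M_2)\bigr),\]
while the $c^\Pi$-twisted $H^1$-sequence of Proposition \ref{inert_descent2} together with Proposition \ref{inert_descent} for $N$ gives
\[\mathrm{char}\Bigl(\tfrac{e_\delta\widetilde H^1(E,\mathbb V^{E_p}_\infty;\Delta^1)}{\mathrm{im}(c^\Pi)}\Bigr)=\mathrm{char}(N/\overline\nabla N)\cdot\mathrm{char}(M_2[\overline\nabla])=\mathrm{char}(N[\overline\nabla])\cdot\pi_{E_p}\bigl(\mathrm{char}_{\Lambda_{\infty,\delta}}(N)\bigr)\cdot\mathrm{char}(M_2[\overline\nabla]).\]
By Corollary \ref{cyc_kmc}(iii), $\mathrm{char}_{\Lambda_{\infty,\delta}}(M_2)\mid\mathrm{char}_{\Lambda_{\infty,\delta}}(N)$; the surjection $\pi_{E_p}$ preserves this divisibility, so $\pi_{E_p}(\mathrm{char}_{\Lambda_{\infty,\delta}}(M_2))\mid\mathrm{char}(N[\overline\nabla])\cdot\pi_{E_p}(\mathrm{char}_{\Lambda_{\infty,\delta}}(N))$, and multiplying through by $\mathrm{char}(M_2[\overline\nabla])$ yields the divisibility of characteristic ideals asserted in (iii).

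The main obstacle is the bookkeeping of the $\overline\nabla$-torsion correction terms: unlike the flat descent of the split case, the non-flat reduction $\pi_{E_p}$ produces the extra factors $\mathrm{char}(M_2[\overline\nabla])$ and $\mathrm{char}(N[\overline\nabla])$, and one must check both that these are genuinely $\Lambda^{E_p}_{\infty,\delta}$-torsion --- the role of the support hypothesis imported from Proposition \ref{inert_descent2} --- and that they propagate correctly through the comparison; here one uses that $\Lambda^{E_p}_{\infty,\delta}$ is a Pr\"ufer domain (Remark \ref{char_prufer}), so characteristic ideals are invertible and divisibility is stable under multiplication. A secondary technical point is the rank-one and torsion-freeness claim in (ii): one must pass from torsion-freeness over the two-dimensional $\Lambda_{\infty,\delta}$ to rank-one behaviour modulo $\overline\nabla$, and exclude the possibility that this reduction acquires a small torsion submodule, which is done by a duality argument rather than read off the exact sequences.
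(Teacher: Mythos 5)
Your proposal is correct and follows essentially the same path as the paper: coadmissibility and torsion/rank statements deduced from Corollary \ref{cyc_kmc} via the perfectness of the Selmer complex, and the divisibility in (iii) obtained by combining the exact sequences of Proposition \ref{inert_descent2} with the characteristic-ideal base-change formula of Proposition \ref{inert_descent}.

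The one place where your write-up actually improves on the paper's is the bookkeeping in (iii). The paper computes two fractional ideals $\mathcal I_1$ and $\mathcal I_2$ and asserts $\mathcal I_1=\mathcal I_2$, but if one tracks the factors carefully one finds $\mathcal I_1=\mathcal I_2\cdot\mathrm{char}_{\Lambda^{E_p}_\infty}(N[\overline\nabla])$ where $N=\widetilde H^1(\mathbb T)/\mathrm{im}(c^\Pi)$; the asserted equality therefore silently uses that the support hypothesis forces $N[\overline\nabla]$ to contribute trivially. Your derivation never makes that claim: you keep $\mathrm{char}(N[\overline\nabla])$ explicitly on the larger side, observe that $\pi_{E_p}$ preserves divisibility, and conclude $\mathrm{char}\bigl(\widetilde H^2(\mathbb V_\infty^{E_p})\bigr)=\mathrm{char}(M_2[\overline\nabla])\cdot\pi_{E_p}(\mathrm{char}_{\Lambda_\infty}M_2)\ \big|\ \mathrm{char}(M_2[\overline\nabla])\cdot\pi_{E_p}(\mathrm{char}_{\Lambda_\infty}N)\cdot\mathrm{char}(N[\overline\nabla])=\mathrm{char}\bigl(\widetilde H^1(\mathbb V_\infty^{E_p})/\mathrm{im}(c^\Pi)\bigr)$. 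This is cleaner and more robust, since it does not depend on $N[\overline\nabla]$ vanishing. Two small remarks: the divisibility you prove, $\mathrm{char}(\widetilde H^2)\mid\mathrm{char}(\widetilde H^1/\mathrm{im}(c^\Pi))$, is the correct (intended) one, matching Corollary \ref{cyc_kmc}(iii) — the statement of Corollary \ref{analytic_kmc}(iii) as printed has the roles of $H^1$ and $H^2$ swapped; and for part (ii), you are right to flag that torsion-freeness of $\widetilde H^1(\mathbb V_\infty^{E_p};\Delta^1)$ does not fall straight out of the exact sequences, and the appeal to a duality/vanishing argument in the style of the split-prime case is the right way to close it.
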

		\begin{proof} 
			\par Since $\Lambda_\infty^L$ is a Pr\"ufer domain, every non-zero finitely generated ideal is invertible, which we will use throughout the proof. As $\Lambda_\infty$ is not a Pr\"ufer domain, we cannot invert ideals arbitrarily, we can only consider a fraction of ideals when the denominator divides the numerator. See Remark \ref{char_prufer} for a discussion on this.
			\par The first two parts follow from Corollary \ref{cyc_kmc} base changed by $\pi_{E_p}$; coadmissibility is preserved as base change corresponds to pullback of coherent sheafs by $\mathfrak{X}_L \hookrightarrow \mathfrak{X}$, and torsion/torsion-freeness is preserved by the support condition. We therefore prove $(iii)$, omitting the $e_\delta$ notation for convenience. Taking characteristic ideals of both exact sequences in Proposition \ref{inert_descent2}, we get an equality of fractional ideals
			\begin{eqnarray*} \mathcal{I}_1 = & \dfrac{\mathrm{char}_{\Lambda_\infty^{E_p}}\left(\dfrac{\widetilde{H}^1(E,\mathbb{V}_\infty^{E_p};\Delta^1)}{\mathrm{im}(c^\Pi)} \right)}{\mathrm{char}_{\Lambda_\infty^{E_p}}(\widetilde{H}^2(E,\mathbb{V}_\infty^{E_p};\Delta^1))} \\ = & \dfrac{\mathrm{char}_{\Lambda_\infty^{E_p}}\left( \dfrac{\widetilde{H}^1(E,\mathbb{T};\Delta^1)}{\mathrm{im}(c^\Pi) \cdot\overline{\nabla}}\right) \cdot \mathrm{char}_{\Lambda_\infty^{E_p}}\left(\widetilde{H}^2(E,\mathbb{T};\Delta^1)[\overline{\nabla}]\right)}{\mathrm{char}_{\Lambda_\infty^{E_p}}\left(\dfrac{\widetilde{H}^2(E,\mathbb{T};\Delta^1)}{\overline{\nabla}}\right)}. 
			\end{eqnarray*}
			Meanwhile we can apply $\pi_{E_p}$ to the fractional ideal coming from the divisibility of Corollary \ref{cyc_kmc}, then apply base change of characteristic ideals Proposition \ref{inert_descent} to get
			\begin{eqnarray*} \mathcal{I}_2 = & \pi_{E_p} \left[ \dfrac{\mathrm{char}_{\Lambda_\infty}\left(\dfrac{\widetilde{H}^1(E,\mathbb{T};\Delta^1)}{\mathrm{im}(c^\Pi)} \right)}{\mathrm{char}_{\Lambda_\infty}(\widetilde{H}^2(E,\mathbb{T};\Delta^1))} \right] \\ = & \dfrac{\mathrm{char}_{\Lambda_\infty^{E_p}}\left( \dfrac{\widetilde{H}^1(E,\mathbb{T};\Delta^1)}{\mathrm{im}(c^\Pi) \cdot\overline{\nabla}}\right) \cdot \mathrm{char}_{\Lambda_\infty^{E_p}}\left(\widetilde{H}^2(E,\mathbb{T};\Delta^1)[\overline{\nabla}]\right)}{\mathrm{char}_{\Lambda_\infty^{E_p}}\left(\dfrac{\widetilde{H}^2(E,\mathbb{T};\Delta^1)}{\overline{\nabla}}\right)}. 
			\end{eqnarray*}
			\par We see that $$ \mathcal{I}_1 = \mathcal{I}_2.$$ Since $\mathcal{I}_2$ is a proper ideal of $\Lambda_\infty^{E_p}$ by Corollary \ref{cyc_kmc}, $\mathcal{I}_1$ must be a proper ideal too, hence the divisibility.
			
		\end{proof}
		
		\begin{rmk} \label{general_analytic_descent}
			This descent theory of locally analytic distribution algebras will generalise to any $[L:\mathbb{Q}_p]=d$. Consider $\mathbb{Q}_p \subset L^{\prime} \subset L$ - we will have that $ \Lambda_\infty^{L^\prime}$ is a Frecher--Stein algebra of dimension $[L:L^\prime]$, and the quotient of $\Lambda_\infty \rightarrow \Lambda_\infty^{L^{\prime}}$ is cut out by the vanishing of $\frac{d}{[L:L^{\prime}]}$ Lie algebra operators by \citep[\S2]{Ber16}. We can carry out the descent steps of Propositions \ref{inert_descent} and \ref{inert_descent2} $\frac{d}{[L:L^{\prime}]}$ times, assuming the right support conditions. However, unless $L=L^{\prime}$ we will not be able to use the properties of a Pr\"ufer domain in order to define $\mathrm{char}_{\Lambda_\infty^{L^\prime}}$ explicitly - we would instead work with lengths of modules over localisations in the style of Kato. It strikes the author that there is not much more difficulty in running these arguments for general $L$, but there are no arithmetic examples that come to mind with $d>2$. \lozengeend 
		\end{rmk}
		
		\section{Lubin--Tate $(\phi, \Gamma)$-modules}
		
			\par Whilst the literature on cyclotomic theory of $(\phi,\Gamma)$-modules is well established, the topic of Lubin--Tate analogues is relatively recent and still in development. In this subsection we will outline a few of the relevant theories, their properties and how they relate to each other. An important difference in the Lubin--Tate setting is the introduction of $L$-analyticity and the fact that \'{e}tale $L$-analytic $(\phi_L, \Gamma_L$)-modules over the Robba ring no longer see all of Iwasawa cohomology. This will become apparent in \ref{steingart_conj} and the surrounding discussions.
		
		\par There are a few different theories of Lubin--Tate $(\phi,\Gamma)$-modules, using different base rings. We are interested in the theories over the period ring $\mathbb{B}_L$ and the Robba ring $\mathcal{R}_L$; the former compares well with Iwasawa cohomology and the latter allows us to think about $L$-analyticity. Loosely, a $(\phi,\Gamma)$-module over an appropriate ring $R$ is an $R$-module $M$ with a `Galois' action of the $p$-adic Lie group group $\Gamma$ and a `Frobenius' action by a semilinear operator $\phi \in \text{End}(M)$ such that these actions commute. For a reasonable theory of such modules, we need to choose a ring $R$ which has such an action (as a trivial module over itself). We define the rings involved below, largely following \citep[\S2]{SV23b} and \citep[\S2]{BSX20}. As with any Lubin--Tate theory, we must first fix a uniformiser $\varpi_L$ of $L$; when $L=E_p$ we will take $\varpi_{E_p}=p$. We will consider the Lubin--Tate extension $L_\infty/L$ attached to $\varpi_L$.
		\par First we define $\mathbb{B}_L= \text{Frac}(\mathbb{A}_L)$, where $\mathbb{A}_L$ is defined by the following. We let $\mathscr{A}_L $ be the $\varpi_L$-adic completion of $\mathcal{O}_L[[Z]][Z^{-1}]$ which embeds Galois equivariantly into the Witt vector ring $W(\mathbb{C}_p^\flat)$ via the map given in \citep[Lemma 4.1]{SV15}. We take $\mathbb{A}_L$ to be the image of this embedding. This inherits the Galois action and Frobenius action from the Witt vectors $W(L_\infty^\flat)$, thus we can think of $(\phi,\Gamma)$-modules over this ring.
		
		\begin{defn} \citep[Theorem 1.6]{KR09} \label{LT_kisin-ren}
			The functor $$W \mapsto D_{\text{LT}}(W) := (\mathbb{B}_L \otimes_{\mathcal{O}_L} W)^{\ker \chi_{\text{LT}}} $$ is an functor $\mathrm{Rep}_{L}(G_L) \rightarrow \mathfrak{M}(\mathbb{B}_L)$ the category of $(\phi_L,\Gamma_L)$-modules over $\mathbb{B}_L$. We denote its categorical image by $\mathfrak{M}^{et}(\mathbb{B}_L)$ the \'etale $(\phi_L,\Gamma_L)$-modules over $\mathbb{B}_L$.
		\end{defn}
		
		\par By \citep[Remark 4.6]{SV15}  we see that this functor commutes with twisting by characters $\eta$ of $\Gamma_L=\text{Gal}(L_\infty/L)$, i.e. Hecke characters of $p$-power conductor. The authors of op. cit. use this result to state Iwasawa cohomology in terms of the invariants of a $(\phi,\Gamma)$-module, which again mirrors classical theory, stated in Theorem \ref{LT_iwasawa}.
		
		\par Recalling our $L$-analytic weight space $\mathfrak{X}_L$, we can define the Robba ring as follows. Let $B=B[0,1)$ be the $L$-affinoid unit disk, and for $r \in (0,1) \cap p^\mathbb{Q}$ let $B(r)$ be the $L$-affinoid disk in $L$ of radius $r$. For an extension $K/L$ we define the Robba ring following \citep[\S2.1]{BSX20} as the union of rings of regular functions $$ \mathcal{R}_K = \bigcup_{r} \mathcal{O}_K(B \setminus B(r)), $$ which is the limit of sections of annuli and can be interpreted as a ring of Laurent series. By this definition we also see that $\Lambda_\infty^L \hat{\otimes}_L K \subset \mathcal{R}_K$ for $K/\widehat{L_\infty}$ complete, as functions converging on the twisted unit disk base extended to $K$ will converge on the unit disk and therefore on each of the annuli. The Robba ring also has Galois and Frobenius actions as described in \S2.2 of op. cit. There is a unique linear continuous $\Gamma$-equivariant endomorphism $\psi$ of $\mathcal{R}_K$ such that $\psi \circ \phi = 1$, which is fixed from now. Later we will want to consider a scaled version of the $\psi$-operator, which we will denote by $\Psi$. The distinction between their use is subtle and one needs to be careful.
		
		\begin{defn}
			\citep[Theorem 10.4]{Ber16} \label{analytic_kisin-ren} 
			Let $W \in \mathrm{Rep}_{\mathcal{O}_L}(\Gamma_L)$ be $L$-analytic. The functor $$W \mapsto D^{\dagger}_{\text{rig}}(W) := (W \otimes_L \mathcal{R}_K)^{\ker(\chi_{\text{LT}})} $$ is an equivalence between the category of $L$-analytic Galois representations $\mathrm{Rep}^{L-\text{an}}_{\mathcal{O}_L}(G_L)$ and $\mathfrak{M}_{L-\text{an}}^{et}(\mathcal{R}_K)$ the category of $L$-analytic \'{e}tale $(\phi_L,\Gamma_L)$-modules over $\mathcal{R}_K$.
		\end{defn}
		
		\par Note that by \citep[Theorem 3.3.11]{BSX20} this is equivalent to the construction of $L$-analytic $(\phi,\Gamma)$-modules of Berger--Schneider--Xie where the Robba ring is defined as a limit of sections of annuli twisted by the $p$-adic Fourier transform, i.e. $$ \ \mathcal{R}^*_K := \bigcup_{r} \mathcal{O}_K(\mathfrak{X}_L \setminus \mathfrak{X}_L(r)) $$ where $\mathfrak{X}_L(r)$ is the image of the rigid closed disc of radius $r$ inside $\mathfrak{X}^L$. Due to this equivalence we will only use the former definition. These functors only see the category of $L$-analytic $(\phi_L,\Gamma_L)$-modules, i.e. where the derived $\text{Lie}(\Gamma)$-action is $L$-bilinear. We hope to recover the $L$-analytic base change of Iwasawa cohomology as the $\psi_L=1$ invariants of such a $(\phi_L, \Gamma_L)$-module $M$ over the Robba ring, but this doesn't work for $L \neq \mathbb{Q}_p$. We will drop the $L$ from subscript as we only work in the Lubin--Tate setting. The complex attached to $M \xrightarrow{\psi-1} M$ still remains of arithmetic interest, we will later interpret its cohomology as a quotient of Iwasawa cohomology.

		\subsection{Cohomology of $L$-analytic $(\phi,\Gamma)$-modules}
		
		\par In the established cyclotomic and $\mathbb{Q}_p$-analytic theory, many people have studied Iwasawa cohomology over the cyclotomic extension via the theory of $(\phi,\Gamma)$-modules over $\mathbb{B}_{\mathbb{Q}_p}$. This is generalised in two steps for our purpose; first extending the theory to $G_L$-representations for general finite $L/\mathbb{Q}_p$ and then to take into account $L$-analyticity. We will state things at first for general $L$ with choice of uniformiser $\varpi_L$ but in applications we will have $L=E_p=\mathbb{Q}_{p^2}$ and $\varpi_L=p$. In this section we first survey cohomology theories of these $(\phi, \Gamma)$-modules and in the \'etale case we investigate their comparisons to Galois cohomology and Iwasawa cohomology. To see the the relationship between the different theories of Lubin--Tate $(\phi,\Gamma)$-modules and conditions on Galois representations we refer the reader to the web of morphisms in \citep[\S6]{SV23b}. After defining the necessary objects We develop comparison maps between Herr complex cohomology groups and group cohomology which are isomorphisms only when $L=\mathbb{Q}_p$, and whose non-zero kernels will be important in defining the local conditions we need to state a main conjecture.
		
		\par Schneider and Venjakob give a Lubin--Tate version of the map embedding Galois representations into the category of $(\phi,\Gamma)$-modules over $\mathbb{B}_L$, where $\Gamma = \Gamma_L \cong \mathcal{O}_L^\times$ is the Lubin--Tate Galois group of $L$. Note that given a general module $M$ over a ring $R$, and $f \in \text{Aut}_R(M)$, we will denote the complex $$ M[f] = [0 \rightarrow M \xrightarrow{f} M \rightarrow 0]. $$

		\begin{thm}\citep[Theorem 5.2.51]{KV22} \label{LT_iwasawa} 
			Let $S \in \mathrm{Rep}_{\mathcal{O}_L}(G_L)$, $\tau= \chi_{\text{LT}}^{-1}\chi_{\text{cyc}}$ and $\psi = \psi_{D_{\text{LT}}(S(\tau^{-1}))}$ the unique endomorphism arising from $\mathcal{R}_L$ such that $\psi \circ \phi=\tfrac{p^{[L:\mathbb{Q}_p]}}{\varpi_L}$ (in our case of $L=E_p$ and uniformiser $p$ this constant is $p$). Then
			$$R\Gamma_{Iw}(L_\infty/L,S) \cong_{\text{q-iso}} D_{\text{LT}}(S(\tau^{-1}))[\psi-1]$$
		\end{thm}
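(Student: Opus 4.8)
The statement is the Lubin--Tate generalisation of the classical description, due to Fontaine and Cherbonnier--Colmez, of cyclotomic Iwasawa cohomology by the $\psi$-complex of a $(\phi,\Gamma)$-module; the plan is to reduce to Galois cohomology of an induced module and then ``integrate out'' the $\Gamma_L$-direction. First one uses Shapiro's lemma for the tower $L_\infty/L$ to rewrite
$$ R\Gamma_{\text{Iw}}(L_\infty/L,S) \cong_{\text{q-iso}} R\Gamma\big(G_L,\ S\,\widehat{\otimes}_{\mathcal{O}_L}\,\mathcal{O}_L[[\Gamma_L]]^{\iota}\big), $$
where $\iota$ is the involution inverting $\Gamma_L$ and the transition maps defining the left-hand side are corestrictions. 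Next one applies the Kisin--Ren equivalence (Theorem \ref{LT_kisin-ren}) together with its compatibility with twisting by characters of $\Gamma_L$ (the cited \citep[Remark 4.6]{SV15}), which identifies $D_{\text{LT}}(S\,\widehat{\otimes}\,\mathcal{O}_L[[\Gamma_L]]^{\iota})$ with $D_{\text{LT}}(S)\,\widehat{\otimes}_{\mathcal{O}_L}\,\mathcal{O}_L[[\Gamma_L]]^{\iota}$ carrying suitably twisted $\phi_L$ and $\Gamma_L$ actions, and then computes the right-hand side through the generalised Herr complex of this $(\phi_L,\Gamma_L)$-module.

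The core of the argument is the collapse of the $\Gamma_L$-part of that complex. Because we induced from the trivial subgroup, $\mathcal{O}_L[[\Gamma_L]]^{\iota}$ is co-induced, so the Koszul complex on a topological generating set $\gamma_1-1,\dots,\gamma_d-1$ ($d=[L:\mathbb{Q}_p]$, after splitting off the prime-to-$p$ torsion of $\Gamma_L$) of the augmentation ideal computing $R\Gamma\big(\Gamma_L,\,D_{\text{LT}}(S)\widehat{\otimes}\mathcal{O}_L[[\Gamma_L]]^{\iota}\big)$ is a resolution of $D_{\text{LT}}(S)$ concentrated in degree $0$; here one uses flatness of $D_{\text{LT}}(S)\widehat{\otimes}\mathcal{O}_L[[\Gamma_L]]^{\iota}$ over $\mathcal{O}_L[[\Gamma_L]]$. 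What survives is a two-term complex built from a single Frobenius-type operator, and the remaining point is to identify it as the canonical trace operator $\psi$ rather than $\phi$: passing to the inverse limit over corestriction, as opposed to the direct limit over restriction, is exactly what exchanges $\phi$ for $\psi$ under the $(\phi,\Gamma)$-module dictionary. One makes this precise through the four-term exact sequence $0\to D^{\psi=1}\to D\xrightarrow{\psi-1}D\to D/(\psi-1)D\to 0$ and its compatibility with the corestriction maps $L_{n+1}\to L_n$, using that $\psi$ is the adjoint of $\phi$ with respect to the residue pairing on $\mathbb{B}_L$. This is the Lubin--Tate analogue of the computation carried out in the cyclotomic case and is the technical heart of the proof.

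Finally one must account for the twist $S\rightsquigarrow S(\tau^{-1})$ with $\tau^{-1}=\chi_{\text{LT}}\chi_{\text{cyc}}^{-1}$ and for the scalar normalisation $\psi\circ\phi = p^{[L:\mathbb{Q}_p]}/\varpi_L$. Both have the same origin: the residue pairing on $\mathbb{B}_L$ that makes $\psi$ canonical involves the invariant differential of the Lubin--Tate group law, on which $\Gamma_L$ acts through $\chi_{\text{LT}}$, whereas the cyclotomic Tate twist that intervenes in local duality transforms by $\chi_{\text{cyc}}$; reconciling the two forces the extra twist by $\tau^{-1}$, and the constant appearing in that pairing (a ramification and residue-degree correction, equal to $p$ when $L=E_p$ and $\varpi_L=p$) pins down the scaling of $\psi$. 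When $L=\mathbb{Q}_p$ one has $\tau=1$ and scalar $1$, so the statement recovers the classical one. I expect the main obstacle to be precisely this bookkeeping around the Lubin--Tate $\psi$-operator — constructing it with the correct normalisation, proving that corestriction is implemented by it, and tracking $\tau$ — since all of this is invisible in the cyclotomic theory; a secondary difficulty is that $\Gamma_L$ has $\mathbb{Z}_p$-rank $d>1$, so the Herr complex is a genuine Koszul complex rather than a three-term complex, and one needs finiteness and flatness to run the collapse argument cleanly. This is exactly the content of \citep[Theorem 5.2.51]{KV22}, which we invoke.
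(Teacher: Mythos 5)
The paper gives no proof of this theorem: it is quoted verbatim from \citep[Theorem 5.2.51]{KV22} and simply invoked as an external input, so there is nothing internal to compare against. Your sketch (Shapiro's lemma to an induced module, Kisin--Ren $(\phi,\Gamma)$-module dictionary, collapse of the $\Gamma_L$-Koszul complex on the coinduced factor, identification of the surviving operator as the trace $\psi$ via the residue pairing, with the $\tau$-twist and $p^{d}/\varpi_L$ normalisation arising from reconciling Lubin--Tate and Tate duality) is a reasonable outline of the standard argument behind the cited result, and your concluding invocation of \citep{KV22} matches exactly what the paper does.
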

		
		\par This allows us to compare our Lubin--Tate Iwasawa cohomology over $\Lambda$ with constructions from $p$-adic Hodge theory that have proved useful in the $\mathbb{Q}_p$-analytic case. The twist by $\tau$ appears here again, which is not a coincidence. However, we still need an analogous comparison using $L$-analytic families of $(\phi_L,\Gamma_L)$-modules (i.e. modules over the Robba ring $\mathcal{R}_L$) using Definition \ref{analytic_kisin-ren}. One needs to be careful here, as it is stated in terms of an endomorphism $\Psi=\tfrac{\varpi_L}{p^{[L:\mathbb{Q}_p]}}\psi$ such that $\Psi \circ \phi = 1$. This scaled $Psi$-operator is accounted for as a twist by a ramified non-etale $L$-analytic character in \citep[Lemma 3.10]{Ste22b} (hence this $\Psi$ operator which may appear more natural on $(\Phi,\Gamma)$-modules does not come from the underlying Galois representations.
		
		%State + Explain this in more detail
		%Need rigid analytic Kisin-Ren equivalence defined properly (+ notation fixed)
		
		\begin{defn} \label{comp_defn} For any co-$L$-analytic Galois representation $S$, we have a natural comparison map defined in \citep[\S5]{Ste22b}
			$$ \text{comp}: R\Gamma_{Iw}(L_\infty, S) \otimes \Lambda_\infty^{L} \rightarrow  D^{\dagger}_{\text{rig}}(S(\tau^{-1}))[\Psi-1] $$
		\end{defn}
		
		In the classical case $L=\mathbb{Q}_p$, this is known to be a quasi-isomorphism by work of Colmez \citep[V.1.18]{Col10} which is an essential input in the work of Pottharst. We can compute the Euler characteristics of complexes on each side and we will show this can no longer be a quasi-isomorphism as soon as $L \neq \mathbb{Q}_p$. A conjecture of Steingart claims this natural map to be a surjection after taking $H^1$; this tells all classes of the $\Psi-1$ complex necessarily come from Galois cohomology classes. There is a discussion of the difficulty of proving this in generality in \citep[\S5]{Ste22b}.
		
		\begin{conj} \textup{\citep[Conjecture 5.3]{Ste22b}} \label{steingart_conj}
			Suppose $S^*(1)$ is $L$-analytic. Then the map $\mathrm{comp}$ is surjective after taking $H^1$, i.e.
			$$ H^1_{\text{Iw}}(L_\infty/L,S) \otimes_\Lambda \Lambda_\infty^L \twoheadrightarrow D^{\dagger}_{\text{rig}}(S(\tau^{-1}))^{\Psi=1}. $$ 
		\end{conj}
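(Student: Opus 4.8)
The plan is to reduce the statement to a purely $(\phi,\Gamma)$-module-theoretic surjectivity and then to establish it by combining overconvergence of $\psi$-invariants, a descent along the Lie-algebra operator $\overline{\nabla}$, and a rank count over $\Lambda_\infty^L$.

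First I would rewrite both sides in terms of $(\phi,\Gamma)$-modules. By Theorem \ref{LT_iwasawa} the Iwasawa cohomology complex $R\Gamma_{\text{Iw}}(L_\infty/L,S)$ is quasi-isomorphic to $D_{\text{LT}}(S(\tau^{-1}))[\psi-1]$, so $H^1_{\text{Iw}}(L_\infty/L,S)$ is identified with the $\psi$-invariants $D_{\text{LT}}(S(\tau^{-1}))^{\psi=1}$. The twist by $\tau=\chi_{\text{LT}}^{-1}\chi_{\text{cyc}}$ is exactly where the hypothesis enters: a Hodge--Tate weight computation shows that $S^*(1)$ being $L$-analytic is equivalent to $S(\tau^{-1})$ being $L$-analytic, so that Definition \ref{analytic_kisin-ren} applies and $D^{\dagger}_{\text{rig}}(S(\tau^{-1}))$ is a genuine $L$-analytic $(\phi_L,\Gamma_L)$-module over $\mathcal{R}_L$. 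The map $\mathrm{comp}$ is built from the fact that $\psi$-fixed vectors are overconvergent, so that $D_{\text{LT}}(S(\tau^{-1}))^{\psi=1}=D^{\dagger}_{\text{LT}}(S(\tau^{-1}))^{\psi=1}$ embeds into $D^{\dagger}_{\text{rig}}(S(\tau^{-1}))$; tracking the scaled operator through the ramified non-\'etale $L$-analytic twist of \citep[Lemma 3.10]{Ste22b} relating $\psi$ and $\Psi$, the assertion becomes: the $\Lambda_\infty^L$-linear extension $D_{\text{LT}}(S(\tau^{-1}))^{\psi=1}\otimes_\Lambda\Lambda_\infty^L\to D^{\dagger}_{\text{rig}}(S(\tau^{-1}))^{\Psi=1}$ is surjective.

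Now I would argue surjectivity in three moves. (a) Using the Fr\'echet--Stein presentation $\Lambda_\infty^L=\varprojlim_n\Lambda_n^L$ from Remark \ref{padic_fourier} and coadmissibility, reduce surjectivity in the limit to surjectivity at each affinoid level together with a closed-image/Mittag-Leffler argument, as in Pottharst's \citep{Pot12} treatment of the cyclotomic case; if moreover $S$ is trianguline (as in the form of the conjecture stated in the introduction), one may further d\'evisser along a triangulation of $D^{\dagger}_{\text{rig}}(S(\tau^{-1}))$ to the rank-one case over $\mathcal{R}_L$, where $\psi$- and $\Psi$-invariants of rank-one Lubin--Tate $(\phi,\Gamma)$-modules can be computed explicitly. (b) Relate the $L$-analytic Robba module to an ambient (overconvergent, merely $\mathbb{Q}_p$-analytic) $(\phi,\Gamma)$-module: by Lemma \ref{lie_element}, and its generalisation via the operators of \citep[\S2]{Ber16} when $[L:\mathbb{Q}_p]>2$, the $L$-analytic module is cut out by the Koszul complex of $\overline{\nabla}$, and taking $\Psi$-cohomology of that Koszul complex relates $D^{\dagger}_{\text{rig}}(S(\tau^{-1}))^{\Psi=1}$ to the corresponding invariants of the ambient module, for which the comparison with Iwasawa cohomology is the known cyclotomic statement \citep[V.1.18]{Col10}. (c) Compute ranks over $\Lambda_\infty^L$: the source has generic rank $[L:\mathbb{Q}_p]\dim S$, the target generic rank $\dim S$, and the difference $([L:\mathbb{Q}_p]-1)\dim S$ is exactly the conjectural kernel rank, so the cokernel is a torsion coadmissible $\Lambda_\infty^L$-module; one then kills it by checking vanishing at closed points of $\mathfrak{X}_L$, where it reduces to a finite statement about the Galois cohomology of a specialisation of $S(\tau^{-1})$.

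The hard part will be move (b) --- controlling the cokernel of the $\overline{\nabla}$-descent. Because $\mathfrak{X}_L\hookrightarrow\mathfrak{X}$ is closed but not open, the map $\Lambda_\infty\to\Lambda_\infty^L$ is not flat (Remark \ref{padic_fourier}), so one cannot simply base change Colmez's cyclotomic quasi-isomorphism; the Koszul differential for $\overline{\nabla}$ contributes a potential $H^2$-type term to the cokernel, and ruling it out requires genuinely new information about how $\psi$-invariants interact with the Lie-algebra operators. This is precisely the discrepancy between locally $L$-analytic and overconvergent representations identified by Steingart, and is why the assertion remains a conjecture. A secondary subtlety, easy to mishandle, is keeping the scaled operator $\Psi$ and its associated ramified twist consistent throughout the reduction.
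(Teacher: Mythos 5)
The statement you are trying to prove is explicitly a \emph{conjecture} in the paper --- it is Steingart's Conjecture 5.3 (\citep[Conjecture 5.3]{Ste22b}), imported wholesale and not proved. Immediately after stating it the author writes that it ``is more accessible for rank 1 $(\phi,\Gamma)$-modules,'' that upcoming joint work with Steingart \citep{MR25} ``will study surjectivity of the comparison map when $S^*(1)$ is a trianguline $L$-analytic representation,'' and that ``for this current work we will assume this as a Conjecture.'' The paper even remarks that the assumption is not strictly necessary for the downstream results, only convenient for suppressing cokernel error terms. So there is no ``paper's own proof'' to compare your argument against.

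Given that, your proposal is best read as a strategy sketch, and as such it is a reasonable one; in particular, your identification of the hypothesis is correct (the Hodge--Tate weight bookkeeping showing that $S^*(1)$ $L$-analytic $\Leftrightarrow$ $S(\tau^{-1})$ $L$-analytic is right, since $\tau=\chi_{\text{LT}}^{-1}\chi_{\text{cyc}}$ carries weight $0$ at the identity embedding and $1$ elsewhere), and your remark about keeping $\psi$ and the scaled $\Psi$ consistent via the ramified non-\'etale twist of \citep[Lemma 3.10]{Ste22b} is a real pitfall worth flagging. Your d\'evissage via a triangulation is also exactly the direction the paper points to as ``more accessible.'' However, you have correctly diagnosed that move (b) is a genuine gap, not a routine step: the non-flatness of $\Lambda_\infty\to\Lambda_\infty^L$ (the closed-but-not-open inclusion $\mathfrak{X}_L\hookrightarrow\mathfrak{X}$ of Remark~\ref{padic_fourier}) means one cannot simply base change Colmez's cyclotomic comparison, and the $\overline{\nabla}$-Koszul differential produces a cokernel term that is precisely the locally-analytic-versus-overconvergent discrepancy. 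Ruling that term out is the entire content of the conjecture; it is not something that follows from a rank count or a Mittag--Leffler argument. Your own final paragraph essentially says as much, and you should not present (a)--(c) as a proof: move (c) (vanishing of a torsion cokernel by checking closed points) presupposes what must be shown. In short, your honest assessment matches the paper's: the statement is open, the obstruction sits exactly where you say it does, and the intended route to partial results is through the rank-one/trianguline case.
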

		
		This conjecture is more accessible for rank 1 $(\phi,\Gamma)$-modules, which would be sufficient for our applications, or more generally trianguline $S$ since the author of op. cit. has developed further results in this setting. We define the trianguline condition here, emulating the Definition \ref{def_ord} but in the larger category of $(\phi,\Gamma)$-modules. 
		
		\begin{defn} \label{triang}
			An overconvergent representation $S$ is called trianguline if $D^{\dagger}_{\text{rig}}(S(\tau^{-1}))$ admits a full flag of $(\phi,\Gamma)$-submodules over $\mathcal{R}_K$ with graded pieces given by $\mathcal{R}_K(\delta)$ for characters $\delta:L^\times \rightarrow K^\times$.
		\end{defn}

        \par Upcoming work joint with Rustam Steingart \citep{MR25} will study surjectivity of the comparison map when $S^*(1)$ is a trianguline $L$-analytic representation, and establish some consequences. For this current work we will assume this as a Conjecture \ref{steingart_conj}. We remark that this conjecture is not necessary in the following, but without the assumption we have to carry around the cokernel term in exact sequence computations, leading to more error terms (which we expect to vanish anyway). The conjecture also allows us to compute the ranks of the Selmer groups we define, and in particular shows that the Euler characteristics of the underlying Selmer complexes match what we expect.
		
		\par For trianguline $S$ we can compute how large the kernel $\mathfrak{K} = \ker(\text{comp})$ is using the conjecture. The following result is from discussions with David Loeffler and Rustam Steingart, also mentioned in \citep[\S5]{Ste22b}. In order to use results of Steingart computing the rank of $D^{\dagger}_{\text{rig}}(S(\tau^{-1}))[\Psi-1] $ when $S$ is a trianguline co-$L$-analytic representation, we will need to restrict from the full Lubin--Tate Galois group $\Gamma_L$ to a torsion-free subgroup $U\cong \mathbb{Z}_p^2.$ We recall that for a $\Lambda_\infty^L$-module $M$, we have a decomposition $$ M = \oplus_\delta e_\delta M =: \oplus_\delta M_\delta, $$ as before for $\delta$ ranging over the (finitely many) characters of $\Delta$.
		
		\begin{lem} \label{steingart_kernel}
			For $L$ a finite extension of $\mathbb{Q}_p$, $S$ a trianguline co-$L$-analytic representation, assume $H^0(L,S)=H^2(L,S)=0$.Then $\mathfrak{K} = \ker(\text{comp})$ is a $\Lambda_\infty^L$-module of rank $([L:\mathbb{Q}_p]-1)\text{dim}(S)$.
		\end{lem}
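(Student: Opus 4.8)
The plan is to combine Conjecture \ref{steingart_conj} with additivity of $\Lambda_\infty^L$-rank in short exact sequences, so that the statement reduces to two separate rank computations: the rank over $\Lambda_\infty^L$ of the base change of local Iwasawa cohomology, and the rank over $\Lambda_\infty^L$ of the $\Psi=1$-invariants of the associated Robba-ring $(\phi,\Gamma)$-module. First we would pass to a single connected component by applying an idempotent $e_\delta$: by Remark \ref{char_prufer} the algebra $\Lambda_{\infty,\delta}^L$ is then an integral (Pr\"ufer) domain, so $\mathrm{rank}_{\Lambda_\infty^L}$ is well defined as the dimension at its generic point and the assertion may be checked componentwise. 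Granting Conjecture \ref{steingart_conj}, the comparison map $\mathrm{comp}$ of Definition \ref{comp_defn} is surjective on $H^1$, giving an exact sequence of $\Lambda_\infty^L$-modules
\[ 0 \longrightarrow \mathfrak{K} \longrightarrow H^1_{\text{Iw}}(L_\infty/L,S)\otimes_\Lambda \Lambda_\infty^L \longrightarrow D^{\dagger}_{\text{rig}}(S(\tau^{-1}))^{\Psi=1} \longrightarrow 0, \]
and hence $\mathrm{rank}_{\Lambda_\infty^L}\mathfrak{K} = \mathrm{rank}_{\Lambda_\infty^L}\big(H^1_{\text{Iw}}(L_\infty/L,S)\otimes_\Lambda \Lambda_\infty^L\big) - \mathrm{rank}_{\Lambda_\infty^L} D^{\dagger}_{\text{rig}}(S(\tau^{-1}))^{\Psi=1}$.

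For the middle term we would show the rank is $[L:\mathbb{Q}_p]\dim S$. By Theorem \ref{LT_iwasawa} the complex $C := R\Gamma_{\text{Iw}}(L_\infty/L,S)$ is quasi-isomorphic to $D_{\text{LT}}(S(\tau^{-1}))[\psi-1]$; it is a perfect complex of $\Lambda$-modules in degrees $1$ and $2$ with $H^2(C)=H^2_{\text{Iw}}$ torsion, and the local Euler characteristic formula for Iwasawa cohomology over the Lubin--Tate tower, together with $H^0(L,S)=H^2(L,S)=0$ (which gives $H^0_{\text{Iw}}=0$ and places the entire Euler characteristic in degree $1$), gives $H^1(C)=H^1_{\text{Iw}}$ of $\Lambda$-rank $[L:\mathbb{Q}_p]\dim S$. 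Since $C$ is perfect, the alternating sum of the ranks of a finite free representative is preserved under the derived base change $-\otimes^{\mathbb{L}}_\Lambda\Lambda_\infty^L$; working at the generic point of $\Lambda_{\infty,\delta}^L$ and reading off the universal-coefficient spectral sequence, the cohomology of $C\otimes^{\mathbb{L}}_\Lambda\Lambda_\infty^L$ in degrees $0$ and $2$, as well as the excess $\mathrm{Tor}_1^\Lambda(H^2_{\text{Iw}},\Lambda_\infty^L)$-contribution to $H^1$, are all torsion, which forces $\mathrm{rank}_{\Lambda_\infty^L}\big(H^1_{\text{Iw}}\otimes_\Lambda\Lambda_\infty^L\big)=[L:\mathbb{Q}_p]\dim S$. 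The delicate point is that $\Lambda\hookrightarrow\Lambda_\infty^L$ is not flat, so one must rule out the possibility that a torsion submodule of $H^1_{\text{Iw}}$ or $H^2_{\text{Iw}}$ is supported along the whole of $\mathfrak{X}_L$; this is controlled by the trianguline structure, which reduces the question to the rank-one case where $H^0_{\text{Iw}}$ (hence the relevant torsion) vanishes under the running hypotheses.

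For the third term we would invoke the trianguline hypothesis (Definition \ref{triang}) to reduce to rank one: $D^{\dagger}_{\text{rig}}(S(\tau^{-1}))$ has a full flag over $\mathcal{R}_K$ with graded pieces $\mathcal{R}_K(\delta_i)$ for $1\le i\le\dim S$, and since $\Psi=1$-invariants are left exact, $D^{\dagger}_{\text{rig}}(S(\tau^{-1}))^{\Psi=1}$ is, up to a rank-zero subquotient, built from the $\mathcal{R}_K(\delta_i)^{\Psi=1}$. For each rank-one piece, Steingart's calculation \citep[\S5]{Ste22b} yields $\mathrm{rank}_{\Lambda_\infty^L}\mathcal{R}_K(\delta)^{\Psi=1}=1$, keeping track of the scaling between $\psi$ and $\Psi=\tfrac{\varpi_L}{p^{[L:\mathbb{Q}_p]}}\psi$, which by \citep[Lemma 3.10]{Ste22b} is a twist by a non-\'etale $L$-analytic ramified character and leaves the rank unchanged. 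Summing over $i$ gives $\mathrm{rank}_{\Lambda_\infty^L}D^{\dagger}_{\text{rig}}(S(\tau^{-1}))^{\Psi=1}=\dim S$, and substituting into the displayed difference yields $\mathrm{rank}_{\Lambda_\infty^L}\mathfrak{K}=[L:\mathbb{Q}_p]\dim S-\dim S=([L:\mathbb{Q}_p]-1)\dim S$.

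The main obstacle I expect is this last computation. The equality $\mathrm{rank}_{\Lambda_\infty^L}D^{\dagger}_{\text{rig}}(S(\tau^{-1}))^{\Psi=1}=\dim S$ is exactly the discrepancy between Iwasawa cohomology and the Robba-ring $\Psi=1$-invariants that disappears when $L=\mathbb{Q}_p$: the Herr complex $D^{\dagger}_{\text{rig}}(S(\tau^{-1}))[\Psi-1]$ has rank Euler characteristic $-\dim S$ rather than the Galois-theoretic value $-[L:\mathbb{Q}_p]\dim S$, and extracting this from Steingart's work in the trianguline (not merely rank-one) case, with the correct $\psi/\Psi$ normalisation, is where the substantive input lies. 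The non-flatness of $\Lambda\hookrightarrow\Lambda_\infty^L$ entering the middle-term computation is a secondary, more technical, point.
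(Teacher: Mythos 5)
Your overall architecture is the same as the paper's: assume Conjecture \ref{steingart_conj} to obtain a short exact sequence
\[ 0 \to \mathfrak{K} \to H^1_{\text{Iw}}(L_\infty/L,S)\otimes_\Lambda \Lambda_\infty^L \to D^{\dagger}_{\text{rig}}(S(\tau^{-1}))^{\Psi=1} \to 0 \]
and compute the ranks of the two outer terms. For the right-hand term the paper simply cites \citep[Prop.\,4.7]{Ste22b}, applied componentwise via $e_\delta$, to get rank $\dim S$ directly for the trianguline module; your manual reduction along the trianguline flag gets the same answer and is a reasonable (if more laborious) route, and you correctly flag the $\psi$ versus $\Psi$ normalisation.

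The genuine gap is in your computation of the middle term. You treat $H^1_{\text{Iw}}$ as merely a finitely generated $\Lambda$-module whose torsion might be supported along all of $\mathfrak{X}_L$, and you then claim this is ``controlled by the trianguline structure'' via a reduction to the rank-one case. That reduction does not work: the trianguline flag on $D^\dagger_{\text{rig}}(S(\tau^{-1}))$ does not induce a filtration on $H^1_{\text{Iw}}(L_\infty/L,S)$ whose graded pieces are the Iwasawa cohomologies of the rank-one pieces (there are connecting maps), and even if it did, $H^0_{\text{Iw}}$ being zero is not a rank-one phenomenon and has nothing to do with where torsion of $H^1_{\text{Iw}}$ is supported. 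The paper sidesteps the entire issue by observing that, under Assumption \ref{assumption_V_eta} (vanishing of $H^0$ and $H^2$ for all subquotients of all twists), the complex $R\Gamma_{\text{Iw}}(L_\infty/L,S)\simeq R\Gamma(L,S\otimes_{\mathcal{O}_L}\Lambda)$ has $H^0_{\text{Iw}}=H^2_{\text{Iw}}=0$ by Nakayama, so it is concentrated in degree $1$; combined with the constancy of the fiber dimension $\dim_K H^1(L,S(\eta^{-1}))=[L:\mathbb{Q}_p]\dim S$ coming from the local Euler characteristic formula, $H^1_{\text{Iw}}$ is a \emph{free} $\Lambda$-module of rank $[L:\mathbb{Q}_p]\dim S$. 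For a free module the non-flatness of $\Lambda\hookrightarrow\Lambda_\infty^L$ is irrelevant and the base-changed rank is immediate, which is precisely the simplification your argument is missing.
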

		
		\begin{proof}
			\par By \citep[Prop 4.7]{Ste22b}, for each character $\delta$ of $\Gamma_{\text{tors}}$, taking $U=e_\delta \Gamma$ we have $$\text{rank}_{e_\delta \Lambda_\infty^L} e_\delta D^{\dagger}_{\text{rig}}(S(\tau^{-1}))^{\Psi=1} \enspace = \text{dim} \enspace S.$$ We sum these together to find $$\text{rank}_{\Lambda_\infty^L} D^{\dagger}_{\text{rig}}(S(\tau^{-1}))^{\Psi=1} \enspace = \text{dim} \enspace S.$$ It remains to compute the ranks of the Iwasawa cohomology groups.
			\par By \citep[Lemma 5.8]{SV15} we have a quasi-isomorphism of complexes of $\Lambda$-modules:
			$$ R\Gamma_{\text{Iw}}(L_\infty/L,S) \cong_{\text{q-iso}} R\Gamma(L,S \otimes_{\mathcal{O}_L} \Lambda). $$ By vanishing of local $H^0$ and $H^2$ Galois cohomology of Assumption \ref{assumption_V_eta}, this complex is concentrated in degree 1 and its $H^1$ is free. Since $\mathcal{O}_L \rightarrow \Lambda$ is (faithfully) flat, we have 
			$$ \mathrm{rank}_\Lambda (H^1_{\text{Iw}}(L_\infty/L,S)) = \dim_L(H^1(L,S)) = [L:\mathbb{Q}_p] \dim{S} $$
            a free $\Lambda$-module. After base changing by $\Lambda_\infty^L$ we have the same dimension by freeness and thus get the result.
		\end{proof}
		
		\par We can think of this argument in the following way; in the cyclotomic setting ($L=\mathbb{Q}_p$) there is a short exact sequence $$ 0 \rightarrow D_{\text{rig}}^\dagger(S(\tau^{-1}))^{\phi=1} \rightarrow D^\dagger_{\text{rig}}(S(\tau^{-1}))^{\psi-1} \xrightarrow{1-\phi} \mathcal{C}(S(\tau^{-1})) \rightarrow 0 $$ 
		where $\mathcal{C}(S(\tau^{-1}) = (1-\phi)D_{\text{rig}}(S(\tau^{-1}))^{\psi-1}$ is called the heart (coeur in Colmez's work) in existing literature (note that in this setting $\psi=\Psi$ too). 
	   \par Colmez uses freeness of the heart of the rank to compare the $(\phi,\Gamma)$-module over the cyclotomic Robba ring with Iwasawa cohomology, see for example \citep[Theorem I.5.2, Theorem V.1.18]{Col10}. In the $L$-analytic setting when $L \neq \mathbb{Q}_p$ we do not have a similar argument deducing freeness of the heart $ \mathcal{C}_{\text{rig}}$. This is why Colmez's arguments have not been generalised to compare the $\Psi-1$ invariants of the Robba ring $(\phi, \Gamma)$-module and Iwasawa cohomology for a co-$L$-analytic representation (and why we must assume the conjecture). We will denote this kernel by $\mathfrak{K}=\text{ker}(\text{comp})$, and any time it is mentioned we assume Conjecture \ref{steingart_conj} so we can use dimension counting arguments such as the above Lemma.

		\begin{rmk} \label{psi_euler_char}
			It will be useful for us to consider the Euler characteristic of the complex $C^\bullet = M[\Psi-1]$ for an $L$-analytic $(\phi, \Gamma)$-module over $\mathcal{R}_L$; when $L \neq \mathbb{Q}_p$ we can consider this as a complex of modules over both the $L$-analytic and $\mathbb{Q}_p$-analytic distribution algebras. We know from \citep[Proposition 4.7]{Ste22b} that $$\chi_{\Lambda_\infty^L}(C^\bullet) = \text{rank}_{\Lambda_\infty^L}(M^{\Psi=1}) - \text{rank}_{\Lambda_\infty^L}(M/(\Psi-1)M)  = 1. $$
			And since $M$ is $L$-analytic, it is torsion under the action of $\overline{\nabla}$ from Remark \ref{cauchy_riemann} and therefore 
			$$\chi_{\Lambda_\infty}(C^\bullet) = \text{rank}_{\Lambda_\infty}(M^{\Psi=1}) - \text{rank}_{\Lambda_\infty}(M/(\Psi-1)M) = 0. $$
			The difference between the Euler characteristic of $C^\bullet$ over these two different rings will be an important observation later. \lozengeend
		\end{rmk}
		
		\par For the purpose of this work, $(\phi, \Gamma)$-modules and Iwasawa cohomology are tools used to study our Galois representations, as such we need to find comparisons between the cohomology of certain complexes of a $(\phi,\Gamma)$ module $D$ and the Galois cohomology of a $p$-adic Galois representation $W$ when $D = D_{\text{rig}}^\dagger(W)$. In the classical setting of $G_{\mathbb{Q}_p}$-representations this is straightforward and can be seen in \citep{Her98}. When dealing with $L$-analyticity we have already seen the disconnect between Iwasawa cohomology and the $\Psi=1$ complex; we will now `descend' to the level of Galois cohomology and Herr complexes and see how the comparison holds up. In particular we will distinguish between continuous and analytic Herr complexes; whilst in the $G_{\mathbb{Q}_p}$-case they coincide and are both quasi-isomorphic to the complex of continuous cochains computing Galois cohomology, in the Lubin--Tate setting neither one computes Galois cohomology, demonstrated in \citep[Remark 4.7]{Ste23} where the author shows a discrepancy in their Euler characteristics. In degree 1 the respective continuous and analytic $\phi$-complexes compute overconvergent and analytic extension classes inside Galois cohomology, discussed in more detail in Remark \ref{analytic_ext}. This gives two ways of `descending' the comparison map above.
		
		\par First we deal with the continuous Herr complex, which we denote $C^{\text{cts}}_{f, \Gamma}(D)$ for $D$ a $(\phi, \Gamma)$-module over $\mathcal{R}_L$, $f$ a continuous $L$-linear operator on $D$ and $\Gamma=\Gamma_L$, following the constructions in \citep[\S5.2.3]{SV23} where it is denoted by $\mathcal{T}_{f, G}(D)$, the mapping cone of $f$ on the complex of continuous cochains, where $G=\Gamma$.
		We will denote its cohomology by $H^i_{f, \Gamma}(D)=h^i(C^{\text{cts}}_{f, \Gamma}(D)). $
		
		\par We will only use this complex for $f=\phi$, $\psi$ or $\Psi$, and therefore it is useful to present definitions of some quasi-isomorphic complexes which are more useful in computation. By \citep[(171)]{SV23}, in the $f=\phi$ case one can compute $$C^{\text{cts}}_{\phi, \Gamma}(D) \cong_{\text{q-iso}} K_{\phi,\Gamma}(D) $$ for a certain Koszul complex $K_{f,D}$ which we therefore define below. Let $\Gamma$ be topologically generated by $\gamma_1, \dots \gamma_d$ for $d=[L:\mathbb{Q}_p]$. The Koszul complex is defined by $$ K_{f,\Gamma}(D)= \text{Cone}_{f-1}\left(\bigotimes^d_{i=1} [D \xrightarrow{\gamma_i-1} D] \right)[1]. $$ Explicitly, when $d=2$ this complex can be written as \[ 0 \rightarrow D \otimes D \xrightarrow{A} (D \otimes D)^{\oplus 3} \xrightarrow{B} (D \otimes D)^{\oplus 3} \xrightarrow{C} D \otimes D \] where $$A = \begin{pmatrix}
			1- \gamma_1\\
			1-\gamma_2\\
			f-1
		\end{pmatrix}, \qquad B = \begin{pmatrix}
			1- \gamma_2 & \gamma_1-1 & 0 \\
			\Psi-1 & 0 & \gamma_2-1 \\
			0 & f-1 & \gamma_1-1
		\end{pmatrix},  \qquad C = \begin{pmatrix} f-1 & \gamma_2-1 & 1-\gamma_1  \end{pmatrix}. $$
		
		We have a corresponding comparison of the $\Psi$ complex from \citep[(173)]{SV23b} as 
		$$ C^{\text{cts}}_{\Psi, \Gamma}(D)) \cong_{\text{q-iso}} K_{\Psi, \Gamma}(D^*(\chi_{\text{LT}}))[d+1] $$ where $d=[L:\mathbb{Q}_p]$. In our case this is a degree shift by 3 which ensures that both the continuous $\phi$ and $\Psi$ complexes concentrated in degrees [0,3].

		\par This definition doesn't capture $L$-analytic information and we therefore need to define an analytic variant, which comes from \citep[\S3.1]{Ste22a}. After extending scalars to $K/\widehat{L_\infty}$ we can define the element $Z \in \Lambda_\infty^{L} \hat{\otimes}_L K$ as the preimage of $T$ in the isomorphism $\Lambda_\infty^L \hat{\otimes}_L K \cong \mathcal{R}_K^{[0,1)}$ presented in op. cit. This $Z$ is in effect a replacement for the topological generator of $\Gamma_{\mathbb{Q}_p}$ used in the cyclotomic case - we can think of this as choosing one `analytic generator' of the $\mathbb{Z}_p^2$ extension rather than two `continuous' topological generators. However this necessarily uses the $p$-adic Fourier transform and therefore needs such a transcendental base extension.  If we assume $e(L/\mathbb{Q}_p)<p-1$ (which is true in our application as we have assumed $p \geq 5$) and we recall the notation of the torsion subgroup $\Delta$ of $\Gamma_L$. For $D$ a $(\phi, \Gamma)$-module over $\mathcal{R}_K$, $f$ a continuous $L$-linear operator on $D$, we can define
		$$ C^{\text{an}}_{f, \Gamma}(D)= [0 \rightarrow D^\Delta \xrightarrow{(f-1, Z)} D^\Delta \oplus D^\Delta  \xrightarrow{Z \oplus (1-f)} D^\Delta \rightarrow 0] $$ where again $f$ is usually either $\phi$, $\psi$ or $\Psi$. Since $Z$ is defined using the $p$-adic Fourier transform, the analytic Herr complex is necessarily a complex of $\widehat{L_\infty}$-vector spaces for $\widehat{L_\infty}$   - and when discussing analytic cohomology we need to extend scalars to an extension of this. To ensure this is satsfied, from now we will work over $\mathcal{R}_K$ specifically for $K/\widehat{L_\infty}$, rather than $\mathcal{R}_L$.
		\par We use $\mathcal{H}$ to denote analytic cohomology groups to avoid conflation with the continuous cohomology groups. If $U_\delta$ is the $\mathbb{Z}_p^2$ extension cut out by the character $\delta$ of $\Delta$, we write $$\mathcal{H}^1_{f,\Gamma}(D) = \bigoplus_{\delta} \mathcal{H}^1_{f, U_\delta}(D) = \bigoplus_\delta h^1(C^{\text{an}}_{f, U_\delta}(D)) .$$ By applying the $e_\delta$ operator we therefore return to one of these components. Note that these cohomology groups are $K$-vector spaces, and therefore the $\Lambda_{\infty,K}^L$ action factors through a character $\Lambda_{\infty,K}^L \rightarrow K$ for $K/\widehat{L_\infty}$ large enough. In particular, such a character gives us a fixed choice of $\delta$ - and so in applications below we will be implicitly applying the idempotent $e_\delta$ determined by such a choice. We will therefore omit it from notation.

		\begin{rmk} \label{analytic_ext}
			From \citep[5.2.10]{SV23} we can understand the degree 1 cohomology groups of these two complexes. Suppose $D=D_{\text{rig}}^\dagger(W)$ for an $L$-analytic Galois representation $W$. There are isomorphisms \[ \mathcal{H}^1_{\phi, \Gamma}(D) \xrightarrow{\sim} \mathrm{Ext}_{L-an}^1(L,W). \]
			\[  H^1_{\phi, \Gamma}(D) \xrightarrow{\sim} \mathrm{Ext}_{\dagger}^1(L,W) =: H^1_{\dagger}(L,W)  \]
			where $\mathrm{Ext}_{L-\text{an}}(L,W) \subset \mathrm{Ext}_{\dagger}(L,W) \subset H^1(L,W)$ denote the $L$-analytic extension classes inside overconvergent classes inside Galois cohomology.
			\par In the $\mathbb{Q}_p$-analytic setting these all coincide but they are generally strict inclusions when $L \neq \mathbb{Q}_p$. In \citep[Theorem 0.6]{FX12} the authors demonstrate examples of non-overconvergent $G_L$-representations, and for dimension reasons there exist overconvergent non-analytic ones too. This is a necessary part of the theory. If either of the complexes computed Galois cohomology, much of the upcoming work would be redundant. However, Theorem 0.2 of op. cit. demonstrates that $\mathrm{Ext}_{L-\text{an}}(L,W) = \mathrm{Ext}_{\dagger}(L,W)$ if $H^0(L,W)=0$. This condition for example follows from the our existing cohomology vanishing conditions in Assumption \ref{assumption_V_eta} when $V$ satisfies Assumption \ref{assumption_L-an} and $W=S(\tau^{-1}\eta^{-1})$. \lozengeend
		\end{rmk}
		
		\par Note that overconvergence is preserved by local Tate duality but $L$-analyticity for $L \neq \mathbb{Q}_p$ is not. We denote for an overconvergent co-$L$-analytic representation $S$ the quotients of the above extension groups using local Tate duality:
		\[  H^1(L,S) \twoheadrightarrow H^1_{\backslash\dagger}(L,S) := H^1_{\dagger}(L,S^*(1))^\vee  ,\]
		\[ H^1(L,S) \twoheadrightarrow \mathrm{Ext}_{L-\text{an}}(L,S^*(1))^\vee .\]
	We also denote the orthogonal complement of overconvergent classes with respect to Tate duality as $$ H^1_{\perp}(L,S)=H^1_\dagger(L,S^*(1))^\perp.$$ 
		
		\begin{rmk} \label{herr_phi_computations}
			\par The exact relationship between cohomology of the continuous and analytic $\phi$-complexes is known, given in \citep[Corollary 4.4]{Ste23}. In particular it shows us that for $\mathbb{Q}_{p^2}$-analytic $D$, if $D$ satisfies $\mathcal{H}^0_{\phi,\Gamma}(D)=\mathcal{H}^2_{\phi,\Gamma}(D)=0$, we have that $$H^0_{\phi,\Gamma}(D)=H^3_{\phi,\Gamma}(D)=0, \enspace H^1_{\phi,\Gamma}(D)=H^2_{\phi,\Gamma}(D)=\mathcal{H}^1_{\phi, \Gamma}(D). $$ 
			Using this, we can compute exactly the dimensions of all Herr complex cohomology groups when $D=\mathcal{R}_K(\delta)$ for a generic character. By `generic' we mean that $\delta$ is not a character of the form $x \mapsto \chi_{\text{LT}}^{-i}$ or $x \mapsto \chi_{\text{LT}}^{i}\chi_{\text{an}}$ for $i \in \mathbb{N}$; these are exactly the conditions of \citep[Theorem 5.19]{FX12} which put us in case $(c)$, and we therefore have that $$\dim _L \mathcal{H}_{\phi, \Gamma_L}^j(D)= \begin{cases}0, & j \neq 1 \\ 1, & j=1  \end{cases} \qquad . $$
			It therefore follows that 
			$$\dim _L H_{\phi, \Gamma_L}^j(D)= \begin{cases}0, & j \neq 1,2 \\ 1, & j=1,2\end{cases} \qquad . $$ \lozengeend
		\end{rmk}
		
		\begin{ass} \label{assumption_generic}
			The character $\eta$ is chosen so that the character associated to $S(\tau^{-1}\eta^{-1}))$ is not of the form $x \mapsto \chi_{\text{LT}}^{-i}$ or $x \mapsto \chi_{\text{LT}}^{i}\chi_{\text{an}}$ for $i \in \mathbb{N}$. We say a character is generic if it satisfies this. 
		\end{ass}
		\par Since $\mathcal{H}_{\phi, \Gamma_L}^0(D^{\dagger}_\text{rig}(S(\tau^{-1}))=H^0(L,S)$, the first condition that the character is not of the form $\chi_{\text{LT}}^{-i}$ follows from vanishing of local Galois cohomology (in Assumption \ref{assumption_V_delta}). However the second condition that it is not of the form $\chi_{\text{LT}}^i \chi_{\text{an}}$ is a new assumption we have to place, coming from analytic Herr complex cohomology. We will later state this duality and later see it does not preserve etale-ness.
		
		\par We have results above for the continuous and analytic $\phi$-complexes, and to translate these to results on respective $\Psi$-complexes we need duality pairings of these complexes. For the continuous setting, this is established by work of Schneider and Venjakob using $\chi_{\text{LT}}$ as a dualising character; this is analogous to local Tate duality, as the next Lemma will show, and accounts for the twist by $\tau=\chi_{\text{LT}}^{-1}\chi_{\text{cyc}}$ that keeps appearing.
		
		\begin{lem} \label{cts_herr_duality}
			There is a commutative diagram of perfect pairings, for co-$L$-analytic $G_{E_p}$-representation $S$:
			
			\[ \begin{tikzcd}
				H^1(E_p, S) & \times & H^1(E_p, S^*(1)) &  K \\
				H^1_{\Psi,\Gamma}(D_{\text{rig}}^{\dagger}(S(\tau^{-1}))) & \times & H^1_{\phi,\Gamma}(D_{\text{rig}}^{\dagger}(S^*(1))) & K \\
				H^1_{\backslash\dagger}(E_p,S) & \times & H^1_{\dagger}(E_p,S^*(1)) & K \\
				\arrow[from=1-1, to=2-1, "\text{pr}"]
				\arrow[from=2-3, to=1-3, hook, "i"]
				\arrow[from=1-3, to=1-4]
				\arrow[from=2-3, to=2-4]
				\arrow[from=3-3, to=3-4]
				\arrow["\sim", from=2-1, to=3-1]
				\arrow["\sim", from=2-3, to=3-3]
				\arrow[ equal, from=1-4, to=2-4]
				\arrow[ equal, from=2-4, to=3-4]
			\end{tikzcd} \]
			
		\end{lem}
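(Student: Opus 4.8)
The plan is to reduce the whole diagram to two inputs: the Schneider--Venjakob local duality for continuous Herr complexes over the Robba ring with dualising character $\chi_{\text{LT}}$, which supplies the perfect pairing in the middle row, and the identification of $\phi$-Herr cohomology with overconvergent extension classes from Remark~\ref{analytic_ext}, which supplies the right-hand column. The bottom row and the isomorphisms of the lower square then come out essentially by the definition of $H^1_{\backslash\dagger}$, and the only step carrying genuine content is the commutativity of the upper square.

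First I would fix $D=D^{\dagger}_{\text{rig}}(S(\tau^{-1}))$ and record that, since the Berger/Kisin--Ren functor commutes with duals and with twisting by $\Gamma_{E_p}$-characters and $\tau=\chi_{\text{LT}}^{-1}\chi_{\text{cyc}}$, one has $S(\tau^{-1})^{*}(\chi_{\text{LT}})=S^{*}(1)$ on the level of Galois representations, hence $D^{*}(\chi_{\text{LT}})\cong D^{\dagger}_{\text{rig}}(S^{*}(1))$; here one also checks that both $S(\tau^{-1})$ and $S^{*}(1)$ are $E_p$-analytic, the former by comparing Hodge--Tate weights (the weight of $\tau^{-1}$ at the non-identity embedding is $-1$, which cancels the weight $+1$ forced on $S$ by co-$E_p$-analyticity), so that $D^{\dagger}_{\text{rig}}$ applies to both. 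Schneider and Venjakob construct, from the cup product on the Herr complexes of \citep[\S5.2]{SV23} together with the residue/trace map attached to $\chi_{\text{LT}}$, a perfect pairing of $K$-vector spaces
\[ H^1_{\Psi,\Gamma}(D)\times H^1_{\phi,\Gamma}\big(D^{*}(\chi_{\text{LT}})\big)\longrightarrow K, \]
once the scaling between $\Psi$ and $\psi$ (the twist by the ramified non-\'etale character of \citep[Lemma 3.10]{Ste22b}) is absorbed so that the target is $K$ and not a twist of it; this is the middle row.

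Next, Remark~\ref{analytic_ext} applied with $W=S^{*}(1)$ gives an isomorphism $H^1_{\phi,\Gamma}(D^{*}(\chi_{\text{LT}}))\xrightarrow{\ \sim\ }H^1_{\dagger}(E_p,S^{*}(1))$, whose composite with the inclusion $H^1_{\dagger}(E_p,S^{*}(1))\hookrightarrow H^1(E_p,S^{*}(1))$ is the map $i$; this inclusion is compatible with cup products by construction of the comparison. Dualising the middle pairing and transporting through this identification yields
\[ H^1_{\Psi,\Gamma}(D)\ \cong\ H^1_{\phi,\Gamma}\big(D^{*}(\chi_{\text{LT}})\big)^{\vee}\ \cong\ H^1_{\dagger}(E_p,S^{*}(1))^{\vee}\ =\ H^1_{\backslash\dagger}(E_p,S), \]
the left-hand isomorphism of the lower square, and I would \emph{define} $\mathrm{pr}$ to be the map which under this isomorphism becomes the quotient $H^1(E_p,S)\twoheadrightarrow H^1(E_p,S)/H^1_{\perp}(E_p,S)=H^1_{\backslash\dagger}(E_p,S)$, where $H^1_{\perp}(E_p,S)=H^1_{\dagger}(E_p,S^{*}(1))^{\perp}$ is the annihilator of $i\big(H^1_{\phi,\Gamma}(D^{*}(\chi_{\text{LT}}))\big)$ under local Tate duality. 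With these choices the bottom pairing is exactly the one that local Tate duality induces between the quotient $H^1_{\backslash\dagger}(E_p,S)$ and the subspace $H^1_{\dagger}(E_p,S^{*}(1))$, hence perfect, and the lower square commutes by construction of the vertical isomorphisms.

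The one assertion that requires real work is the commutativity of the upper square, i.e. that $\langle x,i(y)\rangle_{\mathrm{Tate}}=\langle \mathrm{pr}(x),y\rangle_{\mathrm{SV}}$ for all $x\in H^1(E_p,S)$, $y\in H^1_{\phi,\Gamma}(D^{*}(\chi_{\text{LT}}))$; equivalently, that the Schneider--Venjakob Herr pairing refines classical local Tate duality under the comparison maps $\mathrm{pr}$ and $i$. This I expect to be the main obstacle. The approach is to extract from \citep[\S5.2]{SV23} the compatibility there, where the Herr-complex cup product is set up precisely so as to recover, via the trace/residue identification, the cup product $H^1(E_p,S)\times H^1(E_p,S^{*}(1))\to H^2(E_p,K(1))\xrightarrow{\mathrm{inv}}K$ computing Tate duality, and then to transport that statement along Remark~\ref{analytic_ext}. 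The care needed is almost purely bookkeeping: keeping track of the degree shift in $C^{\text{cts}}_{\Psi,\Gamma}(D)\cong_{\text{q-iso}}K_{\Psi,\Gamma}\big(D^{*}(\chi_{\text{LT}})\big)[d+1]$ with $d=[E_p:\mathbb{Q}_p]=2$, and of the $\Psi$-versus-$\psi$ normalisation constant, so that no spurious twist of $K$ is introduced; if a sign survives at the end it is harmless for the applications.
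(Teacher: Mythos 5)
Your proof is correct and follows essentially the same route as the paper, which simply cites \citep[Proposition 5.2.19]{SV23} together with the Koszul--Herr comparisons (171)--(173) of \emph{op.\ cit.} Your identifications of the twists (the check that $S(\tau^{-1})^*(\chi_{\text{LT}})=S^*(1)$, the use of Remark~\ref{analytic_ext} for the right column) and your observation that the bottom row and lower square are essentially definitional via $H^1_{\backslash\dagger}(E_p,S):=H^1_{\dagger}(E_p,S^*(1))^\vee$ are exactly the bookkeeping that makes the citation apply; and for the commutativity of the upper square you rightly defer, as the paper does, to the diagram in Schneider--Venjakob. The only difference from the paper's one-line proof is that you have written out the unwinding explicitly.
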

		\begin{proof}
			This follows from the diagram of \citep[Proposition 5.2.19]{SV23}, recalling the quasi-isomorphism between $\phi$-Koszul complexes and the continuous $\phi$-Herr complexes from (171) of op.cit. as well as the dualised $\Psi$-complex quasi-isomorphism.
		\end{proof}
		
		\begin{rmk} \label{an_LT_duality}
			The above duality, unlike local Tate duality, comes from a pairing of Koszul complex cohomology groups of degree 1 and $d=[L:\mathbb{Q}_p]$. This is then `corrected' to a pairing of two degree 1 groups by the degree shift of $d+1$ on Koszul complexes, after which we can compare with local Tate duality. The fact that the continuous Herr complexes for $\phi$ and $\Psi$ have cohomological degree $d+1$ is an important departure from the classical $\mathbb{Q}_p$-analytic setting; now we have a canonical embedding $\mathcal{H}^1_{\phi, \Gamma}(D) \hookrightarrow H^j_{\phi, \Gamma}(D) $, seen in \citep[Corollary 4.4]{Ste23} for both $j=1$ and $j=d$, which are distinct. A useful duality theory on continuous Herr cohomology groups will make use of the $j=d$ embedding, and therefore may not correspond to a duality on analytic Herr cohomology groups. \lozengeend
		\end{rmk}

		\par Duality for the analytic Herr complex is not as simple as Lemma \ref{cts_herr_duality}. We have by Remark \ref{herr_phi_computations} that $\mathcal{H}^2_{\phi,\Gamma}(\mathcal{R}_k(\chi_{\text{LT}}))=0$ and thus the analogous $\chi_{\text{LT}}$-pairing would be zero valued. There is a working alternative duality theory which uses a different dualising character $\chi_{\text{an}}$ defined by $x \mapsto x|x|$, introduced by Colmez and developed in the Lubin--Tate setting in \citep[\S3.4]{MSVW24}. Importantly, they compute that $\mathcal{H}^2_{\phi,\Gamma}(\mathcal{R}_k(\chi_{\text{LT}}))=K$. We will present their duality pairings below, which is Theorem 4.13 of op. cit.;
		
		\begin{lem} \label{an_herr_duality}
			Let $D$ be an $L$-analytic $(\phi, \Gamma)$-module over $R_L$, then there exist perfect pairings $$ \mathcal{H}^i_{\phi, \Gamma}(D) \times \mathcal{H}^{2-i}_{\Psi, \Gamma}(D^*(\chi_{\text{an}})) \rightarrow K $$
		\end{lem}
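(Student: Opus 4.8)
The plan is to realise the pairing as a cup product of analytic Herr complexes and to deduce non-degeneracy by d\'evissage to the rank-one case, following the construction of \citep[\S3.4]{MSVW24}. The conceptual point, already flagged above, is that $\chi_{\text{LT}}$ cannot serve as dualising character in the analytic setting because $\mathcal{H}^2_{\phi,\Gamma}(\mathcal{R}_K(\chi_{\text{LT}}))=0$, whereas \citep{MSVW24} compute $\mathcal{H}^2_{\phi,\Gamma}(\mathcal{R}_K(\chi_{\text{an}}))=K$; it is this copy of $K$ that supplies the trace map into which the pairing lands, so the replacement of $\chi_{\text{LT}}$ by $\chi_{\text{an}}=(x\mapsto x|x|)$ is forced by exactly this computation.

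Concretely I would proceed in three steps. First, write both complexes in the explicit three-term Koszul shape $C^{\text{an}}_{f,\Gamma}(D) = [\,D^\Delta \xrightarrow{(f-1,\,Z)} D^\Delta \oplus D^\Delta \xrightarrow{Z\,\oplus\,(1-f)} D^\Delta\,]$ of \S3, with $f=\phi$ on $D$ and $f=\Psi$ on $D^*(\chi_{\text{an}})$. Second, combine the evaluation map $D \otimes_{\mathcal{R}_K} D^*(\chi_{\text{an}}) \to \mathcal{R}_K(\chi_{\text{an}})$ with the residue on the Robba ring to build a chain-level cup product $C^{\text{an}}_{\phi,\Gamma}(D) \otimes C^{\text{an}}_{\Psi,\Gamma}(D^*(\chi_{\text{an}})) \to C^{\text{an}}_{\phi,\Gamma}(\mathcal{R}_K(\chi_{\text{an}}))$; checking that this is a morphism of complexes amounts to the adjunction $\mathrm{Res}(\phi(a)\,b)=\mathrm{Res}(a\,\Psi(b))$ (equivalently the normalisation $\Psi\circ\phi=1$) together with the skew-self-adjointness, up to sign and the $\Delta$-component, of $Z$ for the residue pairing. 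Third, compose with the trace $\mathcal{H}^2_{\phi,\Gamma}(\mathcal{R}_K(\chi_{\text{an}})) \xrightarrow{\sim} K$ of \citep{MSVW24} to obtain the pairing of the statement; functoriality in $D$ and compatibility with the $e_\delta$-decomposition (hence with restriction to $U_\delta\cong\mathbb{Z}_p^2$) are then formal.

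For perfectness I would use that all the analytic Herr cohomology groups in sight are finite-dimensional $K$-vector spaces with Euler characteristic pinned down by Remark \ref{psi_euler_char} and \citep[Proposition 4.7]{Ste22b}, and that the chain-level pairing is compatible with short exact sequences of $(\phi,\Gamma)$-modules; a five-lemma argument along a filtration by saturated $(\phi,\Gamma)$-submodules (e.g.\ a trianguline filtration) then reduces the question to $D=\mathcal{R}_K(\delta)$ of rank $1$. For generic $\delta$ (Assumption \ref{assumption_generic}) the dimension count recalled in Remark \ref{herr_phi_computations} puts $\mathcal{H}^\bullet_{\phi,\Gamma}(\mathcal{R}_K(\delta))$ and $\mathcal{H}^\bullet_{\Psi,\Gamma}(\mathcal{R}_K(\delta^{-1}\chi_{\text{an}}))$ in degree $1$ with dimension $1$, so one only needs the resulting $K\times K\to K$ to be nonzero --- an explicit residue computation, cross-checkable against Colmez's cyclotomic residue pairing via $N_{E_p/\mathbb{Q}_p}\circ\chi_{\text{LT}}=\chi_{\text{cyc}}$ --- while the non-generic characters $\delta\in\{\chi_{\text{LT}}^{-i},\,\chi_{\text{LT}}^{i}\chi_{\text{an}}\}$, at which $\mathcal{H}^0$ or $\mathcal{H}^2$ jumps, are matched by hand, pairing $\mathcal{H}^0$ of one side against $\mathcal{H}^2$ of the dual side. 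The main obstacle is the first step --- producing the chain-level pairing compatibly with the Fourier-theoretic operator $Z$, which is not a group element, so that its adjunction behaviour and the appearance of $\chi_{\text{an}}$ rather than $\chi_{\text{LT}}$ in the target are precisely the delicate points --- and since this is exactly \citep[Theorem 4.13]{MSVW24}, in practice one invokes their construction after checking that $D=D^\dagger_{\text{rig}}(S(\tau^{-1}))$ satisfies the relevant hypotheses.
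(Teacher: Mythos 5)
Your proposal ultimately rests on invoking \citep[Theorem 4.13]{MSVW24}, which is exactly what the paper does: the Lemma is stated verbatim as a quotation of that theorem with no further argument given. The expository sketch of the cup-product/residue construction you supply is a reasonable account of what lies behind the cited result (modulo a minor imprecision --- a $\phi$-against-$\Psi$ pairing built via the residue adjunction lands most naturally in a shift of $K$ rather than in $C^{\text{an}}_{\phi,\Gamma}(\mathcal{R}_K(\chi_{\text{an}}))$, and the rank-one d\'evissage presupposes a trianguline filtration that a general $L$-analytic $D$ need not have), but none of it is required once the citation is made, so this is essentially the same approach as the paper.
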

		
		\begin{rmk} 
			Ideally we want to use such a pairing to compare analytic Herr complexes for $S(\tau^{-1})$ and $S^*(1)$ for a co-$L$-analytic representation $S$. However, the pairing above doesn't seem to help because $\mathcal{R}_K(\chi_{\text{an}})$ is not an \'{e}tale $(\phi, \Gamma)$-module. Any duality statement obtained must involve at least one non-\'{e}tale $(\phi,\Gamma)$-module, which limits how much we can use this for global arithmetic. Still the existence of such a pairing is useful in many ways, and we use it to obtain some computations of $\Psi$-Herr complex cohomology. \lozengeend
		\end{rmk}
		
		\begin{cor} \label{herr_psi_computations}
			Suppose $D$ is a generic $L$-analytic $(\phi,\Gamma)$-module over $\mathcal{R}_K$ in the sense of Assumption \ref{assumption_generic}, then $$\dim _L \mathcal{H}_{\Psi, \Gamma_L}^j(D)= \begin{cases}0, & j \neq 1 \\ 1, & j=1  \end{cases} \qquad . $$
			$$\dim _L H_{\Psi, \Gamma_L}^j(D)= \begin{cases}0, & j \neq 1,2 \\ 1, & j=1,2  \end{cases} \qquad . $$
		\end{cor}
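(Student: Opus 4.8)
The plan is to read the analytic statement off the $\chi_{\mathrm{an}}$-duality of Lemma~\ref{an_herr_duality} together with the $\phi$-Herr computation of Remark~\ref{herr_phi_computations}, and then to deduce the continuous statement from the continuous-versus-analytic comparison of \citep[Corollary 4.4]{Ste23} (already invoked in Remark~\ref{herr_phi_computations}) applied with $\Psi$ in place of $\phi$. Throughout one writes $D = \mathcal{R}_K(\delta)$ with $\delta$ generic in the sense of Assumption~\ref{assumption_generic}, and one works on a fixed $e_\delta$-component so that the relevant $\Delta$-invariants are nonzero.

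For the analytic groups, set $D' := D^*(\chi_{\mathrm{an}})$. Since dualising and twisting by $\chi_{\mathrm{an}}$ commute up to inversion one has $(D')^*(\chi_{\mathrm{an}}) \cong D$, so specialising Lemma~\ref{an_herr_duality} to $D'$ gives perfect pairings $\mathcal{H}^i_{\phi,\Gamma}(D') \times \mathcal{H}^{2-i}_{\Psi,\Gamma}(D) \to K$ for $i=0,1,2$, hence $\dim \mathcal{H}^{2-i}_{\Psi,\Gamma}(D) = \dim \mathcal{H}^i_{\phi,\Gamma}(D')$. The next step is to verify that $D' = \mathcal{R}_K(\delta^{-1}\chi_{\mathrm{an}})$ is again generic: the character $\delta^{-1}\chi_{\mathrm{an}}$ is of the form $x\mapsto \chi_{\mathrm{LT}}^{-i}$ precisely when $\delta$ is of the form $x\mapsto \chi_{\mathrm{LT}}^{i}\chi_{\mathrm{an}}$, and of the form $x\mapsto \chi_{\mathrm{LT}}^{i}\chi_{\mathrm{an}}$ precisely when $\delta$ is of the form $x\mapsto \chi_{\mathrm{LT}}^{-i}$, so the two bad types of Assumption~\ref{assumption_generic} are merely interchanged by $\delta\mapsto\delta^{-1}\chi_{\mathrm{an}}$ and $D'$ is generic. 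Remark~\ref{herr_phi_computations} then gives $\dim \mathcal{H}^i_{\phi,\Gamma}(D') = 1$ for $i=1$ and $0$ otherwise, whence $\dim\mathcal{H}^j_{\Psi,\Gamma}(D) = 1$ for $j=1$ and $0$ otherwise.

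For the continuous groups I would argue that the comparison of \citep[Corollary 4.4]{Ste23} between the continuous Herr complex (in degrees $[0,3]$) and the analytic Herr complex (in degrees $[0,2]$) only concerns the passage from continuous group cohomology of $\Gamma\cong\mathbb{Z}_p^2\times\Delta$ to its Fourier-theoretic model in the single variable $Z$; the semilinear operator placed in the mapping cone plays no essential role in that comparison, so it applies verbatim with $\Psi$ replacing $\phi$ (using $e(E_p/\mathbb{Q}_p)=1<p-1$, valid since $p\geq 5$, and that $\Psi=\tfrac{\varpi_L}{p^{[L:\mathbb{Q}_p]}}\psi$ is a continuous $L$-linear endomorphism of $D$, cf.\ \citep[Lemma 3.10]{Ste22b}). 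Combined with $\mathcal{H}^0_{\Psi,\Gamma}(D)=\mathcal{H}^2_{\Psi,\Gamma}(D)=0$ from the previous step this gives $H^0_{\Psi,\Gamma}(D)=H^3_{\Psi,\Gamma}(D)=0$ and $H^1_{\Psi,\Gamma}(D)\cong H^2_{\Psi,\Gamma}(D)\cong\mathcal{H}^1_{\Psi,\Gamma}(D)$, which is one-dimensional, yielding the second assertion. A fallback is to invoke instead the continuous $\chi_{\mathrm{LT}}$-duality of \citep[(173)]{SV23b} and Lemma~\ref{cts_herr_duality} together with Koszul duality and Remark~\ref{herr_phi_computations} applied to $D^*(\chi_{\mathrm{LT}})$.

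The main obstacle I anticipate is not the dimension count---which is forced once the inputs are in place---but the two bookkeeping verifications: that the set of characters excluded by Assumption~\ref{assumption_generic} is stable under $\delta\mapsto\delta^{-1}\chi_{\mathrm{an}}$ (done above), and that \citep[Corollary 4.4]{Ste23} is genuinely insensitive to whether the cone operator is $\phi$ or $\Psi$. If the latter needs work, the $\chi_{\mathrm{LT}}$-duality fallback will instead require that $D^*(\chi_{\mathrm{LT}})$ be generic, which is not automatic from genericity of $D$ in the form stated and may force a mild strengthening of Assumption~\ref{assumption_generic}; keeping the degree shift $d+1=3$ consistent between the $\phi$- and $\Psi$-pictures is the last thing to watch.
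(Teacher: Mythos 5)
Your proof is correct, and for the continuous groups it takes a mildly different route from the paper. The paper's own proof is terse: it derives the analytic dimensions from Lemma~\ref{an_herr_duality} (as you do) and the continuous ones directly from the $\chi_{\text{LT}}$-duality of Lemma~\ref{cts_herr_duality}, both paired with Remark~\ref{herr_phi_computations}. You instead obtain the continuous dimensions by first computing the analytic ones and then invoking the continuous-to-analytic comparison \citep[Corollary 4.4]{Ste23} with $\Psi$ in place of $\phi$, relegating the $\chi_{\text{LT}}$-duality to a fallback. The paper does implicitly treat \citep[Corollary 4.4]{Ste23} as applicable to $\Psi$ (it is invoked this way just after Definition~\ref{comp_descent_oc}), so your primary route is consistent with the paper's standing conventions.

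Two things you add are worth flagging. First, your verification that the excluded set of Assumption~\ref{assumption_generic} is stable under $\delta \mapsto \delta^{-1}\chi_{\text{an}}$ (the two bad families being interchanged) is a genuine bookkeeping step the paper suppresses, and it is needed to apply Remark~\ref{herr_phi_computations} to $D^*(\chi_{\text{an}})$. Second, your worry about the $\chi_{\text{LT}}$-fallback is well placed and in fact applies to the paper's own route: the pairing of Lemma~\ref{cts_herr_duality} matches the $\Psi$-cohomology of $D=\mathcal{R}_K(\mu)$ with the $\phi$-cohomology of $D^*(\chi_{\text{LT}})=\mathcal{R}_K(\mu^{-1}\chi_{\text{LT}})$, and genericity of $\mu^{-1}\chi_{\text{LT}}$ in the sense of Assumption~\ref{assumption_generic} is not formally implied by genericity of $\mu$ (e.g.\ $\mu=\chi_{\text{LT}}^{i+1}$ is not excluded but produces $\mu^{-1}\chi_{\text{LT}}=\chi_{\text{LT}}^{-i}$). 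Your primary route via \citep[Corollary 4.4]{Ste23} sidesteps this because it only requires genericity of $D^*(\chi_{\text{an}})$, which you checked. So arguably your proof is the cleaner of the two, provided the $\Psi$-version of \citep[Corollary 4.4]{Ste23} is indeed available as the paper assumes.
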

		\begin{proof}
			Our conditions ensure we are in the situation of Remark \ref{herr_phi_computations}, and we apply the respective duality theories of Lemma \ref{cts_herr_duality} and Lemma \ref{an_herr_duality}.
		\end{proof}
		
		\par We could think of either continuous or analytic Herr complexes as the `right versions' depending on whether we care about the module structure over $\Lambda_\infty$ or $\Lambda_\infty^L$. We are more interested in the latter, but we will have to make use of both versions to study the analytic one. By Corollary \ref{herr_psi_computations} we have that the Euler characteristics of the analytic and continuous Herr complexes are -1 and 0 respectively. The computations of Remark \ref{psi_euler_char} strongly suggest to us that they are the respective base changes of $D^\dagger_{\text{rig}}(S(\tau^{-1}))$ via the maps $\Lambda_\infty \rightarrow K$ and $\Lambda_\infty^L \rightarrow K$ given by an $L$-analytic character $\eta$ and $K$ a large enough extension of $L$. This is almost true, as the following proposition and remark will show, once we can find the right degree shift.
		
		\begin{prop} \label{herr_descent}
			Given an $L$-analytic $(\phi,\Gamma)$-module $D$ and an $L$-analytic character $\eta: \Lambda \rightarrow K$ there is a quasi-isomorphism
			\[	C^{\text{an}}_{\Psi,\Gamma}(D(\eta^{-1})) \cong_{\text{q-iso}} 	D(\eta^{-1})[\Psi-1][1] \otimes^{\mathbb{L}}_{\Lambda_\infty^L, \eta} K \]
		\end{prop}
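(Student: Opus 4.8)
\emph{Proof strategy.} The plan is to compute the right-hand side as an explicit mapping cone and then identify it, term-by-term and differential-by-differential, with the analytic Herr complex. First I would resolve $K$ as a module along $\eta$. Everything takes place after the transcendental base change to $K \supseteq \widehat{L_\infty}$ that is already built into the definition of $C^{\text{an}}_{\Psi,\Gamma}$, so I may work over $\Lambda_{\infty,K}^L = \Lambda_\infty^L \hat{\otimes}_L K$; by Remark \ref{padic_fourier} this is the ring of analytic functions on the rigid open unit disk, in which the ideal $\ker(\eta)$ of a single $K$-point is principal and generated by a non-zero-divisor, say $z_\eta$ (concretely $z_\eta = Z - \eta(Z)$ in the coordinate $Z$). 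Hence $K$ admits the two-term Koszul resolution $[\Lambda_{\infty,K}^L \xrightarrow{z_\eta} \Lambda_{\infty,K}^L]$ placed in degrees $[-1,0]$, and $-\otimes^{\mathbb L}_{\Lambda_\infty^L,\eta}K$ is computed by the cone of multiplication by $z_\eta$. Since $D(\eta^{-1})$ is a module over $\mathcal{R}_K$ and thence over $\Lambda_{\infty,K}^L$, the tensor product over $\Lambda_\infty^L$ is unchanged by this extension of scalars.

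Next I would totalise: applying the resolution to $D(\eta^{-1})[\Psi-1][1]$ yields the total complex of the bicomplex $[D(\eta^{-1})\xrightarrow{\Psi-1}D(\eta^{-1})]\otimes_{\Lambda_{\infty,K}^L}[\Lambda_{\infty,K}^L\xrightarrow{z_\eta}\Lambda_{\infty,K}^L]$. Here the shift $[1]$ is precisely what is needed to align cohomological degrees: with the Koszul resolution of $K$ sitting in degrees $[-1,0]$, the total complex lands in degrees $[0,2]$, i.e. it is a three-term complex
\[ 0 \to D(\eta^{-1}) \xrightarrow{(\Psi-1,\,z_\eta)} D(\eta^{-1})\oplus D(\eta^{-1}) \xrightarrow{z_\eta\oplus(1-\Psi)} D(\eta^{-1}) \to 0, \]
which has exactly the shape of $C^{\text{an}}_{\Psi,\Gamma}(D(\eta^{-1}))$ (the sign discrepancy in the final differential is absorbed by an evident isomorphism of complexes). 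The $\Delta$-invariants appearing in the definition of $C^{\text{an}}$ cause no difficulty: since $e(L/\mathbb{Q}_p)<p-1$ (as $p\geq 5$), the group $\Delta$ has order prime to $p$, so $(-)^\Delta = e_\delta(-)$ is exact and commutes with $\Psi$, with the $\Lambda_{\infty,K}^L$-action and with forming the cone; moreover, as noted just before the statement, the relevant $\Lambda_{\infty,K}^L$-action factors through a character which pins down a single $\delta$, so one works on that component throughout and the invariants are invisible.

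The substantive step — and the one I expect to be the main obstacle — is matching the operator $z_\eta$ with the operator $Z$ occurring in $C^{\text{an}}_{\Psi,\Gamma}(D(\eta^{-1}))$. By construction $Z$ is the preimage of the coordinate $T$ under the Fourier isomorphism $\Lambda_\infty^L \hat{\otimes}_L K \cong \mathcal{R}_K^{[0,1)}$, hence a generator of the augmentation ideal $\ker(\epsilon:\Lambda_\infty^L\to K)$; so $C^{\text{an}}_{\Psi,\Gamma}(-)$ is, tautologically over $\Lambda_{\infty,K}^L$, the functor $(-)[\Psi-1][1]\otimes^{\mathbb L}_{\Lambda_\infty^L,\epsilon}K$. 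Twisting a $(\phi,\Gamma)$-module by $\eta^{-1}$ alters its $\Gamma$-action, hence its $\Lambda_\infty^L$-module structure, by the automorphism of $\Lambda_\infty^L$ dual to translation by $\eta$ on $\mathfrak{X}_L$ (the map $\mu \mapsto \eta^{-1}\mu$ on distributions, cf. \citep[\S2]{Ber16}), and this automorphism carries the coordinate vanishing at the trivial character to one vanishing at $\eta$. Unwinding this identifies the $\epsilon$-specialisation of $D(\eta^{-1})[\Psi-1][1]$ with the $\eta$-specialisation, i.e. $Z$ acting on $D(\eta^{-1})$ is realised, through the twisting isomorphism, by $z_\eta$, which is precisely the claimed quasi-isomorphism. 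The care required here is entirely bookkeeping: keeping straight the direction of the twist, the passage to $\widehat{L_\infty}$, and the fact that the operator is the scaled $\Psi = \tfrac{\varpi_L}{p^{[L:\mathbb{Q}_p]}}\psi$ rather than $\psi$ (which, via \citep[Lemma 3.10]{Ste22b}, is what makes the $\tau$-twist of Theorem \ref{LT_iwasawa} disappear and the clean statement with $D$ "as is" hold). I would model the argument on the $\mathbb{Q}_p$-analytic case of Pottharst \citep{Pot12} and on \citep[\S5]{Ste22b}, \citep[\S3.1]{Ste22a}, where the $\eta=\epsilon$ version is essentially in place; no new ideas beyond these appear necessary.
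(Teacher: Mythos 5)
Your approach is correct in spirit but takes a genuinely different route from the paper. The paper's proof does not resolve $K$ explicitly: it invokes a spectral sequence of Steingart, namely $\mathrm{Tor}^{1-i}_{\Lambda_\infty^L}(H^j(D[\Psi-1]),K) \Rightarrow \mathcal{H}^{i+j}_{\Psi,\Gamma}(D(\eta^{-1}))$ from \citep[Corollary 3.5(2)]{Ste23}, compares it term-by-term with the Tor spectral sequence of derived base change (Remark \ref{tor_spectral_sequence}) applied to $C^\bullet = D[\Psi-1][1]$, and then concludes by ``uniqueness of convergence.'' Your argument instead produces the quasi-isomorphism directly: resolve $K$ by the Koszul complex $[\Lambda_{\infty,K}^L \xrightarrow{z_\eta} \Lambda_{\infty,K}^L]$, totalise, and then match $z_\eta$ with the operator $Z$ in the analytic Herr complex via the translation automorphism induced by the twist. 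This is more concrete and in fact more robust: ``two spectral sequences with matching $E_2$ pages have the same abutment'' is not a theorem without a map of spectral sequences, so your explicit construction supplies precisely what the paper's phrase quietly assumes. What the paper's route buys is brevity (citing Steingart does the Koszul bookkeeping once and for all) and a clean interface with the continuous analogue in Remark \ref{herr_descent_oc}, where the same citation applies with $\Lambda_\infty$ in place of $\Lambda_\infty^L$.

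One place where you should be more careful, because the paper itself conflates it: which $\Lambda_\infty^L$-module structure underlies the derived tensor on the right-hand side. In the paper's own proof the author immediately rewrites the target as $D[\Psi-1][1]\otimes^{\mathbb L}_{\Lambda_\infty^L}K$ with the \emph{untwisted} $D$, and your Koszul totalisation should really be taken on $D$ (with the native $\Gamma$-structure) against the resolution of $K$ at $\eta$, not on $D(\eta^{-1})$ with its twisted structure --- otherwise $z_\eta$ acting through the twisted structure is $z_{\eta^2}$, not $z_\eta$. Your final paragraph does land in the right place (``$Z$ acting on $D(\eta^{-1})$ is realised by $z_\eta$'' on the untwisted $D$), but the bicomplex written in your middle paragraph uses $D(\eta^{-1})$ throughout, so the two ends of your argument are momentarily inconsistent; fixing the notation to $D$ in the totalisation, or explicitly inserting the translation isomorphism before totalising, removes the wrinkle. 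With that repair your argument is sound and is a perfectly serviceable --- arguably cleaner --- alternative to the proof in the text.
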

		\begin{proof}
			Recall $L$-analytic $\eta$ as above lifts to a character $\widetilde{\eta}$ of $\Lambda_\infty$ factoring through $\Lambda_\infty^L$ and valued in $K$. Therefore we will show $\tilde{\eta}$ induces a quasi-isomorphisms
			$$  C^{\text{an}}_{\Psi,\Gamma}(D(\eta^{-1})) \cong_{\text{q-iso}} D[\Psi-1][1] \otimes^{\mathbb{L}}_{\Lambda_\infty^L} K .$$ The natural map of complexes defined by $\eta$ is just given by
			
			\par Recall that by Remark \ref{cauchy_riemann} we have $$ \widetilde{\eta}:\Lambda_\infty \xrightarrow{\overline{\nabla}=0} \Lambda_\infty^L \xrightarrow{\nabla=c} K $$ which factors through $\pi_{L}:\Lambda_\infty \rightarrow \Lambda_\infty^L$ by $L$-analyticity (i.e $\nabla\widetilde{\eta}=0$) and $c=\nabla\widetilde{\eta}$ is determined by $\eta$.  By \citep[Corollary 3.5 (2)]{Ste23} taking $R=\mathcal{R}_K$, $D^\prime=K$, $T=\Psi$ and $D= \Lambda_\infty^L$ and $\Lambda_\infty$ respectively, we get spectral sequences $$ \text{Tor}^{1-i}_{\Lambda_\infty^L}(H^j(D[\Psi-1]),K) \Rightarrow \mathcal{H}^{i+j}_{\Psi,\Gamma}(D(\eta^{-1})). $$
			
			\par  We can compare this with the spectral sequence of Remark \ref{tor_spectral_sequence} corresponding to base change by $\eta: S \rightarrow R$. Note that this is a homological spectral sequence as stated in terms of a bounded chain complex $K_\bullet$, as we wanted it to be a first quadrant spectral sequence. Now we must state it cohomologically, taking the substitution $K_j=C^{-j}$:
			$$  \text{Tor}^{-i}_{\Lambda_\infty^L}(H^{j}(C^\bullet),K) \Rightarrow H^{i+j}(C^\bullet \otimes^{\mathbb{L}}_{\Lambda^{L}_\infty} K) .$$
			
			We now take $C^\bullet = D[\Psi-1][1]$. By uniqueness of convergence, noting the degree shift of $i$ by 1 in the $E_2^{i,j}$ terms, we obtain the quasi-isomorphisms we need.
			
		\end{proof}

		\begin{rmk} \label{herr_descent_oc}
			\par It would be nice to have a continuous analogue of this. In fact in the setting of general $L/\mathbb{Q}$ with $d=[L:\mathbb{Q}_p]$ the results of Proposition \ref{herr_descent} will stay the same but the spectral sequence of \citep[Corollary 3.5 (2)]{Ste23} for the continuous Herr complex will be $$ \text{Tor}^{d-i}_{\Lambda_\infty}(H^j(D[\Psi-1]),K) \Rightarrow H^{i+j}_{\Psi,\Gamma}(D(\eta^{-1})).$$ Thus we will have a quasi-isomorphism 
			$$  C^{\text{cts}}_{\Psi,\Gamma}(D(\eta^{-1})) \cong_{\text{q-iso}} D[\Psi-1][d] \otimes^{\mathbb{L}}_{\Lambda_\infty^L} K. $$ This is not so strange when we recall that $C^{\text{cts}}_{\Psi,\Gamma}(D(\eta^{-1})$ will have cohomological dimension $d+1$ generically. \lozengeend
			
		\end{rmk}

		\par We now want to turn our attention to `descended' versions of the theory of the comparison map of cohomology in Definition \ref{comp_defn}, but we have a choice to make between using continuous and analytic Herr complexes to do so. Since these cohomology groups are coherent sheaves over either $\mathfrak{X}$ or $\mathfrak{X}_L$, we can specialise at an $L$-analytic character $\eta:\mathcal{O}_L^\times \rightarrow K$ by considering it as a closed point of either rigid space. The former doesn't capture the analyticity condition we need to use the analytic distribution algebra, and the latter is lacking in tools such as Pontryagin duality using $\chi_{\text{LT}}$. We will make use of different aspects of both.

		\begin{defn} \label{comp_descent}
			Let $S$, $\eta$ be a $G_L$ characters such that $S$ is co-$L$-analytic and $\eta$ is $L$-analytic. Assume $p > e(L/\mathbb{Q}_p)+1$. There exists a natural map of complexes $$ \text{comp}_{\eta}: C^\bullet(L,S(\eta^{-1})) \rightarrow C^{\text{an}}_{\Psi,\Gamma}(D^\dagger_{\text{rig}}(S(\tau^{-1}\eta^{-1}))) $$ defined so that in degree 1 satisfies the following commutative diagram with exact rows:
			\[ \begin{tikzcd}
				0 & \mathfrak{K} & H^1_{Iw}(L_\infty/L,S) \otimes \Lambda_\infty^L & D_{\text{rig}}^{\dagger}(S(\tau^{-1}))^{\Psi=1} \\
				0 & \mathfrak{K}_\eta & H^1(L,S(\eta^{-1})) & \mathcal{H}^1_{\Psi,\Gamma}(D_{\text{rig}}^{\dagger}(S(\tau^{-1}\eta^{-1}))).
				\arrow[from=1-1, to=1-2]
				\arrow[from=1-2, to=1-3]
				\arrow["\text{comp}"', from=1-3, to=1-4]
				\arrow[from=2-1, to=2-2]
				\arrow[from=2-2, to=2-3]
				\arrow["\text{comp}_\eta"', from=2-3, to=2-4]
				\arrow[from=1-2, to=2-2]
				\arrow["\eta"', from=1-3, to=2-3]
				\arrow["\alpha_{\eta}"', from=1-4, to=2-4]
			\end{tikzcd} \]
		\end{defn}	

            \par To show this map is well defined; given an element $x \in H^1(L,S(\eta^{-1}))$ choose a lift $y \in H^1_{\text{Iw}}(L_\infty/L,S) $ such that $\eta(y \otimes 1)=x$ i.e. we just choose any sequence in the inverse limit defining Iwasawa cohomology whose $G_L$-cohomology term is a twist of $x$ by $\eta^{-1}$. Then we define $\text{comp}_{\eta}(x)=\alpha_\eta(\text{comp}(y \otimes 1))$. This is well defined; if $y^\prime$ is another choice of lift then $\eta(y-y^\prime)=0$. Since the comparison map is $\Lambda^L_\infty$-equivariant, the vertical map $\alpha_\eta^1$ will send $\text{comp}(y-y^\prime)$ to 0 (this is easier to see after writing $\alpha_\eta^1$ out explicitly below).

            \par  The vertical base change map $\alpha_\eta^1$, can be written out explicitly as the degree 1 part of a map of complexes as follows. The map $$\alpha_{\eta}: D_{\text{rig}}^{\dagger}(S(\tau^{-1}))[\Psi-1] \rightarrow C_{\Psi,\Gamma}(D_{\text{rig}}^{\dagger}(S(\tau^{-1}\eta^{-1})))$$ on complexes of $(\phi, \Gamma)$-modules given by:
		\[ \begin{tikzcd} 
			0 &  0  \\
			0 &  D_{\text{rig}}^{\dagger}(S(\tau^{-1}\eta^{-1}))  \\
			D_{\text{rig}}^{\dagger}(S(\tau^{-1})) & D_{\text{rig}}^{\dagger}(S(\tau^{-1}\eta^{-1}))^{\oplus 2} \\
			D_{\text{rig}}^{\dagger}(S(\tau^{-1})) & D_{\text{rig}}^{\dagger}(S(\tau^{-1}\eta^{-1})) \\
			0 & 0 
			\arrow[from=2-1, to=2-2]
			\arrow[from=3-1, to=3-2, " {(0,\eta^{-1})} "]
			\arrow["\eta^{-1}"', from=4-1, to=4-2]
			\arrow[from=1-1, to=2-1]
			\arrow[from=2-1, to=3-1]
			\arrow["\Psi-1"', from=3-1, to=4-1]
			\arrow[from=4-1, to=5-1]
			\arrow[from=1-2, to=2-2]
			\arrow["{(Z, \Psi-1)}"', from=2-2, to=3-2]
			\arrow[" 1-\Psi \oplus Z"', from=3-2, to=4-2]
			\arrow[from=4-2, to=5-2]
		\end{tikzcd} \]
        
		\begin{rmk} \label{herr_stalks}
			Like the definition of \text{comp} given in Definition \ref{comp_defn}, the specialised version is a natural map given by base change. Since the objects on the top row are coadmissible $\Lambda_\infty^L$-modules due to \citep{Ste22b} and freeness of Iwasawa cohomology, $\text{comp}$ is really a map of coherent analytic sheaves over $\mathfrak{X}_L$. Then for an $L$-analytic $G_L$-character $\eta$ corresponding to a closed point $x_\eta \in \mathfrak{X}_L$, $\text{comp}_{\eta}$  is the map of stalks at $x_\eta$. This coincides with the map in Definition \ref{comp_descent} by the base change result of Prop \ref{herr_descent}.\lozengeend
		\end{rmk}

		\par We can also descend to the continuous Herr complex, which sees overconvergent rather than analytic extension classes in degree 1, but the advantage is that we have a better notion of duality. The disadvantage is that we are truly interested in analytic classes. Nevertheless we define such a descent map, which must factor through $\alpha_\eta$ by the universal property of the derived tensor product. First we will define this overconvergent version of $\text{comp}_\eta$ then we wil state the duality statement which we will use. 
		
		We define an analogous map $\beta_\eta$ for the continuous $\Psi$-Herr complex. Like the analytic setting, we can define it naturally as a base change map from Remark \ref{herr_descent_oc} or we can write it explicitly and check it commutes by hand. We do both here; explicitly, denoting $D^\prime = D^*(\chi_{\text{LT}} \eta^{-1})$, we write out a map $\beta_{\eta}: D[\Psi-1] \rightarrow  K_{\Psi, \Gamma}(D^\prime)[2]$ given below, using maps $A$, $B$ and $C$ written out in the definition of the Koszul complex:
		\[ \begin{tikzcd}[column sep=6cm]
			0 & D^\prime \otimes D^\prime \\
			D & (D^\prime \otimes D^\prime)^{\oplus3} \\
			D & (D^\prime \otimes D^\prime)^{\oplus3} \\
			0 & D^\prime \otimes D^\prime \\
			\arrow[from=1-1, to=2-1]
			\arrow[from=2-1, to=3-1, "\Psi-1"]
			\arrow[from=3-1, to=4-1]
			\arrow[from=1-2, to=2-2, "A"]
			\arrow[from=2-2, to=3-2, "B"]
			\arrow[from=3-2, to=4-2, "C"]
			\arrow[from=1-1, to=1-2]
			\arrow[rightarrow, from=2-1, to=2-2, "{((\gamma_1-1)\mathcal{T},(\gamma_2-1)\mathcal{T}, 0 )}"]
			\arrow[rightarrow, from=3-1, to=3-2, "{(0, (\gamma_1-1)\mathcal{T}, (\gamma_2-1)\mathcal{T})}"]
			\arrow[from=4-1, to=4-2]
		\end{tikzcd} \]	where $D^\prime = D^*(\chi_{\text{LT}} \eta^{-1})$ and $D \xrightarrow{\mathcal{T}} D^\prime$ is the appropriate twisting map.
		
		\begin{defn} \label{comp_descent_oc}
			Let $S$, $\eta$ be a $G_L$ characters such that $S$ is co-$L$-analytic and $\eta$ is $L$-analytic. There is a map of complexes $$ \text{comp}^\dagger_\eta:C^\bullet(L,S(\eta^{-1})) \rightarrow C^{\text{cts}}_{\Psi,\Gamma}(D^\dagger_{\text{rig}}(S(\tau^{-1}\eta^{-1})))[1-d] $$ which, in degree 1, provides a map $\text{comp}^{\dagger}_{\eta}$ satisfying the following commutative diagram and factoring through Definition \ref{comp_descent}:
			\[ \begin{tikzcd}
				0 & \mathfrak{K} & H^1_{Iw}(L_\infty/L,S) \otimes \Lambda_\infty^L & D_{\text{rig}}^{\dagger}(S(\tau^{-1}))^{\Psi=1} & 0 \\
				0 & \mathfrak{K}^{\dagger}_\eta & H^1(L,S(\eta^{-1})) & H^d_{\Psi,\Gamma}(D_{\text{rig}}^{\dagger}(S(\tau^{-1}\eta^{-1}))) 
				\arrow[from=1-1, to=1-2]
				\arrow[from=1-2, to=1-3]
				\arrow["\text{comp}"', from=1-3, to=1-4]
				\arrow[dotted,from=1-4, to=1-5]
				\arrow[from=2-1, to=2-2]
				\arrow[from=2-2, to=2-3]
				\arrow["\text{comp}^{\dagger}_\eta"', from=2-3, to=2-4]
				\arrow[from=1-2, to=2-2]
				\arrow["\eta"', from=1-3, to=2-3]
				\arrow["\beta_{\eta}^1"', from=1-4, to=2-4]
			\end{tikzcd} \]
		\end{defn}	
		
		\par Note that by \citep[Corollary 4.4]{Ste23} and vanishing of $\mathcal{H}^2_{\Psi,\Gamma}(D_{\text{rig}}^{\dagger}(S(\tau^{-1}\eta^{-1})))$, we have that  $H^d_{\Psi,\Gamma}(D_{\text{rig}}^{\dagger}(S(\tau^{-1}\eta^{-1})))=H^1_{\Psi,\Gamma}(D_{\text{rig}}^{\dagger}(S(\tau^{-1}\eta^{-1}))) $. Thus we can think of $\beta_\eta^1$ as a map of degree 1 terms of the above map of complexes. The map $\text{comp}_{\eta}^{\dagger}$ is well defined by a similar reasoning to $\text{comp}_{\eta}$. Moreover by an analogous argument to Remark \ref{herr_stalks}, we can think of this map as the map of stalks at $x_\eta$ of the map of coherent sheaves on $\mathfrak{X}$, and by the base change result of Remark \ref{herr_descent_oc} this coincides with Definition \ref{comp_descent_oc}.

	\section{A Locally Analytic Main Conjecture}
		
		\subsection{The $L$-analytic regulator map}
		
		\par As mentioned previously, an obstruction to the inert prime theory is that Perrin-Riou's big logarithm map does not generalise to extensions which are not locally cyclotomic. A Lubin--Tate version big logarithm map has been defined in \citep{SV23} using $p$-adic Hodge theory applied to the Lubin--Tate extension $\Gamma_L$ of an arbitrary finite extension $L/\mathbb{Q}_p$. This regulator map can only be applied when the dual of $V$ is $L$-analytic, so that the twist $V(\tau^{-1})$ with $\tau = \chi_{\text{cyc}}^{-1}\chi_{\text{LT}}$ is $L$-analytic. We say such $V$ is co-$L$-analytic or sometimes co-analytic. This twist by $\tau$ is a compensation for the difference between Pontryagin and local Tate duality theories - this will be explained in a later remark when our language has developed further.
		
		\begin{defn} \label{analytic_reg}
			Let $S$ be a crystalline $\Gamma_L=\mathrm{Gal}(L_\infty/L)$-representation such that $S$ is $co-L$-analytic and has negative Hodge--Tate weight at the identity embedding and $H^0(L,V(\tau^{-1}))=0$. Then $L$-analytic regulator map is defined in \citep[\S3]{SV23} as a map $$\mathcal{L}_S: H^1_{Iw}(L_\infty/L, S) \rightarrow \left( \Lambda^{L}_\infty \hat{\otimes}_L \mathbb{C}_p \right) \otimes D_{\text{cris},L}(S(\tau^{-1}))$$
		\end{defn}
		
		\par The rest of this section will be spent interpreting this map and developing tools to study it in order to obtain an $L$-analytic main conjecture using it. Note that the vanishing of the above $H^0$ group doesn't fall into the hypothesis of Assumption \ref{assumption_V_eta}, but we will assume it below in Assumption \ref{assumption_L-an}.  We also fix a basis $\xi$ of $D_{\text{cris}}(S(\tau^{-1}))$ which will be important when proving an explicit reciprocity law in the future but arbitrary now - so we will not include it in the notation.
		
		\par Since the regulator map is contstructed using the $p$-adic Fourier tranform, we are forced to to base change by an extension $K/\widehat{L_\infty}$ as in Remark \ref{padic_fourier} so that the $\mathfrak{X}_L$ is $K$-isomorphic to the closed unit disk by methods of \citep{ST01}. Without losing much generality, the authors of \textit{op. cit.} take $K=\mathbb{C}_p$. Since $\mathcal{O}_K$ is not noetherian we lose the hope of getting nice integral results. Since $K$ is a complete $E_p$-vector space therefore a faithfully flat $E_p$-module and this base change behaves nicely. In particular it commutes with taking cohomology of the Selmer complex and with all exact sequences. Thus we can formulate the same main conjecture over $\Lambda_{\infty,K}^{E_p}=\Lambda_\infty^{E_p} \hat{\otimes}_{E_p} K$. The definition of characteristic ideal will also be the same, as after base changing to $K$ we still have a Pr\"ufer domain by Remark \ref{char_prufer}.
		
		\begin{cor} \label{C_p-kmc}
			Kato's Main conjecture (Corollary \ref{analytic_kmc}) holds as a statement of $\Lambda_{\infty,K}^{E_p}$-modules after base change by $\hat{\otimes}_L K$
		\end{cor}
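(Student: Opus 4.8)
The plan is to obtain the statement by transporting Corollary~\ref{analytic_kmc}, assertion by assertion, across the faithfully flat base change $-\hat\otimes_{E_p}K$. First I would record the flatness input. Since $K$ is a complete $E_p$-vector space it is a nonzero, hence faithfully flat, $E_p$-module, and at each finite level $\Lambda_{n,K}^{E_p}:=\Lambda_n^{E_p}\hat\otimes_{E_p}K$ is by Remark~\ref{padic_fourier} a one-variable Tate algebra $K\langle T\rangle$ over the complete field $K$, hence noetherian and in fact a Dedekind (indeed principal ideal) domain; passing to the limit, $\Lambda_\infty^{E_p}\hookrightarrow\Lambda_{\infty,K}^{E_p}$ is a faithfully flat extension of Fr\'echet--Stein algebras compatible with the coadmissible projective-limit presentations. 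By Remark~\ref{char_prufer}, $\Lambda_{\infty,K}^{E_p}$ remains a Pr\"ufer domain, so $\mathrm{char}_{\Lambda_{\infty,K}^{E_p}}$ is defined on torsion coadmissible modules exactly as in Lemma~\ref{analytic_char}, and the analogue of Lemma~\ref{cyc_char} holds: for a finitely generated torsion coadmissible $\Lambda_\infty^{E_p}$-module $M$ one has $\mathrm{char}_{\Lambda_{\infty,K}^{E_p}}(M\hat\otimes_{E_p}K)=\mathrm{char}_{\Lambda_\infty^{E_p}}(M)\cdot\Lambda_{\infty,K}^{E_p}$, by the same level-by-level structure-theorem computation used there.

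Next I would treat parts (i) and (ii). The Selmer complex $\widetilde{R\Gamma}(E,\mathbb{V}_\infty^{E_p};\Delta^1)$ is perfect over $\Lambda_\infty^{E_p}$, and its formation commutes with the derived base change $-\otimes^{\mathbb{L}}_{\Lambda_\infty^{E_p}}\Lambda_{\infty,K}^{E_p}$ by the base-change properties of Selmer complexes (cf.\ \citep[Theorem 1.12]{Pot13}); as the extension is flat this gives $e_\delta\widetilde{H}^i(E,\mathbb{V}_\infty^{E_p}\hat\otimes_{E_p}K;\Delta^1)=e_\delta\widetilde{H}^i(E,\mathbb{V}_\infty^{E_p};\Delta^1)\hat\otimes_{E_p}K$. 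Coadmissibility is preserved because this base change is the pullback of coherent analytic sheaves along the morphism $\mathfrak{X}_{E_p}\hat\otimes_{E_p}K\to\mathfrak{X}_{E_p}$; torsion-ness of $\widetilde{H}^2$ and torsion-freeness together with $\Lambda_{\infty,\delta,K}^{E_p}$-rank $1$ of $\widetilde{H}^1$ follow from faithful flatness over the domain $\Lambda_{\infty,K}^{E_p}$ (tensoring up to the fraction field kills, respectively detects, torsion, and preserves rank). For (iii) the Euler system class $c^\Pi$ is defined over $\Lambda$, so its image in $e_\delta\widetilde{H}^1(E,\mathbb{V}_\infty^{E_p};\Delta^1)$ base changes to its image in the $K$-coefficient group (formation of the image of a module map commutes with flat base change); applying the characteristic-ideal base-change formula above to $M=e_\delta\widetilde{H}^2(E,\mathbb{V}_\infty^{E_p};\Delta^1)$ and to $M=e_\delta\widetilde{H}^1(E,\mathbb{V}_\infty^{E_p};\Delta^1)/\mathrm{im}(c^\Pi)$, and using that a divisibility $I\mid J$ of ideals in $\Lambda_{\infty,\delta}^{E_p}$ yields $I\Lambda_{\infty,\delta,K}^{E_p}\mid J\Lambda_{\infty,\delta,K}^{E_p}$ (write $J=AI$ and extend scalars), gives the divisibility over $\Lambda_{\infty,\delta,K}^{E_p}$.

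The hard part is essentially bookkeeping: the only points that genuinely need checking are the compatibility of $-\hat\otimes_{E_p}K$ with the projective-limit presentations of coadmissible modules, and the re-verification that the characteristic-ideal formalism of \S2 survives passage to the non-noetherian coefficient ring $\mathcal{O}_K$. But since $K$ is a field and each $\Lambda_{n,K}^{E_p}\cong K\langle T\rangle$ is a one-variable Tate algebra, hence noetherian and Pr\"ufer, Lemma~\ref{analytic_char} and Proposition~\ref{inert_descent} apply verbatim with $K$-coefficients, and no new obstacle arises. Thus the corollary is purely a repackaging of faithfully flat ascent, and I would expect the write-up to be short.
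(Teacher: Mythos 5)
Your proposal is correct and takes essentially the same route as the paper: the paper's own justification is the paragraph immediately preceding the corollary, which rests on exactly the three points you identify — faithful flatness of $-\hat\otimes_{E_p}K$, commutation of this base change with Selmer-complex cohomology, and preservation of the Pr\"ufer property (Remark \ref{char_prufer}) so that the characteristic-ideal formalism survives. Your write-up spells out the level-by-level Tate-algebra structure and the coadmissibility/torsion bookkeeping in more detail than the paper does, but the underlying argument is the same.
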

		
		\par Taking $S$ to be the middle graded piece of the ordinary filtration of $V=V_{\mathfrak{P}}(\Pi)$, we will later consider the image of the Euler system as an element $$\text{comp}(c^\Pi \otimes 1) \in D_{\text{rig}}^{\dagger}(S(\tau^{-1}))^{\Psi=1}$$ which is non-zero by Assumption \ref{assumption_V_delta}; this is the representation $S$ to which we apply the regulator map. We use the relationships between the different categories of $(\phi_L, \Gamma_L)$-modules to do this; using Definition \ref{LT_kisin-ren} we can view the Euler system as a $\Psi$-invariant element of $D_{\text{LT}}(S(\tau^{-1}))$. Since $S$ is an overconvergent character this this has a model over $\mathbb{B}^\dagger_L$ by \citep[Remark A.09]{SV23}. To get a regulator map with domain $D_{\text{rig}}^{\dagger}(S(\tau^{-1}))^{\Psi=1}$ we would need to base change via $\iota: \mathbb{B}_L^\dagger \hookrightarrow \mathcal{R}_L \rightarrow \mathcal{R}_K$ from but this will not give the required map. The universal property of tensor product of $\Lambda$-modules, however, gives us a map
		$$ \overline{\mathcal{L}}_S: H^1_{\text{Iw}}(L_\infty/L,S) \otimes_{\Lambda} \Lambda_{\infty,K}^L \rightarrow \Lambda_{\infty,K}^{L} $$ such that $\overline{\mathcal{L}}_S(c \otimes \xi) = \xi\mathcal{L}_S(c)$.

		\par We let $V^1 \subset V(\eta^{-1})$ be the Panchishkin subrepresentation attached the the rank 1 region $\Sigma^{(2)}$ of Figure \ref{fig1}, with $\eta$ a twist in the rank 0 region $\Sigma^{(0)}$. Let $S$ be the 1-dimensional quotient of $V^1$ such that $V^1/S$ only has positive Hodge--Tate weights (this $S$ is just the analogue of $V^1/V^0$ from the split prime case). Since we assumed $V$ is crystalline, $S$ must also be crystalline. By the properties of the Euler system construction in \citep{LSZ21} and our assumption of non-vanishing, $\text{im}(c^\Pi)$ is non-trivial in the local Iwasawa cohomology of $S$ over the Lubin--Tate extension. We can apply the regulator map provided that $S$ is co-$L$-analytic. This condition is that the middle Hodge--Tate weight of $S$ at the conjugate embedding is 1, which forces the condition $r=a$ in the initial choice of twist of Hecke character; this is a line on the parameter space Figure \ref{fig1} which we can see intersects both the Euler system region $\Sigma^{(2)}$ and the rank 0 region $\Sigma^{(0^{\prime})}$. We label this in red in the figure below. However, we lose a degree of freedom in the space of Hecke characters we can twist by. It remains to be seen whether this loss of variation is necessary or can be recovered.
		
		\begin{figure}[H]
			\includegraphics[width=14cm]{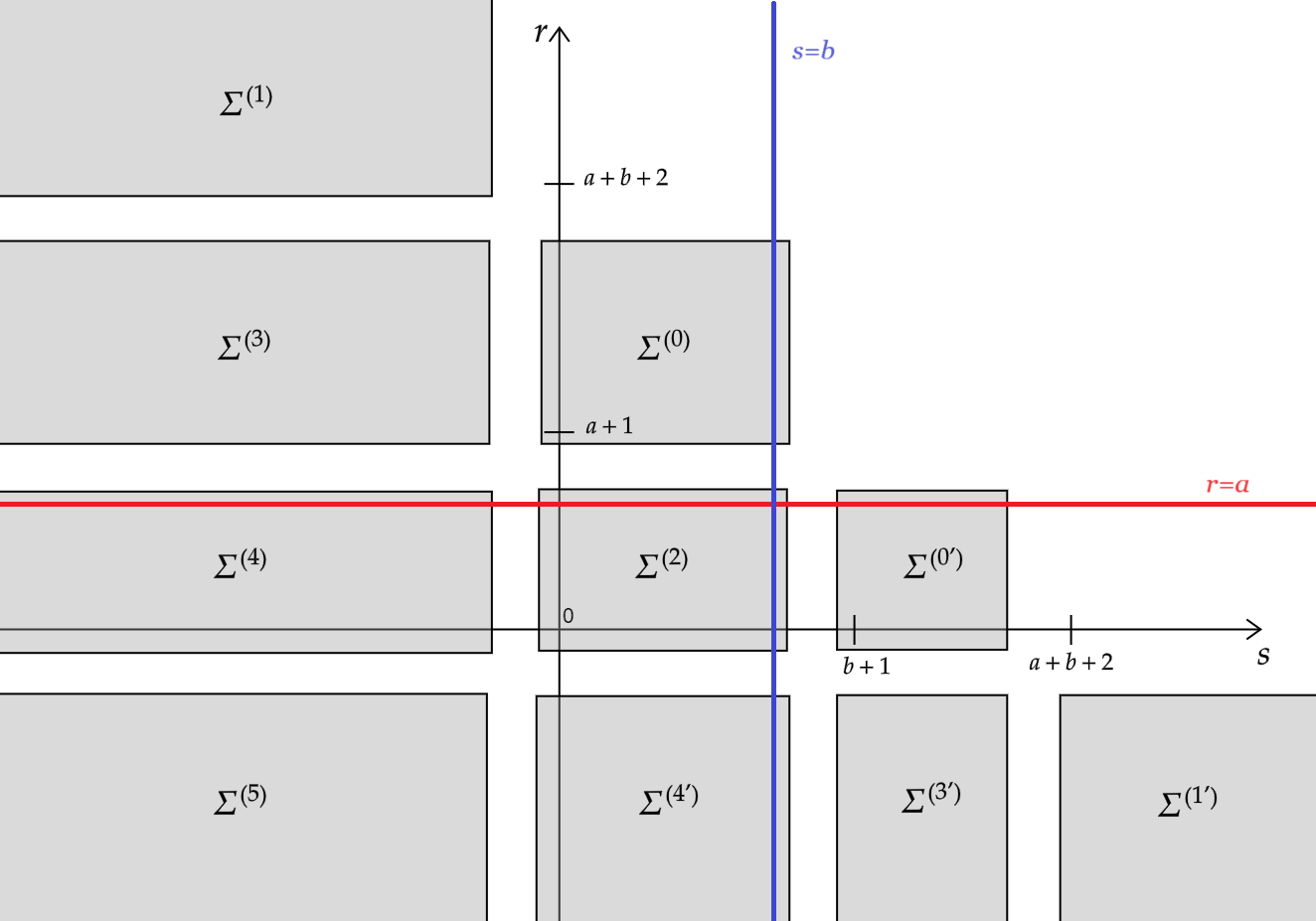}
			\caption{The locus horizontal $r=a$ of Hecke characters $\eta$ such that the 1-dimensional middle piece $S(\eta^{-1})$ of $V(\eta^{-1})$ has $S$ co-$E_p$-analytic. The vertical line $s=b$ is the conjugate locus where $S$ is co-$\bar{L}$-analytic.}
			\label{fig3}
		\end{figure}
		
		\begin{rmk}
			The locus in Figure \ref{fig3} intersects both the Euler system region $\Sigma^{(2)}$, and one $p$-adic $L$-function box similar to our case with $\Sigma^{(0^{\prime})}$, which is exactly what we want; we can think of this locus as a bridge over which we can apply the regulator map and hopefully an explicit reciprocity law down the line. The conjugate $p$-adic $L$-function box $\Sigma^{(0)}$ is not accessible on this locus but we could access it by using a notion of $\bar{E_p}$-analyticity i.e. applying the conjugate element of $\text{Gal}(E_p/\mathbb{Q}_p)$ to everything. This will give a locus $s=b$ (vertical) on the parameter space which will play exactly the conjugate role. It appears that the $E_p$-analytic and $\bar{E_p}$-analytic theories must exist entirely separately as the intersection of both conditions is only a point of the diagram, twisting by a Hecke character of fixed $\infty$-type $(b,a)$ only. \lozengeend
		\end{rmk}
		
		\begin{ass} \label{assumption_L-an}
			The 1-dimensional subquotient $S$ of $V$ is co-$L$-analytic with Hodge-Tate weight $ < 0$ at the identity embedding, $\eta$ is an $L$-analytic character, and $H^0(E_p,S(\eta^{-1})=0$.
		\end{ass}
		
		\par Similar to the split case, when $S$ is co-$L$-analytic we implicitly fix a basis of $D_{\text{cris},L}(S(\tau^{-1}))$ so we can define $L_{p, E_p\text{-an}}^*(\Pi) := \mathcal{L}_S(c^\Pi) \in \Lambda^{E_p}_\infty \hat{\otimes}_{E_p} \mathbb{C}_p$ to be the $L$-analytic $p$-adic distribution which we conjecture to interpolate complex $L$-values of $\Pi$. In this inert setting we have no comparison to an analytic construction, so we are even further from an explicit reciprocity law. As the $L$-analytic regulator map is a far more modern construction, work is still ongoing to develop so called $L$-analytic explicit reciprocity laws. This are discussed thoroughly in the survey article \citep{Ven23}, and evidence is certainly presented that it is a suitable regulator map.

		\par An important property of a regulator map is that it interpolates the Bloch--Kato dual exponential and logarithm maps after specialising to characters. This maintains a link between the image of the Euler system and $H^1_f$, which is the kernel of $\text{exp}^*$. Our $L$-analytic regulator map satisfies a Lubin--Tate version of the interpolation property, stated below. This result will not use twisted versions Bloch--Kato logarithm and dual exponential maps by the action of $\tau$, c.f. \citep[(185)]{SV23}, given by;
		$$ \widetilde{\text{exp} }_{L, W^*(1)}: H_f^1\left(L, W^*(1)\right) \rightarrow  \text{Fil}^{0,-}D_{\text{dR}, L}\left(W^*\left(\chi_{L T}\right)\right), $$
		$$ \widetilde{\log }_{L, W^*(1)}: H_f^1\left(L, W^*(1)\right) \rightarrow D_{\text{cris}, L}\left(W^*\left(\chi_{L T}\right)\right). $$ We also fix a basis element $ e_j \in D_{\text {cris }, L}\left(L\left(\chi_{L T}^j\right)\right)$ as defined in op. cit.
		
		\begin{thm}\citep[Theorem 5.2.26]{SV23} \label{analytic_interpolation}
			Assume that $T$ is co-$L$-analytic and crystalline and that it satisfies $\mathrm{Fil}^{-1} D_{\mathrm{cris }, L}\left(T^*(1)\right)=D_{\mathrm{cris }, L}\left(T^*(1)\right)$ and $D_{\mathrm{cris }, L}\left(T^*(1)\right)^{\phi_L=\varpi_L^{-1}}=D_{\mathrm{cris }, L}\left(T^*(1)\right)^{\phi_L=1}=0$. Suppose further that the operators $1-\varpi_L^{-1} \phi_L^{-1}, 1-\frac{\varpi_L}{q} \phi_L$ are invertible on $D_{c r i s, L}\left(T\left(\tau^{-1} \chi_{L T}^j\right)\right)$, respectively. Then for a period $\Omega \in \mathbb{C}_p$;
			\begin{itemize} \item for $j \geqslant 0$
				$$
				\Omega^j \mathcal{L}_T(y)\left(\chi_{L T}^j\right) =j !\left(\left(1-\varpi_L^{-1} \phi_L^{-1}\right)^{-1}\left(1-\frac{\varpi_L}{q} \phi_L\right) \widetilde{\exp }_{L, T\left(\chi_{L T}^{-j}\right), \mathrm{id}}^*\left(y_{\chi_{L T}^{-j}}\right)\right) \otimes e_j 
				$$
				\item for $j \leqslant-1$ :
				$$
				\Omega^j \mathcal{L}_T(y)\left(\chi_{L T}^j\right) =\frac{(-1)^j}{(-1-j) !}\left(\left(1-\varpi_L^{-1} \phi_L^{-1}\right)^{-1}\left(1-\frac{\varpi_L}{q} \phi_L\right) \widetilde{\log }_{L, T\left(\chi_{L T}^{-j}\right), \mathrm{id}}\left(y_{\chi_{L T}^{-j}}\right)\right) \otimes e_j 
				$$
			\end{itemize}
		\end{thm}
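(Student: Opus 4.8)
Although this is quoted as \citep[Theorem 5.2.26]{SV23}, we indicate the shape of the argument one runs to establish it, for orientation. The plan is to factor the regulator $\mathcal{L}_T$ of Definition \ref{analytic_reg} into its two natural constituents and then read off the interpolation formula by specialising at the characters $\chi_{\text{LT}}^j$. First one unwinds $\mathcal{L}_T$ as the composite of the comparison map $\mathrm{comp}$ of Definition \ref{comp_defn}, which carries $y \in H^1_{\mathrm{Iw}}(L_\infty/L,T)$ to a $\Psi$-invariant vector of $D^{\dagger}_{\mathrm{rig}}(T(\tau^{-1}))$, followed by the Lubin--Tate ``big logarithm'' of \citep[\S3]{SV23}; the latter is assembled from the operator $1-\phi$ together with Berger's crystalline dictionary $D^{\dagger}_{\mathrm{rig}}(W)[1/t] \cong (D_{\mathrm{cris},L}(W) \otimes_L \mathcal{R}_K)[1/t]$, valid because $T(\tau^{-1})$ is $L$-analytic and crystalline, and it lands in $(\Lambda^L_\infty \hat{\otimes}_L \mathbb{C}_p) \otimes D_{\mathrm{cris},L}(T(\tau^{-1}))$ after passing through the $p$-adic Fourier isomorphism of Remark \ref{padic_fourier}. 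With this description the asserted identity reduces to computing the value at $\chi_{\text{LT}}^j$ of the Mellin--Fourier transform of this distribution.

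The key local step is to identify that specialisation. Viewing $\chi_{\text{LT}}^j$ as a closed point of $\mathfrak{X}_L$ and pushing forward along it in the sense of Proposition \ref{herr_descent}, the module $D^{\dagger}_{\mathrm{rig}}(T(\tau^{-1}))$ degenerates to $D_{\mathrm{cris},L}(T(\tau^{-1}\chi_{\text{LT}}^{-j}))$, and the composite $\mathcal{L}_T(-)(\chi_{\text{LT}}^j)$ becomes, on the $\Psi=1$ part, a twist of the Bloch--Kato dual exponential $\widetilde{\exp}^{*}_{L,\,T(\chi_{\text{LT}}^{-j}),\,\mathrm{id}}$ when $j \geq 0$ and of the Bloch--Kato logarithm $\widetilde{\log}_{L,\,T(\chi_{\text{LT}}^{-j}),\,\mathrm{id}}$ when $j \leq -1$, these being the identity-embedding components of the $\tau$-twisted maps recalled before the statement. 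This is the Lubin--Tate incarnation of Bloch and Kato's description of $\exp^*$ and $\log$ via $(\phi,\Gamma)$-modules; it applies exactly in the Hodge--Tate ranges cut out by the sign of $j$, and the hypotheses $\mathrm{Fil}^{-1}D_{\mathrm{cris},L}(T^*(1)) = D_{\mathrm{cris},L}(T^*(1))$ and $D_{\mathrm{cris},L}(T^*(1))^{\phi_L = \varpi_L^{-1}} = D_{\mathrm{cris},L}(T^*(1))^{\phi_L = 1} = 0$ are precisely what make $\widetilde{\exp}$ and $\widetilde{\exp}^{*}$ the expected isomorphisms and force $y_{\chi_{\text{LT}}^{-j}}$ into $H^1_f$. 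The basis vector $e_j \in D_{\mathrm{cris},L}(L(\chi_{\text{LT}}^j))$ enters when one trivialises the twisting isomorphism $D_{\mathrm{cris},L}(T(\tau^{-1}\chi_{\text{LT}}^{-j})) \otimes D_{\mathrm{cris},L}(L(\chi_{\text{LT}}^j)) \cong D_{\mathrm{cris},L}(T(\tau^{-1}))$.

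It then remains to account for the correction terms. The Euler factor $1 - \tfrac{\varpi_L}{q}\phi_L$ is the image of the $1-\phi$ occurring in the big logarithm, read off through Berger's dictionary on $D_{\mathrm{cris},L}(T(\tau^{-1}\chi_{\text{LT}}^j))$ after the normalisation $\psi \circ \phi = q/\varpi_L$ of Theorem \ref{LT_iwasawa}; the inverse factor $(1 - \varpi_L^{-1}\phi_L^{-1})^{-1}$ comes from inverting $1-\phi$ on the complementary eigenspace via the exact sequence relating $\Psi = 1$ invariants to $\phi = 1$ invariants recalled after Lemma \ref{steingart_kernel}, the invertibility hypotheses on these two operators being exactly what legitimises this. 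The factorials $j!$ for $j \geq 0$ and $(-1)^j/(-1-j)!$ for $j \leq -1$ arise from iterating the Lubin--Tate analogue of $t\,\tfrac{d}{dt}$, equivalently the Lie-algebra element $\nabla$ of Lemma \ref{lie_element}, which acts as multiplication by $j$ on the $\chi_{\text{LT}}^j$-eigenspace, applied $|j|$ respectively $|j+1|$ times to land in the correct graded piece; and the power $\Omega^j$ of the Lubin--Tate period $\Omega_{\text{LT}}$ measures the gap between the algebraic normalisation of $e_j$ and the analytic one implicit in the $p$-adic Fourier transform through which the construction passes.

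The main obstacle is the bookkeeping. One must carry three independent normalisation choices through the chain of comparison isomorphisms (Definitions \ref{LT_kisin-ren} and \ref{analytic_kisin-ren} together with $p$-adic Fourier theory) --- the uniformiser $\varpi_L$, hence $t$ and $\chi_{\text{LT}}$; the period $\Omega_{\text{LT}}$; and the $\tau = \chi_{\text{cyc}}^{-1}\chi_{\text{LT}}$ twist separating $\chi_{\text{LT}}$-twists from $\chi_{\text{cyc}}$-twists --- and verify that the accumulated constants assemble into exactly the stated factorials and Euler factors with no spurious scalar. In practice one proves the identity first for $j$ in a range where the big logarithm is given by a visibly convergent series and the specialisation is transparent, then propagates to all $j$ by $\Gamma_L$-equivariance together with Zariski density of $\{\chi_{\text{LT}}^j\}$ in $\mathfrak{X}_L$; making that density argument precise, and checking that the boundary cases $j = 0$ and $j = -1$ contribute no extra factor, is the delicate point.
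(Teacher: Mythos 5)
The paper does not prove Theorem~\ref{analytic_interpolation} at all: it is quoted verbatim as \citep[Theorem 5.2.26]{SV23} and used as a black box. There is therefore no paper proof against which to compare your sketch; what you have written is a reconstruction of the Schneider--Venjakob argument, and it should be assessed on its own terms.

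As an orientation sketch your outline has the right shape: a Lubin--Tate big-logarithm assembled from $1-\phi$ and the crystalline dictionary, specialisation at $\chi_{\mathrm{LT}}^j$, and bookkeeping of Euler factors, factorials and the period $\Omega$. That is indeed how Perrin--Riou-type interpolation formulas are proved, and the role you assign to the hypotheses ($\mathrm{Fil}^{-1}D_{\mathrm{cris},L}(T^*(1))=D_{\mathrm{cris},L}(T^*(1))$ and the $\phi_L$-eigenvalue conditions making the exponential maps behave as isomorphisms; the invertibility hypotheses legitimising the Euler-factor manipulations) is correct. Two caveats though. First, you assert at the outset that $\mathcal{L}_T$ ``unwinds'' as $\mathrm{comp}$ followed by a big logarithm. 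In the present paper that factorisation is not free: it is established separately (via Proposition~\ref{L-an_crys} and the interpolation property itself) in the lemma immediately after Theorem~\ref{analytic_interpolation}, so you should not treat it as part of the definition of $\mathcal{L}_T$. Schneider and Venjakob in fact build the regulator through a specific chain of maps involving the Wach module $N(T(\tau^{-1}))$ and the operator $1-\frac{\phi}{\varpi_L}$ --- the same chain cited in the proof of Lemma~\ref{ker_reg} --- not through $\mathrm{comp}$. Second, the claim that specialising $D^\dagger_{\mathrm{rig}}(T(\tau^{-1}))$ at a character ``degenerates to $D_{\mathrm{cris},L}(T(\tau^{-1}\chi_{\mathrm{LT}}^{-j}))$'' elides the actual mechanism: one must pass through the Bloch--Kato exponential/dual exponential on a Herr complex or Galois cohomology group, not identify a fibre of the sheaf directly with a Dieudonn\'e module. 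Your sketch names those maps later, so the idea is present, but as written that sentence would not compile into a proof. With those two points flagged, the sketch is an accurate high-level picture of the SV23 argument; filling it in is exactly the bookkeeping you rightly identify as the bulk of the work.
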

		
		%Interpolation property (what about for general locally L-analytic characters?)
		
		Heuristically, once we specialise at a power of the Lubin--Tate character, the regulator map looks like $(1-\varpi_L^{-1}\phi_L^{-1})$ times either $\text{exp}_{BK}^*$ or $\text{log}_{BK}$ depending on whether its a positive or negative power; when $L=E_p$ recall that $\varpi_{E_p}=p$ and this operator is $(1-\tfrac{1}{p} \phi)$. This has been generalised to hold for all crystalline $L$-analytic characters, which look like powers of $\chi_{\text{LT}}$ twisted by a finite order character, in a follow up paper \citep{SaV24}. This interpolation formula will be key to proving an explicit reciprocity law in the future, and we will use it in this work to show that the local conditions we construct compare with local Bloch--Kato Selmer groups at $p$.

        \begin{lem}
            The regulator map $\overline{\mathcal{L}}_S$ factors further to $$ \widetilde{\mathcal{L}}_S: D_{\text{rig}}^\dagger(S(\tau^{-1}))^{\Psi=1} \hat{\otimes}_L K \rightarrow \Lambda_{\infty,K}^L .$$
        \end{lem}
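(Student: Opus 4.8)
The plan is to descend $\overline{\mathcal{L}}_S$ along the comparison map $\text{comp}$ of Conjecture \ref{steingart_conj}. Granting that conjecture, $\text{comp}$ is surjective with kernel $\mathfrak{K}$, and since the completed base change $\hat{\otimes}_L K$ is exact on coadmissible modules (as used after Corollary \ref{C_p-kmc}), the map $\text{comp}\,\hat{\otimes}_L K \colon H^1_{\text{Iw}}(L_\infty/L,S)\otimes_\Lambda\Lambda_{\infty,K}^L \twoheadrightarrow D^{\dagger}_{\text{rig}}(S(\tau^{-1}))^{\Psi=1}\hat{\otimes}_L K$ is surjective with kernel $\mathfrak{K}\hat{\otimes}_L K$. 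Once $D_{\text{cris},L}(S(\tau^{-1}))$ is trivialised by the fixed basis $\xi$, both $\overline{\mathcal{L}}_S$ and the sought $\widetilde{\mathcal{L}}_S$ are genuinely $\Lambda_{\infty,K}^L$-valued, and $\widetilde{\mathcal{L}}_S$ exists (uniquely, $\Lambda_{\infty,K}^L$-linearly and continuously) precisely when $\overline{\mathcal{L}}_S$ annihilates $\mathfrak{K}\hat{\otimes}_L K$. So the whole proof reduces to that vanishing.

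To check it, recall from Remark \ref{padic_fourier} that after base change $\Lambda_{\infty,K}^L$ is the ring of functions on the rigid open unit disk, so an element vanishing at all the crystalline characters $\chi_{\text{LT}}^{j}$, $j\in\mathbb{Z}$ --- which accumulate, each closed subdisk containing infinitely many of them --- is zero; we may further restrict to the cofinite set of $j$ for which $\chi_{\text{LT}}^{j}$ is generic (Assumption \ref{assumption_generic}) and the Euler-factor operators $1-\varpi_L^{-1}\phi_L^{-1}$, $1-\tfrac{\varpi_L}{q}\phi_L$ of Theorem \ref{analytic_interpolation} are invertible (after checking the remaining hypotheses on $D_{\text{cris},L}$ there hold for our $S$ and $\eta$). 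Fix $\omega\in\mathfrak{K}\hat{\otimes}_L K$ and such an $\eta=\chi_{\text{LT}}^{j}$. By the interpolation property of the regulator (Theorem \ref{analytic_interpolation}, extended to all crystalline $L$-analytic characters in \citep{SaV24}), $\overline{\mathcal{L}}_S(\omega)(\eta)$ equals an invertible scalar times $\widetilde{\exp}^{*}$ (for $j\geq 0$) or $\widetilde{\log}$ (for $j\leq -1$) of the specialisation $\eta(\omega)\in H^1(L,S(\eta^{-1}))$, the image of $\omega$ under the middle vertical map of Definition \ref{comp_descent}. Since $\omega\in\ker(\text{comp})$, that diagram gives $\text{comp}_\eta(\eta(\omega))=\alpha_\eta(\text{comp}(\omega))=0$, so $\eta(\omega)\in\mathfrak{K}_\eta=\ker(\text{comp}_\eta)$. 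The Bloch--Kato dual exponential and logarithm are built from $p$-adic Hodge theory of $D^{\dagger}_{\text{rig}}(S(\tau^{-1}\eta^{-1}))$ and hence factor through the image of the class in $\mathcal{H}^1_{\Psi,\Gamma}(D^{\dagger}_{\text{rig}}(S(\tau^{-1}\eta^{-1})))$ (equivalently, via Lemma \ref{an_herr_duality}, through $\mathcal{H}^1_{\phi,\Gamma}$ of the Tate dual); that image is exactly $\text{comp}_\eta(\eta(\omega))=0$, so both $\widetilde{\exp}^{*}$ and $\widetilde{\log}$ kill $\eta(\omega)$. Thus $\overline{\mathcal{L}}_S(\omega)$ vanishes at every $\eta$ in our dense set, so $\overline{\mathcal{L}}_S(\omega)=0$ and $\widetilde{\mathcal{L}}_S$ is well defined.

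The main obstacle is the last implication: that $\widetilde{\exp}^{*}$ and $\widetilde{\log}$ really annihilate $\ker(\text{comp}_\eta)$, i.e. that the Schneider--Venjakob interpolation is \emph{internal} to the $(\phi,\Gamma)$-module $D^{\dagger}_{\text{rig}}$. This forces one to keep track of the scaling $\psi=\tfrac{p^{[L:\mathbb{Q}_p]}}{\varpi_L}\Psi$ from Theorem \ref{LT_iwasawa} --- absorbed into a ramified non-\'etale $L$-analytic twist as in \citep[Lemma 3.10]{Ste22b}, which is exactly why the naive base change along $\iota\colon\mathbb{B}_L^{\dagger}\hookrightarrow\mathcal{R}_K$ fails to give the required map --- and to verify that, after this correction, the descended comparison $\text{comp}_\eta$ of Definition \ref{comp_descent} is compatible with the factorisations of the Bloch--Kato maps through Herr cohomology recalled in Remark \ref{analytic_ext}. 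Everything else is formal.
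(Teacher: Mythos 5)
Your overall strategy matches the paper's: reduce to showing $\overline{\mathcal{L}}_S$ kills $\mathfrak{K}=\ker(\mathrm{comp})$, then argue by density (accumulation of Lubin--Tate powers in the disk) using the interpolation property. You correctly recognise that surjectivity (Conjecture \ref{steingart_conj}) is what makes the factored map land on the full target, and you correctly reduce the content to a statement about specialisations of elements of $\mathfrak{K}$.

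However, the step you flag as "the main obstacle" is genuinely a gap, and your proposed resolution for it is wrong. You claim that both $\widetilde{\exp}^*$ and $\widetilde{\log}$ annihilate $\ker(\mathrm{comp}_\eta)$ because they "factor through the image of the class in $\mathcal{H}^1_{\Psi,\Gamma}$." This is true for the dual exponential but false for the Bloch--Kato logarithm: $\widetilde{\exp}^*$ is defined on $H^1$ and has kernel exactly $H^1_f$, so it does factor through the quotient, but $\widetilde{\log}$ is defined only on $H^1_f$ and is \emph{injective} there --- it is the inverse of the Bloch--Kato exponential, not a map factoring through $H^1/H^1_f$. Under Proposition \ref{L-an_crys}, $\mathfrak{K}_\eta$ is precisely $H^1_f(E_p,S(\eta^{-1}))$, so if $\eta(\omega)$ is a nonzero element of $\mathfrak{K}_\eta$ then $\widetilde{\log}(\eta(\omega))\neq 0$. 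Your argument as stated would therefore fail at the negative-power specialisations. The paper sidesteps this entirely: it introduces the subset $\widehat{H}$ of classes whose specialisations all lie in $H^1_f$, shows $\ker(\mathrm{comp})\subseteq\widehat{H}$ via Proposition \ref{L-an_crys}, and then invokes the interpolation property only through the dual exponential --- the characters $\chi_{\mathrm{LT}}^j$ with $j\geq 0$ already accumulate in every closed subdisk, so the negative-power ($\log$) branch of Theorem \ref{analytic_interpolation} is never needed. If you restrict your density set to nonnegative $j$ and quote Proposition \ref{L-an_crys} to identify $\mathfrak{K}_\eta$ with $H^1_f$ (rather than appealing to a nonexistent factorisation of $\log$ through Herr cohomology), your argument closes.

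One smaller inaccuracy: the parenthetical appeal to Lemma \ref{an_herr_duality} is not quite the right duality here, since that pairing uses the non-\'etale dualising character $\chi_{\mathrm{an}}$ rather than the Tate twist; the clean bridge between $\mathfrak{K}_\eta$ and crystalline classes is Proposition \ref{L-an_crys} together with Remark \ref{analytic_ext}, not the analytic Herr duality.
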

        \begin{proof}
            Let $\widehat{H} \subset H^1_\text{Iw}(L_\infty/L,S) \otimes_\Lambda \Lambda_\infty^L $ be the subset of elements whose specialisations at all $L$-analytic characters $\eta$ lie in $H^1_f(L,S(\eta^{-1})).$ Later in this paper (specifically Proposition \ref{L-an_crys}) we show that $$\ker(\text{comp}) \subseteq \widehat{H}. $$ Moreover, the interpolation property of Bloch--Kato exponential maps, proved in necessary generality in \citep{SV24}[Theorem 2.6.3] shows that $$ \widehat{H} \subseteq \ker(\mathcal{L}_{S}).$$ In particular this shows the regulator map factors through the comparison map of Definition \ref{comp_defn}. Assuming Conjecture \ref{steingart_conj}, we obtain the factorization required.
        \end{proof}
        
		\begin{lem} \label{ker_reg}
			Suppose $S$ is a crystalline co-$L$-analytic $G_L$ representation with negative Hodge--Tate weight at the identity embedding  and $\eta$ is an $L$-analytic character satisfying $H^0(L,S(\eta^{-1}))=0$. Then, $\ker \overline{\mathcal{L}}_S = 0.$ 
		\end{lem}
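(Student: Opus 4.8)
The plan is to reduce the statement to the non-vanishing of the regulator and then deduce that non-vanishing from its interpolation property. First I would recall from the previous lemma that $\overline{\mathcal{L}}_S$ factors through the comparison map $\mathrm{comp}$, which under Conjecture \ref{steingart_conj} is surjective onto $D^{\dagger}_{\mathrm{rig}}(S(\tau^{-1}))^{\Psi=1}\hat{\otimes}_L K$; it therefore suffices to show that the induced map $\widetilde{\mathcal{L}}_S\colon D^{\dagger}_{\mathrm{rig}}(S(\tau^{-1}))^{\Psi=1}\hat{\otimes}_L K\to\Lambda^{L}_{\infty,K}$ is injective (equivalently, that $\ker\overline{\mathcal{L}}_S$ is the expected $\ker\mathrm{comp}$). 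Source and target are both torsion-free coadmissible $\Lambda^{L}_{\infty,K}$-modules of rank one (the source by Lemma \ref{steingart_kernel} and the inclusion $\Lambda^{L}_{\infty}\hat{\otimes}_L K\subset\mathcal{R}_K$; the target tautologically), and $\Lambda^{L}_{\infty,K}$ is a domain by Remark \ref{char_prufer}. For a $\Lambda^{L}_{\infty,K}$-linear map between rank-one torsion-free modules over a domain, the kernel is a submodule of a rank-one torsion-free module; if the map is nonzero its image is a nonzero submodule of a torsion-free module, so the kernel has rank $0$, hence is torsion, hence is $0$. Thus injectivity of $\widetilde{\mathcal{L}}_S$ is equivalent to $\widetilde{\mathcal{L}}_S\neq 0$, i.e. to $\mathcal{L}_S\neq 0$.

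To prove $\mathcal{L}_S\neq 0$ I would exhibit $y\in H^{1}_{\mathrm{Iw}}(L_\infty/L,S)$ and $j\in\mathbb{Z}$ with $\mathcal{L}_S(y)(\chi_{\mathrm{LT}}^{j})\neq 0$, using Theorem \ref{analytic_interpolation}. Under Assumption \ref{assumption_L-an} the fixed hypotheses of that theorem hold for $T=S$, and as $S$ is crystalline the remaining $j$-dependent conditions — invertibility of $1-\varpi_L^{-1}\phi_L^{-1}$ and $1-\tfrac{\varpi_L}{q}\phi_L$ on $D_{\mathrm{cris},L}(S(\tau^{-1}\chi_{\mathrm{LT}}^{j}))$, and the vanishing $H^{0}(L,S(\chi_{\mathrm{LT}}^{-j}))=H^{2}(L,S(\chi_{\mathrm{LT}}^{-j}))=0$ — fail for only finitely many $j$, since twisting by $\chi_{\mathrm{LT}}^{j}$ scales the crystalline Frobenius eigenvalue by a power of $q$ and shifts the Hodge--Tate weight linearly. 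I would fix $j\ll 0$ avoiding all these bad values and so negative that the identity-embedding Hodge--Tate weight of $S(\tau^{-1}\chi_{\mathrm{LT}}^{-j})$ forces $\mathrm{Fil}^{0}D_{\mathrm{dR},L}(S(\chi_{\mathrm{LT}}^{-j}))=0$, so that $H^{1}_{f}(L,S(\chi_{\mathrm{LT}}^{-j}))=H^{1}(L,S(\chi_{\mathrm{LT}}^{-j}))$, a nonzero $K$-space of dimension $[L:\mathbb{Q}_p]\dim S$ by the local Euler characteristic formula.

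Then, as in the proof of Lemma \ref{steingart_kernel}, the quasi-isomorphism $R\Gamma_{\mathrm{Iw}}(L_\infty/L,S)\cong_{\text{q-iso}}R\Gamma(L,S\otimes_{\mathcal{O}_L}\Lambda)$ together with $H^{0}_{\mathrm{Iw}}=H^{2}_{\mathrm{Iw}}=0$ shows $H^{1}_{\mathrm{Iw}}(L_\infty/L,S)$ is $\Lambda$-free and that its specialisation at $\chi_{\mathrm{LT}}^{-j}$ surjects onto $H^{1}(L,S(\chi_{\mathrm{LT}}^{-j}))$. I may thus pick $y$ with $y_{\chi_{\mathrm{LT}}^{-j}}\neq 0$ in $H^{1}_{f}(L,S(\chi_{\mathrm{LT}}^{-j}))$. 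Since $H^{0}(L,S(\chi_{\mathrm{LT}}^{-j}))=0$ and (by the choice of $j$) $D_{\mathrm{cris},L}(S(\chi_{\mathrm{LT}}^{-j}))^{\phi_L=1}=0$, the twisted logarithm $\widetilde{\log}_{L,S(\chi_{\mathrm{LT}}^{-j}),\mathrm{id}}$ is injective on $H^{1}_{f}$, hence nonzero on $y_{\chi_{\mathrm{LT}}^{-j}}$; the Euler factor $(1-\varpi_L^{-1}\phi_L^{-1})^{-1}(1-\tfrac{\varpi_L}{q}\phi_L)$ is invertible, and the constants $\tfrac{(-1)^{j}}{(-1-j)!}$ and $\Omega^{j}$ are nonzero, so Theorem \ref{analytic_interpolation} yields $\mathcal{L}_S(y)(\chi_{\mathrm{LT}}^{j})\neq 0$. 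Hence $\mathcal{L}_S\neq 0$, so $\widetilde{\mathcal{L}}_S$ is injective and the statement follows.

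The step I expect to cause the most trouble is the bookkeeping in the second paragraph: pinning down precisely that the standing hypotheses on $S$ (Assumption \ref{assumption_L-an}) and the cohomological vanishing of Assumptions \ref{assumption_V_delta}--\ref{assumption_V_eta} really place us in the situation of Theorem \ref{analytic_interpolation} for $S$ and for cofinitely many $\chi_{\mathrm{LT}}^{j}$, so that there is at least one twist — in the $\widetilde{\log}$ regime $j\ll 0$, or symmetrically in the $\widetilde{\exp}^{*}$ regime $j\gg 0$ where $\exp^{*}$ is nonzero because $\mathrm{Fil}^{0}D_{\mathrm{dR},L}\neq 0$ — at which the relevant Bloch--Kato map is visibly nonzero on a class coming from $H^{1}_{\mathrm{Iw}}$. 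Once $\mathcal{L}_S\neq 0$ is established the rest is formal.
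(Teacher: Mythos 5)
Your argument is sound in outline but takes a genuinely different route. The paper's proof opens up the definition of $\mathcal{L}_S$ directly: it runs through the chain of maps from \citep[(141), \S5]{SV23} and notes that every arrow is injective \emph{a priori} except $1-\tfrac{\phi}{p}$ on the Wach module, and that this operator is injective precisely when $p$ is not a crystalline Frobenius eigenvalue of $S$, which follows from $H^0(E_p,S(\eta^{-1}))=0$; the passage to $\widetilde{\mathcal{L}}_S$ then uses flatness of $\mathbb{B}_L^\dagger\to\mathcal{R}_L$ (indeed $\mathbb{B}_L^\dagger$ is a field) to get injectivity of $D_{\text{LT}}(S(\tau^{-1}))^{\Psi=1}\to D^\dagger_{\text{rig}}(S(\tau^{-1}))^{\Psi=1}$. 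You instead reduce injectivity of $\widetilde{\mathcal{L}}_S$ to nonvanishing via the rank-one torsion-free module argument over the domain $\Lambda_{\infty,K}^L$, and prove nonvanishing by specialising at a single $\chi_{\text{LT}}^j$ and invoking the interpolation property, Theorem \ref{analytic_interpolation}. The paper's route is more economical: it uses only the definition of the regulator, gives injectivity of $\mathcal{L}_S$ outright rather than mere nonvanishing, and does not lean on $\dim S=1$; yours makes the lemma a corollary of the interpolation theorem, which is conceptually pleasing but imports the heaviest available input and requires checking its hypotheses for $T=S$ at your chosen $j$. You rightly flag this bookkeeping as the weak point, and it is genuinely delicate: the conjugate-embedding Hodge--Tate weight of $S(\chi_{\text{LT}}^{-j})$ is frozen by co-$L$-analyticity (the $L$-analytic twist moves only the identity weight), so only one of your two regimes $j\ll 0$ or $j\gg 0$ will make the relevant $\widetilde{\log}$ or $\widetilde{\exp}^*$ injective, and you would need to pin this down with the paper's sign conventions. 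Both proofs implicitly rely on Conjecture \ref{steingart_conj}, since the existence of $\widetilde{\mathcal{L}}_S$ does. Finally, you correctly noticed that the statement as printed, $\ker\overline{\mathcal{L}}_S=0$, cannot be meant literally, since $\mathfrak{K}=\ker(\mathrm{comp})\subseteq\ker\overline{\mathcal{L}}_S$ has positive rank when $L\neq\mathbb{Q}_p$; what the paper actually proves and what Lemma \ref{coker_reg} uses is $\ker\widetilde{\mathcal{L}}_S=0$, and your proposal targets exactly that.
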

		\begin{proof}
			\par First we show  $\ker \mathcal{L}_S = 0.$ The regulator map is given by the chain of maps \citep[(141), \S5]{SV23}. All of the maps are a priori injective except for $$ (1-\tfrac{\phi}{p}): N(S(\tau^{-1}))^{\Psi-1} \rightarrow N(S(\tau^{-1}))^{\phi=0}$$ where $N(-)$ is the associated Wach module of the $(\phi, \Gamma)$-module, since $S$ is crystalline. This operator is injective if $p$ is not the crystalline eigenvalue of $S$. Since $S$ is crystalline, the argument of Assumption \ref{assumption_V_eta} shows this follows from $H^0(E_p,S(\eta^{-1}))=0$.
			\par Now to show $\ker \widetilde{\mathcal{L}}_S = 0$ we simply need to show that $$ D_{\text{LT}}(S(\tau^{-1}))^{\Psi-1} \rightarrow D_{\text{rig}}^\dagger(S(\tau^{-1}))^{\Psi-1} $$ is injective. Since $S$ is overconvergent, it is sufficient to show that $\mathbb{B}_L^\dagger \rightarrow \mathcal{R}_L$ is flat. Since $\mathbb{B}_L^\dagger$ is a field this is true and the result holds.
		\end{proof}

		\begin{lem} \label{coker_reg}
			Under the assumptions of Lemma \ref{ker_reg}, $\text{coker}(\widetilde{\mathcal{L}}_S)$ is a torsion $\Lambda^{E_p}_{\infty, \mathbb{C}_p}$-module.
		\end{lem}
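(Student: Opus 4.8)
The plan is to deduce this from a rank count together with the injectivity already established in Lemma~\ref{ker_reg}. Working on a fixed connected component of $\mathfrak{X}_{E_p}$ (equivalently, after applying an idempotent $e_\delta$), the ring $\Lambda_{\infty,\mathbb{C}_p}^{E_p}$ is an integral domain — in fact a Pr\"ufer domain by Remark~\ref{char_prufer} — and over such a ring a submodule of a module of rank $1$ has torsion cokernel precisely when it itself has rank $1$. A module over $\Lambda_{\infty,\mathbb{C}_p}^{E_p}$ is torsion if and only if each of its $e_\delta$-components is torsion over the corresponding domain, so it suffices to show that the source $D_{\mathrm{rig}}^\dagger(S(\tau^{-1}))^{\Psi=1}\hat{\otimes}_L\mathbb{C}_p$ and the target $\Lambda_{\infty,\mathbb{C}_p}^{E_p}$ of $\widetilde{\mathcal{L}}_S$ both have rank $1$, and that $\widetilde{\mathcal{L}}_S$ is injective.

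First I would compute the ranks. The target is free of rank $1$ over itself, so nothing is needed there. For the source, I would invoke \citep[Prop.~4.7]{Ste22b} exactly as in the proof of Lemma~\ref{steingart_kernel}: summing the component-wise identities $\mathrm{rank}_{e_\delta\Lambda_\infty^{E_p}}\, e_\delta D_{\mathrm{rig}}^\dagger(S(\tau^{-1}))^{\Psi=1}=\dim S$ over $\delta$ gives $\mathrm{rank}_{\Lambda_\infty^{E_p}}D_{\mathrm{rig}}^\dagger(S(\tau^{-1}))^{\Psi=1}=\dim S$, and here $S$ is the one-dimensional subquotient of $V(\eta^{-1})$, so $\dim S=1$. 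The base change $\hat{\otimes}_L\mathbb{C}_p$ is faithfully flat (as used before Corollary~\ref{C_p-kmc}), hence preserves rank, so the source has rank $1$ over $\Lambda_{\infty,\mathbb{C}_p}^{E_p}$ as well.

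Next I would apply Lemma~\ref{ker_reg}: $\overline{\mathcal{L}}_S$, and therefore its factorisation $\widetilde{\mathcal{L}}_S$, has trivial kernel. Thus $\mathrm{image}(\widetilde{\mathcal{L}}_S)$ is a rank-$1$ submodule of the rank-$1$ module $\Lambda_{\infty,\mathbb{C}_p}^{E_p}$; tensoring with $\mathrm{Frac}(\Lambda_{\infty,\mathbb{C}_p}^{E_p})$ turns $\widetilde{\mathcal{L}}_S$ into an injection of one-dimensional vector spaces, hence an isomorphism, so $\mathrm{coker}(\widetilde{\mathcal{L}}_S)\otimes\mathrm{Frac}(\Lambda_{\infty,\mathbb{C}_p}^{E_p})=0$, i.e. the cokernel is torsion (it is moreover coadmissible, being the cokernel of a map of coadmissible modules over a Fr\'echet--Stein algebra).

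This argument is essentially formal once the two inputs are in place; the only genuine content is the equality $\dim S=1$, specific to the rank-$1$ Euler system setting, and the injectivity of Lemma~\ref{ker_reg}. The ``hard part'' one might hope for — pinning down the support of $\mathrm{coker}(\widetilde{\mathcal{L}}_S)$ precisely, which one expects to be controlled by the Euler factor $(1-\tfrac1p\phi)$ appearing in the interpolation formula of Theorem~\ref{analytic_interpolation} — is not needed here and would require an $L$-analytic explicit reciprocity law, which lies outside the scope of this work.
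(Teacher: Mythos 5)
Your argument is correct and is essentially the same as the paper's: the paper observes that $\widetilde{\mathcal{L}}_S$ is a map between rank-one modules over $\Lambda_{\infty,K}^{E_p}$ and that injectivity (Lemma~\ref{ker_reg}) then forces the cokernel to be torsion. Where the paper simply asserts that the source is free of rank one, you supply the justification via \citep[Prop.~4.7]{Ste22b} (as in Lemma~\ref{steingart_kernel}) and faithful flatness of $\hat{\otimes}_L\mathbb{C}_p$, and you correctly note that only rank one (not freeness) is needed for the fraction-field argument to go through; you also record coadmissibility of the cokernel, which is needed implicitly downstream when taking characteristic ideals. These are welcome elaborations rather than a change of method.
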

		\begin{proof}
			$\widetilde{\mathcal{L}}_S$ presents the regulator as a map between 1-dimensional free $\Lambda_{\infty,K}^L$-modules. If the kernel is 0, this implies immediately that the cokernel is torsion. 
		\end{proof}
		\par It is probably achievable to show this cokernel is 0 under some suitable conditions, but this is not needed for the following argument.
		
		\subsection{The $E_p$-analytic main conjecture}
		%Discussion of p-adic L-functions
		%Exact sequence argument to get IMC over \Lambda_\infty^L with "p-adic L-function"
		
		To state a main conjecture in the inert case we still need to define the analogous `rank 0' local conditions when $p$ is inert, which we can now do given the tools developed so far. At primes $l \neq p$ we still use the unramified local condition, and at the inert prime $p$ we will do something different. Recall the rank 1 local condition was defined by the rank 1 Panchishkin subrepresentation, which still exists for $p$ inert so we can define $\Delta_p^1$ as before. By our construction after Definition \ref{selmer_complex}, we have an isomorphism $$ \iota: R\Gamma(E_p,V(\eta^{-1});\Delta^1_p) \xrightarrow{\sim} R\Gamma(E_p,V^1).$$ When $\eta$ is a twist in region $\Sigma^{(0)}$ and lying on the horizontal $L$-analytic locus of Figure \ref{fig3}, $V^1$ has all the non-positive Hodge--Tate weights plus an extra weight equal to 1, i.e. the fixed weight from the co-$E_p$-analyticity condition, so $V^1$ is not quite the Panchishkin subrepresentation. A lot of the technical work so far has been to remedy this difference. Let $\mathrm{pr}: V^1 \twoheadrightarrow S$ be the projection of the local restriction of the Galois representation to the middle $L$-analytic piece, which lifts to a map $\mathrm{pr}^*$ of group cohomology.
		
		%We need to sort out notation of local and global Selmer groups - and \tilde should only be used globally
		
		\begin{defn} \label{analytic_rank0_selmer}
			When $p$ is inert in $E$ we define a local condition at $p$, $C^\bullet_{\Delta^0_p}$, as the mapping cone of $$ \mathrm{comp} \circ [(\mathrm{pr}^* \circ \iota^{-1})\otimes^{\mathbb{L}} \Lambda_\infty^{E_p}]: R\Gamma(E_p,V;\Delta^1_p) \otimes^{\mathbb{L}}_\Lambda \Lambda_\infty^{E_p} \rightarrow D^\dagger_{\text{rig}}(S(\tau^{-1}))[\Psi-1][1]. $$
			Using this definition at $p$ and unramified local conditions inside $R\Gamma(E_v,V) \otimes^{\mathbb{L}}_\Lambda \Lambda_\infty^{E_p}$ for $v \neq p$ we obtain the rank 0 Selmer complex $\widetilde{R\Gamma}(E,\mathbb{V}_\infty^{E_p};\Delta^0)$. We will denote its Selmer groups by $$\widetilde{H}^i(E, \mathbb{V}_\infty^{E_p};\Delta^0).$$ In degree 2 this will live inside $H^2(E,\mathbb{V}_\infty) \otimes_{\Lambda_\infty} \Lambda_\infty^{E_p}$ by the exact sequences in Propostition \ref{inert_descent2} coming from the base change spectral sequence.
			
		\end{defn}
		
		\par We can compute the Euler characteristic as a complex of $\Lambda_\infty^L$-modules as follows, using Remark \ref{psi_euler_char}; $$ \chi_{\Lambda_\infty^L}(\widetilde{R\Gamma}(E,V;\Delta^0)) = \chi_{\Lambda_\infty^L}(D^\dagger_{\text{rig}}(S(\tau^{-1}))[\Psi-1][1]) - \chi_{\Lambda}(\widetilde{R\Gamma}(E,\mathbb{V};\Delta^1)) = 1-1 = 0 $$
		
		\begin{lem} \label{analytic_rank0_vanishing}
			If $L^*_{p,E_p\text{-an}} \neq 0$ then $\widetilde{H}^1(E,\mathbb{V}_\infty^{E_p};\Delta^0)=0$
		\end{lem}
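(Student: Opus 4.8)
The plan is to read off $\widetilde{H}^1(E,\mathbb{V}_\infty^{E_p};\Delta^0)$ from the exact triangle defining the rank $0$ Selmer complex, and then to annihilate it using Kato's main conjecture over $\Lambda_{\infty,K}^{E_p}$. By faithful flatness of $\mathbb{C}_p$ over $E_p$ it suffices to prove the vanishing after $\hat\otimes_{E_p}\mathbb{C}_p$, so I work over $K=\mathbb{C}_p$, where the regulator of Definition \ref{analytic_reg} is available, and throughout I stay in the fixed $e_\delta$-component dictated by the analytic set-up. Unwinding Definition \ref{analytic_rank0_selmer} (the mapping-cone condition at $p$, together with the unchanged unramified conditions away from $p$) produces an exact triangle of complexes of $\Lambda_{\infty,K}^{E_p}$-modules
\[ \widetilde{R\Gamma}(E,\mathbb{V}_{\infty}^{E_p};\Delta^0)\longrightarrow \widetilde{R\Gamma}(E,\mathbb{V}_{\infty}^{E_p};\Delta^1)\xrightarrow{\ \rho\ } D^\dagger_{\text{rig}}(S(\tau^{-1}))[\Psi-1][1]\longrightarrow[1], \]
where $\rho=\mathrm{comp}\circ[(\mathrm{pr}^*\circ\iota^{-1})\otimes^{\mathbb L}\Lambda_\infty^{E_p}]$ precomposed with localisation at $p$. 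Passing to the long exact cohomology sequence and using (i) $\widetilde{H}^0(E,\mathbb{V}_\infty^{E_p};\Delta^1)=H^0(E,\mathbb{V}_\infty^{E_p})=0$ from irreducibility of $V$ (Assumption \ref{assumption_V_delta}), (ii) the rank computations of Remark \ref{psi_euler_char} and Lemma \ref{steingart_kernel}, and (iii) surjectivity of $\mathrm{comp}$ in degree $1$ from Conjecture \ref{steingart_conj} (or carrying the cokernel as an error term otherwise), one checks that this collapses to the identification
\[ \widetilde{H}^1(E,\mathbb{V}_\infty^{E_p};\Delta^0)=\ker\Bigl(\rho^1\colon e_\delta\widetilde{H}^1(E,\mathbb{V}_\infty^{E_p};\Delta^1)\longrightarrow D^\dagger_{\text{rig}}(S(\tau^{-1}))^{\Psi=1}\Bigr), \]
consistently with the Euler characteristic count $\chi_{\Lambda_\infty^{E_p}}(\widetilde{R\Gamma}(E,V;\Delta^0))=0$ recorded after Definition \ref{analytic_rank0_selmer}.

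Next I would show $\rho^1$ does not vanish on the Euler system class. By construction $\rho^1(c^\Pi)=\mathrm{comp}(c^\Pi\otimes 1)\in D^\dagger_{\text{rig}}(S(\tau^{-1}))^{\Psi=1}$, and applying the regulator $\widetilde{\mathcal{L}}_S$ — which factors through $\mathrm{comp}$ (as in the factorisation preceding Lemma \ref{ker_reg}) and has trivial kernel by Lemma \ref{ker_reg} — gives $\widetilde{\mathcal{L}}_S(\rho^1(c^\Pi))=\mathcal{L}_S(c^\Pi)=L^*_{p,E_p\text{-an}}$ up to the fixed basis of $D_{\text{cris},L}(S(\tau^{-1}))$. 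Hence the hypothesis $L^*_{p,E_p\text{-an}}\neq 0$ forces $\rho^1(c^\Pi)\neq 0$.

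Finally I would invoke Kato's main conjecture over $\Lambda_{\infty,K}^{E_p}$ (Corollary \ref{C_p-kmc}, built on Corollary \ref{analytic_kmc}): $e_\delta\widetilde{H}^1(E,\mathbb{V}_\infty^{E_p};\Delta^1)$ is coadmissible, torsion-free of rank $1$, and the submodule generated by $c^\Pi$ has finite index in the characteristic-ideal sense. Since $\rho^1$ is $\Lambda_\infty^{E_p}$-linear with $\rho^1(c^\Pi)\neq 0$ while its target $D^\dagger_{\text{rig}}(S(\tau^{-1}))^{\Psi=1}$ has rank $1$ (Lemma \ref{steingart_kernel}), the image of $\rho^1$ has rank $1$, so $\ker\rho^1$ is a torsion submodule of a torsion-free module, hence $\ker\rho^1=0$. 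Thus $\widetilde{H}^1(E,\mathbb{V}_\infty^{E_p};\Delta^0)=0$ over $\Lambda_{\infty,K}^{E_p}$, and hence, by faithful flatness of $K$ over $E_p$, over $\Lambda_{\infty,\delta}^{E_p}$ as well.

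I expect the principal obstacle to be the first step: pinning down the degree shift in the mapping cone so that the long exact sequence genuinely places $\widetilde{H}^1(\Delta^0)$ as exactly $\ker\rho^1$, with no surviving contribution from $D^\dagger_{\text{rig}}(S(\tau^{-1}))^{\Psi=1}$ or from a connecting homomorphism — this is precisely where the cohomology-vanishing hypotheses of Assumption \ref{assumption_V_eta} and the surjectivity in Conjecture \ref{steingart_conj} are used, and it parallels the descent carried out in the split case in \citep{Man22}. Once that identification is secured, the remaining module theory is formal given Corollary \ref{C_p-kmc}.
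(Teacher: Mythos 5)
Your proposal is correct and follows essentially the same route as the paper's proof: identify $\widetilde{H}^1(E,\mathbb{V}_\infty^{E_p};\Delta^0)$ with the kernel of the map $\rho^1\colon \widetilde{H}^1(E,\mathbb{V}_\infty^{E_p};\Delta^1) \to D^\dagger_{\text{rig}}(S(\tau^{-1}))^{\Psi=1}$ from the defining triangle, observe that $\rho^1(c^\Pi)\neq 0$ because the regulator is injective (Lemma \ref{ker_reg}) and $L^*_{p,E_p\text{-an}}\neq 0$, and then conclude $\ker\rho^1=0$ from torsion-freeness and rank $1$ of the source (Corollary \ref{analytic_kmc}). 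The only minor inaccuracy is in step (i) of your unwinding: the injectivity of $\widetilde{H}^1(\Delta^0)\hookrightarrow\widetilde{H}^1(\Delta^1)$ comes from the vanishing of $H^0$ of the (shifted) complex $D^\dagger_{\text{rig}}(S(\tau^{-1}))[\Psi-1][1]$ in the relevant degree range, not from $\widetilde{H}^0(E,\mathbb{V}_\infty^{E_p};\Delta^1)=0$; but you correctly flag the degree-shift bookkeeping as the delicate point, and the remaining module-theoretic conclusion over the Pr\"ufer domain $\Lambda_{\infty,K}^{E_p}$ is exactly what the paper uses.
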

		\begin{proof}
			By definition we have an injection $\widetilde{H}^1(E,\mathbb{V}_\infty^{E_p};\Delta^0) \hookrightarrow \widetilde{H}^1(E, \mathbb{T};\Delta^1) \otimes_{\Lambda_\infty} \Lambda_\infty^{E_p}$, the latter of which is torsion-free of $\Lambda_\infty^{E_p}$-rank 1 by Corollary \ref{analytic_kmc}. The cokernel of this injection is $ D^\dagger_{\text{rig}}(S(\tau^{-1}))^{\Psi-1}$, in which we can consider the image of the Euler system. This is non-zero by the assumption combined with Lemma \ref{ker_reg}. Thus the image of the injection must be 0.
		\end{proof}
		
		This lemma is a sanity check and an important ingredient in the following proof of the `rank 0' main conjecture. 
		
		\begin{thm} \label{analytic_imc}
			Suppose $V$ and $\delta$ satisfy the assumptions of \ref{assumption_V_delta}, and the middle piece $S$ of $V$ is co-$L$-analytic. Then $$\mathrm{char}_{e_\delta\Lambda_{\infty, \mathbb{C}_p}^{E_p}}e_\delta\widetilde{H}^2(E,\mathbb{V}_\infty^{E_p};\Delta^0) \bigm \vert (e_\delta L^*_{p,E_p\text{-an}}(\Pi)) $$
		\end{thm}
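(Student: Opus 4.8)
The plan is to descend the divisibility of Corollary~\ref{C_p-kmc} along the mapping cone defining the rank-$0$ local condition, and then transport the resulting bound along the regulator map. Throughout I would work after applying the idempotent $e_\delta$ and over the base ring $\Lambda^{E_p}_{\infty,\mathbb{C}_p}$. Note that under the running hypotheses one has $L^*_{p,E_p\text{-an}}(\Pi)=\mathcal{L}_S(c^\Pi)\ne 0$: indeed $\ker\overline{\mathcal{L}}_S=0$ by Lemma~\ref{ker_reg}, and the relevant localisation of the Euler system at $p$ is non-zero by Assumption~\ref{assumption_V_delta} and the properties of the construction recalled before Figure~\ref{fig3}. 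Consequently Lemma~\ref{analytic_rank0_vanishing} applies and $e_\delta\widetilde{H}^1(E,\mathbb{V}^{E_p}_\infty;\Delta^0)=0$, which is the fact that forces the whole descent to be torsion.

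First I would record the distinguished triangle produced by the mapping-cone definition of $C^\bullet_{\Delta^0_p}$ (Definition~\ref{analytic_rank0_selmer}). Writing $D=D^\dagger_{\text{rig}}(S(\tau^{-1}))$, it takes the shape
\[
\widetilde{R\Gamma}(E,\mathbb{V}^{E_p}_\infty;\Delta^0)\longrightarrow \widetilde{R\Gamma}(E,\mathbb{V}^{E_p}_\infty;\Delta^1)\xrightarrow{\ r\ } D[\Psi-1][1]\xrightarrow{+1},
\]
where in degree $1$ the map $r$ is $\mathrm{comp}\circ\mathrm{pr}^*\circ\iota^{-1}$ applied after localisation at $p$; in particular $r$ sends the class of $c^\Pi$ to $\mathrm{comp}(c^\Pi\otimes 1)\in D^{\Psi=1}$, using part~(iii) of \citep[Theorem~12.3.1]{LSZ21}, which places the Euler system inside the cohomology of $V^1$. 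Taking the long exact cohomology sequence, and using $\widetilde{H}^1(\Delta^0)=0$ together with Corollary~\ref{C_p-kmc} (so $\widetilde{H}^1(\Delta^1)$ is torsion-free of rank $1$ and $\widetilde{H}^2(\Delta^1)$ torsion) and the rank computation $\operatorname{rank}_{\Lambda^{E_p}_\infty}D^{\Psi=1}=\dim S=1$ from the proof of Lemma~\ref{steingart_kernel}, the sequence should collapse to: $r$ is injective; $\operatorname{coker}(r)$ is torsion coadmissible (both modules have rank $1$); and there is a short exact sequence $0\to\operatorname{coker}(r)\to\widetilde{H}^2(\Delta^0)\to T\to 0$ with $T=\ker\!\bigl(\widetilde{H}^2(\Delta^1)\to D/(\Psi-1)D\bigr)$ a submodule of $\widetilde{H}^2(\Delta^1)$. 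In particular $\widetilde{H}^2(\Delta^0)$ is torsion coadmissible, so its characteristic ideal over $\Lambda^{E_p}_{\infty,\mathbb{C}_p}$ is defined.

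Next I would chase characteristic ideals using multiplicativity in exact sequences of torsion coadmissible modules (cf. the discussion after Lemma~\ref{analytic_char}). From the short exact sequence above, $\mathrm{char}(\widetilde{H}^2(\Delta^0))=\mathrm{char}(\operatorname{coker}r)\cdot\mathrm{char}(T)$. From the filtration $\mathrm{comp}(c^\Pi)\Lambda^{E_p}_\infty=r(\mathrm{im}\,c^\Pi)\subseteq r(\widetilde{H}^1(\Delta^1))\subseteq D^{\Psi=1}$, whose successive quotients are $\widetilde{H}^1(\Delta^1)/\mathrm{im}(c^\Pi)$ and $\operatorname{coker}(r)$, one obtains
\[
\mathrm{char}\!\left(\tfrac{D^{\Psi=1}}{\mathrm{comp}(c^\Pi)\Lambda^{E_p}_\infty}\right)=\mathrm{char}\!\left(\tfrac{\widetilde{H}^1(\Delta^1)}{\mathrm{im}(c^\Pi)}\right)\cdot\mathrm{char}(\operatorname{coker}r).
\]
Since $T\subseteq\widetilde{H}^2(\Delta^1)$ gives $\mathrm{char}(T)\mid\mathrm{char}(\widetilde{H}^2(\Delta^1))$, and Corollary~\ref{C_p-kmc} gives $\mathrm{char}(\widetilde{H}^2(\Delta^1))\mid\mathrm{char}(\widetilde{H}^1(\Delta^1)/\mathrm{im}(c^\Pi))$, combining the displays yields $\mathrm{char}(\widetilde{H}^2(\Delta^0))\mid\mathrm{char}(D^{\Psi=1}/\mathrm{comp}(c^\Pi)\Lambda^{E_p}_\infty)$.

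Finally I would invoke the regulator: $\widetilde{\mathcal{L}}_S\colon D^{\Psi=1}\hat{\otimes}_{E_p}\mathbb{C}_p\to\Lambda^{E_p}_{\infty,\mathbb{C}_p}$ is injective (Lemma~\ref{ker_reg}) with torsion cokernel (Lemma~\ref{coker_reg}) and, by its defining factorisation of $\overline{\mathcal{L}}_S$ through $\mathrm{comp}$ and the fixed basis $\xi$ of $D_{\mathrm{cris},L}(S(\tau^{-1}))$, sends $\mathrm{comp}(c^\Pi\otimes 1)$ to $L^*_{p,E_p\text{-an}}(\Pi)$. Applying the snake lemma to the map induced by $\widetilde{\mathcal{L}}_S$ from $0\to\mathrm{comp}(c^\Pi)\Lambda^{E_p}_\infty\to D^{\Psi=1}\hat{\otimes}\mathbb{C}_p\to D^{\Psi=1}/\mathrm{comp}(c^\Pi)\Lambda^{E_p}_\infty\to 0$ to the sequence $0\to(L^*_{p,E_p\text{-an}})\to\Lambda^{E_p}_{\infty,\mathbb{C}_p}\to\Lambda^{E_p}_{\infty,\mathbb{C}_p}/(L^*_{p,E_p\text{-an}})\to 0$ — noting the left vertical map is an isomorphism of free rank-one modules and using $\mathrm{char}(\Lambda^{E_p}_{\infty,\mathbb{C}_p}/(f))=(f)$ for $f$ regular — produces $(L^*_{p,E_p\text{-an}}(\Pi))=\mathrm{char}(D^{\Psi=1}/\mathrm{comp}(c^\Pi)\Lambda^{E_p}_\infty)\cdot\mathrm{char}(\operatorname{coker}\widetilde{\mathcal{L}}_S)$, hence $\mathrm{char}(D^{\Psi=1}/\mathrm{comp}(c^\Pi)\Lambda^{E_p}_\infty)\mid(L^*_{p,E_p\text{-an}}(\Pi))$; chaining with the previous paragraph gives the theorem. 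I expect the main obstacle to be the bookkeeping of the mapping-cone triangle: pinning down the degree shift so the long exact sequence genuinely collapses to the two short exact sequences above (no interference from $\widetilde{H}^3(\Delta^0)$ or the $D/(\Psi-1)D$ term), and checking that every module appearing is torsion and coadmissible so characteristic ideals are well defined and multiplicative over the non-Noetherian Prüfer ring $\Lambda^{E_p}_{\infty,\mathbb{C}_p}$ — where the torsionness is exactly where the non-vanishing of $L^*_{p,E_p\text{-an}}(\Pi)$, via Lemma~\ref{analytic_rank0_vanishing}, is indispensable.
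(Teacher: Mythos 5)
Your proposal is correct and takes essentially the same route as the paper: the same distinguished triangle from Definition \ref{analytic_rank0_selmer}, the same vanishing of $\widetilde{H}^1(\Delta^0)$ via Lemma \ref{analytic_rank0_vanishing}, the same multiplicativity of characteristic ideals, and the same final step applying $\widetilde{\mathcal{L}}_S$. The only cosmetic difference is that the paper kills the $D/(\Psi-1)D$ term outright (it vanishes as a quotient of local $H^2$ Iwasawa cohomology), turning the long exact sequence directly into a four-term exact sequence, whereas you keep it around and instead use that your $T\subseteq\widetilde{H}^2(\Delta^1)$ forces $\mathrm{char}(T)\mid\mathrm{char}(\widetilde{H}^2(\Delta^1))$; both yield the same divisibility.
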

		\begin{proof}
			By Definition \ref{analytic_rank0_selmer} and using \citep[11.2.1]{KLZ17}, the $\Delta^0$ Selmer complex fits into the distinguished triangle:
			
			$$ \widetilde{R\Gamma}(E,\mathbb{V}_\infty^{E_p};\Delta^0) \rightarrow \widetilde{R\Gamma}(E,\mathbb{V}_\infty^{E_p};\Delta^1) \rightarrow {D}_{\text{rig}}^\dagger(S)[\Psi-1][1] $$
			
			\par Note that $ H^2(D^\dagger_{\text{rig}}(S(\tau^{-1}))[\Psi-1])= D^\dagger_{\text{rig}}(S(\tau^{-1}))/(\Psi-1)$ is a quotient of a local $H^2$ Iwasawa cohomology group which vanishes (see Lemma \ref{steingart_kernel}), therefore it is 0. We also have the vanishing of the $\Delta^0$ Selmer group in degree 1 from Lemma \ref{analytic_rank0_vanishing}. Thus the distinguished triangle gives us an exact sequence of $\Lambda_\infty^{E_p}$-modules:
			
			\[ \begin{tikzcd} 0 & \dfrac{ \widetilde{H}^1(E,\mathbb{V}_\infty^{E_p};\Delta^1)}{im(c^\Pi)} & \dfrac{D_{\text{rig}}^\dagger(S(\tau^{-1}))^{\Psi=1}}{im(c^\Pi)} \\ & \widetilde{H}^2(E,\mathbb{V}_\infty^{E_p};\Delta^0) & \widetilde{H}^2(E,\mathbb{V}_\infty^{E_p};\Delta^1) & 0 
				\arrow[from=1-1, to=1-2]
				\arrow[from=1-2, to=1-3]
				\arrow[from=1-3, to=2-2]
				\arrow[from=2-2, to=2-3]
				\arrow[from=2-3, to=2-4]
			\end{tikzcd} \]
			
			Since the first two and last modules are $\Lambda_\infty^{E_p}$-torsion, it follows that $\widetilde{H}^2(E,\mathbb{V}_\infty^{E_p};\Delta^0)$ is also torsion i.e. the hypothesis of the theorem is well defined.

			Applying the regulator map and $ - \hat{\otimes}_{E_p} \mathbb{C}_p$, we also get the following exact sequence:
			
			$$ 0 \rightarrow \dfrac{D_{\text{rig}}^\dagger(S(\tau^{-1}))^{\Psi=1}}{im(c^\Pi)} \xrightarrow{\widetilde{\mathcal{L}}_{S}} \dfrac{\Lambda_{\infty,K}^{E_p}}{L^*_{p,E_p\text{-an}}(\Pi)} \rightarrow \mathrm{coker}(\widetilde{\mathcal{L}}_{S}) \rightarrow 0 $$
			where $\mathrm{coker}(\widetilde{\mathcal{L}}_{S})$ is a torsion module by Lemma \ref{coker_reg}. We combine these exact sequences with multiplicativity of $\mathrm{char}_{\Lambda_{\infty, \mathbb{C}_p}^{E_p}}$. Recalling that $\Lambda^{E_p}_{\infty, \mathbb{C}_p}$ is Pr\"ufer and therefore non-zero ideals are invertible, we get
			$$ \dfrac{\mathrm{char}_{\Lambda_{\infty,K}^{E_p}}\left( \dfrac{\widetilde{H}^1(E,\mathbb{V}_\infty^{E_p};\Delta^1)}{im(c^\Pi)} \right) }{\mathrm{char}_{\Lambda_{\infty,K}^{E_p}} ( \widetilde{H}^2(E,\mathbb{V}_\infty^{E_p};\Delta^1)) } =  \dfrac{ \left( L^*_{p,E_p\text{-an}}(\Pi) \right)}{\mathrm{char}_{\Lambda_{\infty,K}^{E_p}}( \widetilde{H}^2(E,\mathbb{V}_\infty^{E_p};\Delta^0)) \mathrm{char}_{\Lambda_{\infty,K}^{E_p}} (\text{coker}(\widetilde{\mathcal{L}}_{S})) }$$
			
			\par We now apply the $e_\delta$ idempotents; by the divisibility of Corollary \ref{analytic_kmc} the left hand side is a proper ideal. Since the cokernel term is on the bottom of the fractional ideal, we obtain the divisibility of ideals we are looking for.
		\end{proof}
		
		\section{Bounding the Bloch--Kato Selmer group}
		
		\subsection{Descended Selmer structures}
		
		\par We are now interested in a descended statement of a main conjecture; in the split case we considered base change along the projection $\Lambda \rightarrow \mathcal{O}$ (in two steps), but we can no longer do this. We have inverted $p$ in defining the distribution algebras, and there is not a good notion of integrality of distributions inside $\Lambda_\infty^L$ when $L \neq \mathbb{Q}_p$. 	
		
		\begin{rmk} \label{ardakov_berger}
			This is best expressed in the work of Ardakov-Berger \citep{AB23} where the question is asked how bounded distibutions in $\Lambda_\infty^L$ (in the LF topology) compare to the image of $\Lambda \hookrightarrow \Lambda_\infty^L$. In the cyclotomic case they are known to coincide, but it is unclear in the general $L$-analytic case. Our `motivic' $L$-analytic $p$-adic $L$-functions live inside $\Lambda_\infty^L$ but they are attached to ordinary representations and thus they should really live in $\Lambda$, but the work of op. cit. shows that it is currently insufficient to just show they are bounded. As with the other deep algebraic issues occurring in the $L$-analytic setting, this is to do with the fact that $\mathfrak{X}_L$ is a transcendentally twisted disk. The authors also present some criteria, for example Theorems 1.7.3 and 1.8.1, in which bounded distributions are exactly $\Lambda$ in our setting of $L=\mathbb{Q}_{p^2}$ but these are not easily computable or sufficiently general for the purposes of this paper.  \lozengeend
		\end{rmk}
		
		\par Nevertheless we are still interested in base change to field level. Let $\eta$ be an $L$-analytic character, giving us a point on the character variety $\mathfrak{X}_L$. Thus it has a lift to a unique character of $\Lambda_\infty \rightarrow K$ factoring through $\pi_{E_p}:\Lambda_\infty \rightarrow \Lambda_\infty^{E_p}$ for a suitably large field extension $K$ of $\mathbb{Q}_p$. This gives us two possible descended versions of the $\Delta^0$ Selmer complex, analogous to the two descended versions of $\text{comp}$ by a character $\eta$ we constructed.

		\begin{defn} \label{rank0_descent_an}
			We define the analytic descended rank 0 local condition at $p$ by
			$$R\Gamma(E_p,V(\eta^{-1});\Delta^0_{\text{an},p})) = \text{Cone}(\text{comp}_\eta \circ (pr^* \circ \iota^{-1})$$ where $\iota$ and $pr^*$ are the same maps as in Definition \ref{analytic_rank0_selmer} and $\text{comp}_\eta$ is defined in Definition \ref{comp_descent}. Combining this with the unramified condition at $v \neq p$ gives us the Selmer complex $\widetilde{R\Gamma}(E, V(\eta^{-1});\Delta^0_{\text{an}})$.
		\end{defn}

            \par We remark that this is well defined since our assumpions imply vanishing of local cohomology of $H^2(L,S)$ and thus $R\Gamma_{\text{Iw}}(L_\infty/L,S)$ is concentrated in degree 1. Thus $H^1\left(R\Gamma_{\text{Iw}}(L_\infty/L,S) \otimes^{\mathbb{L}}_\Lambda \Lambda_\infty^L \right)=H^1_{\text{Iw}}(L,S \otimes \Lambda_\infty^L).$ This does not hold for the global Iwasawa complex.
		
		\begin{defn} \label{rank0_descent_cts}
			We define the analytic descended rank 0 local condition at $p$ by
			$$R\Gamma(E_p,V(\eta^{-1});\Delta^0_{\dagger,p}) = \text{Cone}(\text{comp}^\dagger_\eta \circ (pr^* \circ \iota^{-1}))$$ where $\iota$ and $pr^*$ are the same maps as in Definition \ref{analytic_rank0_selmer} and $\text{comp}^\dagger_\eta$ is defined in Definition \ref{comp_descent_oc}. Combining this with the unramified condition at $v \neq p$ gives us the Selmer complex $\widetilde{R\Gamma}(E, V(\eta^{-1});\Delta^0_{\dagger})$.
		\end{defn}
		
		\par The analytic $\Delta^0_{\text{an}}$ complex is really the one we want to study, but we are hindered due to a lack of tools in the study of analytic cohomology. Whilst the $\Delta^0_\dagger$ complex is the wrong one, it comes with a better toolkit and we can later compare back to the analytic descended complex. We will think of it as follows; they are the $L$-analytic and $\mathbb{Q}_p$-analytic base change of the $\Psi=1$ complex. We will demonstrate this using Proposition \ref{herr_descent} and Remark \ref{herr_descent_oc}.
		
		\begin{prop} \label{rank_0_an_basechange}
			Suppose $V$ has co-$L$-analytic middle piece $S$  and $\eta$ is an $L$-analytic character of $\Lambda$ valued in a field $K/L$ containing $\Omega_{\text{LT}}$ we have a quasi-isomorphism;
			
			\[
			\widetilde{R\Gamma}(E, V(\eta^{-1});\Delta^0_{\text{an}}) \cong_{\text{q-iso}}
			\widetilde{R\Gamma}(E, \mathbb{V}_\infty^{E_p};\Delta^0) \otimes^{\mathbb{L}}_{\Lambda_\infty^{E_p}} K \]
			
		\end{prop}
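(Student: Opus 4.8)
The plan is to realise both complexes as mapping fibres built by the same recipe and to show that the derived base-change functor $-\otimes^{\mathbb{L}}_{\Lambda_\infty^{E_p},\eta}K$, induced by the $L$-analytic character $\eta$ (which lifts to $\Lambda_\infty^{E_p}\to K$), carries each ingredient of the Iwasawa-theoretic Selmer complex $\widetilde{R\Gamma}(E,\mathbb{V}_\infty^{E_p};\Delta^0)$ to the corresponding ingredient of $\widetilde{R\Gamma}(E, V(\eta^{-1});\Delta^0_{\text{an}})$. Since $-\otimes^{\mathbb{L}}_{\Lambda_\infty^{E_p},\eta}K$ is a triangulated functor it commutes with the formation of mapping fibres and cones, so it suffices to check the global term, the local terms at $v\neq p$, and the local term at $p$ separately, and compatibly with the localisation maps $\mathrm{loc}_v$ so that the fibres actually match.

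For the global term I would invoke Pottharst's functorial base-change package \citep[Theorem 1.12]{Pot13} (as already used in Lemma \ref{cyc_flat}) to obtain $R\Gamma(E,\mathbb{V}_\infty^{E_p})\otimes^{\mathbb{L}}_{\Lambda_\infty^{E_p},\eta}K\cong_{\text{q-iso}}R\Gamma(E,V(\eta^{-1}))$; this has to be kept strictly at the level of complexes, since (as flagged after Definition \ref{rank0_descent_an}) the global Iwasawa complex is not concentrated in a single degree. The unramified local terms at $v\neq p$ are perfect and flat over $\Lambda$, so the same package gives $R\Gamma(E_v,\mathbb{V}_\infty^{E_p})\otimes^{\mathbb{L}}_{\Lambda_\infty^{E_p},\eta}K\cong_{\text{q-iso}}R\Gamma(E_v,V(\eta^{-1}))$, compatibly with the unramified subcomplex.

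The content is concentrated at $p$. By Definition \ref{analytic_rank0_selmer}, $C^\bullet_{\Delta^0_p}$ is the mapping cone of $\mathrm{comp}\circ[(\mathrm{pr}^*\circ\iota^{-1})\otimes^{\mathbb{L}}\Lambda_\infty^{E_p}]$, so its base change along $\eta$ is the cone of the base-changed map, and I must identify the latter with $\mathrm{comp}_\eta\circ(\mathrm{pr}^*\circ\iota^{-1})$ of Definition \ref{rank0_descent_an}. For the source, $R\Gamma(E_p,V;\Delta^1_p)\otimes^{\mathbb{L}}_\Lambda\Lambda_\infty^{E_p}\otimes^{\mathbb{L}}_{\Lambda_\infty^{E_p},\eta}K\cong_{\text{q-iso}}R\Gamma(E_p,V(\eta^{-1});\Delta^1_p)$ by base change for the simple perfect Selmer complex $\Delta^1_p$, and $\iota$, $\mathrm{pr}^*$ are $\Lambda$-linear (resp. $\Lambda_\infty^{E_p}$-linear), hence base-change naturally. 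For the target, Proposition \ref{herr_descent} gives exactly $D^\dagger_{\text{rig}}(S(\tau^{-1}))[\Psi-1][1]\otimes^{\mathbb{L}}_{\Lambda_\infty^{E_p},\eta}K\cong_{\text{q-iso}}C^{\text{an}}_{\Psi,\Gamma}(D^\dagger_{\text{rig}}(S(\tau^{-1}\eta^{-1})))$, with the shift $[1]$ matching the one in Definition \ref{analytic_rank0_selmer}. Finally, the compatibility of $\mathrm{comp}$ itself with this base change — that its stalk at $x_\eta$ is $\mathrm{comp}_\eta$ — is precisely Remark \ref{herr_stalks}. Assembling the three pieces, $-\otimes^{\mathbb{L}}_{\Lambda_\infty^{E_p},\eta}K$ sends the mapping fibre defining $\widetilde{R\Gamma}(E,\mathbb{V}_\infty^{E_p};\Delta^0)$ to the mapping fibre defining $\widetilde{R\Gamma}(E,V(\eta^{-1});\Delta^0_{\text{an}})$, which yields the quasi-isomorphism.

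I expect the main obstacle to be the compatibility of $\mathrm{comp}$ with base change: one must verify that the ad hoc ``lift to Iwasawa cohomology, apply $\mathrm{comp}$, push forward via $\alpha_\eta$'' construction in Definition \ref{comp_descent} genuinely computes the derived tensor of $\mathrm{comp}$ at $x_\eta$ (the burden of Remark \ref{herr_stalks}, resting on Proposition \ref{herr_descent}), and one must keep careful track of the degree shifts — the unshifted $\mathrm{comp}$ of Definition \ref{comp_defn} against the $[1]$ appearing both in Definition \ref{analytic_rank0_selmer} and in Proposition \ref{herr_descent}. A secondary technical point is ensuring that all the base-change quasi-isomorphisms above are compatible with the $\mathrm{loc}_v$ maps, so that the mapping fibres line up; this is handled by the functoriality built into Pottharst's formalism.
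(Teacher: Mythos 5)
Your proposal is correct and reaches the same conclusion, but it organises the argument differently from the paper. The paper's proof works at the level of the distinguished triangle
$$\widetilde{R\Gamma}(E,\mathbb{V}_\infty^{E_p};\Delta^0)\to\widetilde{R\Gamma}(E,\mathbb{V}_\infty^{E_p};\Delta^1)\to D^\dagger_{\text{rig}}(S(\tau^{-1}))[\Psi-1][1]$$
coming from comparing the $\Delta^0$ and $\Delta^1$ Selmer complexes, which encodes all the $\mathrm{loc}_v$ compatibilities in a single package; it base changes this triangle, establishes perfectness of all three vertices (using Steingart's perfectness of the trianguline Herr complex and perfectness of Iwasawa cohomology over the regular local ring $\Lambda$, stitched through the tensor triangulated structure on $D_{\text{perf}}(\Lambda_\infty^{E_p})$), identifies the outer two vertices via Proposition~\ref{herr_descent} and the $L$-analyticity of $\eta$, and concludes by uniqueness of mapping cones. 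Your route instead unravels $\widetilde{R\Gamma}(E,\mathbb{V}_\infty^{E_p};\Delta^0)$ into its defining mapping fibre and base changes the global term, the local conditions away from $p$, and the local cone at $p$ separately. This is more granular: what the paper's triangle trick buys you is that one does not need to chase the compatibility of all the localisation arrows by hand, because the entire comparison is replaced by a single ``two-out-of-three'' argument; what your decomposition buys is transparency about exactly which ingredients (Pottharst's base change for $R\Gamma$, Proposition~\ref{herr_descent}, Remark~\ref{herr_stalks}) carry the load.

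One point you should be more careful about, which the paper spends real effort on and you elide: you invoke that ``$-\otimes^{\mathbb{L}}_{\Lambda_\infty^{E_p},\eta}K$ is a triangulated functor'' and hence preserves the mapping fibre, but $\Lambda_\infty^{E_p}$ is a non-noetherian Fréchet--Stein algebra, so the paper deliberately restricts to the \emph{perfect} derived category $D_{\text{perf}}(\Lambda_\infty^{E_p})$ and verifies perfectness of each term before applying the tensor triangulated structure. In your decomposition this translates into needing perfectness (or at least a finiteness control) for the global Iwasawa complex, for each local piece, and in particular for the cone $C^\bullet_{\Delta^0_p}$; for the Herr complex this is Steingart's perfectness result in the trianguline case, and your proof should surface it. You correctly identify the degree bookkeeping (the $[1]$ in Definition~\ref{analytic_rank0_selmer} against the shift in Proposition~\ref{herr_descent}) and the base-change compatibility of $\mathrm{comp}$ (Remark~\ref{herr_stalks}) as the essential pressure points; both are also where the paper's proof does its work.
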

		\begin{proof}
			\par We have the following commutative diagram with distinguished triangles as rows 
			
			\[ \begin{tikzcd}[scale cd=0.8]
				\widetilde{R\Gamma}(E, \mathbb{V}_\infty^{E_p};\Delta^0) &\widetilde{R\Gamma}(E, \mathbb{V}_\infty^{E_p};\Delta^1) & D_{\text{rig}}^\dagger(S(\tau^{-1}))[\Psi-1][1] \\
				\widetilde{R\Gamma}(E, \mathbb{V}_\infty^{E_p};\Delta^0) \otimes^{\mathbb{L}}_{\Lambda_\infty^{E_p}, \eta} K & \widetilde{R\Gamma}(E, \mathbb{V}_\infty^{E_p};\Delta^1) \otimes^{\mathbb{L}}_{\Lambda_\infty^{E_p}, \eta} K & D_{\text{rig}}^\dagger(S(\tau^{-1}))[\Psi-1][1] \otimes^{\mathbb{L}}_{\Lambda_\infty^{E_p}, \eta} K \\
				\widetilde{R\Gamma}(E, V(\eta^{-1});\Delta^0_{\text{an}}) & R\Gamma(E,V^1(\eta^{-1})) & C^{\text{an}}_{\Psi,\Gamma}(D_{\text{rig}}^\dagger(S(\tau^{-1}\eta^{-1})))
				\arrow[from=1-1, to=1-2]
				\arrow[from=1-2, to=1-3]
				\arrow[from=2-1, to=2-2]
				\arrow[from=2-2, to=2-3]
				\arrow[from=3-1, to=3-2]
				\arrow[from=3-2, to=3-3]
				\arrow[from=1-1, to=2-1, " \otimes^{\mathbb{L}}_{\Lambda_\infty^{E_p}, \eta} K "]
				\arrow[from=1-2, to=2-2, " \otimes^{\mathbb{L}}_{\Lambda_\infty^{E_p}, \eta} K "]
				\arrow[from=1-3, to=2-3, " \otimes^{\mathbb{L}}_{\Lambda_\infty^{E_p}, \eta} K "]
				\arrow[from=2-1, to=3-1, "\vartheta"]
				\arrow[from=2-2, to=3-2, "\sim"]
				\arrow[from=2-3, to=3-3, "\sim"]
			\end{tikzcd} \]
			There are some things to justify --- first note that the middle row is a distinguished triangle by \citep[\S1.2]{Bal10} as long the complexes in the top row are all perfect complexes of $\Lambda_\infty^{E_p}$-modules, since the perfect derived category $D_{\text{perf}}(\Lambda_\infty^{E_p})$ is a tensor triangulated category. We show they are perfect; $D_{\text{rig}}^\dagger(S(\tau^{-1}))[\Psi-1]$ perfect by \citep[Theorem 4.8]{Ste22b} as long as $S$ is trianguline (and in our case it has rank 1!). $R\Gamma_{Iw}(E[p^\infty]/E,V^1)$ is a bounded complex of modules over a regular local ring $\Lambda$ with cohomology groups that are finite $\Lambda$-modules, and therefore it is perfect over $\Lambda$ by \citep[15.74.14]{SP}; and its base change is therefore perfect over $\Lambda_\infty^{E_p}$. Then since the derived category of perfect modules is triangulated, $\widetilde{R\Gamma}(E, \mathbb{V}_\infty^{E_p};\Delta^0)$ is perfect too.
			
			\par The top two rows commute by construction. The vertical maps between the second and third row exist by the universal property of the derived tensor product. By Proposition \ref{herr_descent} we know that the bottom right vertical arrow is a quasi-isomorphism. The bottom middle vertical arrow is also a quasi-isomorphism since $\eta$ is an $L$-analytic character. Now by uniqueness of mapping cones, the map $\vartheta$ is a quasi-isomorphism too. This completes the result for the analytic setting. 
			
		\end{proof}
		
		\begin{cor} \label{delta_0_descent}
			Suppose $V$ and $\eta$ satisfy Assumption \ref{assumption_V_eta} and Assumption \ref{assumption_L-an}. Then $\widetilde{H}^1(E,\mathbb{V}_\infty^{E_p};\Delta^0)[\nabla-\eta(\nabla)]=0$, we have an isomorphism of $K$-vector spaces
			$$\widetilde{H}^2(E, \mathbb{V}_\infty^{E_p};\Delta^0) \otimes_{\Lambda_\infty^{E_p}, \eta} K \cong \widetilde{H}^2(E, V(\eta^{-1});\Delta^0_{\text{an}}), $$ and the following long exact sequence  of $K$-vector spaces
			$$ 0 \rightarrow \widetilde{H}^1(E,\mathbb{V}_\infty^{E_p};\Delta^0) \otimes_{\Lambda_\infty^{E_p}, \eta} K \rightarrow \widetilde{H}^1(E,V(\eta^{-1});\Delta^0_{\text{an}}) \rightarrow \widetilde{H}^2(E,\mathbb{V}_\infty^{E_p};\Delta^0)[\nabla-\eta(\nabla)] \rightarrow 0 $$
		\end{cor}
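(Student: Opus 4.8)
The plan is to deduce all three statements from a single application of the base-change long exact sequence of Remark \ref{tor_spectral_sequence} to the specialisation map $\eta$, and then to read off the degree-by-degree consequences using the triangles already at our disposal. The two structural inputs I would set up first are: that $C^\bullet := \widetilde{R\Gamma}(E,\mathbb V_\infty^{E_p};\Delta^0)$ is a perfect — hence flat — complex of $\Lambda_\infty^{E_p}$-modules, and that $C^\bullet\otimes^{\mathbb L}_{\Lambda_\infty^{E_p},\eta}K\cong_{\text{q-iso}}\widetilde{R\Gamma}(E,V(\eta^{-1});\Delta^0_{\text{an}})$; both of these are exactly what is established inside the proof of Proposition \ref{rank_0_an_basechange}. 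The remaining input is that $\ker(\tilde\eta\colon\Lambda_\infty^{E_p}\to K)$ is the principal ideal $(a)$ generated by the single nonzero (hence regular) element $a:=\nabla-\eta(\nabla)$.

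For this last point I would argue that, $\eta$ being $E_p$-analytic, $\eta(\overline\nabla)=0$, so the maximal ideal of the point $x_\eta$ in $\Lambda_\infty$ is cut out by the regular system of parameters $(\overline\nabla,\ \nabla-\eta(\nabla))$ — using that $\{\nabla,\overline\nabla\}$ spans $\mathrm{Lie}(\mathcal O_{E_p})$ over $\mathbb Q_p$ (Lemma \ref{lie_element}) and that $\mathfrak X$ is a finite disjoint union of open balls, so these Lie-algebra directions are everywhere coordinates — and the surjection $\pi_{E_p}$, which quotients by $\overline\nabla$ (Lemma \ref{lie_element}), carries this onto $(\nabla-\eta(\nabla))$ inside the domain $\Lambda_\infty^{E_p}$, and similarly after the harmless $\hat\otimes_{E_p}K$ of Corollary \ref{C_p-kmc}.

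Granting this, tensoring $0\to\Lambda_\infty^{E_p}\xrightarrow{a}\Lambda_\infty^{E_p}\xrightarrow{\tilde\eta}K\to0$ with the flat complex $C^\bullet$ and passing to cohomology yields — exactly the case $\ker(\tau)=(a)$ of Remark \ref{tor_spectral_sequence} — short exact sequences
$$0\to\widetilde H^i(E,\mathbb V_\infty^{E_p};\Delta^0)\otimes_{\Lambda_\infty^{E_p},\eta}K\to\widetilde H^i(E,V(\eta^{-1});\Delta^0_{\text{an}})\to\widetilde H^{i+1}(E,\mathbb V_\infty^{E_p};\Delta^0)[a]\to0$$
for all $i$, where $M\otimes_{\Lambda_\infty^{E_p},\eta}K=M/aM$. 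The case $i=1$ is literally the asserted three-term sequence (third bullet). The first bullet, $\widetilde H^1(E,\mathbb V_\infty^{E_p};\Delta^0)[a]=0$, I would get more directly: by the proof of Lemma \ref{analytic_rank0_vanishing} (the inclusion there does not itself use $L^*_{p,E_p\text{-an}}\neq 0$) the group $\widetilde H^1(E,\mathbb V_\infty^{E_p};\Delta^0)$ embeds into $\widetilde H^1(E,\mathbb T;\Delta^1)\otimes_{\Lambda_\infty}\Lambda_\infty^{E_p}$, which is torsion-free, hence $\widetilde H^1(E,\mathbb V_\infty^{E_p};\Delta^0)$ is torsion-free and has no $a$-torsion for $a\neq0$. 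For the degree-$2$ isomorphism (second bullet) I would take $i=2$ in the sequences above and show the correction term $\widetilde H^3(E,\mathbb V_\infty^{E_p};\Delta^0)$ vanishes; this is exactly as in the proof of Theorem \ref{analytic_imc}: the defining triangle of $\widetilde{R\Gamma}(\Delta^0)$ squeezes $\widetilde H^3(E,\mathbb V_\infty^{E_p};\Delta^0)$ between $\widetilde H^3(E,\mathbb V_\infty^{E_p};\Delta^1)=0$ ($\mathrm{cd}_p\,\mathrm{Gal}(E[p^\infty]/E)=2$ and $\Delta^1$ simple) and the local contribution $D^\dagger_{\text{rig}}(S(\tau^{-1}))/(\Psi-1)$, a quotient of the vanishing local $H^2$ Iwasawa cohomology group.

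The step I expect to be the main obstacle is making the principality of $\ker\tilde\eta$ with generator exactly $\nabla-\eta(\nabla)$ fully rigorous: that $\nabla,\overline\nabla$ really do form a regular system of parameters at every closed point $x_\eta$ in a manner compatible with the transcendentally twisted Fourier description of $\mathfrak X_{E_p}$ from Remark \ref{padic_fourier}, and that this survives $\hat\otimes_{E_p}K$. Everything downstream is then a formal consequence of perfectness, flat base change, and the long exact sequences already constructed in the paper.
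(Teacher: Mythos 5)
Your proposal matches the paper's own argument in all essentials: both derive the three statements from the degreewise short exact sequences obtained by tensoring the perfect complex $\widetilde{R\Gamma}(E,\mathbb V_\infty^{E_p};\Delta^0)$ with $0\to\Lambda_\infty^{E_p}\xrightarrow{\nabla-\eta(\nabla)}\Lambda_\infty^{E_p}\to K\to0$ and invoking the quasi-isomorphism of Proposition \ref{rank_0_an_basechange}. The one genuine variation is your proof of the first bullet: the paper obtains $\widetilde H^1(E,\mathbb V_\infty^{E_p};\Delta^0)[\nabla-\eta(\nabla)]=0$ by reading the $i=0$ sequence against vanishing of both $\widetilde H^0$ groups, whereas you observe directly that $\widetilde H^1(\Delta^0)$ injects (by construction, independently of any non-vanishing hypothesis) into the torsion-free module $\widetilde H^1(\Delta^1)\otimes_{\Lambda_\infty}\Lambda_\infty^{E_p}$, which is cleaner and does not require control of degree-$0$ cohomology of the descended complex. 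You also make explicit two points the paper treats implicitly --- that $\widetilde H^3(\Delta^0)=0$ follows from the defining triangle together with the vanishing of $\widetilde H^3(\Delta^1)$ and of $D^\dagger_{\text{rig}}(S(\tau^{-1}))/(\Psi-1)$, and that the identification $\ker\tilde\eta=(\nabla-\eta(\nabla))$ in $\Lambda_\infty^{E_p}\hat\otimes K$ needs to be justified via the Fourier-theoretic coordinate on $\mathfrak X_{E_p}$ --- both of which are legitimate clarifications rather than departures.
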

		\begin{proof}
			\par The cohomologies of both Selmer complex are concentrated in degree 1 and 2 so we forget all other terms. We can write out the Tor spectral sequence of the base change map $\tilde{\eta}: \Lambda_\infty \rightarrow K$ with $C=\widetilde{R\Gamma}(E, \mathbb{V}_\infty^{E_p};\Delta^0)$, taking $d=3$ in Remark \ref{tor_spectral_sequence}. Applying Proposition \ref{rank_0_an_basechange}, this looks like
			$$ \text{Tor}_{\Lambda_\infty^{E_p}}^{-j}(\widetilde{H}^{3-i}(E,\mathbb{V}_\infty^{E_p};\Delta^0),K) \Rightarrow \widetilde{H}^{3-i-j}(E,V(\eta^{-1});\Delta^0_{\text{an}}).$$ We note that $C^\bullet=\widetilde{R\Gamma}(E,\mathbb{V}^{E_p}_\infty;\Delta^0)$ is a perfect complex of $\Lambda_\infty$-modules, shown in the proof of Proposition \ref{rank_0_an_basechange}. Thus we have an exact sequence of complexes $$ 0 \rightarrow C^\bullet \xrightarrow{\nabla-\eta(\nabla)} C^\bullet \xrightarrow{\eta} C^\bullet \otimes^{\mathbb{L}}_{\Lambda_\infty^{E_p}, \eta} K \rightarrow 0. $$ Computing the long exact sequence as we do in Proposition \ref{inert_descent2}, noting that $M \otimes_{\Lambda_\infty^{E_p}, \eta} K \cong M/(\nabla-\eta(\nabla))$,  we get exact sequences
			\[ 0 \rightarrow \widetilde{H}^i(E,\mathbb{V}_\infty^{E_p};\Delta^0) \otimes_{\Lambda_\infty^{E_p}, \eta} K \rightarrow \widetilde{H}^i(E,V(\eta^{-1});\Delta^0_{\text{an}}) \rightarrow \widetilde{H}^{i+1}(E,\mathbb{V}_\infty^{E_p};\Delta^0)[\nabla-\eta(\nabla)] \rightarrow 0 ,\]
			\par We apply this for $i=0,1,2$. Recalling vanishing of the global $H^0$ and $H^3$ groups in our assumptions, we get the result.
		\end{proof}
		
		%Remark about the base change for continuous Delta^0 complex using degree shift
		
		\begin{rmk}
			\par If we define the rank 0 Selmer structures in sufficient generality, for general $L/\mathbb{Q}_p$ the above argument will work a similar way by Remark \ref{general_analytic_descent}. We can also attempt a similar base change of the $\Delta^0$ complex to the continuous version for general $d=[L:\mathbb{Q}_p]$; recall from Remark \ref{herr_descent_oc} that 
			$$  C^{\text{cts}}_{\Psi,\Gamma}(D(\eta^{-1})) \cong_{\text{q-iso}} D[\Psi-1][d] \otimes^{\mathbb{L}}_{\Lambda_\infty^L} K. $$
			
			Then we can compute the degree $d$ cohomology to see that 
			$$H^0(D[\Psi-1] \otimes^{\mathbb{L}}_{\Lambda_\infty} K) = H^d_{\Psi,\Gamma}(D) = \mathcal{H}^1_{\Psi,\Gamma}(D)  $$ after applying vanishing of analytic Herr cohmology in degree 0 and 2 to \citep[Corollary 4.4]{Ste23}. Going through the same perfectness computations and the argument of Proposition \ref{rank_0_an_basechange} and Corollary \ref{delta_0_descent} tells us that we can obtain similar exact sequences but with a degree shift of $d-1$ in continuous Herr complex cohomology. Since we do not use such a result, it is left as an exercise to the avid reader. \lozengeend
		\end{rmk}
		
		As they are defined by mapping cones involving $\text{comp}_\eta$ and $\text{comp}^\dagger_\eta$, the local cohomology of these complex in degree 1 are equipped with maps $$H^1(E,V(\eta^{-1});\Delta^0_{\text{an},p})) \rightarrow \mathfrak{K}_\eta,$$ $$H^1(E,V(\eta^{-1});\Delta^0_{\dagger,p})) \rightarrow \mathfrak{K}^\dagger_\eta.$$ By understanding these kernels and relating them to $H^1_f(E_p,S(\eta^{-1}))$, we can therefore study the relationship between the descended $\Delta^0$ Selmer complexes and the Bloch--Kato Selmer group.
		
		\par Let us write out the long exact sequences of cohomology arising from the distinguished triangles defining both of these descended `rank 0' Selmer complexes --- or more accurately the distinguished triangles coming from \citep[(11.2.1)]{KLZ17} when we take $\Delta=\Delta^0_{?}$ and $\Delta^\prime=\Delta^1$ for $? \in \{\text{an}, \dagger \}$ Selmer structures of $K$-modules. The analytic version gives the following exact sequence, assuming previous vanishing conditions of $H^0$ groups:
		
		\[ \begin{tikzcd} 
			0 & \widetilde{H}^1(E,V(\eta^{-1}); \Delta^0_{\text{an}}) & \widetilde{H}^1(E,V(\eta^{-1}); \Delta^1) & \mathcal{H}^1_{\Psi,\Gamma}(D_{\text{rig}}^\dagger(S(\tau^{-1}\eta^{-1}))) \\
			& \widetilde{H}^2(E,V(\eta^{-1}); \Delta^0_{\text{an}}) & \widetilde{H}^2(E,V(\eta^{-1}); \Delta^1) & 0
			\arrow[from=1-1, to=1-2]
			\arrow[from=1-2, to=1-3]
			\arrow[from=1-3, to=1-4]
			\arrow[from=1-4, to=2-2]
			\arrow[from=2-2, to=2-3]
			\arrow[from=2-3, to=2-4]
		\end{tikzcd} \]
		where we use Remark \ref{herr_phi_computations} to show that $\mathcal{H}^0_{\Psi,\Gamma}(D_{\text{rig}}^\dagger(S(\tau^{-1}\eta^{-1})))=\mathcal{H}^2_{\Psi,\Gamma}(D_{\text{rig}}^\dagger(S(\tau^{-1}\eta^{-1})))=0.$ This tells us in fact that $\widetilde{H}^1(E,V(\eta^{-1}); \Delta^0_{\text{an}})=0$ in degrees 0 and 3, which is what we want.
		\par We do the same for the continuous version, which is a bit longer since we have a non-vanishing $H^2$ term of the Herr complex. The exact sequence is:
		
		\[ \begin{tikzcd} 
			0 & \widetilde{H}^1(E,V(\eta^{-1}); \Delta^0_{\dagger}) & \widetilde{H}^1(E,V(\eta^{-1}); \Delta^1) & H^1_{\Psi,\Gamma}(D_{\text{rig}}^\dagger(S(\tau^{-1}\eta^{-1}))) \\
			& \widetilde{H}^2(E,V(\eta^{-1}); \Delta^0_{\dagger}) & \widetilde{H}^2(E,V(\eta^{-1}); \Delta^1) & H^2_{\Psi,\Gamma}(D_{\text{rig}}^\dagger(S(\tau^{-1}\eta^{-1}))) \\
			& \widetilde{H}^3(E,V(\eta^{-1}); \Delta^0_{\dagger}) & 0
			\arrow[from=1-1, to=1-2]
			\arrow[from=1-2, to=1-3]
			\arrow[from=1-3, to=1-4]
			\arrow[from=1-4, to=2-2]
			\arrow[from=2-2, to=2-3]
			\arrow[from=2-3, to=2-4]
			\arrow[from=2-4, to=3-2]
			\arrow[from=3-2, to=3-3]
		\end{tikzcd} \]

		\begin{rmk} \label{rank_0_euler_char}
			\par We can compute the Euler characteristics of our new descended `rank 0' Selmer complexes. By Remark \ref{herr_phi_computations} we find that for an $L$-analytic rank 1 $(\phi, \Gamma)$-module $D$, generated by a generic character in the sense of Assumption \ref{assumption_generic}, $$\chi(C^{\text{cts}}_{\Psi,\Gamma}(D)) = 0, $$ $$ \chi(C^{\text{an}}_{\Psi,\Gamma}(D)) = 1. $$
			Using the distinguished triangles emerging from Definitions \ref{rank0_descent_an} and \ref{rank0_descent_cts} we find that 
			$$\chi(\widetilde{R\Gamma}(E, V(\eta^{-1});\Delta^0_{\dagger})) = -1, $$ $$ \chi(\widetilde{R\Gamma}(E, V(\eta^{-1});\Delta^0_{\text{an}})) = 0. $$
			\par This justifies the earlier comment that the analytic version is the `right one', since Euler characteristics are preserved by base change and $$ \chi_{\Lambda_\infty^L}(\widetilde{R\Gamma}(E, \mathbb{V}_\infty^L;\Delta^0)) = 0 .$$ 
			We will see later where the non-zero Euler characteristic of the $\Delta^\dagger_0$ complex causes a problem. \lozengeend
		\end{rmk}
		
		\subsection{Recovering the Bloch--Kato Selmer group}
		
		\begin{lem} \label{analytic_crys_comp}
			For a crystalline $L$-analytic character $W$  with Hodge--Tate weight $\leq 0$ at the identity which satisfies Assumption \ref{assumption_generic}, $$H^1_f(L,W) = \mathrm{Ext}^1_{L\text{-an}}(L, W) .$$
		\end{lem}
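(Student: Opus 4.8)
The plan is to show that $H^1_f(L,W)$ and $\mathrm{Ext}^1_{L\text{-an}}(L,W)$ are both one-dimensional $L$-subspaces of $H^1(L,W)$ and that the former is contained in the latter; equality is then immediate.

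First I would compute $\dim_L \mathrm{Ext}^1_{L\text{-an}}(L,W)$. As $W$ is crystalline it is overconvergent, so $D^\dagger_{\mathrm{rig}}(W)$ makes sense and is an $L$-analytic rank-one $(\phi,\Gamma)$-module, of the form $\mathcal{R}_K(\delta)$; the genericity hypothesis (Assumption \ref{assumption_generic}) says precisely that $\delta$ avoids the excluded characters, so Remark \ref{herr_phi_computations} gives $\dim_L \mathcal{H}^1_{\phi,\Gamma}(D^\dagger_{\mathrm{rig}}(W)) = 1$, and the identification $\mathcal{H}^1_{\phi,\Gamma}(D^\dagger_{\mathrm{rig}}(W)) \xrightarrow{\sim} \mathrm{Ext}^1_{L\text{-an}}(L,W)$ of Remark \ref{analytic_ext} transports this to $\dim_L \mathrm{Ext}^1_{L\text{-an}}(L,W) = 1$. (Note that genericity also forces $W \not\cong \mathbf{1}$, so $H^0(L,W)=0$.) Next I would compute $\dim_L H^1_f(L,W)$ from the Bloch--Kato formula $\dim_{\mathbb{Q}_p} H^1_f(L,W) = \dim_{\mathbb{Q}_p} H^0(L,W) + \dim_{\mathbb{Q}_p}\big(D_{\mathrm{dR}}(W)/\mathrm{Fil}^0 D_{\mathrm{dR}}(W)\big)$. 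Because $W$ is $L$-analytic its Hodge--Tate weights are zero at every embedding except the identity, where (by Assumption \ref{assumption_L-an}) the weight is strictly negative; hence $D_{\mathrm{dR}}(W)/\mathrm{Fil}^0$ is exactly the identity-embedding component of $D_{\mathrm{dR}}(W)$, one-dimensional over $L$, so $\dim_L H^1_f(L,W) = 1$.

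Then I would prove $H^1_f(L,W) \subseteq \mathrm{Ext}^1_{L\text{-an}}(L,W)$. A class $c \in H^1_f(L,W)$ represents an extension $0 \to W \to W_c \to \mathbf{1} \to 0$ with $W_c$ crystalline (this is the defining property of $H^1_f$ for the crystalline representation $W$). In particular $W_c$ is Hodge--Tate, and since Hodge--Tate weights are additive in short exact sequences, the weights of $W_c$ at any embedding are those of $W$ together with those of $\mathbf{1}$; as both $W$ and $\mathbf{1}$ have vanishing non-identity weights, so does $W_c$, i.e. $W_c$ is $L$-analytic. Under the equivalence of Definition \ref{analytic_kisin-ren} this says $c \in \mathrm{Ext}^1_{L\text{-an}}(L,W)$. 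Since this subspace and $H^1_f(L,W)$ are both one-dimensional over $L$, they coincide. (The reverse inclusion is not transparent, as an $L$-analytic extension need not be crystalline, which is exactly why the dimension count is needed.)

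The step I expect to be the main obstacle is the reconciliation of the two incarnations of ``$L$-analyticity'' --- the Hodge--Tate-theoretic condition on Galois representations and the $(\phi,\Gamma)$-module condition underlying $\mathrm{Ext}^1_{L\text{-an}}$ --- so that a crystalline extension with vanishing non-identity Hodge--Tate weights really does define an $L$-analytic extension class; this is precisely Definition \ref{analytic_kisin-ren} applied to de Rham objects. One should also double-check, against the sign conventions of Assumption \ref{assumption_L-an}, that the identity Hodge--Tate weight being strictly negative makes the tangent space $D_{\mathrm{dR}}(W)/\mathrm{Fil}^0$ one-dimensional rather than zero (the case of weight $0$, where $W$ is unramified, genuinely behaves differently).
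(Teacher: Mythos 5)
Your proof is correct and follows essentially the same strategy as the paper's: show that a crystalline extension of the trivial representation by $W$ has vanishing non-identity Hodge--Tate weights and hence is $L$-analytic (giving the inclusion $H^1_f(L,W)\subseteq \mathrm{Ext}^1_{L\text{-an}}(L,W)$), then compare dimensions to upgrade the inclusion to an equality. The paper gets the dimension $\dim_L \mathrm{Ext}^1_{L\text{-an}}(L,W)=1$ by citing \citep[Theorem 0.3]{FX12} directly, whereas you route through the Herr-complex computation of Remark \ref{herr_phi_computations} and the identification of Remark \ref{analytic_ext}; since Remark \ref{herr_phi_computations} is itself a restatement of Fourquaux--Xie, these are the same input. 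Your parenthetical caution about the sign convention (strictly negative vs.\ $\leq 0$ identity weight) and about reconciling the Galois-theoretic and $(\phi,\Gamma)$-theoretic notions of $L$-analyticity are both reasonable points of care, but they are resolved exactly as you anticipate: the paper's convention makes the non-positive identity weight contribute one dimension to $D_{\mathrm{dR}}/\mathrm{Fil}^0$, and Definition \ref{analytic_kisin-ren} is precisely the equivalence that makes a crystalline extension with the right weights into an $L$-analytic extension class.
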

		\begin{proof}
			\par Suppose $W$ is as above with Hodge--Tate weights $d \leq 0$ at the identity embedding and let $M$ be a crystalline extension of $L$ by $W$. Since it is de Rham, the exact sequence $$ 0 \rightarrow L \rightarrow M \rightarrow W \rightarrow 0 $$ respects filtrations, so we can compute its Hodge--Tate weights as $\{d , 0\}$ at the identity embedding and $\{0 , 0\}$ at the conjugate embedding, so $M$ is $L$-analytic.
			\par For the reverse inclusion we can compare dimensions as $L$-vector spaces. Due to our $H^0$ vanishing conditions the dimension formula for local $H^1_f$ tells us that $$ \text{dim} H^1_f(E_p, W) = \#\{ \text{Hodge--Tate \enspace weights \enspace of \enspace} W \leq 0 \} = 1. $$
			\par Likewise, \citep[Theorem 0.3]{FX12} shows that as long as $W$ is not generic in the sense of Assumption \ref{assumption_generic}, $\dim_L \mathrm{Ext}^1_{E_p\text{-an}}(E_p, W) =1$. By the inclusion above, the spaces must be equal. 
		\end{proof}
		
		\begin{lem}
			Assume $S$ and $\eta$ satisfy Assumption \ref{assumption_L-an}, then the kernel $\mathfrak{K}_\eta^{\dagger}$ of the comparison map $\text{comp}_{\eta}^{\dagger}$ satisfies $$\mathfrak{K}_\eta^{\dagger} = H^1_{\perp}(L,S(\eta^{-1})).$$
		\end{lem}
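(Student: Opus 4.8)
The plan is to identify the descended comparison map $\text{comp}^{\dagger}_{\eta}$, up to the canonical identifications already available, with the natural projection $\mathrm{pr}$ appearing in Lemma \ref{cts_herr_duality}, and then read off its kernel from the duality diagram there. Write $W = S(\eta^{-1})$ and note that $W$ is again co-$L$-analytic: indeed $W^*(1) = S^*(1)(\eta)$ is $L$-analytic, being the product of the $L$-analytic representation $S^*(1)$ and the $L$-analytic character $\eta$. So Lemma \ref{cts_herr_duality} applies to $W$, providing a perfect pairing $H^1_{\Psi,\Gamma}(D^{\dagger}_{\text{rig}}(W(\tau^{-1}))) \times H^1_{\phi,\Gamma}(D^{\dagger}_{\text{rig}}(W^*(1))) \rightarrow K$ together with isomorphisms onto $H^1_{\backslash\dagger}(E_p,W)$ and $H^1_{\dagger}(E_p,W^*(1))$ respectively, compatible with local Tate duality via the vertical surjection $\mathrm{pr}$ and injection $i$. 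Recall also, from the discussion following Definition \ref{comp_descent_oc}, that the vanishing of $\mathcal{H}^2_{\Psi,\Gamma}(D^{\dagger}_{\text{rig}}(W(\tau^{-1})))$ together with \citep[Corollary 4.4]{Ste23} gives $H^d_{\Psi,\Gamma}(D^{\dagger}_{\text{rig}}(W(\tau^{-1}))) = H^1_{\Psi,\Gamma}(D^{\dagger}_{\text{rig}}(W(\tau^{-1})))$, so that the target of $\text{comp}^{\dagger}_{\eta}$ is precisely the left-hand group of the above pairing.

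The key step is then to check that, under these identifications, $\text{comp}^{\dagger}_{\eta}$ coincides with the map $\mathrm{pr}\colon H^1(E_p,W) \twoheadrightarrow H^1_{\backslash\dagger}(E_p,W)$ of Lemma \ref{cts_herr_duality}. By construction (Definition \ref{comp_descent_oc}) $\text{comp}^{\dagger}_{\eta}$ is the specialisation at $x_\eta$ of the continuous comparison map $\text{comp}$ of Definition \ref{comp_defn}, which is itself the base change to $\Lambda^L_\infty$ of the realisation of Iwasawa cohomology inside the continuous $\Psi$-Herr complex constructed in \citep[\S5]{Ste22b}; on the other hand $\mathrm{pr}$ is the corresponding realisation map for the single representation $W$ built in \citep[\S5.2]{SV23} out of the Koszul complexes $K_{\phi,\Gamma}$, $K_{\Psi,\Gamma}$ and the quasi-isomorphisms (171), (173) of op. cit. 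Thus one reduces to matching these two constructions, using the explicit $\beta_\eta$ of Definition \ref{comp_descent_oc} and the quasi-isomorphism of Remark \ref{herr_descent_oc} recording that $C^{\text{cts}}_{\Psi,\Gamma}(D^{\dagger}_{\text{rig}}(W(\tau^{-1})))[d]$ is the derived base change of $D^{\dagger}_{\text{rig}}(S(\tau^{-1}))[\Psi-1]$ along $\eta$. This is the main obstacle: one must keep careful track of the degree shift by $d$, the $\tau$- and $\chi_{\text{LT}}$-twists, and the identification $H^d_{\Psi,\Gamma} = H^1_{\Psi,\Gamma}$, and verify that the Iwasawa-level map of \citep{Ste22b} restricts to the single-representation map of \citep{SV23} on the nose.

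Granting this identification, the conclusion is immediate from the pairing diagram of Lemma \ref{cts_herr_duality}. Since the middle pairing is perfect, an element $x \in H^1(E_p,W)$ lies in $\ker(\mathrm{pr}) = \mathfrak{K}^{\dagger}_{\eta}$ if and only if $\langle \mathrm{pr}(x), y\rangle = 0$ for every $y \in H^1_{\phi,\Gamma}(D^{\dagger}_{\text{rig}}(W^*(1)))$; commutativity of the diagram rewrites this as $\langle x, i(y)\rangle = 0$ for all such $y$, where now $\langle\,,\,\rangle$ denotes local Tate duality. By Remark \ref{analytic_ext} the isomorphism $H^1_{\phi,\Gamma}(D^{\dagger}_{\text{rig}}(W^*(1))) \xrightarrow{\sim} \mathrm{Ext}^1_{\dagger}(E_p,W^*(1)) = H^1_{\dagger}(E_p,W^*(1))$ identifies $i$ with the inclusion $H^1_{\dagger}(E_p,W^*(1)) \hookrightarrow H^1(E_p,W^*(1))$, so $\mathfrak{K}^{\dagger}_{\eta}$ is exactly the orthogonal complement of $H^1_{\dagger}(E_p,W^*(1))$ under local Tate duality, i.e. $H^1_{\perp}(E_p,W) = H^1_{\perp}(L,S(\eta^{-1}))$, as claimed. (Throughout, the hypotheses $H^0(E_p,S(\eta^{-1})) = 0$ from Assumption \ref{assumption_L-an} and the genericity of Assumption \ref{assumption_generic} supply the cohomological vanishing needed to invoke \citep[Corollary 4.4]{Ste23} and Remark \ref{analytic_ext}; as a consistency check, the resulting dimension $\dim_K H^1_{\perp}(E_p,S(\eta^{-1})) = ([L:\mathbb{Q}_p]-1)\dim S$ agrees with the rank of $\mathfrak{K}$ computed in Lemma \ref{steingart_kernel}.)
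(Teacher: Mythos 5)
Your proof is correct and takes essentially the same approach as the paper: both identify $\text{comp}^\dagger_\eta$ with the projection $\text{pr}$ in Lemma \ref{cts_herr_duality} and read off the kernel from commutativity and nondegeneracy of that pairing (the paper invokes this via ``commutativity of the diagram of Definition \ref{comp_descent_oc}'' but the pairing equality $(c,i(d))_{\text{Tate}} = (\text{comp}_\eta^\dagger(c),d)_{\dagger}$ in fact comes from the Lemma \ref{cts_herr_duality} diagram under precisely the identification you describe). You are more explicit than the paper in flagging this identification --- together with the degree-shift, $\tau$-twist and $H^d_{\Psi,\Gamma}=H^1_{\Psi,\Gamma}$ bookkeeping it requires --- as the substantive step; the paper takes it as immediate from construction.
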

		\begin{proof}
			\par Suppose $c \in \mathfrak{K}_\eta^\dagger$, then by commutativity of the diagram of Definition \ref{comp_descent_oc} we have for all $d \in H^1_{\dagger}(E_p,S(\eta^{-1})^*(1))$, $$(c,i(d))_{\text{Tate}} = (\text{comp}_\eta^\dagger(c),d)_{\dagger}=0.$$ Hence $c \in H^1_{\dagger}(E_p,S(\eta^{-1})^*(1))^{\perp} = H^1_{\perp}(E_p,S(\eta^{-1}))$.
			\par Conversely suppose $c \in H^1_{\perp}(E_p,S(\eta^{-1}))$, then again for all such $d$ $$(\text{comp}_\eta^\dagger(c),d)_{\dagger}= (c,i(d))_{\text{Tate}} = 0. $$ In the proof of \citep[Proposition 5.2.19]{SV23} it is stated that the pairing of Koszul complexes is nondegenerate, thus $\text{comp}_\eta^\dagger(c)=0$.
		\end{proof}	
		
		\par It would be preferable to have an analytic version of this result, saying perhaps that $$\mathfrak{K}_\eta = Ext^1_{L-\text{an}}(L,S^*(1)))^\perp.$$ However, we can't use the Lubin--Tate character as a dualising character for the analytic Herr complex due to Remark \ref{herr_phi_computations}, so we do not get a result directly. However, due to some of our earlier assumptions, $L$-analytic extension classes, overconvergent classes and crystalline classes all coincide. So this analytic version is true incidentally. The next Proposition will show this. 
		
		\begin{prop} \label{L-an_crys}
			Assuming the vanishing conditions of Assumption \ref{assumption_generic} as well as Assumption \ref{assumption_L-an}, $$\mathfrak{K}^\dagger_\eta = H^1_f(E_p,S(\eta^{-1})).$$
		\end{prop}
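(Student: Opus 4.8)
The plan is to combine the preceding lemma with local Tate duality for Bloch--Kato subspaces; the entire content reduces to the single identity $H^1_\dagger(E_p,W)=H^1_f(E_p,W)$ for the character $W:=S(\eta^{-1})^*(1)$.

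First I would rewrite both sides of the claimed equality in terms of $W$. The preceding lemma identifies $\mathfrak{K}^\dagger_\eta$ with $H^1_\perp(E_p,S(\eta^{-1}))=H^1_\dagger(E_p,W)^\perp$, the annihilator under local Tate duality of the overconvergent extension classes of $W$. On the other hand, since $S(\eta^{-1})$ is crystalline, in particular de Rham, the Bloch--Kato finite part is its own annihilator up to duality, $H^1_f(E_p,S(\eta^{-1}))=H^1_f(E_p,W)^\perp$. Because $\perp$ is an involution on subspaces of $H^1(E_p,S(\eta^{-1}))$, the proposition is therefore equivalent to $H^1_\dagger(E_p,W)=H^1_f(E_p,W)$.

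To prove that equality I would check that $W$ falls under both Remark \ref{analytic_ext} and Lemma \ref{analytic_crys_comp}. It is a crystalline character, since $S$ is crystalline by Assumption \ref{assumption_L-an} and $\eta$ is an $L$-analytic character of $p$-power conductor, hence crystalline; it is $L$-analytic, because co-$L$-analyticity of $S$ means $S^*(1)$ is $L$-analytic and twisting by the $L$-analytic character $\eta$ adds $0$ to every Hodge--Tate weight at the non-identity embedding; the vanishing $H^0(E_p,W)=0$ is part of Assumption \ref{assumption_V_eta}; and genericity of the associated character is Assumption \ref{assumption_generic}. Granting in addition that the Hodge--Tate weight of $W$ at the identity embedding is $\leq 0$, Lemma \ref{analytic_crys_comp} gives $H^1_f(E_p,W)=\mathrm{Ext}^1_{L\text{-an}}(E_p,W)$, while Remark \ref{analytic_ext}, applicable since $H^0(E_p,W)=0$, gives $\mathrm{Ext}^1_{L\text{-an}}(E_p,W)=\mathrm{Ext}^1_{\dagger}(E_p,W)=H^1_\dagger(E_p,W)$; chaining these yields $H^1_\dagger(E_p,W)=H^1_f(E_p,W)$, and taking $\perp$ as above finishes the proof.

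The single point that needs genuine care is the Hodge--Tate weight of $W=S(\eta^{-1})^*(1)$ at the identity embedding. I would compute it from the infinity type of $\eta$ in the region $\Sigma^{(0)}$ intersected with the $r=a$ locus of Figure \ref{fig3}, together with the weight of the middle graded piece $S$, recalling that co-$E_p$-analyticity of $S$ is precisely the condition forcing the extra identity-embedding weight, and verify that it lands in the $\leq 0$ range which, combined with Assumption \ref{assumption_generic}, places $W$ in case $(c)$ of \citep{FX12} as used in Lemma \ref{analytic_crys_comp}. Were the bookkeeping to produce a positive weight one would first pass to the Tate dual and run the argument there; but with the normalisations fixed in Assumption \ref{assumption_L-an} and Definition \ref{analytic_reg} this is the side on which Lemma \ref{analytic_crys_comp} applies directly, and everything else is formal.
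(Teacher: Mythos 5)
Your proposal takes essentially the same route as the paper's proof: use the preceding lemma to rewrite $\mathfrak{K}^\dagger_\eta$ as the Tate-duality annihilator of $H^1_\dagger(E_p,W)$ with $W = S(\eta^{-1})^*(1)$, reduce via Bloch--Kato duality to showing $H^1_\dagger(E_p,W)=H^1_f(E_p,W)$, and prove that equality by chaining \citep[Corollary 4.4]{FX12} (analytic equals overconvergent extension classes) with Lemma~\ref{analytic_crys_comp} (analytic extension classes equal $H^1_f$). If anything your exposition is cleaner: the paper's displayed chain writes $H^1_{\perp}(E_p,S(\eta^{-1})^*(1))$ where the preceding lemma actually produces $H^1_\perp(E_p,S(\eta^{-1}))$, and you avoid that notational slip by stating the reduction to $H^1_\dagger(E_p,W)=H^1_f(E_p,W)$ explicitly.

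Your caveat about the Hodge--Tate weight of $W$ at the identity embedding is a fair observation --- the paper invokes Lemma~\ref{analytic_crys_comp} without explicitly verifying that the weight of $W=S(\eta^{-1})^*(1)$ at the identity lands in the required range, and this does depend on bookkeeping against the normalisations of Assumption~\ref{assumption_L-an} and the $r=a$ locus. One remark on your proposed fallback, though: passing to the Tate dual would not rescue the argument if the sign came out wrong, because $W^*(1)=S(\eta^{-1})$ is co-$L$-analytic rather than $L$-analytic, and Lemma~\ref{analytic_crys_comp} requires the $L$-analytic side. So if the weight condition failed, the correct fix would be to re-examine the normalisation in Assumption~\ref{assumption_L-an}, not to dualise.
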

		\begin{proof}
			\par Under the hypotheses, by \citep[Corollary 4.4]{FX12} we have an equality of the analytic and non-analytic $\phi$-complexes, i.e. $$\text{Ext}^1_{E_p-\text{an}}(E_p,S(\eta^{-1})^*(1))=\text{Ext}^1_{\dagger}(E_p,S(\eta^{-1})^*(1)). $$
			Now applying Lemma \ref{analytic_crys_comp} and by local Tate duality, $$ \mathfrak{K}_\eta^{\dagger} = \dfrac{H^1(E_p,S(\eta^{-1})^*(1))}{H^1_{\perp}(E_p,S(\eta^{-1})^*(1))} = \dfrac{H^1(E_p,S(\eta^{-1})^*(1))}{H^1_f(E_p,S(\eta^{-1})^*(1))} = H^1_f(E_p,S(\eta^{-1})). $$ 
		\end{proof}
		
		\par In the split case we used the Panchiskin subrepresentation $V^0$ and the work of \citep[4.1.7]{FK06}, stated in Lemma \ref{fukaya_kato}, to compare $\widetilde{H}^1(E,T(\eta^{-1});\Delta^0)$ to $H^1_f(T(\eta^{-1}))$ locally at $p$. We no longer have such $V^0$; the closest subrepresentation is $V^1$ which has the wrong Panchishkin rank. Nevertheless we can still use a weaker form of Fukaya--Kato's arguments.
		
		\begin{lem} \label{FK_inert}
			When $\eta$ lies in the region $\Sigma^{(0)}$ of Figure \ref{fig3}, the following diagram is Cartesian
			\[\begin{tikzcd}
				H^1_f(E_p, V(\eta^{-1})) & H^1(E_p,V^1(\eta^{-1})) \\
				H^1_f(E_p, S(\eta^{-1})) & H^1(E_p,S(\eta^{-1})) 
				\arrow[hook, from=1-1, to=1-2]
				\arrow[hook, from=2-1, to=2-2]
				\arrow[from=1-1, to=2-1]
				\arrow[from=1-2, to=2-2]
			\end{tikzcd} .\]
		\end{lem}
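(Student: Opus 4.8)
The plan is to deduce the Cartesian property from two short exact sequences of $G_{E_p}$-representations together with the Bloch--Kato dimension formula $\dim H^1_f(E_p,M)=\dim H^0(E_p,M)+\dim t_M$, where $t_M=D_{\mathrm{dR},E_p}(M)/\mathrm{Fil}^0 D_{\mathrm{dR},E_p}(M)$. Write $W=V(\eta^{-1})$, put $V^1_+=\ker(\mathrm{pr}\colon V^1\to S)$ and $Q=W/V^1$, so that there are exact sequences
\[ 0\to V^1\to W\to Q\to 0,\qquad 0\to V^1_+\to V^1\xrightarrow{\mathrm{pr}} S\to 0 \]
with $Q$ and $V^1_+$ one-dimensional. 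Here $V^1_+=\mathcal{F}^2(\eta^{-1})$ and $Q=(V/\mathcal{F}^1)(\eta^{-1})$; unwinding the definition of the region $\Sigma^{(0)}$ of Figure \ref{fig3} together with the constraint $r=a$ imposed by co-$L$-analyticity of $S$, the Hodge--Tate weights of $Q$ are all $\leq 0$ and those of $V^1_+$ are all $\geq 1$ (the latter being exactly the defining property of $S$, namely that $V^1/S$ has only positive weights). Combining this with the vanishing $H^0(E_p,X)=H^2(E_p,X)=0$ for every subquotient $X$ of $V(\eta^{-1})$ from Assumption \ref{assumption_V_eta}, the dimension formula gives $H^1_f(E_p,Q)=0$ and $H^1_f(E_p,V^1_+)=H^1(E_p,V^1_+)$.

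First I would identify the top arrow. From the first exact sequence and $H^0(E_p,Q)=0$, the map $H^1(E_p,V^1(\eta^{-1}))\to H^1(E_p,W)$ induced by $V^1\subset W$ is injective, and it carries $H^1_f(E_p,V^1(\eta^{-1}))$ into $H^1_f(E_p,W)$ by functoriality of $H^1_f$. Comparing dimensions, additivity of $D_{\mathrm{dR}}$ and $\mathrm{Fil}^0$ along the first exact sequence of de Rham representations gives $\dim t_W=\dim t_{V^1}+\dim t_Q=\dim t_{V^1}$, whence $\dim H^1_f(E_p,W)=\dim t_W=\dim t_{V^1}=\dim H^1_f(E_p,V^1(\eta^{-1}))$ (using $H^0$-vanishing for $W$ and $V^1$). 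Therefore this map restricts to an isomorphism $H^1_f(E_p,V^1(\eta^{-1}))\xrightarrow{\sim} H^1_f(E_p,W)$, realising $H^1_f(E_p,V(\eta^{-1}))$ as the subspace $H^1_f(E_p,V^1(\eta^{-1}))\subseteq H^1(E_p,V^1(\eta^{-1}))$ forming the top row of the diagram. This is the ``weaker form of Fukaya--Kato'' mentioned before the Lemma: since $V^1$ is not itself a Panchishkin subrepresentation, one recovers only $H^1_f$ of $V^1(\eta^{-1})$ rather than all of $H^1$, in contrast to Lemma \ref{fukaya_kato}.

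Then I would compute the fibre product explicitly using the second exact sequence. Since $H^2(E_p,V^1_+)=0$, the map $\mathrm{pr}_*\colon H^1(E_p,V^1(\eta^{-1}))\to H^1(E_p,S(\eta^{-1}))$ is surjective; since $H^0(E_p,S)=0$, its kernel is the image of $H^1(E_p,V^1_+)=H^1_f(E_p,V^1_+)$, which lies inside $H^1_f(E_p,V^1(\eta^{-1}))$ by functoriality. Hence $\dim\mathrm{pr}_*^{-1}(H^1_f(E_p,S(\eta^{-1})))=\dim H^1(E_p,V^1_+)+\dim H^1_f(E_p,S(\eta^{-1}))$, while additivity of $t$ along $0\to V^1_+\to V^1\to S\to 0$ gives $\dim H^1_f(E_p,V^1(\eta^{-1}))=\dim t_{V^1}=\dim t_{V^1_+}+\dim t_S=\dim H^1_f(E_p,V^1_+)+\dim H^1_f(E_p,S(\eta^{-1}))$; since $H^1_f(E_p,V^1_+)=H^1(E_p,V^1_+)$ these two dimensions agree. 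As $H^1_f(E_p,V^1(\eta^{-1}))\subseteq\mathrm{pr}_*^{-1}(H^1_f(E_p,S(\eta^{-1})))$ by functoriality of $H^1_f$, equality follows. Translating back, the corner $H^1_f(E_p,V(\eta^{-1}))=H^1_f(E_p,V^1(\eta^{-1}))$ is exactly $\{x\in H^1(E_p,V^1(\eta^{-1})) : \mathrm{pr}_*(x)\in H^1_f(E_p,S(\eta^{-1}))\}=H^1_f(E_p,S(\eta^{-1}))\times_{H^1(E_p,S(\eta^{-1}))}H^1(E_p,V^1(\eta^{-1}))$, which is precisely the assertion that the square is Cartesian; commutativity and well-definedness of the left vertical arrow $H^1_f(E_p,V(\eta^{-1}))\to H^1_f(E_p,S(\eta^{-1}))$ are automatic from these identifications.

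The step I expect to be the main obstacle is the Hodge--Tate weight bookkeeping in the first paragraph: one must check against the numerics of \S1.2 and Figure \ref{fig3} that on the whole of the $r=a$ locus inside $\Sigma^{(0)}$ the quotient $Q$ really has all weights $\leq 0$ and $V^1_+$ all weights $\geq 1$ --- which in particular forces $a\geq 1$, a constraint that should be recorded (or the degenerate case handled separately) --- and that the required $H^0$- and $H^2$-vanishing for each of $V(\eta^{-1}),V^1,V^1_+,S,Q$ is genuinely subsumed in Assumption \ref{assumption_V_eta}. Once these inputs are in place the argument is the purely formal dimension count above.
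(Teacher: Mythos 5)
Your proof is correct and follows essentially the same route as the paper: both reduce to dimension counting via the local Euler characteristic formula and the Bloch--Kato dimension formula, and both rely on the observation that $H^1_f(E_p,V(\eta^{-1}))$ identifies with $H^1_f(E_p,V^1(\eta^{-1}))$. The one place you go beyond the paper's write-up is at the end: the paper observes that the cokernels of the two horizontal inclusions have the same ($1$-dimensional) size and declares the square Cartesian, whereas you additionally check the inclusion $H^1_f(E_p,V^1(\eta^{-1}))\subseteq \mathrm{pr}_*^{-1}(H^1_f(E_p,S(\eta^{-1})))$ via functoriality of $H^1_f$ (equivalently, that the induced map on cokernels is nonzero), which is genuinely needed for the Cartesian conclusion and is left implicit in the paper's proof; this small gap-filling, together with your explicit decomposition through $V^1_+$ and $Q$ to get the Hodge--Tate bookkeeping, makes your version slightly more robust.
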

		\begin{proof}
			Since $\eta$ lies in the given region, there is no Panchishkin subrepresentation. But by \citep{FK06} $H^1_f(E_p, V^1)=H^1_f(E_p, V)$ and therefore the top embedding is well defined. Moreover, there is no map $H^1(E_p,V(\eta^{-1}))$ to $H^1(E_p,S(\eta^{-1}))$ but $S(\eta^{-1})$ is a quotient of $V^1(\eta^{-1})$ so the right vertical map is also well defined. Now we can compare dimensions as $L$-vector spaces; by vanishing of local $H^0$ and $H^2$ groups in Assumption \ref{assumption_V_eta} and the local Euler characteristic formula, we have that $$ \dim_{\mathbb{Q}_p} H^1(E_p,V^1(\eta^{-1})) = 4$$ $$ \dim_{\mathbb{Q}_p} H^1(E_p,V^1(\eta^{-1})) = 2 .$$ By virtue of $\eta$ lying in the region $\Sigma^{(0)}$, $V(\eta^{-1})$ has 3 positive Hodge--Tate weights and 3 non-negative ones (looking at the 3 weights from each embedding). The representation $S(\eta^{-1})$ is a subquotient picking up exactly one negative weight at the identity embedding, and with weight 1 at the non-identity embedding. By the Bloch--Kato Selmer group dimension formula of \citep[Proposition 2.8]{Bel09}, $$ \dim_{\mathbb{Q}_p} 	H^1_f(E_p, V(\eta^{-1})) = 3 $$ $$ \dim_{\mathbb{Q}_p} H^1_f(E_p, S(\eta^{-1})) = 1. $$
			\par Therefore the cokernels of both embeddings are isomorphic. We obtain the universal property of a pullback from here, since all terms are $\mathbb{Q}_p$-vector spaces.
		\end{proof}

		\begin{prop} \label{delta_0_cts_crys}
			Suppose $V$, $\eta$ satisfies the Assumption \ref{assumption_V_eta} and Assumption \ref{assumption_L-an} such that $\eta$ lies in region $\Sigma^{(0)}$ of Figure \ref{fig3}. Then
			$$ \widetilde{H}^1(E,V(\eta^{-1});\Delta^0_\dagger) = H^1_f(E,V(\eta^{-1})) .$$
		\end{prop}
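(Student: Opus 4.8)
The plan is to reduce the statement to the local analysis at $p$ carried out in Lemma \ref{FK_inert} and Proposition \ref{L-an_crys}. First I would invoke the distinguished triangle of \citep[(11.2.1)]{KLZ17} comparing the Selmer complexes for $\Delta^0_\dagger$ and $\Delta^1$; its associated long exact sequence is the one displayed just before Remark \ref{rank_0_euler_char}, and together with the vanishing of the relevant $H^0$'s coming from Assumption \ref{assumption_V_eta} it identifies
$$ \widetilde{H}^1(E,V(\eta^{-1});\Delta^0_\dagger) = \ker\Big(\widetilde{H}^1(E,V(\eta^{-1});\Delta^1) \longrightarrow H^1_{\Psi,\Gamma}(D^\dagger_{\mathrm{rig}}(S(\tau^{-1}\eta^{-1})))\Big), $$
where the arrow is induced by $\mathrm{comp}^\dagger_\eta \circ \mathrm{pr}^* \circ \iota^{-1}$ after localising at $p$. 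In particular $\widetilde{H}^1(E,V(\eta^{-1});\Delta^0_\dagger)$ is genuinely a subspace of $H^1(E,V(\eta^{-1}))$, even though the $\Delta^0_\dagger$ complex has non-zero Euler characteristic (Remark \ref{rank_0_euler_char}), since the error shows up only in higher degree.

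Next I would unwind the Selmer condition $\Delta^1$. By the cohomology vanishing of Assumption \ref{assumption_V_eta} the structure $\Delta^1$ is simple, so by Proposition \ref{selmer_pt_dual} the group $\widetilde{H}^1(E,V(\eta^{-1});\Delta^1)$ consists of the classes $c \in H^1(E,V(\eta^{-1}))$ that are unramified at all $v \nmid p$ and whose localisation at $p$ lies in $\iota\big(H^1(E_p,V^1(\eta^{-1}))\big) \subseteq H^1(E_p,V(\eta^{-1}))$. Writing $\mathrm{loc}_p(c)=\iota(x)$ for the unique $x \in H^1(E_p,V^1(\eta^{-1}))$, such a class $c$ then lies in $\widetilde{H}^1(E,V(\eta^{-1});\Delta^0_\dagger)$ if and only if $\mathrm{comp}^\dagger_\eta(\mathrm{pr}^*(x))=0$; and by Proposition \ref{L-an_crys} the kernel $\mathfrak{K}^\dagger_\eta$ of $\mathrm{comp}^\dagger_\eta$ in degree $1$ is exactly $H^1_f(E_p,S(\eta^{-1}))$, so this last condition reads $\mathrm{pr}^*(x) \in H^1_f(E_p,S(\eta^{-1}))$.

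The local input that closes the argument is Lemma \ref{FK_inert}: the Cartesian property of the square there says precisely that $\{x \in H^1(E_p,V^1(\eta^{-1})) : \mathrm{pr}^*(x) \in H^1_f(E_p,S(\eta^{-1}))\}$ equals the image of $H^1_f(E_p,V(\eta^{-1}))$ inside $H^1(E_p,V^1(\eta^{-1}))$, and the Fukaya--Kato equality $H^1_f(E_p,V^1(\eta^{-1})) = H^1_f(E_p,V(\eta^{-1}))$ (from \citep{FK06}, used in the proof of that lemma) means that $\iota$ carries this subspace isomorphically onto $H^1_f(E_p,V(\eta^{-1})) \subseteq H^1(E_p,V(\eta^{-1}))$. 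Hence conditions on $c$ become: $\mathrm{loc}_v(c)\in H^1_f(E_v,V(\eta^{-1}))$ for $v\nmid p$ (using $H^1_f=H^1_{\mathrm{ur}}$ at primes where $V(\eta^{-1})$ is unramified, and triviality at the archimedean place of $E$) and $\mathrm{loc}_p(c)\in H^1_f(E_p,V(\eta^{-1}))$ — i.e. exactly the defining conditions of $H^1_f(E,V(\eta^{-1}))$.

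I expect the main obstacle to be the bookkeeping at $p$: one must check carefully that the connecting map in the triangle above is indeed the degree-$1$ map $\mathrm{comp}^\dagger_\eta\circ\mathrm{pr}^*\circ\iota^{-1}\circ\mathrm{loc}_p$ whose kernel Proposition \ref{L-an_crys} controls, and, more delicately, that the quasi-isomorphism $\iota$ identifies the Bloch--Kato subspace $H^1_f(E_p,V^1(\eta^{-1}))$ with $H^1_f(E_p,V(\eta^{-1}))$ as subspaces of $H^1(E_p,V(\eta^{-1}))$ (not merely abstractly), using functoriality of $H^1_f$ for $V^1\hookrightarrow V$. Once these local matchings are in place, the passage between $\widetilde{H}^1(\Delta^0_\dagger)$, $\widetilde{H}^1(\Delta^1)$ and the global $H^1_f$ is formal, exactly as in the split case \citep[Corollary 3.8]{Man22}.
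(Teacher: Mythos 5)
Your proposal is correct and follows essentially the same route as the paper's proof: both reduce to matching the local conditions at $p$, identify $\widetilde{H}^1(E,V(\eta^{-1});\Delta^0_\dagger)$ with the kernel of the map $\text{comp}^\dagger_\eta\circ\mathrm{pr}^*\circ\iota^{-1}$ restricted to $\widetilde{H}^1(\Delta^1)$, and then invoke Proposition \ref{L-an_crys} (to compute that kernel at the level of $S$) together with the Cartesian property of Lemma \ref{FK_inert} (to pull the $H^1_f$-condition on $S(\eta^{-1})$ back to the $H^1_f$-condition on $V(\eta^{-1})$). The only cosmetic difference is that you track elements through the kernels, whereas the paper records the same information in a commutative diagram of two short exact sequences and closes the argument with the five lemma; the substance of both arguments — reduction to the local condition at $p$, Lemma \ref{FK_inert}, and Proposition \ref{L-an_crys} — is identical.
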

		\begin{proof}
			Since all the Selmer complexes involved use the unramified condition at all primes $l \neq p$, we only need to show the local conditions at $p$ match. From Lemma \ref{FK_inert} it follows that $$ H^1(E_p,V^1(\eta^{-1})/H^1_f(E_p,V(\eta^{-1})) \cong H^1(E_p,S(\eta^{-1})/H^1_f(E_p,S(\eta^{-1})).$$ The latter is isomorphic to $H^1_{\phi,\Gamma}(D_{\text{rig}}^\dagger(E_p,S(\tau^{-1}\eta^{-1})))$ by Proposition \ref{L-an_crys}, which is also the cokernel of the map $\widetilde{H}^1(E,V(\eta^{-1});\Delta^0_\dagger) \rightarrow \widetilde{H}^1(E,V(\eta^{-1});\Delta^1)$ by construction. We consider this in the commutative diagram:
			\[ \begin{tikzcd}[scale cd=0.9]
				0 & H^1_f(E_p,V(\eta^{-1})) & H^1(E_p, V^1(\eta^{-1})) & \dfrac{H^1(E_p,V^1(\eta^{-1}))}{H^1_f(E_p,V(\eta^{-1}))} & 0  \\
				0 & H^1(E_p,V(\eta^{-1});\Delta^0_{\dagger,p}) & H^1(E_p,V(\eta^{-1});\Delta^1_p) & H^1_{\phi,\Gamma}(D_{\text{rig}}^{\dagger}(S(\tau^{-1}\eta^{-1}))) & 0
				\arrow[from=1-1, to=1-2]
				\arrow[from=1-2, to=1-3]
				\arrow[from=1-3, to=1-4]
				\arrow[from=1-4, to=1-5]
				\arrow[from=2-1, to=2-2]
				\arrow[from=2-2, to=2-3]
				\arrow[from=2-3, to=2-4]
				\arrow[from=2-4, to=2-5]
				\arrow[from=1-2, to=2-2]
				\arrow[from=1-3, to=2-3, "\sim"]
				\arrow[from=1-4, to=2-4, "\sim"]
			\end{tikzcd} \]
			By the five lemma we have an isomorphism in the right vertical arrow.
		\end{proof}
		
		\par We ideally want to work with simple Selmer structures, where we can use Proposition \ref{selmer_pt_dual}, that $\widetilde{H}^2(V;\Delta) \cong \widetilde{H}^1(V^*(1);\Delta^\vee)^\vee$ where $\Delta^\vee$ are the dual local conditions with respect to local Tate duality. We have two issues if we try to do this with the $\Delta_\dagger^0$ complex; firstly it doesn't have Euler characteristic 0, and therefore we can't have both $\widetilde{H}^1(E,V(\eta^{-1});\Delta^0_\dagger)$ and $\widetilde{H}^2(E,V(\eta^{-1});\Delta^0_\dagger)$ both equal to 0 even assuming non-vanishing of $\eta(L_{p,E_p-\text{an}}^*)$. The Bloch--Kato conjecture of course stipulates that if $\eta(L_{p,E_p-\text{an}}^*)$ is non-vanishing, combining with vanishing result of Lemma \ref{analytic_rank0_vanishing}, both must be 0. Thus $\widetilde{H}^2(E,V(\eta^{-1});\Delta^0_\dagger)$ does not see the Bloch--Kato Selmer group of $V^*(1)$, or to put it another way $\Delta_\dagger^0$ is not a simple Selmer complex - as we will prove in the immediate lemma. This is where we need to jump back to the $\Delta^0_{\text{an}}$ Selmer complex, which we show is simple and therefore does satisfy the nice Poitou--Tate compatibility of Proposition \ref{selmer_pt_dual}. This non-simplicity is a new phenomenon when $L \neq \mathbb{Q}_p$.
		
		\begin{lem} \label{cts_nonsimple}
			Suppose $V$, $\eta$ satisfy Assumption \ref{assumption_V_eta} and Assumption \ref{assumption_L-an}. Then the following statements of Selmer structures of $K$-modules hold;
			\begin{itemize}
				\item $\Delta^0_{\dagger}$ is not a simple Selmer structure.
				\item $\Delta^0_{\text{an}}$ is a simple Selmer structure.
			\end{itemize}
		\end{lem}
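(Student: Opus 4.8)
The proof verifies Definition~\ref{selmer_simple} place by place. Both $\Delta^0_{\text{an}}$ and $\Delta^0_\dagger$ carry the unramified condition at every $v\neq p$, which is simple by the usual computation, so everything comes down to the local condition at $p$, and I would treat the two cases in parallel. Write $D:=D^\dagger_{\text{rig}}(S(\tau^{-1}\eta^{-1}))$, and use the isomorphism $\iota\colon R\Gamma(E_p,V(\eta^{-1});\Delta^1_p)\xrightarrow{\sim}R\Gamma(E_p,V^1(\eta^{-1}))$ introduced before Definition~\ref{analytic_rank0_selmer}: then, up to a shift, the local condition complex $U_p^+(\Delta^0_?)$ is the mapping fibre of $g_?:=\text{comp}^?_\eta\circ\mathrm{pr}^*\circ\iota^{-1}\colon R\Gamma(E_p,V^1(\eta^{-1}))\to C^?_{\Psi,\Gamma}(D)$, for $?\in\{\text{an},\dagger\}$. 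The local inputs I would assemble first: by Assumption~\ref{assumption_V_eta} (and Nakayama over a lattice, together with the vanishing of local Galois cohomology above degree $2$), $H^0(E_p,-)=H^{\geq 2}(E_p,-)=0$ for each of $V(\eta^{-1})$, $V^1(\eta^{-1})$, $S(\eta^{-1})$ and $\ker(\mathrm{pr})$, so $\mathrm{pr}^*$ is surjective on $H^1$; the local Euler characteristic formula (as in the proof of Lemma~\ref{FK_inert}) gives $\dim_K H^1(E_p,S(\eta^{-1}))=2$ and $\dim_K H^1(E_p,V^1(\eta^{-1}))=4$; $\ker(\text{comp}_\eta)=\mathfrak{K}_\eta$ and $\ker(\text{comp}^\dagger_\eta)=\mathfrak{K}^\dagger_\eta$ both coincide with the $1$-dimensional space $H^1_f(E_p,S(\eta^{-1}))$ (the latter by Proposition~\ref{L-an_crys}, the former because $\mathrm{Ext}^1_{L\text{-an}}=\mathrm{Ext}^1_\dagger$ under Assumption~\ref{assumption_L-an} by Remark~\ref{analytic_ext}); $\Delta^1$ is simple; and, crucially, the dichotomy of Corollary~\ref{herr_psi_computations}: $\mathcal{H}^2_{\Psi,\Gamma}(D)=0$ but $H^2_{\Psi,\Gamma}(D)\cong K\neq 0$, with $\mathcal{H}^1_{\Psi,\Gamma}(D)$ and $H^1_{\Psi,\Gamma}(D)$ both $1$-dimensional.

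For $\Delta^0_{\text{an}}$: since $\mathrm{pr}^*$ is surjective and $\dim\mathrm{im}(\text{comp}_\eta)=2-1=1=\dim\mathcal{H}^1_{\Psi,\Gamma}(D)$, the map $g_{\text{an}}$ is surjective on $H^1$; combined with $\mathcal{H}^0_{\Psi,\Gamma}(D)=\mathcal{H}^2_{\Psi,\Gamma}(D)=0$ and $H^{\geq 2}(E_p,V^1(\eta^{-1}))=0$, the long exact sequence of the fibre triangle leaves $U_p^+(\Delta^0_{\text{an}})$ concentrated in degrees $0$ and $1$, with $H^0=0=H^0(E_p,V(\eta^{-1}))$ and $H^1(U_p^+)=\ker(g_{\text{an},1})\subseteq H^1(E_p,V^1(\eta^{-1}))$. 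The three conditions of Definition~\ref{selmer_simple} are then immediate: the $i=0$ map is the trivial isomorphism $0\xrightarrow{\sim}0$; the $i=1$ map is the composite injection $\ker(g_{\text{an},1})\hookrightarrow H^1(E_p,V^1(\eta^{-1}))\hookrightarrow H^1(E_p,V(\eta^{-1}))$, the last arrow being injective because $\Delta^1$ is simple; and the $i=2$ map vanishes since $H^2(E_p,V(\eta^{-1}))=0$. Hence $\Delta^0_{\text{an}}$ is simple, and its local condition at $p$ is the genuine subspace $\mathrm{im}(\ker(g_{\text{an},1}))\subseteq H^1(E_p,V(\eta^{-1}))$, which by Lemma~\ref{FK_inert} and Proposition~\ref{L-an_crys} is $H^1_f(E_p,V(\eta^{-1}))$.

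For $\Delta^0_\dagger$: the same fibre-triangle long exact sequence now places a nonzero class of $H^2_{\Psi,\Gamma}(D)\cong K$ in degree $3$ of $U_p^+(\Delta^0_{\dagger,p})$ --- nothing in $H^{\geq 2}(E_p,V^1(\eta^{-1}))=0$ can absorb it --- so $U_p^+(\Delta^0_{\dagger,p})$ is \emph{not} quasi-isomorphic to a complex concentrated in degrees $[0,1]$, and the local condition at $p$ does not reduce to a subspace of $H^1(E_p,V(\eta^{-1}))$ as a simple structure requires. Equivalently, in the displayed long exact sequence for $\Delta^0_\dagger$ the connecting map $\widetilde{H}^2(E,V(\eta^{-1});\Delta^1)\to H^2_{\Psi,\Gamma}(D)$ factors through the local cohomology $H^\bullet(E_p,S(\eta^{-1}))$ at $p$ in the relevant degree, which vanishes, so $\widetilde{H}^3(E,V(\eta^{-1});\Delta^0_\dagger)\cong H^2_{\Psi,\Gamma}(D)\neq 0$; but a simple Selmer complex is concentrated in degrees $0,1,2$ by Proposition~\ref{selmer_pt_dual} (and this also matches the non-zero Euler characteristic $-1$ recorded in Remark~\ref{rank_0_euler_char}). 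Therefore $\Delta^0_\dagger$ is not simple. I expect the only genuinely delicate point to be the bookkeeping of degree shifts in the mapping-cone definitions of Definitions~\ref{rank0_descent_an}--\ref{rank0_descent_cts} and~\ref{comp_descent}--\ref{comp_descent_oc}, i.e.\ matching the fibre-triangle long exact sequences to the displayed ones; all of the real content is the single input from Corollary~\ref{herr_psi_computations} that the analytic $\Psi$-Herr complex is one step shorter than the continuous one ($\mathcal{H}^2_{\Psi,\Gamma}(D)=0\neq H^2_{\Psi,\Gamma}(D)$), which is exactly the new phenomenon at $L\neq\mathbb{Q}_p$ stressed throughout~\S3.
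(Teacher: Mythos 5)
Your proof is correct and takes essentially the same approach as the paper's: both hinge on the dichotomy of Corollary~\ref{herr_psi_computations}, $\mathcal{H}^2_{\Psi,\Gamma}(D)=0$ yet $H^2_{\Psi,\Gamma}(D)\neq 0$, applied to the fibre triangle defining the local condition at $p$. Where you differ from the paper is in the degree bookkeeping, and you handle it more carefully: the paper asserts ``$H^2(U_p^+)\xrightarrow{\sim}H^2_{\Psi,\Gamma}(D)$'' from the (unshifted) triangle, whereas the long exact sequence with $H^2(E_p,V^1(\eta^{-1}))=H^3(E_p,V^1(\eta^{-1}))=0$ in fact gives $H^3(U_p^+)\cong H^2_{\Psi,\Gamma}(D)$ (and $H^2(U_p^+)$ is a cokernel of the comparison map on $H^1$, which you correctly identify as what has to be tracked). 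Your version is the one consistent with the paper's own displayed long exact sequence ending in $\widetilde{H}^3(E,V(\eta^{-1});\Delta^0_\dagger)\to 0$ and with Remark~\ref{rank_0_euler_char}. You also do more than the paper: you explicitly verify all three clauses of Definition~\ref{selmer_simple} for $\Delta^0_{\text{an}}$ (using surjectivity of $g_{\text{an}}$ on $H^1$ by the dimension count $2-1=1=\dim\mathcal{H}^1_{\Psi,\Gamma}(D)$, simplicity of $\Delta^1$, and the vanishing of local $H^0,H^{\geq 2}$), whereas the paper only checks the degree-$2$ vanishing and leaves the remaining clauses implicit. Two small remarks. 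First, your identification $\mathfrak{K}_\eta=H^1_f(E_p,S(\eta^{-1}))$ is used but not cited directly from the paper; it does follow from Proposition~\ref{L-an_crys} together with the factorization $\text{comp}^\dagger_\eta$ through $\text{comp}_\eta$ (which gives $\mathfrak{K}_\eta\subseteq\mathfrak{K}_\eta^\dagger$) and the dimension count forcing equality, so this is fine but worth a sentence. Second, your argument that $\Delta^0_\dagger$ fails simplicity because a simple Selmer complex has $\widetilde{H}^3=0$ (invoking Proposition~\ref{selmer_pt_dual}) is a valid and clean way to close the loop, and sidesteps the fact that Definition~\ref{selmer_simple} as literally worded only constrains the maps $H^i(U_v^+)\to H^i(E_v,M)$ in degrees $0,1,2$ --- the ambient $H^2(E_p,M)$ being $0$ here, the clause for $i=2$ is vacuous, so the relevant failure really does live in degree $3$, exactly as you say.
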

		\begin{proof}
			We show that the local complex $U_p^+$ attached to $\Delta^0_{\dagger}$ is non-zero in degree 2, which breaks one of the three conditions for simplicity. Moreover it will be clear that this happens as soon as $L \neq \mathbb{Q}_p$. Using the distinguished triangle of Definition \ref{rank0_descent_cts}, $$ U_p^+ = R\Gamma(E_p,V(\eta^{-1});\Delta^0_{\dagger,p}) \rightarrow R\Gamma(E_p, V^1(\eta^{-1})) \rightarrow C^{\text{cts}}_{\Psi,\Gamma}(D^\dagger_{\text{rig}}(S(\tau^{-1}\eta^{-1}))), $$ we can get a long exact sequence of cohomology; looking at $H^2$ terms tells us: $$ H^2(U_p^+) \xrightarrow{\sim} H^2_{\Psi,\Gamma}(D^\dagger_{\text{rig}}(S(\tau^{-1}\eta^{-1}))). $$ By Corollary \ref{herr_psi_computations} we see that the latter is a 1-dimensional vector space, so the former is also non-zero. Note that the continous Herr complex has cohomological degree $[L:\mathbb{Q}_p]+1$ and generic vanishing of top degree cohomology. 
			\par If we repeat the above argument for the $\Delta^0_{\text{an}}$ complex, we find that the local complex at $p$ is isomorphic to $\mathcal{H}^2_{\Psi,\Gamma}(D^\dagger_{\text{rig}}(S(\tau^{-1}\eta^{-1})))$ which is 0, whilst the other terms are the same. Thus it is a simple Selmer structure.
		\end{proof}
		
		\begin{rmk}
			Since we are working with a non-simple Selmer complex, we no longer have the nice characterisation of its global $H^2$ term using \citep[11.2.9]{KLZ17}. We have to use the more general Poitou--Tate duality statement of 11.2.8 of op. cit. We unravel these definitions; their $U_p^+$ is the same as our $U_p^+$ in the proof of Lemma \ref{cts_nonsimple}, hence $H^2(U_p^+) = H^2_{\Psi,\Gamma}(D^\dagger_{\text{rig}}(S(\tau^{-1}\eta^{-1}))) $. Now we interpret the kernel of $H^1$ groups using the distinguished triangle $$ (U_p^\vee)^+ \rightarrow R\Gamma(E_p, V(\eta^{-1})) \rightarrow (U_p^\vee)^-  .$$ From the arising exact sequence we see that $H^1((U_p^\vee)^-) = \dfrac{H^1(E_p,V(\eta^{-1})^*(1))}{H^1_f(E_p,V(\eta^{-1})^*(1))} $ by applying Proposition \ref{delta_0_cts_crys}. Thus the Poitou--Tate statement reduces to the exact sequence $$0 \rightarrow H^1_f(E,V^*(1)) \rightarrow \widetilde{H}^2(E,V(\eta^{-1});\Delta^0_{\dagger}) \xrightarrow{\theta} H^2_{\Psi,\Gamma}(D^\dagger_{\text{rig}}(S(\tau^{-1}\eta^{-1}))). $$
			The left most term has dimension 1 and by the Bloch--Kato conjecture we expect the left most term to be zero when the twist of the `$p$-adic $L$-function' $\eta(L_{p,E_p-\text{an}}^*)$ vanishes. But showing the map $\theta$ is injective under this hypothesis is not straightforward. It will be a consequence of the following work. \lozengeend
		\end{rmk}
		
		\begin{lem} \label{delta_0_an_crys}
			Suppose $V$, $\eta$ satisfy Assumption \ref{assumption_V_eta} and Assumption \ref{assumption_L-an} such that $\eta$ lies in region $\Sigma^{(0)}$ of Figure \ref{fig3}. Then
			$$ \widetilde{H}^1(E,V(\eta^{-1});\Delta^0_{\text{an}}) = H^1_f(E,V(\eta^{-1})) .$$
		\end{lem}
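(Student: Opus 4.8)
The plan is to reproduce the proof of Proposition \ref{delta_0_cts_crys} essentially verbatim, but with the \emph{analytic} $\Psi$-Herr complex in place of the continuous one. The payoff is that the analytic Herr cohomology of $D^\dagger_{\text{rig}}(S(\tau^{-1}\eta^{-1}))$ vanishes in degrees $0$ and $2$ (Corollary \ref{herr_psi_computations}), so the long exact sequences involved are short; the only genuinely new ingredient is that the kernel of the analytic comparison map at $p$ is again the local Bloch--Kato subspace.

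First I would note that both $\widetilde{R\Gamma}(E,V(\eta^{-1});\Delta^0_{\text{an}})$ and $\widetilde{R\Gamma}(E,V(\eta^{-1});\Delta^1)$ use the unramified local condition at every $v\neq p$, and that $\Delta^1$ is a simple Selmer structure by the vanishing in Assumption \ref{assumption_V_eta}, so the statement reduces to a comparison of local conditions at $p$. From the distinguished triangle of Definition \ref{rank0_descent_an} and the long exact sequence displayed above --- using $\mathcal{H}^0_{\Psi,\Gamma}(D^\dagger_{\text{rig}}(S(\tau^{-1}\eta^{-1})))=\mathcal{H}^2_{\Psi,\Gamma}(D^\dagger_{\text{rig}}(S(\tau^{-1}\eta^{-1})))=0$ (Remark \ref{herr_phi_computations}, Corollary \ref{herr_psi_computations}) --- one gets $$\widetilde{H}^1(E,V(\eta^{-1});\Delta^0_{\text{an}})=\ker\big(\widetilde{H}^1(E,V(\eta^{-1});\Delta^1)\xrightarrow{g}\mathcal{H}^1_{\Psi,\Gamma}(D^\dagger_{\text{rig}}(S(\tau^{-1}\eta^{-1})))\big),$$ where $g$ is the composite of $\mathrm{loc}_p$, the identification $\iota^{-1}\colon H^1(E_p,V(\eta^{-1});\Delta^1_p)\xrightarrow{\sim}H^1(E_p,V^1(\eta^{-1}))$, the projection $\mathrm{pr}^*\colon H^1(E_p,V^1(\eta^{-1}))\to H^1(E_p,S(\eta^{-1}))$ and $\mathrm{comp}_\eta$ (Definitions \ref{analytic_rank0_selmer}, \ref{comp_descent}). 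Hence $c\in\widetilde{H}^1(E,V(\eta^{-1});\Delta^1)$ lies in $\widetilde{H}^1(E,V(\eta^{-1});\Delta^0_{\text{an}})$ exactly when $\mathrm{pr}^*(\iota^{-1}(\mathrm{loc}_p(c)))\in\mathfrak{K}_\eta=\ker(\mathrm{comp}_\eta)$.

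Next I would establish the analytic analogue of Proposition \ref{L-an_crys}, namely $\mathfrak{K}_\eta=H^1_f(E_p,S(\eta^{-1}))$. Under Assumptions \ref{assumption_generic} and \ref{assumption_L-an} the $L$-analytic, overconvergent and crystalline extension classes in $H^1(E_p,S(\eta^{-1})^*(1))$ all coincide, so $\mathfrak{K}_\eta$ agrees with the kernel $\mathfrak{K}^\dagger_\eta$ computed in Proposition \ref{L-an_crys}; alternatively, since $\mathcal{H}^1_{\Psi,\Gamma}(D^\dagger_{\text{rig}}(S(\tau^{-1}\eta^{-1})))$ is one-dimensional (Corollary \ref{herr_psi_computations}) the image of $\mathrm{comp}_\eta$ is at most one-dimensional, so $\mathfrak{K}_\eta$ has dimension at least $1$ in the two-dimensional space $H^1(E_p,S(\eta^{-1}))$, and being contained in $\mathfrak{K}^\dagger_\eta=H^1_f(E_p,S(\eta^{-1}))$ it must equal it. Feeding this back in, $\widetilde{H}^1(E,V(\eta^{-1});\Delta^0_{\text{an}})$ is the set of classes unramified away from $p$ whose restriction at $p$ lies in $H^1(E_p,V^1(\eta^{-1}))$ and is sent by $\mathrm{pr}^*$ into $H^1_f(E_p,S(\eta^{-1}))$. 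By the Cartesian square of Lemma \ref{FK_inert} --- which identifies $H^1_f(E_p,V(\eta^{-1}))$ with the fibre product of $H^1(E_p,V^1(\eta^{-1}))$ and $H^1_f(E_p,S(\eta^{-1}))$ over $H^1(E_p,S(\eta^{-1}))$ --- this pair of local conditions at $p$ is equivalent to $\mathrm{loc}_p(c)\in H^1_f(E_p,V(\eta^{-1}))$. Combining with the unramified conditions away from $p$ gives $\widetilde{H}^1(E,V(\eta^{-1});\Delta^0_{\text{an}})=H^1_f(E,V(\eta^{-1}))$.

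I expect the only delicate point to be the identification $\mathfrak{K}_\eta=H^1_f(E_p,S(\eta^{-1}))$: because the analytic Herr complex admits no $\chi_{\text{LT}}$-duality (Remark \ref{herr_phi_computations}), this cannot be obtained by a direct orthogonality argument as in the continuous case, only through the coincidence of analytic/overconvergent/crystalline extension classes together with the dimension count of Corollary \ref{herr_psi_computations}. The remaining bookkeeping --- that the connecting map in the mapping-cone long exact sequence is exactly $\mathrm{comp}_\eta\circ\mathrm{pr}^*\circ\iota^{-1}\circ\mathrm{loc}_p$, and that the Cartesian square translates into the stated equality of local conditions --- is routine, and is a cleaner version of the computation already carried out in Proposition \ref{delta_0_cts_crys}, since here the target Herr cohomology vanishes in degrees $0$ and $2$.
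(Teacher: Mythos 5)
Your proof is correct, but it takes a genuinely different route from the paper's. The paper's proof is a one-step reduction: it invokes Proposition \ref{delta_0_cts_crys} (the overconvergent version, $\widetilde{H}^1(E,V(\eta^{-1});\Delta^0_\dagger)=H^1_f(E,V(\eta^{-1}))$) as a black box, then observes that the $\Delta^0_{\text{an}}$ and $\Delta^0_\dagger$ Selmer groups are both realised as kernels of maps landing in $\mathcal{H}^1_{\Psi,\Gamma}$ and $H^1_{\Psi,\Gamma}$ respectively, which coincide by Corollary \ref{herr_psi_computations}; since the continuous comparison map factors through the analytic one, the two kernels agree and the identification with $H^1_f$ transfers. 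You instead rerun the argument of Proposition \ref{delta_0_cts_crys} from scratch in the analytic setting: identify $\widetilde{H}^1(E,V(\eta^{-1});\Delta^0_{\text{an}})$ as the kernel of $\text{comp}_\eta\circ\text{pr}^*\circ\iota^{-1}\circ\text{loc}_p$, establish the analytic analogue $\mathfrak{K}_\eta=H^1_f(E_p,S(\eta^{-1}))$ of Proposition \ref{L-an_crys}, and feed this through the Cartesian square of Lemma \ref{FK_inert}.

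The trade-off is that the paper's route is shorter but depends on having already done the overconvergent case, whereas yours is longer but more self-contained and transparent about where the analytic ingredients enter. You correctly flagged the delicate point, $\mathfrak{K}_\eta=H^1_f(E_p,S(\eta^{-1}))$, and gave two arguments for it; both are sound. The first --- that $\mathfrak{K}_\eta=\mathfrak{K}^\dagger_\eta$ since the continuous map factors through the analytic one and the targets coincide generically --- is essentially what the paper is also implicitly using. The second, by dimension count ($\dim_K H^1(E_p,S(\eta^{-1}))\otimes K=2$, $\dim_K\mathcal{H}^1_{\Psi,\Gamma}=1$, so $\dim_K\mathfrak{K}_\eta\geq 1$, and $\mathfrak{K}_\eta\subseteq\mathfrak{K}^\dagger_\eta=H^1_f$ which is $1$-dimensional), is cleaner and sidesteps the factorisation altogether. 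Neither approach has a gap; yours simply does more of the work in the open rather than via the earlier proposition.
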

		\begin{proof}
			By the previous lemma we only need to show the $\Delta^0_{\text{an}}$ and $\Delta^0_{\dagger}$ complexes compute the same cohomology in degree 1. We take a look at the exact sequences of cohomology following their definitions. From the long exact sequences above Remark \ref{rank_0_euler_char}, $$ \widetilde{H}^1(E,V(\eta^{-1});\Delta^0_{\text{an}}) \cong \ker \left( H^1(E,V^1(\eta^{-1})) \rightarrow \mathcal{H}^1_{\Psi,\Gamma}(D^\dagger_{\text{rig}}(S(\tau^{-1}\eta^{-1}))) \right), $$
			$$ \widetilde{H}^1(E,V(\eta^{-1});\Delta^0_{\dagger}) \cong \ker \left( H^1(E,V^1(\eta^{-1})) \rightarrow H^1_{\Psi,\Gamma}(D^\dagger_{\text{rig}}(S(\tau^{-1}\eta^{-1}))) \right). $$ By Corollary \ref{herr_psi_computations}, the images of these maps are equal, as $$H^1_{\Psi,\Gamma}(D^\dagger_{\text{rig}}(S(\tau^{-1}\eta^{-1}))) = \mathcal{H}^1_{\Psi,\Gamma}(D^\dagger_{\text{rig}}(S(\tau^{-1}\eta^{-1})))$$ and all degree 2 terms match. The required equality follows.
		\end{proof}
		
		\begin{prop} \label{delta_0_an_crys_dual}
			Suppose $\eta$ is an $L$-analytic character such that $V$ and $\eta$ satisfy both Assumption \ref{assumption_V_eta} and Assumption \ref{assumption_L-an}. Then
			$$ \widetilde{H}^2(E,V(\eta^{-1});\Delta^0_{\text{an}}) = H^1_f(E,V(\eta^{-1})^*(1)) .$$
		\end{prop}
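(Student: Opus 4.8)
The plan is to exploit that $\Delta^0_{\mathrm{an}}$ is a \emph{simple} Selmer structure (Lemma \ref{cts_nonsimple}) and feed it into the Poitou--Tate formalism, after which the only real work is to identify the dual local conditions with the Bloch--Kato ones for $V(\eta^{-1})^*(1)$. First I would note that all modules in sight are finite-dimensional $K$-vector spaces, so Pontryagin duality is $K$-linear duality, and apply Proposition \ref{selmer_pt_dual} with $M=V(\eta^{-1})$ to obtain a canonical isomorphism
$$ \widetilde{H}^2(E,V(\eta^{-1});\Delta^0_{\mathrm{an}}) \;\cong\; \widetilde{H}^1\bigl(E,V(\eta^{-1})^*(1);(\Delta^0_{\mathrm{an}})^\vee\bigr)^\vee, $$
where $(\Delta^0_{\mathrm{an}})^\vee$ is the Selmer structure whose local conditions are the orthogonal complements, under local Tate duality, of those of $\Delta^0_{\mathrm{an}}$.

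Next I would pin down the local conditions of $\Delta^0_{\mathrm{an}}$. Away from $p$ this is the unramified condition $H^1_{\mathrm{ur}}(E_v,-)=H^1_f(E_v,-)$, which is well known to be its own orthogonal complement. At $p$, unwinding the mapping cone of Definition \ref{rank0_descent_an}, using $\mathcal{H}^1_{\Psi,\Gamma}=H^1_{\Psi,\Gamma}$ from Corollary \ref{herr_psi_computations}, Proposition \ref{L-an_crys} (so $\mathfrak K^\dagger_\eta=H^1_f(E_p,S(\eta^{-1}))$), and the Cartesian square of Lemma \ref{FK_inert}, one checks --- exactly as in the proofs of Proposition \ref{delta_0_cts_crys} and Lemma \ref{delta_0_an_crys} --- that the local condition $H^1(E_p,V(\eta^{-1});\Delta^0_{\mathrm{an},p})$, viewed inside $H^1(E_p,V(\eta^{-1}))$ via the injection $H^1(E_p,V^1(\eta^{-1}))=H^1(E_p,V(\eta^{-1});\Delta^1_p)\hookrightarrow H^1(E_p,V(\eta^{-1}))$, is exactly $H^1_f(E_p,V(\eta^{-1}))$.

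Then I would dualise and conclude. For the de Rham representation $V(\eta^{-1})$ the Bloch--Kato subspaces $H^1_f(E_p,V(\eta^{-1}))$ and $H^1_f(E_p,V(\eta^{-1})^*(1))$ are exact orthogonal complements under local Tate duality (orthogonality together with the dimension count from the local Euler characteristic formula is classical Bloch--Kato theory). Hence $(\Delta^0_{\mathrm{an}})^\vee$ is precisely the Bloch--Kato Selmer structure for $V(\eta^{-1})^*(1)$, so by definition $\widetilde{H}^1\bigl(E,V(\eta^{-1})^*(1);(\Delta^0_{\mathrm{an}})^\vee\bigr)=H^1_f(E,V(\eta^{-1})^*(1))$, and combined with the isomorphism above this gives a perfect pairing of $\widetilde{H}^2(E,V(\eta^{-1});\Delta^0_{\mathrm{an}})$ with $H^1_f(E,V(\eta^{-1})^*(1))$; in particular the two $K$-vector spaces have the same dimension, which is the asserted identification (and all that is needed for Theorem \ref{analytic_BK_bound}).

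The hard part is the local identification at $p$: one must check that the mapping-cone condition is \emph{literally} $H^1_f(E_p,V(\eta^{-1}))$ rather than merely cutting out the same global Selmer group. But this is exactly the computation carried out on the way to Lemma \ref{delta_0_an_crys} (via Lemma \ref{FK_inert} and Proposition \ref{L-an_crys}), and simplicity of $\Delta^0_{\mathrm{an}}$ from Lemma \ref{cts_nonsimple} is what makes the Poitou--Tate formula of Proposition \ref{selmer_pt_dual} applicable; the remaining inputs (self-duality of the unramified conditions, orthogonality of the $H^1_f$'s) are standard.
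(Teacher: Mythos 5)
Your proof takes essentially the same route as the paper's, which is a one-line appeal to Poitou--Tate duality together with simplicity of $\Delta^0_{\text{an}}$ (via Lemma \ref{cts_nonsimple}; the paper actually cites \ref{selmer_simple} at that point, which is a definition and is presumably a misprint for \ref{cts_nonsimple}). You fill in the implicit steps that the paper leaves to the reader: that Proposition \ref{selmer_pt_dual} applies because the structure is simple, that the local condition at $p$ is literally $H^1_f(E_p,V(\eta^{-1}))$ (Lemma \ref{delta_0_an_crys}), and that the unramified and crystalline conditions are self-orthogonal under local Tate duality, so the dual Selmer structure is the Bloch--Kato one for $V(\eta^{-1})^*(1)$. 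You also rightly flag that the ``$=$'' in the statement is really a perfect pairing, i.e.\ an identification up to $K$-linear dual, which the paper glosses over but which is harmless for the downstream use in Theorem \ref{analytic_BK_bound}.
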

		\begin{proof}
			This follows from Poitou--Tate duality if $\Delta^0_{\text{an}}$ is a simple Selmer structure under the running hypotheses, which is true by Lemma \ref{selmer_simple}.
		\end{proof}

		\begin{thm} \label{analytic_BK_bound}
			Suppose $V$, $\eta$ satisfy Assumptions \ref{assumption_V_eta} and Assumption \ref{assumption_L-an}, the infinity type of $\eta$ lies in the $\Sigma^{(0)}$ region of Figure \ref{fig3} and $\eta(L^*_{p,E_p\text{-an}}) \neq 0 $. Then $$ H^1_f(E,V(\eta^{-1})^*(1)) = 0 .$$  
		\end{thm}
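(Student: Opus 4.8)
The plan is to harvest the three principal outputs of the preceding sections and chain them together: the $E_p$-analytic ``rank $0$'' main conjecture (Theorem \ref{analytic_imc}), the $\eta$-descent of the $\Delta^0$ Selmer complex to field level (Corollary \ref{delta_0_descent}, resting on the base-change quasi-isomorphism of Proposition \ref{rank_0_an_basechange}), and the identification of the degree-$2$ cohomology of the analytic descended complex with the Bloch--Kato Selmer group of the Tate dual (Proposition \ref{delta_0_an_crys_dual}). Explicitly, I would establish the implications
\[
\eta(L^*_{p,E_p\text{-an}}) \neq 0 \;\Longrightarrow\; \widetilde{H}^2(E,\mathbb{V}_\infty^{E_p};\Delta^0)\otimes_{\Lambda_\infty^{E_p},\eta}K = 0 \;\Longrightarrow\; \widetilde{H}^2(E,V(\eta^{-1});\Delta^0_{\text{an}}) = 0 \;\Longrightarrow\; H^1_f(E,V(\eta^{-1})^*(1)) = 0,
\]
the last two being immediate citations and the first the only thing requiring an argument. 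Recall from the discussion after Definition \ref{comp_descent} that the $E_p$-analytic character $\eta$ determines both a closed point $x_\eta \in \mathfrak{X}_{E_p}$ and a connected component $e_\delta$ on which everything lives, so all the modules below are understood on that component.

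For the first implication I would argue via characteristic ideals. The point $x_\eta$ corresponds to a height-one prime $\mathfrak{p}_\eta$ of $\Lambda_{\infty,\mathbb{C}_p}^{E_p}$, and the hypothesis $\eta(L^*_{p,E_p\text{-an}})\neq 0$ says precisely that $v_{\mathfrak{p}_\eta}\bigl(L^*_{p,E_p\text{-an}}(\Pi)\bigr) = 0$. By Theorem \ref{analytic_imc} the ideal $\mathrm{char}_{e_\delta\Lambda_{\infty,\mathbb{C}_p}^{E_p}}\,e_\delta\widetilde{H}^2(E,\mathbb{V}_\infty^{E_p};\Delta^0)$ divides $(e_\delta L^*_{p,E_p\text{-an}}(\Pi))$, so its $\mathfrak{p}_\eta$-adic valuation vanishes too; equivalently $x_\eta$ lies outside the support of the torsion coadmissible module $\widetilde{H}^2(E,\mathbb{V}_\infty^{E_p};\Delta^0)$ (torsionness is established inside the proof of Theorem \ref{analytic_imc}). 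Using the structure theorem of Lemma \ref{analytic_char} over the Pr\"ufer algebra $\Lambda_{\infty,\mathbb{C}_p}^{E_p}$ to write this module as $\prod_\alpha \Lambda_{\infty}^{E_p}/\mathfrak{p}_\alpha^{l_\alpha}$ with every $\mathfrak{p}_\alpha$ distinct from $\mathfrak{p}_\eta$, each factor is annihilated by an element outside $\mathfrak{p}_\eta$ and hence dies after $-\otimes_{\Lambda_\infty^{E_p},\eta}K$; this gives $\widetilde{H}^2(E,\mathbb{V}_\infty^{E_p};\Delta^0)\otimes_{\Lambda_\infty^{E_p},\eta}K=0$.

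The second implication is the degree-$2$ part of Corollary \ref{delta_0_descent}, which provides an isomorphism $\widetilde{H}^2(E,\mathbb{V}_\infty^{E_p};\Delta^0)\otimes_{\Lambda_\infty^{E_p},\eta}K\cong\widetilde{H}^2(E,V(\eta^{-1});\Delta^0_{\text{an}})$. For the third, the hypothesis that $\eta$ lies in $\Sigma^{(0)}$ and satisfies Assumptions \ref{assumption_V_eta} and \ref{assumption_L-an} puts us in the situation of Proposition \ref{delta_0_an_crys_dual}, which identifies $\widetilde{H}^2(E,V(\eta^{-1});\Delta^0_{\text{an}})$ with $H^1_f(E,V(\eta^{-1})^*(1))$. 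Here it is essential that we use the \emph{analytic} descended complex $\Delta^0_{\text{an}}$ and not the continuous one $\Delta^0_\dagger$: only $\Delta^0_{\text{an}}$ is a simple Selmer structure of Euler characteristic $0$ (Lemma \ref{cts_nonsimple}), so that its $H^1$ equals $H^1_f(E,V(\eta^{-1}))$ (Lemma \ref{delta_0_an_crys}, via the Cartesian square of Lemma \ref{FK_inert}) and Poitou--Tate duality then identifies its $H^2$ with $H^1_f$ of the Tate dual. Stringing the three implications together gives the theorem.

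The computations here are routine: the passage from non-vanishing of the characteristic ideal at $x_\eta$ to vanishing of the fibre is elementary once Lemma \ref{analytic_char} is available, and the descent is a direct application of Corollary \ref{delta_0_descent}. The substantive content was spent in the earlier sections, in constructing a descended local condition at $p$ that simultaneously (i) base-changes compatibly from the two-variable picture through the Schneider--Venjakob regulator and the comparison map, and (ii) is a simple Selmer structure whose $H^2$ genuinely sees $H^1_f$ of the Tate dual --- this is the only point where the $L$-analytic phenomenon ($\Delta^0_\dagger$ being non-simple) could have derailed the argument. The sole bookkeeping subtlety is that the idempotent $e_\delta$ must be the one attached to $\eta$ in all three inputs, which is automatic since $\eta$ itself pins down $\delta$.
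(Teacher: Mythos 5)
Your proposal is correct and follows essentially the same route as the paper: identify the descended $H^2$ with the fibre of $\widetilde{H}^2(E,\mathbb{V}_\infty^{E_p};\Delta^0)$ via Corollary \ref{delta_0_descent}, use Theorem \ref{analytic_imc} and $\eta(L^*_{p,E_p\text{-an}})\neq 0$ to conclude $x_\eta$ is outside the support so the fibre vanishes, then invoke Proposition \ref{delta_0_an_crys_dual}. The only variation is purely bookkeeping: where you argue the support step directly from the structure theorem of Lemma \ref{analytic_char}, the paper packages the same fact as the Grothendieck-group/length identity from Proposition \ref{inert_descent} (Kato's Lemma 14.15), and both rest on the Pr\"ufer property of $\Lambda_{\infty,\mathbb{C}_p}^{E_p}$.
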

		\begin{proof}
			\par By Corollary \ref{delta_0_descent}, we have the isomorphism of $K$-vector spaces
			$$\widetilde{H}^2(E, \mathbb{V}_\infty^{E_p};\Delta^0) \otimes_{\Lambda_\infty^{E_p}, \eta} K \cong \widetilde{H}^2(E, V(\eta^{-1});\Delta^0_{\text{an}}). $$ We now apply \citep[Lemma 14.15]{Kat04}, or rather the Frechet--Stein version developed in Proposition \ref{inert_descent}; taking $A=\Lambda_\infty^{E_p}$, $M$ a coadmissible torsion $A$-module with $\{ \mathfrak{q} \mid \ker(\eta) \} \nsubseteq \text{Supp}(M) $, $a=\nabla-\eta(\nabla)$ we get the following equality in the Grothendieck group of the category of coadmissible $\Lambda_\infty^{E_p}$-modules
			\begin{eqnarray*} 
				[M \otimes_{\Lambda_\infty^{E_p}, \eta} K] & = & \sum_{\mathfrak{q}} \text{length}_{\Lambda_{\infty, \mathfrak{q}}^{E_p}}(M_\mathfrak{q}) \cdot \left[\dfrac{\Lambda_\infty^{E_p}}{\mathfrak{q} + \ker(\eta)} \right]  \\
				& = & \sum_{\mathfrak{q} \mid \ker(\eta)} \text{length}_{\Lambda_{\infty, \mathfrak{q}}^{E_p}}(M_\mathfrak{q}) \cdot [K] .
			\end{eqnarray*}
			
			\par We apply this when $M= \widetilde{H}^2(E,\mathbb{V}_\infty^{E_p};\Delta^0).$ Recall by Theorem \ref{analytic_imc}, taking $\delta=\overline{\eta}$, we have
			$$\mathrm{char}_{e_{\overline{\eta}}\Lambda_{\infty,K}^{E_p}}e_{\overline{\eta}} \widetilde{H}^2(E,\mathbb{V}_\infty^{E_p};\Delta^0) \bigm \vert (e_{\overline{\eta}} L^*_{p,E_p\text{-an}}(\Pi)). $$ We will have to change the base field $K$ to account for the main conjecture over the $\mathbb{C}_p$-valued distribution algebra.	Since $\eta(L^*_{p,E_p\text{-an}}) \neq 0 $
			the divisibility tells us that $\{ \mathfrak{q} \mid \ker(\eta)\} \cup \text{Supp}(\widetilde{H}^2(E,\mathbb{V}_\infty^{E_p};\Delta^0)) = \emptyset.$
			Now the above equality in the Grothendieck group tells us that 
			$$ \dim_K \widetilde{H}^2(E,\mathbb{V}_\infty^{E_p};\Delta^0) \otimes_{\Lambda_\infty^{E_p}, \eta} K = 0. $$	
			Finally applying Proposition \ref{delta_0_an_crys_dual} gives the result.
		\end{proof}

		\begin{rmk}
			This tells us that $$ \text{rank}_{\mathcal{O}_K} H^1_f(E,T(\eta^{-1})^*(1)) = 0$$ just like the split prime case in \citep[Corollary 4.4]{Man22}, but we do not get a bound on the number of elements of the finite Bloch--Kato Selmer group like in op. cit. This is due to the lack of integrality of our results henceforth, discussed in Remark \ref{ardakov_berger}. \lozengeend
		\end{rmk}

	\bibliographystyle{amsalpha}
	\bibliography{bibliography}
	
\end{document}